\newcommand{\Ker}{\operatorname{Ker}}
\newcommand{\GQpD}{G_{\mathbb{Q}_p,\Delta}}
\newcommand{\GQpa}{G_{\mathbb{Q}_p,\alpha}}
\newcommand{\HQpD}{H_{\mathbb{Q}_p,\Delta}}
\newcommand{\GQpDa}{G_{\mathbb{Q}_p,\Delta\setminus\{\alpha\}}}
\newcommand{\soc}{\operatorname{soc}}
\newcommand{\Zp}{\mathbb{Z}_p}
\newcommand{\nr}{\mathrm{nr}}
\newcommand{\Fp}{\mathbb{F}_p}
\newcommand{\Fq}{\mathbb{F}_q}
\newcommand{\Qp}{\mathbb{Q}_p}
\newcommand{\OED}{\mathcal{O}_{\mathcal{E}_\Delta}}
\newcommand{\diag}{\operatorname{diag}}
\newcommand{\genlength}{{\operatorname{length}_{gen}}}
\newcommand{\Tr}{\operatorname{Tr}}
\newcommand{\Ext}{\operatorname{Ext}}
\newcommand{\ord}{\operatorname{ord}}
\newcommand{\Ind}{\operatorname{Ind}}
\newcommand{\Fil}{\operatorname{Fil}}
\newcommand{\GL}{\operatorname{GL}}
\newcommand{\Gal}{\operatorname{Gal}}
\newcommand{\Rep}{\operatorname{Rep}}
\newcommand{\res}{\operatorname{res}}
\newcommand{\id}{\operatorname{id}}
\newcommand{\rk}{\operatorname{rk}}
\newcommand{\pr}{\operatorname{pr}}
\newcommand{\SP}{\operatorname{SP}}
\newcommand{\Sp}{\operatorname{Sp}}
\newcommand{\Fin}{\operatorname{Fin}}
\newcommand{\gr}{\operatorname{gr}^{\cdot}}
\newcommand{\bg}{(\hspace{-0.06cm}(}
\newcommand{\jg}{)\hspace{-0.06cm})}
\newcommand{\bs}{[\hspace{-0.04cm}[}
\newcommand{\js}{]\hspace{-0.04cm}]}
\newtheorem{thm}{Theorem}[subsection]
\newtheorem{thma}{Theorem}
\newtheorem{pro}[thm]{Proposition}
\newtheorem{lem}[thm]{Lemma}
\newtheorem{cor}[thm]{Corollary}
\newtheorem{con}[thm]{Conjecture}
\newtheorem{que}{Question}
\theoremstyle{definition}
\newtheorem*{rem}{Remark}
\newtheorem*{rems}{Remarks}
\title{Finiteness properties of generalized Montréal functors with applications to mod $p$ representations of $\mathrm{GL}_n(\mathbb{Q}_p)$}
\author{Gergely Jakov\'ac\footnote{School of Mathematics, University of Bristol}\ \ and Gergely Z\'abr\'adi\footnote{Institute of Mathematics, Eötvös Loránd University, Budapest \& HUN-REN Alfréd Rényi Institute of Mathematics, MTA–RI Lendület ``Momentum'' Analytic Number Theory and Representation Theory Research Group}}
\date{\today}
\begin{document}

\maketitle
\begin{abstract}
The second named author previously \cite{MultVar,MultVarGal} constructed a functor $\mathbb{V}^\vee\circ D^\vee_\Delta$ from the category of smooth $p$-power torsion representations of $\GL_n(\Qp)$ to the category of inductive limits of continuous representations on finite $p$-primary abelian groups of the direct product $\GQpD\times \Qp^\times$ of $(n-1)$ copies of the absolute Galois group of $\Qp$ and one copy of the multiplicative group $\Qp^\times$. In the present work we show that this functor attaches finite dimensional representations on the Galois side to smooth $p$-power torsion representations of finite length on the automorphic side. This has some implications on the finiteness properties of Breuil's functor \cite{Breuil}, too. Moreover, $\mathbb{V}^\vee\circ D^\vee_\Delta$ produces irreducible representations of $\GQpD\times \Qp^\times$ when applied to irreducible objects on the automorphic side and detects isomorphisms unless it vanishes. Further, we determine the kernel of $D^\vee_\Delta$ when restricted to successive extensions of subquotients of principal series. We use this to characterize representations that are parabolically induced from the product of a torus and $\mathrm{GL}_2(\mathbb{Q}_p)$. Finally, we formulate a conjecture and prove partial results on the essential image.
\end{abstract}

\section{Introduction}

\subsection{Background and motivation}

The Langlands programme is one of the most central areas in modern number theory. Since the Millenium it has been increasingly clear that in order to prove general (global) modularity results, one needs the extension of the local Langlands correspondence to the case $\ell=p$ (see \cite{BreuilSurvey} for an overview) where $p$ stands for the residue characteristic of the local field and $\ell$ is the residue characteristic of the coefficient field. The difficulty in the $p$-adic local Langlands programme (as opposed to the classical $\ell\neq p$ case) is that representations on both the automorphic- and Galois sides are very far from being semisimple. Hence it does not suffice to provide a bijection for the irreducible objects, one also has to understand various (successive) extensions. Therefore it is natural, though perhaps optimistic, to expect that the---still hypothetical---correspondence is realized by a functor.

By now the $p$-adic local Langlands correspondence for $\GL_2(\mathbb{Q}_p)$ is very well understood through the works of many  people \cite{Berger,Breuil03a,Breuil03b,BE,Colmez2,Colmez1,E,Kisin,Paskunas,CDP,CEGGPS}. It is indeed realized by a functor \cite{Colmez2,Colmez1} (the so-called Montréal functor of Colmez, first announced at a conference in Montréal in 2005) which makes it possible to extend the correspondence to extensions of representations and to deformations on both sides. Later on there have been attempts \cite{SchVig,Breuil} (see also \cite{links}) to generalize Colmez' functor to groups of higher rank. The goal of the papers \cite{MultVar} and \cite{MultVarGal} was to unify the advantages of these approaches, especially to lose as little information as possible when passing from the automorphic- to the Galois side. In order to do so, one has to work with representations of products of Galois groups (and the center of the reductive group). Note that such representations also played a central role in Drinfeld's work \cite{DrinfeldICM} (and subsequent works by  L.\  Lafforgue \cite{LLafforgue} and by V.\ Lafforgue \cite{VLafforgue1, VLafforgue2}) on the geometric Langlands in positive characteristic (see also \cite{KedlayaSSS} and \cite{ScholzeWeinstein} for shtukas in mixed characteristic).

In order to briefly summarize the results of \cite{MultVar,MultVarGal} let $K/\Qp$ be a finite extension and let ${\bf G}$ be a $K$-split reductive group defined over $K$ with connected centre.  Assume for now (as it is also assumed in both papers \cite{MultVar,MultVarGal}) that $K=\Qp$ and put $G:={\bf G}(\Qp)$. In \cite{MultVar} we define a functor $D^\vee_\Delta$ from the category of smooth mod $p^h$-representations of $G$ to the category of projective limits of so-called étale $T_+$-modules over a multivariate Laurent-series ring in $|\Delta|$ variables. Here $\Delta$ is the (finite) set of simple roots corresponding to a split Borel subgroup $B=TN$ and $T_+\subseteq T$ is the submonoid of dominant elements in the torus $T$. The functor $D^\vee_\Delta$ is right exact contravariant and compatible with tensor products and with parabolic induction \cite{MultVar}. In \cite{MultVar} (see also \cite{CKZ}) a Fontaine-style equivalence of categories is proven---at least in case ${\bf G}=\GL_n$---between the category of étale $T_+$-modules as above and mod $p^h$ representations of the group $\GQpD\times\Qp^\times$ where $\GQpD$ is the $|\Delta|$-fold direct power of the absolute Galois group $G_{\Qp}=\Gal(\overline{\Qp}/\Qp)$ and $\Qp^\times$ is identified with the center $Z(\GL_n(\Qp))$. The equivalence is realized by the exact functors $\mathbb{V}=\mathbb{V}_\Delta$ and $\mathbb{D}=\mathbb{D}_\Delta$. In accordance with the original papers of Colmez \cite{Colmez2,Colmez1} we prefer to apply another Pontryagin duality $(\cdot)^\vee$ so that the composite functor $\mathbb{V}^\vee\circ D^\vee_\Delta$ is covariant and left exact. See \cite{CKZ} for a more conceptual proof via Drinfeld's lemma for perfectoid spaces and \cite{RWZ} for a generalization to higher dimensional local fields.

In the present paper our goal is to prove further properties of the functor $\mathbb{V}^\vee\circ D^\vee_\Delta$, most notably that it attaches finite dimensional representations on the Galois side to objects of finite length on the autmorphic side (for details we refer the reader to section \ref{mainresults}). We concentrate on the case of $G=\GL_n(\Qp)$ even though most results can be generalized with some effort to ${\bf G}(\Qp)$ where ${\bf G}$ is a $\Qp$-split reductive group with connected center. The reason why we stick to $\GL_n(\Qp)$ is the following. Whenever ${\bf G}$ is a more general reductive group, the precise connection of étale $T_+$-modules to actual representations on the Galois side is yet to be understood. Note that from general Langlands philosophy, one would expect representations valued in (the $\Qp$-points of) the dual group ${\bf G}^\vee$. We expect that some kind of tannakian formalism should exist in order to answer this question. On the other hand, whenever $K\neq \Qp$ is a proper extension then the classical cyclotomic $(\varphi,\Gamma)$-modules are less well suited, one either has to work with Lubin--Tate theory or with a different notion of multivariable $(\varphi,\mathcal{O}_K^\times)$-modules \cite{BHHMS,BHHMS2,BHHMS3} (and possibly combine these with ideas in this paper and \cite{MultVar} when the group ${\bf G}$ is not $\GL_2$).

\subsection{Main results and outline of the paper}\label{mainresults}

Our first main theorem is the following finiteness result (Corollary \ref{finitenessofDveeDelta}).

\begin{thma}\label{thmA}
Assume that $\pi$ is a smooth representation of $\GL_n(\mathbb{Q}_p)$ over $\mathcal{O}_F/\varpi^h$ of finite length. Then the attached representation $\mathbb{V}^\vee\circ D_\Delta^\vee(\pi)$ of $\GQpD\times \Qp^\times$ is a finitely generated $\mathcal{O}_F/\varpi^h$-module.
\end{thma}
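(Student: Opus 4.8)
The plan is to transfer the statement to the module side and then argue by dévissage, using the classification of irreducible mod $p$ representations as the backbone. Under the equivalence $\mathbb{V}\leftrightarrow\mathbb{D}$, a continuous representation of $\GQpD\times\Qp^\times$ on an $\mathcal{O}_F/\varpi^h$-module is finitely generated over $\mathcal{O}_F/\varpi^h$ precisely when the corresponding étale $T_+$-module is finitely generated over the multivariate Laurent series ring $\OED$, equivalently when it has finite generic length $\genlength$ (the length of its localization at the multiplicative set generated by the variables, which is then a finite-dimensional module over the corresponding field $\mathcal{E}_\Delta$). A priori $\mathbb{V}^\vee\circ D_\Delta^\vee(\pi)$ is only an inductive limit of finite representations, and the content of the theorem is exactly that $\genlength D_\Delta^\vee(\pi)<\infty$. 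So it suffices to bound this quantity; and since $D_\Delta^\vee$ is right exact, $\genlength\,D_\Delta^\vee(-)$ is subadditive, and in fact additive, on short exact sequences of finite-length representations, the correction terms being generically trivial as in Colmez's one-variable theory.

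Granting additivity, I would first reduce to $\pi$ irreducible by induction along a composition series. An irreducible smooth $\mathcal{O}_F/\varpi^h$-representation is killed by $\varpi$ — its $\varpi$-torsion is a nonzero subrepresentation — so we may assume $\pi$ is an irreducible smooth $\Fq$-representation of $\GL_n(\Qp)$, and such a $\pi$ is admissible. Now invoke the classification of irreducible admissible mod $p$ representations of $\GL_n(\Qp)$ (Herzig; Abe--Henniart--Herzig--Vignéras): $\pi$ is a subquotient of a parabolic induction $\Ind_P^{\GL_n(\Qp)}\sigma$ with $P=MN$ a standard parabolic, $M\cong\prod_{i=1}^r\GL_{n_i}(\Qp)$, and $\sigma\cong\sigma_1\boxtimes\cdots\boxtimes\sigma_r$ (up to a twist, which is harmless for finiteness) with each $\sigma_i$ a supersingular representation of $\GL_{n_i}(\Qp)$. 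Since $\Ind_P^{\GL_n(\Qp)}\sigma$ has finite length, additivity of $\genlength$ reduces the claim to bounding $\genlength\,D_\Delta^\vee(\Ind_P^{\GL_n(\Qp)}\sigma)$.

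At this point I would use the two compatibilities of $D_\Delta^\vee$ recalled above. Compatibility with parabolic induction expresses $D_\Delta^\vee(\Ind_P^{\GL_n(\Qp)}\sigma)$ in terms of $D_{\Delta_M}^\vee(\sigma)$ for the smaller root system of $M$, the simple roots outside $M$ contributing only bounded, cyclotomic-type factors that do not affect finiteness; and compatibility with tensor products factors $D_{\Delta_M}^\vee(\sigma_1\boxtimes\cdots\boxtimes\sigma_r)$ as $\bigotimes_i D_{\Delta_i}^\vee(\sigma_i)$. Hence it is enough to show $\genlength\,D_{\Delta_i}^\vee(\sigma_i)<\infty$ for a single supersingular — in particular admissible — representation $\sigma_i$ of $\GL_{n_i}(\Qp)$.

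The main obstacle is this last base case. For $\GL_1(\Qp)$ it is trivial, and for $\GL_2(\Qp)$ it follows from Colmez's computation of the Montréal functor on supersingular representations (the output is an irreducible two-dimensional Galois representation). For general $\GL_m(\Qp)$ the supersingular representations are not classified, so one cannot argue by inspection; instead I would prove the sharper statement that $D_\Delta^\vee$ has finite generic length on \emph{every} admissible smooth mod $p^h$ representation, by tracking finiteness through the construction of $D_\Delta^\vee$ out of iterated $N_0$-coinvariants (or ordinary-part) functors: admissibility makes the intermediate modules finitely generated over the relevant Iwasawa algebras, and one has to check that finite generation survives each localization and completion building up $\OED$, as well as the final passage to the projective limit. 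Carrying this out — showing that no infiniteness is introduced anywhere in the multivariate formalism — is the technical core of the argument, which is why Theorem A is naturally obtained as a corollary of the structural results of the paper rather than proved on its own.
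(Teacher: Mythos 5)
Your dévissage breaks down before you ever reach the base case. First, to pass from $\genlength D^\vee_\Delta(\Ind_{P}^{G}\sigma)<\infty$ to the same statement for an irreducible subquotient $\pi$ you need additivity (or at least some left-exactness) of $\genlength\circ D^\vee_\Delta$, and right exactness only gives subadditivity, which points the wrong way: if $\pi=\Pi_1/\Pi_2$ with $\Pi_2\subseteq\Pi_1\subseteq\Ind_P^G\sigma$, then right exactness yields $D^\vee_\Delta(\pi)\to D^\vee_\Delta(\Pi_1)\to D^\vee_\Delta(\Pi_2)\to 0$ with no injectivity on the left, so finiteness of $D^\vee_\Delta(\Pi_1)$ does not bound $D^\vee_\Delta(\pi)$. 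Exactness of $D^\vee_\Delta$ is only known on $\SP_h$, not on representations with supersingular constituents, so ``additivity as in Colmez'' is precisely one of the unproved points. Second, you invoke the classification of irreducible \emph{admissible} mod $p$ representations, but for $n\geq 3$ it is not known that an irreducible smooth representation of $\GL_n(\Qp)$ (even one occurring in a finite-length representation) is admissible; in the paper admissibility is a \emph{consequence} of $D^\vee_\Delta(\pi)\neq 0$ (Corollary \ref{irredtoirred}), not an input.

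The base case is where all the content sits, and your sketch does not engage with how $D^\vee_\Delta$ is actually built: it is a projective limit over the directed family $\mathcal{M}_\Delta(\pi^{H_{\Delta,0}})$ of the modules $M^\vee[1/X_\Delta]$, not an iterated coinvariants/ordinary-parts construction, so there is no chain of ``intermediate modules'' whose finite generation admissibility would hand you; each $M^\vee[1/X_\Delta]$ is finitely generated, but admissibility of $\pi$ gives no a priori bound on the generic rank as $M$ grows, which is exactly the problem. Moreover, any route through the classification must evaluate $D^\vee_\Delta$ on supersingular representations of $\GL_m(\Qp)$, $m\geq 3$, about which nothing is known (the paper cannot even decide nonvanishing). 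The paper's proof avoids both issues: reduce to $\pi$ irreducible over $\Fq$ by right exactness; if $D^\vee_\Delta(\pi)\neq 0$ pick any nonzero $M\in\mathcal{M}^0_\Delta(\pi^{H_{\Delta,0}})$, observe that $\beta_{G/B,M}$ is injective by irreducibility, and apply Theorem \ref{DveeDeltagrw_0}, which says that this single $M$ already computes $D^\vee_\Delta(\pi)=M^\vee[X_\Delta^{-1}]$. The proof of that theorem --- the vanishing of $D^\vee_\Delta$ on the graded pieces of the Bruhat filtration other than the big cell (Theorem \ref{DveeDelta0gr}), resting on the lattices $D^\#$, the bounded submodules $\mathbb{M}^{bd}_\infty(D^\#)$ and the filtration argument of Proposition \ref{fingen+kertors} --- is the technical core that your outline defers to a final paragraph.
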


The analogue of Theorem \ref{thmA} in case $n=2$ relies heavily on the finite presentation \cite[Thm.\ III.3.1]{Colmez1} of irreducible smooth representations of $\GL_2(\Qp)$ by compactly induced representations. The approach \cite{SchVig} of Schneider and Vigneras also uses the resolution of smooth representations by compactly induced representations, but the condition \cite[Prop.\ 9.9]{SchVig} needed for the finiteness properties seems very hard to verify in general (even though there exist positive results \cite{Ollivier} in case of principal series). However, in the approach of Breuil \cite{Breuil} and the second named author \cite{MultVar} (originating in Emerton's \cite{EmertonCoh} reformulation of Colmez' functor) builds the corresponding $(\varphi,\Gamma)$-modules from relatively small subspaces (the set $\mathcal{M}_\Delta(\pi^{H_{\Delta,0}})$ and its analogues---see section \ref{notation:DveeDelta} for the definition) of the representation $\pi$ so the finiteness relies on bounding these subspaces inside $\pi$. The idea \cite{MultVar} behind working with multivariable $(\varphi,\Gamma)$-modules instead of classical ones is to keep as much information from $\pi$ as possible. 

The method of the proof is the following. Assume that $\pi$ is a smooth representation of $\GL_n(\Qp)$ over $\Fq$ of finite length such that $D^\vee_\Delta(\pi)\neq 0$. Then for each finitely generated quotient $D$ of $D^\vee_\Delta(\pi)$ in the category of étale $T_+$-modules, we may pass to a noncommutative deformation $\mathbb{M}_{\infty,0}(D)$ (a finitely generated module over some topological localization of $\Fq\bs N_0\js$ together with a $T_+$-action) along with a $T_+^{-1}$-equivariant $\Fq\bs N_0\js$-module homomorphism $\beta_{D,\mathcal{C}_0}\colon\pi^\vee\to \mathbb{M}_{\infty,0}(D)$\footnote{In the text we index $\beta$ by $M$ instead of $D$ where $D=M^\vee[X_\Delta^{-1}]$}. Using the étale hull of the image, one can \cite[Cor.\ 4.22]{MultVar}, in fact, extend $\beta_{D,\mathcal{C}_0}$ to a continuous $G$-equivariant map $\beta_{D,G/B}$ from $\pi^\vee$ to the space $\mathfrak{Y}_{D,\pi}(G/B)$ of global sections of a sheaf $\mathfrak{Y}_{D,\pi}$ (depending on both $D$ and $\pi$) on the flag variety $G/B$ (in the $p$-adic topology). Now $D^\vee_\Delta(\pi)$ is the projective limit of all its finitely generated quotients, so---by the assumption that $\pi$ has finite length---the image of $\beta_{D,G/B}$ must eventually stabilize (when varying $D$). The technical part of the paper concerns the proof that we have $D^\vee_\Delta(\pi)=D$ if $D$ is the smallest finitely generated quotient where this stabilization takes place. In order to prove this we have to compute the value of the functor $D^\vee_\Delta$ at the Pontryagin dual of a smooth representation embedded in some $G$-equivariant sheaf on $G/B$. In such a situation the Bruhat decomposition and Bruhat order gives rise to a filtration by open subsets on the topological space $G/B$ inducing a filtration $\Fil^\bullet(\pi)$ on $\pi$ by $B$-subrepresentations (indexed by the elements of the Weyl group $N_G(T)/T$). We show (Theorem \ref{DveeDelta0gr}) that only the graded piece $\mathrm{gr}^{w_0}(\pi)$ corresponding to the full-dimensional ``big open cell'' $Bw_0 B$ (where $w_0\in N_G(T)/T$ is the longest element) contributes to the value of the functor $D^\vee_\Delta$. Here we use a noncommutative analogue of $D^{\nr}=\bigcap_{k\geq 1}\varphi^k(D)$ noting that under the identification of a classical $(\varphi,\Gamma)$-module $D$ with the global sections of a sheaf of abelian groups on $\mathbb{Z}_p$, $D^{\nr}$ equals the kernel of the restriction of $D$ to the open subspace $\Zp\setminus\{0\}$. The technical difficulty in proving the finiteness of $D^\vee_\Delta(\mathrm{gr}^{w_0}(\pi))$ is the following. In the commutative setting, any compact $\Fq\bs X\js$-submodule of a finitely generated $\Fq\bg X\jg$-module (and even any compact $E_\Delta^+$-submodule of a finitely generated $E_\Delta$-module) is finitely generated over $\Fq\bs X\js$ (resp.\ over $E_\Delta^+$). However, the (noncommutative) topological localization $\Fq\bg N_{\Delta,\infty}\jg$ of $\Fq\bs N_0\js$ contains compact $\Fq\bs N_0\js$-submodules which are \emph{not} finitely generated. This is analogous to the fact that Fontaine's ring $\mathcal{O_E}=\varprojlim \Zp/(p^h)\bg X\jg$ also contains compact $\Zp\bs X\js$-submodules that are not finitely generated. The way we overcome this difficulty is to use the $\psi$-action of $T_+$ (Corollary \ref{compact+psi-fingen}). The key point is a filtration argument (Proposition \ref{fingen+kertors}) combined with the existence of an element $t_{\overline{\alpha}}\in T_+$ for each simple root $\alpha\in\Delta$ such that $\alpha(t_\alpha)$ is a unit, but the subgroups $(t_{\overline{\alpha}}^kH_{\Delta,0}t_{\overline{\alpha}}^{-k})_{k\geq 0}$ form a system of open neighbourhoods of $1$ in $H_{\Delta,0}$ (see the proof of Lemma \ref{onlyonetermnotinkerpr}).

As a corollary, we also prove the finiteness of Breuil's functor conditional on the non-vanishing of $D^\vee_\Delta$ (see Corollary \ref{Breuilfin}):

\begin{thma}\label{thmB}
Assume that $\pi$ is an irreducible smooth representation of $\GL_n(\mathbb{Q}_p)$ over $\Fq$ such that $D_\Delta^\vee(\pi)\neq 0$. Then we have $D^\vee_\xi(\pi)\cong \Fq\bg X\jg\otimes_{\Fq\bg N_{\Delta,0}\jg,\ell}D^\vee_\Delta(\pi)$. In particular, $D_\xi^\vee(\pi)$ is a finitely generated $\Fq\bg X\jg$-module and the Galois representation $\mathbb{V}^\vee\circ D^\vee_\xi(\pi)$ attached by Fontaine's equivalence is finite dimensional and isomorphic to the restriction of $\mathbb{V}_\Delta^\vee\circ D_\Delta^\vee(\pi)$ to the diagonal embedding $G_{\Qp}\hookrightarrow \GQpD\times\Qp^{\times}$.
\end{thma}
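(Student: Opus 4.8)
The plan is to exhibit Breuil's module $D^\vee_\xi(\pi)$ as the ``diagonal specialization'' of the multivariable module $D^\vee_\Delta(\pi)$: I will build a natural comparison map, show it is surjective for any $\pi$, and then invoke the finiteness of $D^\vee_\Delta(\pi)$ (Corollary \ref{finitenessofDveeDelta}) together with the reduction to the big Bruhat cell (Theorem \ref{DveeDelta0gr}) to see it is an isomorphism; the Galois-side assertions are then formal.

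Both $D^\vee_\Delta(\pi)$ and $D^\vee_\xi(\pi)$ are manufactured from the same canonical $H$-stable subspace of $\pi$ — the space $\mathcal{M}_\Delta(\pi^{H_{\Delta,0}})$ of section \ref{notation:DveeDelta} and its ``$\xi$-analogue'' — equipped with its $T_+$-action, the only difference being the coefficient ring over which one localizes. The surjection $\ell\colon\Fq\bg N_{\Delta,0}\jg\to\Fq\bg X\jg$ of (localized) completed group algebras collapsing the $|\Delta|$ simple-root directions of $N_{\Delta,0}$ onto the single $\Zp$-direction underlying Breuil's construction \cite{Breuil} is compatible with $T_+$ acting through $\varphi$ and $\Gamma$, hence induces a natural $\Fq\bg X\jg$-linear, $\varphi$- and $\Gamma$-equivariant map
$$c_\pi\colon\Fq\bg X\jg\otimes_{\Fq\bg N_{\Delta,0}\jg,\,\ell}D^\vee_\Delta(\pi)\longrightarrow D^\vee_\xi(\pi).$$
Since $D^\vee_\xi(\pi)$ is generated as an étale $(\varphi,\Gamma)$-module by the image of the canonical subspace, and the latter lies in the image of $c_\pi$ by naturality, the map $c_\pi$ is surjective. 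Now $\pi$ irreducible implies $\pi$ of finite length, so Corollary \ref{finitenessofDveeDelta} makes $D^\vee_\Delta(\pi)$ a finitely generated $\Fq\bg N_{\Delta,0}\jg$-module; consequently the source of $c_\pi$, and therefore its quotient $D^\vee_\xi(\pi)$, is a finite-dimensional vector space over the field $\Fq\bg X\jg=\Fq((X))$. This already gives the finite generation of $D^\vee_\xi(\pi)$ and, via Fontaine's equivalence, the finite dimensionality of $\mathbb{V}^\vee\circ D^\vee_\xi(\pi)$.

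It remains to see that $c_\pi$ is injective, equivalently that source and target have the same $\Fq((X))$-dimension; this is where Theorem \ref{DveeDelta0gr} and the hypothesis $D^\vee_\Delta(\pi)\neq0$ enter. Exactly as in the proof of Theorem \ref{thmA}, one embeds $\pi^\vee$ $G$-equivariantly into the global sections $\mathfrak{Y}_{D^\vee_\Delta(\pi),\pi}(G/B)$ of the associated sheaf on the flag variety, so that the Bruhat stratification furnishes the filtration $\Fil^\bullet(\pi)$ and Theorem \ref{DveeDelta0gr} gives $D^\vee_\Delta(\pi)=D^\vee_\Delta(\mathrm{gr}^{w_0}(\pi))$. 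I would establish the analogous identity $D^\vee_\xi(\pi)=D^\vee_\xi(\mathrm{gr}^{w_0}(\pi))$ for Breuil's functor — the commutative, one-variable incarnation of the same $D^{\nr}=\bigcap_{k\geq1}\varphi^k(D)$ computation, where $D^{\nr}$ is the kernel of restricting the sheaf attached to $D$ on $\Zp$ to $\Zp\setminus\{0\}$ (it may also be read off Breuil's construction). On the big open cell $Bw_0B$ the local model of $\pi$ involves only the unipotent radical, and the $N_{\Delta,0}$-module-with-$T_+$-action underlying $\mathrm{gr}^{w_0}(\pi)$ is one and the same space feeding both constructions: $D^\vee_\Delta(\mathrm{gr}^{w_0}(\pi))$ is obtained by inverting all the variables $X_\alpha$ ($\alpha\in\Delta$), while $D^\vee_\xi(\mathrm{gr}^{w_0}(\pi))$ is obtained by the $\ell$-localization. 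Unwinding the definitions then shows $c_{\mathrm{gr}^{w_0}(\pi)}$, and hence $c_\pi$, is an isomorphism.

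Finally, the Galois-theoretic conclusion follows from the compatibility — part of the multivariable Fontaine formalism of \cite{MultVar}, and of the Drinfeld-lemma approach of \cite{CKZ} — between the equivalences $\mathbb{V}_\Delta,\mathbb{D}_\Delta$ and base change along $\ell$: concretely, $\mathbb{V}^\vee$ of an $\ell$-base change of an étale $T_+$-module is the restriction of $\mathbb{V}_\Delta^\vee$ of that module to the diagonal $G_{\Qp}\hookrightarrow\GQpD\times\Qp^\times$. Applying this to $c_\pi$ yields $\mathbb{V}^\vee\circ D^\vee_\xi(\pi)\cong(\mathbb{V}_\Delta^\vee\circ D^\vee_\Delta(\pi))|_{G_{\Qp}}$. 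I expect the genuinely hard step to be the big-cell comparison in the previous paragraph — especially the $\xi$-analogue of Theorem \ref{DveeDelta0gr} and the check that the one-variable localization of the big-cell module recovers precisely the $\ell$-specialization of the multivariable one, with no spurious torsion and nothing missing — while the surjectivity of $c_\pi$ and the Fontaine-side bookkeeping should be comparatively routine.
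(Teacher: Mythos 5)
There is a genuine gap, and it sits exactly where you placed your trust in ``naturality''. Your comparison map $c_\pi\colon \Fq\bg X\jg\otimes_{\Fq\bg N_{\Delta,0}\jg,\ell}D^\vee_\Delta(\pi)\to D^\vee_\xi(\pi)$, together with its claimed surjectivity ``for any $\pi$'', is not available: the two functors are built from \emph{different} families of submodules ($\mathcal{M}_\Delta(\pi^{H_{\Delta,0}})$ on one side, Breuil's $\mathcal{M}(\pi^{N_1})$ on the other), and no natural transformation between the resulting projective limits is known in general. Indeed, a natural surjection valid for all smooth $\pi$ would force $D^\vee_\xi(\pi)=0$ whenever $D^\vee_\Delta(\pi)=0$, which is precisely what the paper cannot rule out and only conjectures (see the remarks following Corollary \ref{Breuilfin}); the comparison for general admissible $\pi$ is an open problem, not a formality. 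The mechanism that actually controls the $\xi$-side is Lemma \ref{M'containedinMw}: \emph{every} Breuil module $M_0\in\mathcal{M}(\mathrm{gr}^w_M(\pi)^{N_1})$ (after killing $X$-torsion) is contained in $M_{w,\infty}$, the $B_+$-subrepresentation produced by the multivariable sheaf machinery attached to a fixed nonzero $M\in\mathcal{M}_\Delta(\pi^{H_{\Delta,0}})$ with $\beta_{G/B,M}$ injective. So the hypotheses ($\pi$ irreducible and $D^\vee_\Delta(\pi)\neq 0$) are needed already to dominate the $\xi$-side data, not merely at the final injectivity step.

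The step you yourself flag as ``genuinely hard'' — the $\xi$-analogue of Theorem \ref{DveeDelta0gr} and the big-cell identification — is left unproved, and the route you sketch (a one-variable $D^{\nr}$ computation) is not how it is done. In the paper the lower cells are killed as follows: for $w\neq w_0$, Proposition \ref{Mbdwkerfingen} makes $\Fq\bg N_{\Delta,0}\jg\otimes_{\Fq\bs N_0\js}M_{w,\infty}^\vee$ a torsion $E_\Delta$-module with $T_0$-stable global annihilator, hence zero by \cite[Prop.\ 2.1]{MultVar}, and since any $M_0$ as above is a quotient of its $\ell$-specialization, $D^\vee_\xi(\mathrm{gr}^w_M(\pi))=0$. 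For $w=w_0$ one needs Corollary \ref{kerneltorsion} (the kernel of $\Fq\bs N_{\Delta,0}\js\otimes_{\Fq\bs N_0\js}M_\infty^\vee\to M^\vee$ is $X_\Delta$-power torsion) together with Theorem \ref{DveeDeltagrw_0} to get $\Fq\bg N_{\Delta,0}\jg\otimes_{\Fq\bs N_0\js}M_\infty^\vee\cong D^\vee_\Delta(\pi)$, and then the right exactness of $D^\vee_\xi$ \cite[Prop.\ 3.2]{Breuil} assembles the graded pieces into the asserted isomorphism $D^\vee_\xi(\pi)\cong \Fq\bg X\jg\otimes_{\ell}D^\vee_\Delta(\pi)$; ``unwinding the definitions'' does not substitute for these inputs, since without Corollary \ref{kerneltorsion} the specialization could have spurious torsion and without Lemma \ref{M'containedinMw} nothing bounds an arbitrary $M_0$. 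Your final paragraph on the Galois side is fine: once the module-theoretic isomorphism is in place, the restriction statement follows from the compatibility of $\mathbb{V}_\Delta$ with the diagonal embedding as in \cite[Cor.\ 3.10]{MultVarGal}.
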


By the same methods, we obtain our third main theorem (Corollaries \ref{irredtoirred} and \ref{detectsiso}) that the functor $\mathbb{V}^\vee\circ D^\vee_\Delta$ attaches irreducible objects on the Galois side to irreducible smooth representations of $\GL_n(\Qp)$ and detects isomorphisms:
\begin{thma}\label{thmC}\begin{enumerate}
\item Assume that $\pi$ is an irreducible smooth representation of $\GL_n(\mathbb{Q}_p)$ over $\Fq$ such that $D_\Delta^\vee(\pi)\neq 0$. Then $\pi$ is admissible and $\mathbb{V}^\vee(D_\Delta^\vee(\pi))$ is an irreducible representation of $\GQpD\times \Qp^\times$ over $\Fq$.
\item Assume that $\pi_1$ and $\pi_2$ are irreducible smooth representations of $\GL_n(\mathbb{Q}_p)$ over $\Fq$ such that $\mathbb{V}^\vee(D_\Delta^\vee(\pi_1))\cong \mathbb{V}^\vee(D_\Delta^\vee(\pi_2))\neq 0$. Then we have $\pi_1\cong \pi_2$.
\end{enumerate}
\end{thma}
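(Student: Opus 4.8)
The plan is to deduce both parts from the geometric reconstruction of $\pi$ out of $D^\vee_\Delta(\pi)$ that underlies the proof of Theorem~\ref{thmA}. Write $D:=D^\vee_\Delta(\pi)$. Since $\pi$ has finite length, Theorem~\ref{thmA} makes $D$ a finitely generated étale $T_+$-module with $\length D<\infty$, so $D$ is itself the smallest finitely generated quotient at which the image of $\beta_{D,G/B}$ stabilises. If $\beta_{D,G/B}$ were zero then the image of $\beta_{D,\mathcal{C}_0}$, hence its étale hull, would vanish and we would get $D=0$; so for $D\neq 0$ the map $\beta_{D,G/B}\colon\pi^\vee\to\mathfrak{Y}_{D,\pi}(G/B)$ is a nonzero continuous $G$-equivariant map, and as $\pi$ is irreducible---so $\pi^\vee$ has no proper nonzero closed $G$-submodule---with $\ker\beta_{D,G/B}$ closed, it is a closed $G$-embedding $\pi^\vee\hookrightarrow\mathfrak{Y}_{D,\pi}(G/B)$. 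Admissibility of $\pi$ will then follow by bounding $\pi^\vee$ inside this target: the Bruhat stratification filters $\mathfrak{Y}_{D,\pi}(G/B)$ by finitely many graded pieces, each (co)induced from a stratum, and over the big cell $Bw_0B$ the relevant module is a localisation of the finitely generated $\Fq\bs N_0\js$-module $\mathbb{M}_{\infty,0}(D)$; hence $\mathfrak{Y}_{D,\pi}(G/B)$, and with it its closed submodule $\pi^\vee$, is finitely generated over the Noetherian Iwasawa algebra of a compact open subgroup of $\GL_n(\Qp)$, i.e.\ $\pi$ is admissible.

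For the irreducibility of $\mathbb{V}^\vee(D)$ in part~(1) it suffices, since $\mathbb{V}$ is an equivalence of categories and Pontryagin duality takes irreducible $\Fq$-representations to irreducible ones, to show that $D$ is a simple étale $T_+$-module. Suppose $\overline D$ is a proper nonzero quotient of $D$. Being a finitely generated quotient of $D^\vee_\Delta(\pi)$, it produces $\beta_{\overline D,G/B}=\iota\circ\beta_{D,G/B}$ with $\iota\colon\mathfrak{Y}_{D,\pi}(G/B)\to\mathfrak{Y}_{\overline D,\pi}(G/B)$ the functorial transition map; by irreducibility of $\pi$ the composite is either injective or zero. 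If injective, then $\pi^\vee\hookrightarrow\mathfrak{Y}_{\overline D,\pi}(G/B)$, whose big cell carries $\mathbb{M}_{\infty,0}(\overline D)$, so Theorem~\ref{DveeDelta0gr}---only $\mathrm{gr}^{w_0}$ contributes---computes $D^\vee_\Delta(\pi)$ inside a module of length $\le\length\overline D<\length D$, a contradiction. If $\beta_{\overline D,G/B}=0$, then $\pi^\vee$ embeds into $\ker\iota$, whose big cell carries a module of length $\le\length\ker(D\twoheadrightarrow\overline D)<\length D$, and the same computation gives a contradiction. Hence $D$ is simple.

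For part~(2), Theorem~\ref{thmA} makes both $D^\vee_\Delta(\pi_i)$ finitely generated, so the given isomorphism together with the Fontaine equivalence $(\mathbb{V},\mathbb{D})$ yields an isomorphism of étale $T_+$-modules $D:=D^\vee_\Delta(\pi_1)\cong D^\vee_\Delta(\pi_2)$, simple by part~(1). It then remains to recover $\pi$ from $D$. The idea is to single out, inside $\mathfrak{Y}_{D,\pi}$, the canonical $\psi$-stable sub-object cut out over the big cell by the noncommutative analogue of $D^{\nr}=\bigcap_{k\ge 1}\varphi^k(D)$; the content of the technical part of the paper, specialised to irreducible input, is that $\pi^\vee$ is exactly the space of global sections of the $G$-subsheaf attached to this sub-object, and the latter depends only on $D$. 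Granting this, $\pi_1^\vee\cong\pi_2^\vee$, whence $\pi_1\cong\pi_2$.

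The main obstacle is precisely this reconstruction input, namely Theorem~\ref{DveeDelta0gr} and its consequences: that $D^\vee_\Delta$ applied to the Pontryagin dual of the sections of a $G$-equivariant sheaf on $G/B$ is governed entirely by the big cell $Bw_0B$, so that it recovers (a subquotient of) the module from which the sheaf was built, and that the resulting $\psi$-stable sub-object used in part~(2) is canonical. This is where the filtration argument of Proposition~\ref{fingen+kertors} and the elements $t_{\overline\alpha}\in T_+$---for which $\alpha(t_{\overline\alpha})$ is a unit while the subgroups $t_{\overline\alpha}^kH_{\Delta,0}t_{\overline\alpha}^{-k}$ shrink to $1$---are needed; once it is available, the dévissages above are formal.
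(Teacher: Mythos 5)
Your reduction of the irreducibility statement to simplicity of $D:=D^\vee_\Delta(\pi)$, and your d\'evissage via a proper quotient of $D$ combined with Theorem \ref{DveeDelta0gr}/\ref{DveeDeltagrw_0}, is essentially the paper's own argument (Corollary \ref{irredtoirred}): one picks $M''\in\mathcal{M}_\Delta(\pi^{H_{\Delta,0}})$ with ${M''}^\vee[X_\Delta^{-1}]=D''$, notes that $\beta_{G/B,M''}$ is nonzero (its restriction to $\mathcal{C}_0$ has image $(B_+M'')^\vee$), hence injective by irreducibility, and concludes $D\cong D''$, a contradiction. However, your admissibility step contains a genuine gap: you assert that over the big cell the sections are a localisation of ``the finitely generated $\Fq\bs N_0\js$-module $\mathbb{M}_{\infty,0}(D)$'' and that therefore $\mathfrak{Y}_{D,\pi}(G/B)$, and with it $\pi^\vee$, is finitely generated over the Iwasawa algebra of a compact open subgroup. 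This is false: $\mathbb{M}_{\infty,0}(D)$ is finitely generated over $\Fq\bg N_{\Delta,\infty}\jg$, \emph{not} over $\Fq\bs N_0\js$, and $\Fq\bg N_{\Delta,\infty}\jg$ contains compact $\Fq\bs N_0\js$-submodules that are not finitely generated --- this is precisely the difficulty the technical part of the paper is built to overcome. The ambient sheaf sections are never finitely generated over such an Iwasawa algebra; what must be bounded is the \emph{image} $\beta_{M,\mathcal{C}_0}(\pi^\vee)=(B_+M)^\vee$, a compact $\Fq\bs N_0\js$-submodule on which every $\psi_t$ ($t\in T_+$) acts surjectively, hence finitely generated by Theorem \ref{Mbdfingen} via Corollary \ref{compact+psi-fingen} (Corollary \ref{Minftyadmissible}); admissibility then follows by covering $G/B$ by the finitely many translates $g\mathcal{C}_0$, $g\in\GL_n(\Zp)/U^{(1)}$ with $U^{(1)}=\Ker(\GL_n(\Zp)\to\GL_n(\Fp))$, and using the sheaf property, since each $g\beta_{\mathcal{C}_0,M}(\pi^\vee)$ is finitely generated over $\Fq\bs U^{(1)}\js$.

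Part (2) also has a gap. You reduce it to the claim that $\pi^\vee$ is canonically reconstructed from $D$ alone, as the global sections of a $G$-subsheaf cut out over the big cell by the noncommutative analogue of $D^{\nr}$, and you attribute this to ``the technical part of the paper specialised to irreducible input''. No such reconstruction is proved there: Theorem \ref{DveeDeltagrw_0} computes $D^\vee_\Delta$ of a representation \emph{already given} together with an embedding $\beta_{G/B,M}$ built from the pair $(\pi,M)$; it does not produce $\pi$ from $D$, and whether even $\pr_{0,\infty}\colon\bigcap_{t\in T_+}\psi_t(\mathbb{M}^{bd}_\infty(D^\#))\to D^\#$ is onto is left as an open question in the paper. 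The paper avoids any reconstruction by a direct-sum trick (Corollary \ref{detectsiso}): apply $D^\vee_\Delta$ to $\pi_1\oplus\pi_2$, use the sum map $D\oplus D\to D$ to choose $M\in\mathcal{M}^0_\Delta((\pi_1\oplus\pi_2)^{H_{\Delta,0}})$ with $M^\vee[X_\Delta^{-1}]\cong D$ so that both composites $D^\vee_\Delta(\pi_j)\to D^\vee_\Delta(\pi_1\oplus\pi_2)\twoheadrightarrow M^\vee[X_\Delta^{-1}]$ are isomorphisms; then each $\pi_j^\vee$ maps injectively (by irreducibility) into the image of $\beta_{G/B,M}$, while $\beta_{G/B,M}$ itself cannot be injective by Theorem \ref{DveeDeltagrw_0} because $D\oplus D\not\cong D$; a length count then forces both $\pi_1^\vee$ and $\pi_2^\vee$ to be isomorphic to that image, whence $\pi_1\cong\pi_2$. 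You should replace your reconstruction step by this argument (or supply a proof of the reconstruction, which is not currently available).
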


Note that we do not need to assume admissibility in Theorems \ref{thmA}, \ref{thmB}, and \ref{thmC}. The finiteness condition is provided by the nonvanishing of $D^\vee_\Delta(\pi)$ which \emph{implies} admissibility in case $\pi$ is irreducible. On the Galois side the absolutely irreducible representations of $\GQpD\times \Qp^\times$ over $\Fq$ are of the form $V=V_{Z(G)}\otimes_{\Fq}\bigotimes_{\alpha\in\Delta}V_\alpha$ where $V_{Z(G)}$ is a character of the center $Z(G)\cong \Qp^\times$ and $V_\alpha$ is an irreducible representation of $\GQpa$ ($\alpha\in\Delta$). Whenever $\pi$ is induced from a standard parabolic $B\leq P=LU\lneq G$ then we may pick a simple root $\alpha\in\Delta$ such that the root subgroup $N^{(\alpha)}$ is not contained in $L$. In this case $V_\alpha$ must be $1$-dimensional for $V=\mathbb{V}^\vee(D^\vee_\Delta(\pi))$ by \cite[Thm.\ B]{MultVar}. For the converse (hence a characterization of supercuspidals) one would need the nonvanishing of $\mathbb{V}^\vee\circ D^\vee_\Delta$ on supercuspidal irreducible admissible representations of $\GL_n(\Qp)$ over $\Fq$ and a generalization of Theorem \ref{thmE} below to maximal parabolics and representations $\pi$ not necessarily in $\SP_{\Fq}$.

In section \ref{sec:princser} we turn to the applications on possible successive extensions of principal series. Note that in case $n=2$ the kernel of Colmez' Montréal functor consists of the finite dimensional representations of $\GL_2(\Qp)$ over $\Fq$. We give a similar description of the irreducible subquotients of principal series representations on which the functor $D^\vee_\Delta$ vanishes. Successive extensions of such representations form a full subcategory $\Fin^{>B}_h(G)$ of $\SP_h(G)$ and we put $\overline{\SP}_h(G)$ for the quotient category with quotient functor $Q\colon \SP_h\to \overline{\SP}_h$. We have the following

\begin{thma}[Theorem \ref{fullyfaithful}]\label{thmD}
The functor $D^\vee_\Delta$ is fully faithful on $\overline{\SP}_h$ for all $h\geq 1$.
\end{thma}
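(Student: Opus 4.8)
The plan is to prove that $D^\vee_\Delta$ — equivalently, since $\mathbb{V}^\vee$ is an anti-equivalence, the covariant composite $F:=\mathbb{V}^\vee\circ D^\vee_\Delta$ — descends along $Q$ to a fully faithful functor $\bar F$ on $\overline{\SP}_h$. First I would establish two structural inputs, building on the description of the irreducible subquotients of principal series on which $D^\vee_\Delta$ vanishes: that $D^\vee_\Delta$ is \emph{exact} on $\SP_h$ (it is right exact and contravariant in general, and the explicit values on subquotients of principal series promote this to exactness on $\SP_h$), and that its kernel on $\SP_h$ is exactly $\Fin^{>B}_h(G)=\{\pi\in\SP_h:D^\vee_\Delta(\pi)=0\}$. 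Granting these, $F$ kills $\Fin^{>B}_h$ and, by exactness together with the kernel description, sends every morphism of $\SP_h$ whose kernel and cokernel lie in $\Fin^{>B}_h$ to an isomorphism; hence it factors through $Q$, giving $\bar F$. Since objects of $\SP_h$, hence of $\overline{\SP}_h$, have finite length, a dévissage is available.

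Faithfulness is then formal. A morphism $QX\to QY$ in $\overline{\SP}_h$ is represented by a genuine $\GL_n(\Qp)$-map $\phi\colon X'\to Y/Y''$ with $X/X',Y''\in\Fin^{>B}_h$; factoring $\phi=\iota\circ p$ through its image $I$, the epimorphism $p\colon X'\twoheadrightarrow I$ and the monomorphism $\iota\colon I\hookrightarrow Y/Y''$ go under $F$ to an epimorphism and a monomorphism (by exactness), while $F(X'\hookrightarrow X)$ and $F(Y\twoheadrightarrow Y/Y'')$ are isomorphisms. A short chase then shows that $\bar F$ of the morphism vanishes iff $F(I)=0$, i.e.\ iff $I\in\Fin^{>B}_h$, i.e.\ iff the morphism is already zero in $\overline{\SP}_h$; the same chase shows $\bar F$ reflects mono- and epimorphisms.

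For fullness I would induct on $\ell(X)+\ell(Y)$, the sum of the Jordan--Hölder lengths in $\SP_h$. \emph{Base case:} $X,Y$ irreducible, not in $\Fin^{>B}_h$. By Theorem \ref{thmC}(1) the targets $\bar F(QX),\bar F(QY)$ are irreducible, so a morphism $g\colon\bar F(QX)\to\bar F(QY)$ is either $0$, lifted by $0$, or an isomorphism, in which case $X\cong Y$ by Theorem \ref{thmC}(2). In the latter case one checks $\End_{\overline{\SP}_h}(QX)=\End_{\GL_n(\Qp)}(X)$ (as $X$ is simple and outside $\Fin^{>B}_h$), and the injective $\Fq$-algebra map $\End_{\GL_n(\Qp)}(X)\hookrightarrow\End(\bar F(QX))$ coming from the $\End_{\GL_n(\Qp)}(X)$-linearity of $D^\vee_\Delta$ is onto, since the explicit form $\mathbb{V}^\vee(D^\vee_\Delta(X))\cong V_{Z(G)}\otimes_{\Fq}\bigotimes_{\alpha\in\Delta}V_\alpha$ determines its (finite-field) endomorphism ring; so $g$ lifts. \emph{Inductive step:} pick $0\to X_1\to X\to X_2\to 0$ in $\SP_h$ with $X_1,X_2$ shorter. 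Given $g\colon\bar F(QX)\to\bar F(QY)$, its restriction along $\bar F(QX_1)\hookrightarrow\bar F(QX)$ lifts by induction to $\phi_1\colon QX_1\to QY$; the obstruction to extending $\phi_1$ over $QX$ is the pushout class $(\phi_1)_*[X]\in\Ext^1_{\overline{\SP}_h}(QX_2,QY)$, whose image in $\Ext^1(\bar F(QX_2),\bar F(QY))$ is trivialized by $g$. So once $\bar F$ is injective on $\Ext^1$ we get $\phi\colon QX\to QY$ restricting to $\phi_1$, and then $g-\bar F(\phi)$ annihilates $\bar F(QX_1)$, hence factors through $\bar F(QX_2)$, and is lifted by the inductive hypothesis for $(X_2,Y)$. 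Injectivity of $\bar F$ on $\Ext^1$ reduces, via the long exact sequences for $\Ext^\bullet(-,QY)$ and $\Ext^\bullet(QX,-)$ and the five lemma, to the case of irreducible arguments, using full faithfulness on strictly shorter objects.

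The hard part is precisely this last point: injectivity of $\bar F$ on $\Ext^1$ between irreducible subquotients of principal series. Concretely, given a non-split $0\to S'\to E\to S''\to 0$ in $\SP_h$ with $E\notin\Fin^{>B}_h$, one must show $0\to D^\vee_\Delta(S'')\to D^\vee_\Delta(E)\to D^\vee_\Delta(S')\to 0$ is non-split. By Theorem \ref{DveeDelta0gr} the value of $D^\vee_\Delta$ on an object of $\SP_h$ depends only on the big-cell graded piece $\mathrm{gr}^{w_0}$ of the Bruhat filtration, so it is enough that the induced extension of $\mathrm{gr}^{w_0}$-pieces stays non-split and remains so after $D^\vee_\Delta$; this must be read off from the explicit computation of $D^\vee_\Delta$ on principal series — via compatibility with parabolic induction — and on their subquotients, the same analysis that produces $\Fin^{>B}_h$. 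Granting this non-split detection and the endomorphism matching in the base case, the induction closes and $D^\vee_\Delta$ is fully faithful on $\overline{\SP}_h$.
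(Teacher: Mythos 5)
Your reduction of fullness to an induction on length is exactly the strategy of the earlier result \cite[Theorem G]{MultVar} (which treated $\SP^0_h$, successive extensions of irreducible principal series), and it is precisely the strategy that breaks down for $\overline{\SP}_h$; the step you yourself flag as ``the hard part'' is a genuine gap, in two ways. First, the injectivity of $\bar F$ on $\Ext^1$ between irreducible objects cannot be ``read off'' from the explicit values of $D^\vee_\Delta$ on irreducible subquotients of principal series: knowing $D^\vee_\Delta$ on Jordan--H\"older factors says nothing about whether a non-split extension stays non-split, and in the earlier proof this input had to be imported from Hauseux's computations of $\Ext^1$ between irreducible principal series --- computations which are not available (and not used) for general subquotients such as twists of generalized Steinberg representations. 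Second, and more fundamentally, the relevant group is $\Ext^1$ in the Serre quotient $\overline{\SP}_h$, and an extension of $Q(S'')$ by $Q(S')$ there need not be represented by a genuine extension $0\to S'\to E\to S''\to 0$ in $\SP_h$ (objects of $\Fin^{>B}_h$ can be interleaved, and morphisms are only roofs); so even granting ``non-split detection'' for honest extensions, your obstruction-theoretic inductive step does not close. The paper states this obstacle explicitly and circumvents $\Ext$-groups altogether. (A smaller unproven point: in your base case the surjectivity of $\End_G(X)\to\End(\bar F(QX))$ is asserted, not argued.)

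The missing idea is the direct construction of the lift via the quantitative Theorem \ref{DveeDeltagrw_0}. Given $f\colon D^\vee_\Delta(\pi_1)\to D^\vee_\Delta(\pi_2)$ with image $D$, one chooses $M_1\in\mathcal{M}^0_\Delta(\pi_1^{H_{\Delta,0}})$ with $M_1^\vee[X_\Delta^{-1}]\cong D$ and takes $\pi_3\leq\pi_1$ to be the Pontryagin dual of $\beta_{G/B,M_1}(\pi_1^\vee)$; Theorem \ref{DveeDeltagrw_0} gives $D^\vee_\Delta(\pi_3)\cong D$. Dually, a suitable $M_2$ with $M_2^\vee[X_\Delta^{-1}]\cong D^\vee_\Delta(\pi_2)/D$ produces a quotient $\pi_4$ of $\pi_2$ with $D^\vee_\Delta(\pi_4)\cong D$. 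Applying the same theorem once more to $\pi_3\oplus\pi_4$ with the summation map $D\oplus D\to D$ yields an object $\pi_6$ through which both $Q(\pi_3)$ and $Q(\pi_4)$ become isomorphic in $\overline{\SP}_h$ (using Lemma \ref{KerDveeDelta} to kill the error terms), and the composite $Q(\pi_2)\twoheadrightarrow Q(\pi_4)\cong Q(\pi_3)\hookrightarrow Q(\pi_1)$ is the required preimage of $f$. This is why the paper needs no control of extensions at all; your faithfulness argument, by contrast, is essentially the paper's (exactness plus Lemma \ref{KerDveeDelta}) and is fine.
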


Theorem \ref{thmD} generalizes the earlier result \cite[Theorem G]{MultVar} of the second named author where only successive extensions of \emph{irreducible} principal series (ie.\ objects in the category $\SP^0_h$) were considered. The key point in both proofs of the full property of $D^\vee_\Delta$ is to show the following. Assume $\pi$ is an object in $\SP_h$ whose image $Q(\pi)$ has length $r>1$ in the quotient category $\overline{\SP}_h$. Then one cannot embed the dual $\pi^\vee$ into the global sections of a sheaf on $G/B$ attached to a rank $1$ étale $T_+$-module over $E_\Delta$. Since $Q$ is fully faithful on the subcategory $\SP^0_h$, it sufficed to understand the $\Ext^1$ of two irreducible principal series and one could apply results of Hauseux \cite{Hauseux1,Hauseux}. However, extensions in the quotient category $\overline{\SP}_h$ may not come from genuine extensions of principal series in general, so in the present paper we had to rely on a precise quantitative version of Theorem \ref{thmA} (see Theorem \ref{DveeDeltagrw_0}).

In section \ref{sec:charparind} we apply the previous results to a characterization of smooth representations of $\GL_n(\Qp)$ that are parabolically induced from a representation of $\GL_2(\Qp)^{(\alpha)}\times T^{(\alpha)}$: 
\begin{thma}[Theorem \ref{lookslikeparindisparind}]\label{thmE}
Let $\pi$ be an object in $\SP_{\Fq}$. There is a smooth representation $\pi_\alpha$ of $\GL_2(\mathbb{Q}_p)^{(\alpha)}\times T^{(\alpha)}$ such that the image $Q(\pi)$ in $\overline{\SP}_{\mathbb{F}_q}$ is isomorphic to $Q(\Ind_{\overline{P}^{(\alpha)}}^{\GL_n(\mathbb{Q}_p)}\pi_\alpha)$ if and only if the subgroup $\GQpDa\leq \GQpD\times\Qp^\times$ acts on $\mathbb{V}^\vee(D^\vee_\Delta(\pi))$ via an abelian quotient.
\end{thma}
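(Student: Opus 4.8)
The plan is to prove both directions of the ``if and only if'' by comparing the multivariable $(\varphi,\Gamma)$-module $D^\vee_\Delta(\pi)$ with the one attached to a parabolic induction from $\GL_2(\Qp)^{(\alpha)}\times T^{(\alpha)}$, using the compatibility of $D^\vee_\Delta$ with parabolic induction together with the structural results of the earlier sections. For the easy direction, assume $Q(\pi)\cong Q(\Ind_{\overline{P}^{(\alpha)}}^{\GL_n(\Qp)}\pi_\alpha)$. Since $D^\vee_\Delta$ factors through $\overline{\SP}_{\Fq}$ (it kills $\Fin^{>B}_h(G)$ by construction), we get $D^\vee_\Delta(\pi)\cong D^\vee_\Delta(\Ind_{\overline{P}^{(\alpha)}}^{\GL_n(\Qp)}\pi_\alpha)$, and the compatibility of $D^\vee_\Delta$ with parabolic induction (proved in \cite{MultVar}) expresses the latter in terms of $D^\vee$ of the $\GL_2(\Qp)^{(\alpha)}\times T^{(\alpha)}$-representation $\pi_\alpha$. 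On the Galois side, via $\mathbb{V}^\vee$, the factors $V_\beta$ for $\beta\neq\alpha$ come from the torus part $T^{(\alpha)}$ and hence are characters, while $V_\alpha$ comes from the $\GL_2(\Qp)^{(\alpha)}$-factor; crucially, the root subgroups generating $\GQpDa$ correspond to the simple roots $\beta\neq\alpha$, and since these act through characters (abelian!), $\GQpDa$ acts through an abelian quotient. I would spell this out by tracking which copies of $G_{\Qp}$ in $\GQpD$ are ``controlled'' by the $\GL_2$-block versus the torus.

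For the converse — the substantive direction — suppose $\GQpDa$ acts on $V:=\mathbb{V}^\vee(D^\vee_\Delta(\pi))$ through an abelian quotient. I would first reduce to understanding $V$ as a representation of $\GQpD\times\Qp^\times$: the abelianness hypothesis forces, via the Fontaine equivalence $\mathbb{D}_\Delta$, a rigid structural constraint on the étale $T_+$-module $D^\vee_\Delta(\pi)$ in the variables indexed by $\Delta\setminus\{\alpha\}$ — roughly, it must ``look like'' a module induced from the $\alpha$-direction tensored with characters in the other directions. Then I would use Theorem \ref{thmD} (full faithfulness of $D^\vee_\Delta$ on $\overline{\SP}_h$): it suffices to produce a representation $\pi_\alpha$ of $\GL_2(\Qp)^{(\alpha)}\times T^{(\alpha)}$ together with an isomorphism $D^\vee_\Delta(\pi)\cong D^\vee_\Delta(\Ind_{\overline{P}^{(\alpha)}}^{\GL_n(\Qp)}\pi_\alpha)$, because full faithfulness on $\overline{\SP}_h$ then upgrades this to $Q(\pi)\cong Q(\Ind_{\overline{P}^{(\alpha)}}^{\GL_n(\Qp)}\pi_\alpha)$. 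To build $\pi_\alpha$, I would invoke the $p$-adic local Langlands correspondence for $\GL_2(\Qp)$ (Colmez' functor, which is essentially surjective onto its natural target): the ``$\alpha$-component'' of the data, i.e. $V_{Z(G)}\otimes V_\alpha$ together with the surviving graded piece $\mathrm{gr}^{w_0}(\pi)$ restricted appropriately, should be the image of a genuine $\GL_2(\Qp)^{(\alpha)}$-representation, which I can then twist by the characters coming from $V_\beta$, $\beta\neq\alpha$, to get a representation of $\GL_2(\Qp)^{(\alpha)}\times T^{(\alpha)}$. Finally I would check that $D^\vee_\Delta$ of its parabolic induction recovers $D^\vee_\Delta(\pi)$ by the parabolic induction compatibility, closing the loop.

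The main obstacle I expect is the converse direction's passage from the abelianness of the $\GQpDa$-action to an actual $\GL_2(\Qp)^{(\alpha)}\times T^{(\alpha)}$-representation whose parabolic induction has the right $(\varphi,\Gamma)$-module. The subtlety is that $V$ being abelian over $\GQpDa$ is a statement purely on the Galois side about a tensor-factorization pattern, but to produce $\pi_\alpha$ one must go \emph{back} across $\mathbb{V}^\vee\circ D^\vee_\Delta$, and this functor is only known to be fully faithful (Theorem \ref{thmD}) on $\overline{\SP}_h$, not essentially surjective — so one cannot simply ``apply the inverse''. The resolution is to not invert the $\GL_n$-functor at all, but to build $\pi_\alpha$ intrinsically on the $\GL_2$-side using the known $\GL_2(\Qp)$ correspondence and Colmez' functor, and then verify \emph{a posteriori} — using the quantitative finiteness of Theorem \ref{thmA}/Theorem \ref{DveeDeltagrw_0} to control $\mathrm{gr}^{w_0}(\pi)$ and the compatibility with parabolic induction — that the resulting $(\varphi,\Gamma)$-module matches $D^\vee_\Delta(\pi)$ on the nose. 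Making the match of étale $T_+$-modules precise (as opposed to merely after $\mathbb{V}^\vee$) is where most of the technical work, and the careful bookkeeping of the variables indexed by $\Delta\setminus\{\alpha\}$, will go.
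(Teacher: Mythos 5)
Your ``only if'' direction and your final step (reduce to an isomorphism of étale $T_+$-modules $D^\vee_\Delta(\pi)\cong D^\vee_\Delta(\Ind_{\overline{P}^{(\alpha)}}^{\GL_n(\Qp)}\pi_\alpha)$ and then apply full faithfulness on $\overline{\SP}$) agree with the paper. The gap is in how you produce $\pi_\alpha$. You propose to invoke essential surjectivity of Colmez' Montréal functor: take the ``$\alpha$-component'' of the Galois-side data and realize it as the image of some $\GL_2(\Qp)^{(\alpha)}$-representation, then twist by characters. But no such surjectivity is available at this level of generality: the object you need to hit is $D^\vee_\Delta(\pi)^{\nr}_{\Delta\setminus\{\alpha\}}$, a successive extension of arbitrarily many rank-one étale $(\varphi_\alpha,\Gamma_\alpha)$-modules carrying a prescribed action of $T^{(\alpha)}$ and of the centre, and whether such data lies in the image of Colmez' functor applied to a finite-length smooth $\GL_2(\Qp)$-representation with the matching central character is precisely the kind of essential-image question the paper does \emph{not} take for granted --- it is the content of Conjecture \ref{conimage}, and Proposition \ref{onlyif} \emph{deduces} the ``is in the image of Colmez' functor'' property of the graded pieces \emph{from} Theorem \ref{lookslikeparindisparind}. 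Using it as an input to prove that theorem is therefore circular (you flag the analogous problem for the $\GL_n$-functor, but the same objection applies one level down, to $\GL_2$).

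The paper avoids this by constructing $\pi_\alpha$ intrinsically from $\pi$: it is the sheaf-theoretic Jacquet module $J^{\mathrm{sheaf}}_{\overline{P}^{(\alpha)}}(\pi)$, cut out of the $G$-equivariant sheaf $\mathfrak{Y}_{\pi,M}$ as the sections killed by restriction away from $\overline{P}^{(\alpha)}w_0B/B$. Proposition \ref{gl2admissprinc} shows $\pi_\alpha$ is admissible of finite length with the right Jordan--H\"older constituents (this is also needed so that $\Ind_{\overline{P}^{(\alpha)}}^{G}\pi_\alpha$ lies in $\SP_{\Fq}$ before one may invoke Theorem \ref{fullyfaithful}), and identifies Colmez' functor of $\pi_\alpha$ with $M_0^\vee[X_\alpha^{-1}]\subseteq D_{w_{\alpha'}}\cong D^{\nr}_{\Delta\setminus\{\alpha\}}$; the genuinely hard step, Proposition \ref{asbigasitcanbe}, upgrades this containment to an equality via the minimal $\psi$-stable lattice $D^\natural$, Nakayama's lemma, and the finiteness results of section \ref{sec:finiteness}. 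Only then do the abelianness hypothesis (through Proposition \ref{Dnrmultvar}) and the parabolic-induction compatibility give $D^\vee_\Delta(\Ind_{\overline{P}^{(\alpha)}}^{G}\pi_\alpha)\cong E_\Delta\otimes_{E_{\{\alpha\}}}D^{\nr}_{\Delta\setminus\{\alpha\}}\cong D^\vee_\Delta(\pi)$. Note also that the matching of the \emph{full} étale $T_+$-module structure (including the $T^{(\alpha)}$-action in the $\Delta\setminus\{\alpha\}$ directions), which you correctly identify as delicate, is automatic in this construction precisely because $\pi_\alpha$ is carved out of $\pi$ rather than reconstructed from the Galois side.
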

Theorem \ref{thmE} is proven via a generalization of Colmez' Jacquet functor \cite[sections IV.3, VII.1]{Colmez1} and uses the full force of the previous sections. In fact, we propose two possible generalizations of the Jacquet functor $J$ which coincide in case $G=\GL_2(\Qp)$ \cite[Prop.\ IV.3.2]{Colmez1}. One of them is simply the coinvariants $J_P(\pi)=\pi_U$ under the unipotent radical $U$ of the parabolic subgroup $P=LU\leq G$. The other one is a more complicated construction $J_P^{\mathrm{sheaf}}$ via the $G$-equivariant sheaf on $G/B$ associated to $\pi$. The precise relationship between these functors is mysterious, but we provide some partial results (see Corollary \ref{twojacquetsame} for the case when $L$ is the product of $\GL_2(\Qp)$ and a torus; and see Theorem \ref{torabel} for the case $P=B$ is a Borel). Theorem \ref{thmE} is the key (see Proposition \ref{onlyif}) in the proof of one direction of our Conjecture \ref{conimage} on the essential image of $D^\vee_\Delta$ restricted to $\SP_h$.

One application of the functor $\mathbb{V}^\vee\circ D^\vee_\Delta$ is a possible new method for computing (higher) extension classes of principal series. The idea is that extension groups $\Ext^k(\eta,\eta')$ of characters $\eta,\eta'$ of the group $\GQpD\times \Qp^\times$ can be identified with group cohomology groups $H^k(\GQpD\times\Qp^\times,\eta'\eta^{-1})$. Moreover, the latter can be explicitly computed by a generalization of Herr's complex \cite{PalZabradi,CKZ}. As an illustration how elementary techniques from group theory can be applied to the nonvanishing of certain higher $\Ext$ groups we reprove (see Theorem \ref{nolength3}) the mod $p$ case of a Theorem of Hauseux \cite[Thm.\ 3.2.1]{Hauseux3} on the nonexistence of certain length $3$ towers of principal series. We believe that this method can also be used to show the nonvanishing of some higher $\Ext$ groups of principal series recently proven by Heyer \cite{Heyer} using the geometrical lemma (see also \cite[Thm.\ 5.3.1]{Hauseux1} for a result depending on \cite[Conjecture 3.7.2]{EmertonOrd2} regarding the derived functor of Emerton's ordinary parts).

\subsection{The special case of $\GL_3(\Qp)$}

We briefly explain our results in the special case of $n=3$. In this case the set $\Delta$ of simple roots consists of two elements $\{\alpha,\beta\}=\Delta$ and there is a single non-simple positive root $\gamma\in \Phi^+\setminus \Delta$. The parabolic subgroups $\overline{P}^{(\alpha)}$ and $\overline{P}^{(\beta)}$ are maximal in this case. Assume that $\pi$ is a smooth irreducible representation of $\GL_3(\Qp)$. The basic question is what $\mathbb{V}^\vee\circ D^\vee_\Delta(\pi)$ is. A special case of Theorem \ref{thmC} above is that whenever $\pi$ is non-admissible then we simply have $\mathbb{V}^\vee\circ D^\vee_\Delta(\pi)=0$. Now assuming admissibility, we may use the classicfication \cite{Herzigclass} of irreducible admissible smooth representations of $\GL_3(\Qp)$ in characteristic $p$ to distinguish the following cases. If $\pi$ is parabolically induced from $\overline{P}^{(\alpha)}$ or from $\overline{P}^{(\beta)}$ then one can use the compatibility \cite[Thm.\ B]{MultVar} of the functor $D^\vee_\Delta$ with parabolic induction to reduce the computation of $D^\vee_\Delta(\pi)$ to the computation of Colmez' Montréal functor at representations of $\GL_2(\Qp)$. On the other hand, if $\pi$ is supercuspidal then not much was known before. The difficulty lies in the fact that we only know \cite{HKV} (quite recently) the mere existence of such representations, but no explicit examples. Unfortunately, we still do not know whether or not $D^\vee_\Delta(\pi)\neq 0$ for a single example of a smooth admissible irreducible supercuspidal representation of $\GL_3(\Qp)$ (or of $\GL_n(\Qp)$ for any $n>2$). However, if $D^\vee_\Delta(\pi)\neq 0$ then Theorem \ref{thmC} implies that $\mathbb{V}^\vee\circ D^\vee_\Delta(\pi)$ is finite dimensional, irreducible, and detects isomorphisms. Further, in case $n=3$ a refinement of Theorem \ref{thmE} leads to the following description of supercuspidals:

\begin{thma}[Corollary \ref{gl3supercuspidal}]\label{thmF}
Assume that $\pi$ is a smooth irreducible representation of $\GL_3(\Qp)$ such that $D^\vee_\Delta(\pi)\neq 0$. Then $\pi$ is supercuspidal if and only if the action of $\GQpa$ on $\mathbb{V}^\vee\circ D^\vee_\Delta(\pi)$ does \emph{not} factor through the maximal abelian quotient $\GQpa^{ab}$ for any $\alpha\in\Delta$.
\end{thma}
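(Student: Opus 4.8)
The plan is to combine Herzig's classification \cite{Herzigclass} of the irreducible admissible smooth $\Fq$-representations of $\GL_3(\Qp)$ with the compatibility of $D^\vee_\Delta$ with parabolic induction on the one hand, and the converse direction of the generalized Jacquet functor computation underlying Theorem~\ref{thmE} on the other. Write $V:=\mathbb{V}^\vee(D^\vee_\Delta(\pi))$ and $\Delta=\{\alpha,\beta\}$. By Theorem~\ref{thmC} the hypothesis $D^\vee_\Delta(\pi)\neq 0$ already forces $\pi$ to be admissible and $V$ to be a nonzero irreducible representation of $\GQpD\times\Qp^\times$; enlarging the coefficient field (which affects neither supercuspidality of $\pi$ nor the abelianity of any Galois action) we may assume $V\cong V_{Z(G)}\otimes V_\alpha\otimes V_\beta$ with each $V_\gamma$ absolutely irreducible over $\Fq$. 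Since $\GQpa$ acts on $V$ only through the tensor factor $V_\alpha$, the condition ``$\GQpa$ acts through $\GQpa^{ab}$'' is equivalent to ``$\dim V_\alpha=1$'', and since $|\Delta|=2$ the family $\{\GQpa\}_{\alpha\in\Delta}$ of direct factors coincides with $\{G_{\Qp,\Delta\setminus\{\gamma\}}\}_{\gamma\in\Delta}$, matching the formulation of Theorem~\ref{thmE}. Passing to contrapositives, what must be shown is: $\pi$ is \emph{not} supercuspidal if and only if $\dim V_\gamma=1$ for some $\gamma\in\Delta$.

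For the ``only if'' direction one invokes Herzig's classification: a non-supercuspidal irreducible admissible representation of $\GL_3(\Qp)$ is an irreducible subquotient either of a principal series $\Ind_B^{\GL_3(\Qp)}(\chi)$ or of $\Ind_{\overline{P}^{(\gamma)}}^{\GL_3(\Qp)}(\sigma)$ for some $\gamma\in\Delta$ and some irreducible admissible representation $\sigma$ of the Levi $\GL_2(\Qp)^{(\gamma)}\times T^{(\gamma)}$ whose $\GL_2$-component is supercuspidal. In the first case $\pi\in\SP^0_{\Fq}$, and the compatibility of $D^\vee_\Delta$ with parabolic induction from $B$ \cite[Thm.\ B]{MultVar}, together with the known $n=2$ values of Colmez' functor on the constituents, forces every $V_\gamma$ to be one-dimensional. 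In the second case \cite[Thm.\ B]{MultVar} computes $V$ out of $D^\vee$ of $\sigma$, and the tensor factor indexed by $\gamma$ is one-dimensional precisely because the root subgroup $N^{(\gamma)}$ is not contained in the Levi; thus $\dim V_\gamma=1$. Either way ``some $\gamma$'' holds.

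For the ``if'' direction I would argue that if $\dim V_\gamma=1$ for some $\gamma\in\Delta$, then $\pi$ is a subquotient of $\Ind_{\overline{P}}^{\GL_3(\Qp)}(\tau)$ for a suitable maximal standard parabolic $\overline{P}\subsetneq\GL_3(\Qp)$, with Levi of the form $\GL_2(\Qp)\times T$, and some irreducible admissible representation $\tau$ of that Levi; being a subquotient of a \emph{proper} parabolic induction, $\pi$ is then not supercuspidal. This is the converse direction of Theorem~\ref{thmE}, and the real work lies in removing the standing hypothesis $\pi\in\SP_{\Fq}$ there. The plan is to reconstruct, for an \emph{arbitrary} irreducible $\pi$ with $D^\vee_\Delta(\pi)\neq 0$, the $\GL_3(\Qp)$-equivariant sheaf on $\GL_3(\Qp)/B$ attached to $\pi$, and then to use that only the big-cell graded piece $\mathrm{gr}^{w_0}$ contributes to $D^\vee_\Delta$ (Theorem~\ref{DveeDelta0gr}), together with the quantitative finiteness of Theorem~\ref{DveeDeltagrw_0}, to conclude that one-dimensionality of $V_\gamma$ forces the part of this sheaf transverse to the unipotent radical $U$ of $\overline{P}$ to vanish; hence $J^{\mathrm{sheaf}}_{\overline{P}}(\pi)$ is nonzero, by Corollary~\ref{twojacquetsame} (valid exactly when the Levi has the form $\GL_2(\Qp)\times T$) it coincides with the coinvariants $\pi_U$, and a form of Frobenius reciprocity then exhibits $\pi$ as a subquotient of $\Ind_{\overline{P}}^{\GL_3(\Qp)}(\tau)$ for $\tau$ an irreducible admissible representation of the Levi. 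I expect this to be the main obstacle: Theorem~\ref{thmE} genuinely controls only the image $Q(\pi)$ in $\overline{\SP}_h$ of an object $\pi\in\SP_{\Fq}$, whereas a supercuspidal $\pi$ lies outside $\SP_{\Fq}$ altogether, so the sheaf-and-Jacquet-functor analysis must be redone from scratch; what makes this feasible is that an irreducible $\pi$ with $D^\vee_\Delta(\pi)\neq 0$ never lies in $\Fin^{>B}_h(G)$, so no ``finite-dimensional'' correction term can interfere. The remaining ingredients — matching of central characters, the exactness properties of $D^\vee_\Delta$, and Colmez' computation of his functor on representations of $\GL_2(\Qp)$ — are standard.
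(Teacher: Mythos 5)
Your reduction of the statement to ``$\pi$ not supercuspidal $\Leftrightarrow$ $\GQpDa$ acts on $\mathbb{V}^\vee\circ D^\vee_\Delta(\pi)$ via a character for some $\alpha$'' and your treatment of the easy direction (Herzig's classification plus compatibility with parabolic induction, \cite[Thm.\ B]{MultVar}) match the paper. The problem is the hard direction, which you explicitly leave as ``the main obstacle'': exhibiting an arbitrary irreducible $\pi$ with $D^\vee_\Delta(\pi)\neq 0$ and abelian $\GQpa$-action as a subquotient of $\Ind_{\overline{P}^{(\alpha)}}^{G}\pi_\alpha$. That is precisely the content of Theorem \ref{irredjacquet}, which the paper states and proves in parallel with Theorem \ref{lookslikeparindisparind} exactly so that the $\SP_{\Fq}$ hypothesis can be dropped, and the paper's proof of Corollary \ref{gl3supercuspidal} is then a one-line application of it (the maximal parabolics of $\GL_3(\Qp)$ containing $\overline{B}$ being the $\overline{P}^{(\alpha)}$). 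So the missing step is not something you could wave at: your proposal contains no proof of it, only a plan.

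Moreover, the plan as sketched would not go through as stated. Corollary \ref{twojacquetsame} is proved only for $\pi$ in $\SP^{\alpha,ab}_{\Fq}$ with no nonzero quotient in $\Fin^{>B}_{\Fq}(G)$, so you cannot invoke it for a candidate supercuspidal $\pi$; and even granting $J_{\overline{P}^{(\alpha)}}(\pi)=\pi_{\overline{U}}\neq 0$, the Frobenius-reciprocity step needs an irreducible admissible quotient $\tau$ of the coinvariants, which you do not produce. The paper's actual argument avoids coinvariants entirely: it sets $\pi_\alpha:=J^{\mathrm{sheaf}}_{\overline{P}^{(\alpha)}}(\pi)$, proves in Proposition \ref{gl2admissprinc} that $\pi_\alpha$ is admissible of finite length with $D^\vee_{\{\alpha\}}(\pi_\alpha)\cong M_0^\vee[X_\alpha^{-1}]$ (the key finiteness input being Propositions \ref{MbdcapDwfin}, \ref{Dwprojlim} and \ref{prcontainedinDhash}, not Theorem \ref{DveeDeltagrw_0} as in your sketch), shows in Proposition \ref{asbigasitcanbe} that this equals $D^\vee_\Delta(\pi)^{\nr}_{\Delta\setminus\{\alpha\}}$ via a lattice argument with $D^\natural$, and then concludes $D^\vee_\Delta(\Ind_{\overline{P}^{(\alpha)}}^{G}\pi_\alpha)\cong D^\vee_\Delta(\pi)$ by Proposition \ref{Dnrmultvar} and \cite[Thm.\ B]{MultVar}, so that $\pi$ is a subquotient of the induction by Corollaries \ref{irredtoirred} and \ref{detectsiso}. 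Unless you either cite Theorem \ref{irredjacquet} or reproduce this chain of arguments, your proof of the ``supercuspidal $\Rightarrow$ non-abelian action'' direction is incomplete.
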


Since $\Fq$ is finite, any irreducible representation of $\GQpD\times \Qp^\times$ is of the form $\mathbb{V}^\vee\circ D^\vee_\Delta(\pi)\cong V_\alpha\otimes_{\Fq} V_\beta\otimes_{\Fq} V_{\Qp^\times}$ where $V_\alpha$ is an irreducible representation of $\GQpa$, $V_\beta$ is an irreducible representation of $G_{\Qp,\beta}$, and $V_{\Qp^\times}$ is an irreducible representation of $\Qp^\times$ (hence a character as $\Qp^\times$ is abelian). Theorem \ref{thmF} above shows that---under our standing assumption that $D^\vee_\Delta(\pi)\neq 0$---whenever $\pi$ is supercuspidal, we have $\dim_{\Fq} V_\alpha>1$ and $\dim_{\Fq}V_\beta>1$.

Finally, we describe our Conjecture (see \ref{conimage}) on the essential image of $\mathbb{V}^\vee\circ D^\vee_\Delta$ restricted to the category $\SP_{\Fq}$ in the special case of $G=\GL_3(\Qp)$. In view of Theorem \ref{thmD} we regard this as a conjecture \emph{on the possible shapes of successive extensions} of principal series representations. Let $\pi$ be an object in $\SP_{\Fq}$ (for simplicity, but note that in the text we also allow mod $p$-power coefficients) and put $V:=\mathbb{V}^\vee\circ D^\vee_\Delta(\pi)$. Then $V$ is a successive extension of $1$-dimensional representations of $\GQpD\times \Qp^\times$. The role of the two simple roots $\alpha,\beta$ is \emph{not} entirely symmetric here so in case our fixed Borel subgroup $B\leq \GL_3(\Qp)$ is that of the upper triangular matrices, we let $\alpha$ (resp.\ $\beta$) be the root sending a diagonal matrix $\diag(a,b,c)\in T$ ($a,b,c\in\Qp^\times$) to $ab^{-1}$ (resp.\ to $bc^{-1}$). At first consider the socle-filtration of $V$ restricted to $G_{\Qp,\beta}\times \Qp^\times$ and put $\mathrm{gr}^{\overline{\alpha}}_i(V)$ for the graded pieces ($i=1,\dots,r_\alpha$ for some integer) which are semisimple as representations of $G_{\Qp,\beta}\times \Qp^\times$, but not necessarily as representations of $\GQpa$. So $\mathrm{gr}^{\overline{\alpha}}_i(V)$ is the direct sum of its isotypical components $\mathrm{gr}^{\overline{\alpha}}_i(V)(\eta_{\overline{\alpha}})$ for some (finitely many) characters $\eta_{\overline{\alpha}}=\eta_\beta\otimes \eta_{\Qp^\times}\colon G_{\Qp,\beta}\times \Qp^\times\to \Fq^\times$. By Theorem \ref{thmE} (see Proposition \ref{onlyif}) the representation $\mathrm{gr}^{\overline{\alpha}}_i(V)(\eta_{\overline{\alpha}})$ of $\GQpa$ is in the image of Colmez' Montréal functor at some representation $\pi_\alpha(i,\eta_{\overline{\alpha}})$ of $\GL_2(\Qp)$ (necessarily a successive extension of principal series) admitting a central character which corresponds to $\eta_\beta$ via local class field theory. Similarly, we may take the socle filtration of $V$ restricted to $\GQpa\times \Qp^\times$ with graded pieces $\mathrm{gr}^{\overline{\beta}}_i(V)$ ($i=1,\dots,r_\beta$) and $\GQpa\times\Qp^\times$-isotypical components $\mathrm{gr}^{\overline{\beta}}_i(V)(\eta_{\overline{\beta}})$ for some characters $\eta_{\overline{\beta}}=\eta_\alpha\otimes\eta_{\Qp^\times}\colon \GQpa\times\Qp^\times\to \Fq^\times$. Proposition \ref{onlyif} means that for each $i=1,\dots,r_\beta$ and character $\eta_{\overline{\beta}}$ the representation $\mathrm{gr}^{\overline{\beta}}_i(V)(\eta_{\overline{\beta}})$ of $G_{\Qp,\beta}$ is in the image of Colmez' Montréal functor at some representation $\pi_\beta(i,\eta_{\overline{\beta}})$ of $\GL_2(\Qp)$ admitting a central character $\eta_{\Qp^\times}\widetilde{\eta}^{-1}_\alpha$ where $\widetilde{\eta}_\alpha\colon \Qp^\times\to\Fq^\times$ is the character corresponding to $\eta_\alpha$ via local class field theory. Conjecture \ref{conimage} states that the above conditions on $V$ are not only necessary, but also sufficient for $V$ to be in the image of $\mathbb{V}^\vee\circ D^\vee_\Delta$ restricted to $\SP_{\Fq}$.

\subsection{Notations and preliminaries}

\subsubsection{The group}
Let $G:=\GL_{n}(\mathbb{Q}_p)$ for some $n\geq 2$, $T\leq G$ be the subgroup of diagonal matrices, $B=TN\leq G$ be the Borel subgroup of upper triangular matrices containing $T$ with unipotent radical $N$. Further denote by $\Phi$ the set of roots of $T$, by $\Phi^+\subset \Phi$ the set of positive roots with respect to $B$, and by $\{\alpha_1,\dots,\alpha_{n-1}\}=\Delta\subset \Phi^+$ the set of simple roots so $|\Delta|=n-1$. For a positive root $\alpha\in \Phi^+$ we denote by $N_\alpha$ the root subgroup of $N$, ie.\ $T$ acts on $N_\alpha\cong\mathbb{Q}_p$ via the character $\alpha$. Let $N_0\leq N$ be the open compact subgroup consisting of strictly upper triangular matrices with entries in $\Zp$. Then $N_0$ is totally decomposed (ie.\ we have $N_0=\prod_{\alpha\in\Phi^+}(N_0\cap N_\alpha)$ for any fixed ordering of $\Phi^+$) and put $T_+:=\{t\in T\mid tN_0t^{-1}\leq N_0\}$, $T_0:=T_+\cap T_+^{-1}$, $B_+:=N_0T_+$. The subgroup $H_{\Delta,0}:=\prod_{\alpha\in\Phi^+\setminus\Delta}(N_0\cap N_\alpha)$ is normal in $N_0$ and invariant under conjugation by elements in $T_+$. So the quotient group $N_{\Delta,0}:=N_0/H_{\Delta,0}\cong \prod_{\alpha\in\Delta}N_{\alpha,0}$ also inherits the action of $T_+$. 

\subsubsection{The coefficient rings}
Let $F/\Qp$ be a finite extension with ring of integers $\mathcal{O}_F$, uniformizer $\varpi\in\mathcal{O}_F$ and residue field $\Fq:=\mathcal{O}_F/\varpi$. We consider smooth representations of $G$ on $\mathcal{O}_F/\varpi^h$-modules for $h\geq 1$. In the technical part of the paper we only need to treat the case $h=1$, ie.\ the coefficient field $\Fq$. The Iwasawa algebra $\mathcal{O}_F/\varpi^h\bs N_{\Delta,0}\js$ can be identified with the multivariable power series ring $\mathcal{O}_F/\varpi^h\bs X_\alpha\mid \alpha\in\Delta\js$ by the map $n_\alpha-1\mapsto X_\alpha$ ($\alpha\in\Delta$). We define $\mathcal{O}_F/\varpi^h\bg N_{\Delta,0}\jg$ as the localization $\mathcal{O}_F/\varpi^h\bs N_{\Delta,0}\js [X_\alpha^{-1},\alpha\in\Delta]$ and put $E_\Delta^+:=\Fq\bs X_\alpha\mid \alpha\in\Delta\js$, $E_\Delta:=E_\Delta^+[X_\Delta^{-1}]\cong \Fq\bg N_{\Delta,0}\jg$, $\OED:=\varprojlim_{h}\mathcal{O}_F/\varpi^h\bg N_{\Delta,0}\jg$. We also denote by $\varphi_t\colon \mathcal{O}_F/\varpi^h\bs N_{\Delta,0}\js\to \mathcal{O}_F/\varpi^h\bs N_{\Delta,0}\js$ (resp.\ $\varphi_t\colon \mathcal{O}_F/\varpi^h\bg N_{\Delta,0}\jg \to  \mathcal{O}_F/\varpi^h\bg N_{\Delta,0}\jg$) the induced action of $t\in T_+$ on these rings. By an \'etale $T_+$-module over $\mathcal{O}_F/\varpi^h\bg N_{\Delta,0}\jg$ we mean a (unless otherwise mentioned) finitely generated module $M$ over $\mathcal{O}_F/\varpi^h\bg N_{\Delta,0}\jg$ together with a semilinear action of the monoid $T_+$ (also denote by $\varphi_t$ for $t\in T_+$) such that the maps $$\id\otimes\varphi_t\colon \varphi_t^*M:=\mathcal{O}_F/\varpi^h\bg N_{\Delta,0}\jg\otimes_{\mathcal{O}_F/\varpi^h\bg N_{\Delta,0}\jg,\varphi_t}M\to M$$ are isomorphisms for all $t\in T_+$. For $j=1,\dots,n-1$ we put $t_j:=\diag(p,\dots,p,1,\dots,1)\in T_+$ where the first $j$ entries of $t_j$ in the diagonal equals $p$ and the las $n-j$ entries equal $1$. Further put $s:=t_1t_2\dots t_{n-1}\in T_+$.

\subsubsection{The functor $D^\vee_\Delta$}\label{notation:DveeDelta}
We consider the skew polynomial ring $\mathcal{O}_F/\varpi^h\bs N_{\Delta,0}\js [F_\Delta]:=\mathcal{O}_F/\varpi^h\bs N_{\Delta,0}\js [F_1,F_2,\dots,F_{n-1}]$ where the variables $F_j$ commute with each other and we have $F_j\lambda=(t_j\lambda t_j^{-1})F_j$ for $\lambda\in \mathcal{O}_F/\varpi^h\bs N_{\Delta,0}\js$ and $j=1,\dots,n-1$. For a smooth representation $\pi$ of $B$ over $\mathcal{O}_F/\varpi^h$ we denote by $\mathcal{M}_\Delta(\pi^{H_{\Delta,0}})$ the set of finitely generated $\mathcal{O}_F/\varpi^h\bs N_{\Delta,0}\js [F_\Delta]$-submodules of $\pi^{H_\Delta,0}$ that are stable under the action of $T_0$ and admissible as a representation of $N_{\Delta,0}=N_0/H_{\Delta,0}$. Here $F_j$ acts on $\pi^{H_\Delta,0}$ by the Hecke action of $t_j\in T_+$, ie.\ $F_j v:=\Tr_{H_{\Delta,0}/t_j H_{\Delta,0}t_j^{-1}}(t_j v)$ for $v\in\pi^{H_\Delta,0}$ and $j=1,\dots,n-1$. Then the functor $D^\vee_\Delta$ is defined by the projective limit $$D^\vee_\Delta(\pi):=\varprojlim_{M\in \mathcal{M}_\Delta(\pi^{H_{\Delta,0}})}M^\vee [1/X_{\Delta}]$$ where $X_\Delta=\prod_{\alpha\in\Delta}X_\alpha$ is the product of all the variables $X_\alpha=n_\alpha-1$ in the power series ring $\mathcal{O}_F/\varpi^h\bs N_{\Delta,0}\js$. Finally, we put $\mathcal{M}^0_\Delta(\pi^{H_{\Delta,0}})\subset \mathcal{M}_\Delta(\pi^{H_{\Delta,0}})$ for the subset of those objects $M\in \mathcal{M}_\Delta(\pi^{H_{\Delta,0}})$ such that $M^\vee$ is $X_\Delta$-torsion free, ie.\ the map $M^\vee\to M^\vee[X_\Delta^{-1}]$ is injective.

\subsubsection{The noncommutative deformations}
We define the ring $\mathcal{O}_F/\varpi^h\bg N_{\Delta,\infty}\jg$ as a projective limit $\varprojlim_k \mathcal{O}_F/\varpi^h\bg N_{\Delta,k}\jg$ where the finite layers $\mathcal{O}_F/\varpi^h\bg N_{\Delta,k}\jg:=\mathcal{O}_F/\varpi^h\bs N_{\Delta,k}\js[\varphi_s^{kn_0}(X_\Delta)^{-1}]$ are defined as localisations of the Iwasawa algebra $\mathcal{O}_F/\varpi^h\bs N_{\Delta,k}\js$. Here the group $N_{\Delta,k}:=N_0/H_{\Delta,k}$ is the extension of $N_{\Delta,0}$ by a finite $p$-group $H_{\Delta,0}/H_{\Delta,k}$ where $H_{\Delta,k}$ is the smallest normal subgroup in $N_0$ containing $s^kH_{\Delta,0}s^{-k}$. We have a functor $\mathbb{M}_{k,0}$ from the category $\mathcal{D}^{et}(T_+,\mathcal{O}_F/\varpi^h\bg N_{\Delta,0}\jg)$ of finitely generated \'etale $T_+$-modules over $\mathcal{O}_F/\varpi^h\bg N_{\Delta,0}\jg$ to the category $\mathcal{D}^{et}(T_+,\mathcal{O}_F/\varpi^h\bg N_{\Delta,k}\jg)$ of finitely generated \'etale $T_+$-modules over $\mathcal{O}_F/\varpi^h\bg N_{\Delta,k}\jg$. We set $\mathbb{M}_{\infty,0}:=\varprojlim_k\mathbb{M}_{k,0}$ and $\mathbb{D}_{0,\infty}$ to be the functor from the category $\mathcal{D}^{et}(T_+,\mathcal{O}_F/\varpi^h \bg N_{\Delta,\infty}\jg)$ of finitely generated \'etale $T_+$-modules over $\mathcal{O}_F/\varpi^h\bg N_{\Delta,\infty}\jg$ to $\mathcal{D}^{et}(T_+,\mathcal{O}_F/\varpi^h \bg N_{\Delta,0}\jg)$ induced by the reduction map $\mathcal{O}_F/\varpi^h\bg N_{\Delta,\infty}\jg\to \mathcal{O}_F/\varpi^h\bg N_{\Delta,0}\jg$. By \cite[Theorem D]{MultVar} the functors $\mathbb{M}_{\infty,0}$ and $\mathbb{D}_{0,\infty}$ are quasi-inverse equivalences of categories. We denote the natural projection map (see \cite[Lemma 4.1]{MultVar}) by $\pr_{0,k}\colon \mathbb{M}_{k,0}(D)\twoheadrightarrow D$ and by $\pr_{0,\infty}\colon \mathbb{M}_{\infty,0}(D)\twoheadrightarrow D$. The maps $\pr_{0,k}$ and $\pr_{0,\infty}$ are $\varphi_t$-equivariant for each $t\in T_+$ and $k\geq 0$. Note that for $\infty>k>0$ the map $\varphi_t\colon \mathbb{M}_{k,0}(D)\to \mathbb{M}_{k,0}(D)$ is not injective in general. However, $\varphi_t$ is injective on both $D$ and $\mathbb{M}_{\infty,0}(D)$ and admits a distinguished left inverse $\psi_t^{(D)}\colon D\to D$ and $\psi_t\colon \mathbb{M}_{\infty,0}(D)\to \mathbb{M}_{\infty,0}(D)$. Further, we have 
\begin{align}\label{psireduction}
\psi_t^{(D)}(\pr_{0,\infty}(x))=\pr_{0,\infty}\left(\sum_{u\in J(H_{\Delta,0}/tH_{\Delta,0}t^{-1})}\psi_t(u^{-1}x)\right)
\end{align}
for any set $J(H_{\Delta,0}/tH_{\Delta,0}t^{-1})\subset H_{\Delta,0}$ of representatives of $H_{\Delta,0}/tH_{\Delta,0}t^{-1}$. For $b=ut\in N_0T_+=B_+$ and $x\in \mathbb{M}_{\infty,0}(D)$ we put $\varphi_b(x):=u\varphi_t(x)$ and $\psi_b(x):=\psi_t(u^{-1}x)$.

\subsubsection{Subrings corresponding to Weyl elements}
By an abuse of notation we identify the Weyl group $N_G(T)/T$ with the subgroup of permutation matrices in $G$. For any element $w\in N_G(T)/T$ we put $\Phi^+_w:=\Phi^+\cap w(\Phi^+)$, $\Delta_w:=\Delta\cap w(\Phi^+)$, $N_{0,w}:=N_0\cap wN_0w^{-1}=\prod_{\alpha\in \Phi^+_w}(N_0\cap N_\alpha)$,  $H_{\Delta,k,w}:=H_{\Delta,k}\cap N_{0,w}$, and $N_{\Delta,k,w}:=N_{0,w}/H_{\Delta,k,w}$. We define the ring $\Fq\bg N_{\Delta,\infty,w}\jg$ as a projective limit $\varprojlim_k \Fq\bg N_{\Delta,k,w}\jg$ where the finite layers $\Fq\bg N_{\Delta,k,w}\jg:=\Fq\bs N_{\Delta,k,w}\js[\varphi_s^{kn_0}(X_{\Delta_w})^{-1}]$ are defined as localisations of the Iwasawa algebra $\Fq\bs N_{\Delta,k,w}\js$ by where $X_{\Delta_w}:=\prod_{\alpha\in\Delta_w}X_\alpha$. Since $N_{\Delta,k,w}$ is a subgroup in $N_{\Delta,k}$ and localization is exact, we have a natural injective ring homomorphism $\Fq\bg N_{\Delta,k,w}\jg\hookrightarrow \Fq\bg N_{\Delta,k}\jg$. Taking the limit as $k\to \infty$ we realize $\Fq\bg N_{\Delta,\infty,w}\jg$ as a subring in $\Fq\bg N_{\Delta,\infty}\jg$.

\subsubsection{Various categories of representations}

For any integer $h\geq 1$ we denote by $\Rep_h(\GQpD\times \mathbb{Q}_p^\times):=\Rep_{\mathcal{O}_F/\varpi^h}(\GQpD\times \mathbb{Q}_p^\times)$ the category of continuous representations of $\GQpD\times \mathbb{Q}_p^\times$ on finitely generated $\mathcal{O}_F/\varpi^h$-modules. In case $h=1$ we put $\Rep_{\Fq}(\GQpD\times \mathbb{Q}_p^\times):=\Rep_1(\GQpD\times \mathbb{Q}_p^\times)$. Further, we let $\Rep^{\varinjlim}_h(\GQpD\times \mathbb{Q}_p^\times)$ be the category of countable inductive limits of objects in $\Rep_h(\GQpD\times \mathbb{Q}_p^\times)$. Fix $\alpha\in \Delta$ and a character $\xi\colon \GQpDa\times \mathbb{Q}_p^\times\to \mathbb{F}_q^\times$. Denote by $\Rep^{\alpha,\xi}_{\mathbb{F}_q}(\GQpD\times \mathbb{Q}_p^\times)$ (resp.\ by $\Rep^{\alpha,ab}_{\mathbb{F}_q}(\GQpD\times \mathbb{Q}_p^\times)$) the full subcategory of $\Rep_{\mathbb{F}_q}(\GQpD\times \mathbb{Q}_p^\times)$ consisting of objects $V$ on which the subgroup $\GQpDa\times \mathbb{Q}_p^\times\leq \GQpD\times \mathbb{Q}_p^\times$ acts via the character $\xi$ (resp.\ acts via its maximal abelian quotient $\GQpDa^{ab}\times \mathbb{Q}_p^\times$). Further, put $\Rep^{\ord}_{\mathbb{F}_q}(\GQpD\times \mathbb{Q}_p^\times)$ (resp.\ $\Rep^{\ord}_h(\GQpD\times \mathbb{Q}_p^\times)$, resp.\ $\Rep^{\alpha,\xi,\ord}_{\mathbb{F}_q}(\GQpD\times \mathbb{Q}_p^\times)$, resp.\ $\Rep^{\alpha,ab,\ord}_{\mathbb{F}_q}(\GQpD\times \mathbb{Q}_p^\times)$) for the full subcategory of $\Rep_{\mathbb{F}_q}(\GQpD\times \mathbb{Q}_p^\times)$ (resp.\ of $\Rep_h(\GQpD\times \mathbb{Q}_p^\times)$, resp.\ of $\Rep^{\alpha,\xi}_{\mathbb{F}_q}(\GQpD\times \mathbb{Q}_p^\times)$, resp.\ of $\Rep^{\alpha,ab}_{\mathbb{F}_q}(\GQpD\times \mathbb{Q}_p^\times)$) whose objects are successive extensions of $1$-dimensional representations over $\Fq$. 

For any integer $h\geq 1$ let $\Rep_h(G):=\Rep_{\mathcal{O}_F/\varpi^h}(G)$ be the category of smooth representations of $G$ on modules over $\mathcal{O}_F/\varpi^h$. We denote by $\Rep_h^{fin}(G)$ the full subcategory of finite length representations. Further, we put $\SP_h:=\SP_h(G):=\SP_{\mathcal{O}_F/\varpi^h}(G)$ for the full subcategory of $\Rep_h^{fin}(G)$ whose objects are representations with all Jordan-Hölder factors isomorphic to subquotients of principal series representations. The combination of the main results of \cite{MultVar} and \cite{MultVarGal} (see also \cite{CKZ}) gives rise to the composite left exact \cite[Thm.\ A]{MultVar} functor
$$\mathbb{V}^\vee\circ D^\vee_\Delta\colon \Rep_h(G)\to \Rep_h^{\varinjlim}(\GQpD\times\Qp^\times)\ .$$
By Theorem \ref{thmA} the restriction of $\mathbb{V}^\vee\circ D^\vee_\Delta$ to $\Rep_h^{fin}(G)$ lands in the full subcategory $\Rep_h(\GQpD\times\Qp^\times)$. Recall \cite[Corollary 3.4 and Theorem 3.19]{MultVar}, \cite[Theorem 3.15]{MultVarGal} that the functor $\mathbb{V}^\vee\circ D^\vee_\Delta$ is exact on $\SP_h(G)$ and lands in the subcategory $\Rep^{\ord}_h(\GQpD\times \mathbb{Q}_p^\times)$. Let $\SP_{\mathbb{F}_q}^{\alpha,\xi}(G)$ (resp.\ $\SP_{\mathbb{F}_q}^{\alpha,ab}(G)$) be the full subcategory of $\SP_{\mathbb{F}_q}(G)$ whose objects are representations $\pi$ in $\SP_{\mathbb{F}_q}(G)$ such that $\mathbb{V}^\vee\circ D^\vee_\Delta(\pi)$ lies in $\Rep^{\alpha,\xi,\ord}_{\mathbb{F}_q}(\GQpD\times \mathbb{Q}_p^\times)$ (resp.\ in $\Rep^{\alpha,ab,\ord}_{\mathbb{F}_q}(\GQpD\times \mathbb{Q}_p^\times)$).

\subsection{Acknowledgements}

G.\ J.\ was supported by a PhD Scholarship of the EPSRC at the University of Bristol. G.\ Z.\ was supported by an NKFIH (National Research, Development and Innovation Office) Grant K-135885, by the MTA–RI Lendület ``Momentum'' Analytic Number Theory and Representation Theory Research Group, and by the János Bolyai Scholarship of the Hungarian Academy of Sciences.

We thank Christophe Breuil, Pierre Colmez, Elmar Gro{\ss}e-Klönne, Nataniel Marquis, Stefano Morra, and Peter Schneider for feedback. We especially thank Peter Schneider for pointing us to the recent preprint \cite{Heyer} of Heyer, along with Stefano Morra and Christophe Breuil for useful remarks on the exposition.

\section{Finiteness properties of generalized Montréal functors}\label{sec:finiteness}

\subsection{Lattices of multivariable $(\varphi_\Delta,\Gamma_\Delta)$-modules}

The goal here is to define the analogues of Colmez' treillis $D^\#$ and $D^{\nr}$ and to show their first properties in the context of $(\varphi_\Delta,\Gamma_\Delta)$-modules over $\mathbb{F}_q\bs X_\alpha,\alpha\in\Delta\js[X_\Delta^{-1}]$. 

Let $D \in D^{et}(\varphi_\Delta, \Gamma_\Delta, E_\Delta)$, and fix some $\alpha \in \Delta$.
Then for any $d \in D$, there exist unique $d_i \in D$, such that $d = \sum_{i = 0}^{p-1} (1 + X_\alpha)^i\varphi_\alpha^{(D)}(d_i)$. We define $\psi_\alpha(d) = d_0$. 
Then $\psi_\alpha^{(D)} : D \to D$ is an $\mathbb F_p$-linear map, and it satisfies that $\psi_\alpha^{(D)} \circ \varphi_\alpha^{(D)} = id_D$, and for any $\beta \in \Delta$, $\gamma \in \Gamma_\beta \simeq \mathbb Z_p$, $\psi_\alpha^{(D)}$ commutes with $\varphi_\beta^{(D)}, \psi_\beta^{(D)}$, and $\gamma_\beta^{(D)}$. More generally, if $S \subseteq \Delta$, we have maps $\psi_S^{(D)} : D \to D$, such that $\psi_S^{(D)} \circ \varphi_S^{(D)} = id_D$. $\psi_S^{(D)}$ is simply the composition of the $\psi_\alpha^{(D)}$ with $\alpha \in S$. 
Explicitly, if $S = \{\alpha_1, \ldots, \alpha_N\}$, for any $d \in D$, we have the unique expansion $$d = \sum_{i_1, \ldots, i_N = 0}^{p-1}\prod_{k = 1}^N(1+X_{\alpha_k})^{i_k} \varphi_S^{(D)}(d_{i_1 \ldots i_N}),$$
and $\psi_S^{(D)}(d) = d_{0 \ldots 0}$.
Note that $\psi_S(E_\Delta^+) \subseteq E_\Delta^+$. 

We construct a lattice in $D$ with good properties.
In the following, we often omit the superscript and use the notation $\psi_S$ (resp.\ $\varphi_S$) instead of $\psi_S^{(D)}$ (resp.\ instead of $\varphi_S^{(D)}$) if it does not lead to confusion.

\begin{pro}\label{Dhashalpha}
For each $\alpha\in\Delta$ there exists a maximal element $D^\#_\alpha$ among the finitely generated $E_\Delta^+[X_{\Delta\setminus\{\alpha\}}^{-1}]$-submodules $D_0$ of $D$ such that $\psi_\alpha(D_0)=D_0$. We have $D=D^\#_\alpha[X_\alpha^{-1}]$ and for any $x\in D$ there exists an integer $k>0$ such that $\psi_\alpha^k(x)\in D^\#_\alpha$. Further, $D^\#_\alpha$ is stable under the action of $\Gamma_\Delta$, $\varphi_\beta$, and $\psi_\beta$ for all $\beta\neq \alpha\in\Delta$. In particular, we have $D^\#_\alpha=\bigoplus_{j=0}^{p-1}(1+X_\beta)^j\varphi_\beta(D^\#_\alpha)$ for all $\beta\neq \alpha\in\Delta$.
\end{pro}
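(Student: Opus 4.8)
The plan is to adapt Colmez's construction of the treillis $D^\#$ to the multivariable situation, "completing'' $D$ only along $X_\alpha$ and leaving it a localization in the remaining variables. Write $R_\alpha^+:=E_\Delta^+[X_{\Delta\setminus\{\alpha\}}^{-1}]$, so that $E_\Delta=R_\alpha^+[X_\alpha^{-1}]$ and $R_\alpha^+$ is Noetherian; recall that in characteristic $p$ one has $\varphi_\alpha(X_\alpha)=(1+X_\alpha)^p-1=X_\alpha^p$ while $\varphi_\alpha$ fixes $X_\beta$ ($\beta\neq\alpha$) up to a unit, that $R_\alpha^+$ is free of rank $p$ over $\varphi_\alpha(R_\alpha^+)$ on $\{(1+X_\alpha)^j\}_{0\le j<p}$, that $D=\bigoplus_{j=0}^{p-1}(1+X_\alpha)^j\varphi_\alpha(D)$, and that $\psi_\alpha$ is the projection to the $j=0$ summand followed by $\varphi_\alpha^{-1}$, with $\psi_\alpha(\varphi_\alpha(a)x)=a\psi_\alpha(x)$. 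First I would take any finitely generated $R_\alpha^+$-submodule $L\subseteq D$ with $L[X_\alpha^{-1}]=D$ (the span of an $E_\Delta$-generating set). Expanding generators in the basis $\{(1+X_\alpha)^j\}$ and using the projection formula shows $\psi_\alpha(L)$ lies in a finitely generated $R_\alpha^+$-submodule, hence $\psi_\alpha(L)\subseteq X_\alpha^{-c}L$ for some $c$; since $X_\alpha^{-m}=X_\alpha^{\,p\lceil m/p\rceil-m}\varphi_\alpha(X_\alpha^{-\lceil m/p\rceil})$, this upgrades to the contraction estimate $\psi_\alpha(X_\alpha^{-m}L)\subseteq X_\alpha^{-\lceil m/p\rceil-c}L$. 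Consequently $L_\bullet:=X_\alpha^{-M}L$ satisfies $\psi_\alpha(L_\bullet)\subseteq L_\bullet$ for $M\gg0$, every $x\in D$ has $\psi_\alpha^k(x)\in L_\bullet$ for $k\gg0$, and the symmetric expansion estimate $\varphi_\alpha(L_\bullet)\subseteq X_\alpha^{-c'}L_\bullet$ gives $\varphi_\alpha^k(X_\alpha^jL_\bullet)\subseteq L_\bullet$ for all $k$ once $j:=\lceil c'/(p-1)\rceil$, whence the uniform lower bound $X_\alpha^jL_\bullet=\psi_\alpha^k(\varphi_\alpha^k(X_\alpha^jL_\bullet))\subseteq\psi_\alpha^k(L_\bullet)$ for every $k$.

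Next, set $D^\#_\alpha:=\sum D_0$ over all finitely generated $R_\alpha^+$-submodules $D_0\subseteq D$ with $\psi_\alpha(D_0)=D_0$. These are closed under sums, and each one lies in $L_\bullet$ (if $D_0\subseteq X_\alpha^{-m}L_\bullet$ then $D_0=\psi_\alpha^k(D_0)\subseteq\psi_\alpha^k(X_\alpha^{-m}L_\bullet)\subseteq L_\bullet$ for $k\gg0$), so by Noetherianity $D^\#_\alpha$ is finitely generated and is the required maximal element; moreover any $D_0$ in the family equals $\psi_\alpha^N(D_0)\subseteq\psi_\alpha^N(L_\bullet)$ for $N\gg0$, so $D^\#_\alpha=\psi_\alpha^N(L_\bullet)$ as soon as the right-hand side is shown $\psi_\alpha$-fixed. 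The heart of the proof is therefore to exhibit \emph{one} finitely generated $\psi_\alpha$-fixed lattice; the natural candidate is $\bigcap_kA_k$ with $A_k:=\psi_\alpha^k(L_\bullet)$, which is sandwiched between $X_\alpha^jL_\bullet$ and $L_\bullet$ (hence finitely generated, $L_\bullet/X_\alpha^jL_\bullet$ being Noetherian), contains $X_\alpha^jL_\bullet$ (hence would span $D$), and clearly satisfies $\psi_\alpha(\bigcap_kA_k)\subseteq\bigcap_kA_k$. What does not come for free is the reverse inclusion, i.e. the identity $\psi_\alpha(\bigcap_kA_k)=\bigcap_k\psi_\alpha(A_k)=\bigcap_kA_k$ for the decreasing family $A_k$. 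In Colmez's one-variable situation this is exactly where topological compactness of the lattice enters (and also yields finiteness through "compact $\Rightarrow$ finitely generated''), but here the difficulty signalled in the introduction is genuine: $R_\alpha^+$ is Noetherian yet \emph{not} $X_\alpha$-adically complete (mirroring $\Zp\bs X\js\subset$ Fontaine's ring), $L_\bullet/X_\alpha^jL_\bullet$ need not be Artinian for $n\ge4$, and $L_\bullet$ is not compact in the obvious topology — so the Noetherian sandwich alone does not suffice. I expect this to be the main obstacle, to be overcome either by first running the treillis construction with the compact $E_\Delta^+$-submodules and then inverting the $X_\beta$ for $\beta\neq\alpha$, or by a direct argument exploiting the $p$-power contraction $\varphi_\alpha(X_\alpha)=X_\alpha^p$. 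Granting this, $D=D^\#_\alpha[X_\alpha^{-1}]$, and the assertion that $\psi_\alpha^k(x)\in D^\#_\alpha$ for $k\gg0$ follows by applying $\psi_\alpha^N$ to the eventual inclusion $\psi_\alpha^k(x)\in L_\bullet$.

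The remaining assertions are then formal. For $g$ any of $\gamma\in\Gamma_\Delta$, $\varphi_\beta$, $\psi_\beta$ with $\beta\neq\alpha$: $g$ commutes with $\psi_\alpha$ (established above in the text) and carries finitely generated $R_\alpha^+$-submodules of $D$ to finitely generated $R_\alpha^+$-submodules — for $\gamma$ because it normalizes $R_\alpha^+$, for $\varphi_\beta$ because $R_\alpha^+$ is finite free over $\varphi_\beta(R_\alpha^+)$, and for $\psi_\beta$ by the same expansion argument used for $\psi_\alpha(L)$, which moreover (via $a\psi_\beta(x)=\psi_\beta(\varphi_\beta(a)x)$ and $\varphi_\beta(R_\alpha^+)\subseteq R_\alpha^+$) shows $\psi_\beta(D^\#_\alpha)$ is an $R_\alpha^+$-submodule of a finitely generated one, hence finitely generated. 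Therefore $g(D^\#_\alpha)$ again belongs to the family defining $D^\#_\alpha$, so $g(D^\#_\alpha)\subseteq D^\#_\alpha$ by maximality, and applying $g^{-1}$ (or $\psi_\beta\circ\varphi_\beta=\mathrm{id}$ when $g=\psi_\beta$) gives equality. Finally, for $\beta\neq\alpha$, writing $x\in D^\#_\alpha\subseteq D=\bigoplus_j(1+X_\beta)^j\varphi_\beta(D)$ as $x=\sum_j(1+X_\beta)^j\varphi_\beta(x_j)$ and using $(1+X_\beta)^{-1}\in E_\Delta^+\subseteq R_\alpha^+$ together with $x_j=\psi_\beta((1+X_\beta)^{-j}x)$, the $\psi_\beta$-stability just proved puts each $x_j$ in $D^\#_\alpha$, whence $D^\#_\alpha=\bigoplus_{j=0}^{p-1}(1+X_\beta)^j\varphi_\beta(D^\#_\alpha)$, the sum being direct because it already is in $D$.
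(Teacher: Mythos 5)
Your reduction of the maximality and stability assertions to the existence of \emph{one} finitely generated $R_\alpha^+:=E_\Delta^+[X_{\Delta\setminus\{\alpha\}}^{-1}]$-submodule $D_0\subseteq D$ with $\psi_\alpha(D_0)=D_0$ and $D_0[X_\alpha^{-1}]=D$ is sound (the contraction estimates, the Noetherian bound inside $L_\bullet$, and the formal arguments for $\Gamma_\Delta$-, $\varphi_\beta$-, $\psi_\beta$-stability and the $\varphi_\beta$-decomposition all go through). But that existence statement is precisely the content of the proposition, and you leave it unproved: your candidate $\bigcap_k\psi_\alpha^k(L_\bullet)$ founders exactly where you say it does, since $\psi_\alpha$ need not commute with the decreasing intersection and neither compactness of $L_\bullet$ nor an Artinian quotient $L_\bullet/X_\alpha^jL_\bullet$ is available over $R_\alpha^+$. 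The two escape routes you sketch are not carried out, and the second half of the statement (``for any $x\in D$ there is $k$ with $\psi_\alpha^k(x)\in D^\#_\alpha$'') is also deduced only conditionally on that missing lattice, so the proposal as it stands has a genuine gap at the heart of the proof.

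The paper closes this gap not by working over $R_\alpha^+$ directly but by descending to one variable: by \cite[Prop.\ 3.2.5]{PalZabradi} the module $D$ admits a basis in which the matrices of $\varphi_\alpha$ and of $\Gamma_\alpha$ have entries in $\mathbb{F}_q\bg X_\alpha\jg$, so its $\mathbb{F}_q\bg X_\alpha\jg$-span $D_{\alpha,0}$ is a classical \'etale $(\varphi_\alpha,\Gamma_\alpha)$-module with $D\cong E_\Delta\otimes_{\mathbb{F}_q\bg X_\alpha\jg}D_{\alpha,0}$. Colmez's treillis theory \cite[Prop.\ II.4.2]{Colmez2} (where compactness over the complete ring $\mathbb{F}_q\bs X_\alpha\js$ is exactly the ingredient you are missing) then produces the maximal $\psi_\alpha$-surjective lattice $D^\#_{\alpha,0}\subset D_{\alpha,0}$, and one sets $D^\#_\alpha:=R_\alpha^+\,D^\#_{\alpha,0}$; the same one-variable input (existence of a treillis containing the finitely many elements $(1+X_\alpha)^jy$) is what yields $\psi_\alpha^k(x)\in D^\#_\alpha$ for $k\gg0$ and hence the maximality. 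This is essentially your first suggested fix, but note the subtlety that the descent is along $\mathbb{F}_q\bg X_\alpha\jg$ (a basis adapted to $\varphi_\alpha$ and $\Gamma_\alpha$ simultaneously), not a naive restriction to compact $E_\Delta^+$-submodules; without such a basis the one-variable theory does not apply to $D$ viewed over $R_\alpha^+$. If you want to complete your write-up, the cleanest repair is to insert this descent step and then keep your formal arguments for maximality and for the stability statements, which essentially coincide with the paper's.
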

\begin{proof}
By \cite[Proposition 3.2.5]{PalZabradi} and the following remark, 
    $D$ has a basis $e_1, \ldots e_N$ (depending on $\alpha$) 
    such that the matrices of $\varphi_\alpha$ and of $\gamma_\alpha\in \Gamma_\alpha$ have all entries in $\mathbb{F}_q \bg X_\alpha \jg$. In other words, the $\mathbb{F}_q \bg X_\alpha \jg$-span $D_{\alpha,0}$ of $e_1,\dots,e_N$ is an étale $(\varphi_\alpha,\Gamma_\alpha)$-module over $\mathbb{F}_q \bg X_\alpha \jg$ such that $D\cong E_{\Delta}\otimes {\mathbb{F}_q \bg X_\alpha \jg}D_{\alpha,0}$ as $(\varphi_\alpha,\Gamma_\alpha)$-modules. Note, however, that $D_{\alpha,0}$ is not stable under the action of $\varphi_\beta$ and $\Gamma_\beta$ for $\alpha\neq \beta\in \Delta$. By \cite[Proposition II.4.2]{Colmez2} there is a maximal $\mathbb{F}_q\bs X_\alpha\js$-lattice $D^\#_{\alpha,0}\subset D_{\alpha,0}$ such that $\psi_\alpha\colon D^\#_{\alpha,0}\to D^\#_{\alpha,0}$ is onto. We claim that $D^\#_\alpha:=E_\Delta^+[X_{\Delta\setminus\{\alpha\}}^{-1}]D^\#_{\alpha,0}$ satisfies the properties in the statement.

    Since $D^\#_{\alpha,0}$ is finitely generated over $\mathbb{F}_q\bs X_\alpha\js$, $D^\#_\alpha$ is finitely generated over $E_\Delta^+[X_{\Delta\setminus\{\alpha\}}^{-1}]$. Also, for any $\lambda\in E_\Delta^+[X_{\Delta\setminus\{\alpha\}}^{-1}]$ we compute $\psi_\alpha(\varphi_\alpha(\lambda)x)=\lambda\psi_\alpha(x)$ showing that $\psi_\alpha\colon D^\#_\alpha\to D^\#_\alpha$ is onto as so is $\psi_\alpha\colon D^\#_{\alpha,0}\to D^\#_{\alpha,0}$. Further, we have $D^\#_\alpha[X_\alpha^{-1}]=E_\Delta D^\#_{\alpha,0}=E_\Delta D_{\alpha,0}=D$. Now any $x\in D$ can be written as $\lambda y$ for some $\lambda\in E^+_\Delta[X_{\Delta\setminus\{\alpha\}}^{-1}]$ and $y\in D_{0,\alpha}$. Using \cite[Proposition 3.2.5]{PalZabradi} again we obtain that there exists an integer $k>0$ such that $\psi_\alpha^k((1+X_\alpha)^jy)\in D^\#_{\alpha,0}$ for all $j=0,\dots,p^k-1$ since there is a treillis containing $(1+X_\alpha)^jy$ for all $j$. Then we write $\lambda=\sum_{j=0}^{p^k-1}(1+X_\alpha)^j\varphi_\alpha^k(\lambda_j)$ for some $\lambda_0,\dots,\lambda_{p^k-1}\in E^+_\Delta[X_{\Delta\setminus\{\alpha\}}^{-1}]$ so that $$\psi_\alpha^k(x)=\psi_\alpha^k\left(\sum_{j=0}^{p^k-1}(1+X_\alpha)^j\varphi_\alpha^k(\lambda_j)y\right)=\sum_{j=0}^{p^k-1}\lambda_j\psi_\alpha^k((1+X_\alpha)^jy)\in E_\Delta^+[X_{\Delta\setminus\{\alpha\}}^{-1}]D^\#_{\alpha,0}=D^\#_\alpha\ .$$
    Finally if $D_0$ is a finitely generated $E_\Delta^+[X_{\Delta\setminus\{\alpha\}}^{-1}]$-submodule of $D$ with generators $x_1,\dots,x_r$ such that $\psi_\alpha(D_0)=D_0$ then by the previous discussion there is an integer $k\gg 0$ such that $\psi_\alpha^k((1+X_\alpha)^jx_i)\in D^\#_\alpha$ for all $j=0,\dots,p^k-1$ and $i=1,\dots,r$. Hence we deduce $\psi_\alpha^k(\sum_{i=1}^r \lambda_i x_i)\in D^\#_\alpha$ for any $\lambda_1,\dots,\lambda_r\in E_\Delta^+[X_{\Delta\setminus\{\alpha\}}^{-1}]$ whence we obtain $D_0=\psi_\alpha^k(D_0)\subseteq D^\#_\alpha$. Finally, the $E_\Delta^+[X_{\Delta\setminus\{\alpha\}}^{-1}]$-submodules $D^?:=\gamma(D^\#_\alpha), \psi_\beta(D^\#_\alpha), \sum_{j=0}^{p-1}(1+X_\beta)^j\varphi_\beta(D^\#_\alpha)$ are all finitely generated with $\psi_\alpha(D^?)=D^?$ showing that $D^?\subseteq D^\#_\alpha$, ie.\ $D^\#_\alpha$ is stable under the action of $\gamma\in\Gamma_\Delta$, $\varphi_\beta$, and $\psi_\beta$ for all $\beta\neq \alpha\in\Delta$. The last statement follows from the stability under $\varphi_\beta$ and $\psi_\beta$ and the étale property of the action $\varphi_\beta$ on $D$.
\end{proof}

We put $D^\#:=\bigcap_{\alpha\in\Delta}D^\#_\alpha$.

\begin{pro}\label{Dhashprop}
$D^\#\subset D$ is a finitely generated $E_\Delta^+$-submodule stable under $\Gamma_\Delta$ and $\psi_\alpha^D$ for all $\alpha\in\Delta$ such that $D=D^\#[X_\Delta^{-1}]$.
\end{pro}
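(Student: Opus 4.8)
The plan is to deduce everything from Proposition \ref{Dhashalpha} by intersecting over $\alpha \in \Delta$ and controlling the resulting module step by step. First I would record the easy stability statements: since each $D^\#_\alpha$ is stable under $\Gamma_\Delta$ and under $\psi_\beta$ for $\beta \neq \alpha$ (by Proposition \ref{Dhashalpha}), and since trivially $D^\#_\alpha$ is stable under $\psi_\alpha$ because $\psi_\alpha(D^\#_\alpha) = D^\#_\alpha$, it follows that $D^\# = \bigcap_{\alpha \in \Delta} D^\#_\alpha$ is stable under $\Gamma_\Delta$ and under $\psi_\alpha$ for every $\alpha \in \Delta$. This is purely formal. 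The genuine content is: (i) $D^\#$ is finitely generated over $E_\Delta^+$, and (ii) $D = D^\#[X_\Delta^{-1}]$.

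For finite generation, the natural approach is to show $D^\#$ is contained in a finitely generated $E_\Delta^+$-module and then invoke noetherianity of $E_\Delta^+ = \Fq\bs X_\alpha \mid \alpha \in \Delta \js$. Here I would use the explicit description from Proposition \ref{Dhashalpha}: $D^\#_\alpha = E_\Delta^+[X_{\Delta\setminus\{\alpha\}}^{-1}] D^\#_{\alpha,0}$ where $D^\#_{\alpha,0}$ is a finitely generated $\Fq\bs X_\alpha\js$-module. The key observation is that $D^\#_\alpha$ is, by construction, an $E_\Delta^+[X_{\Delta\setminus\{\alpha\}}^{-1}]$-module, so an element of $D^\#$ lies simultaneously in $E_\Delta^+[X_{\Delta\setminus\{\alpha\}}^{-1}] \cdot (\text{f.g. over }\Fq\bs X_\alpha\js)$ for \emph{each} $\alpha$. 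One then wants to intersect these: for a fixed finite set of $\Fq\bs X_\alpha\js$-generators of $D^\#_{\alpha,0}$, the poles in the $X_\beta$ ($\beta \neq \alpha$) directions are "absorbed" when we also demand membership in $D^\#_\beta$, which has \emph{no} poles in the $X_\beta$ direction. Concretely, I expect the argument to run: pick a free $E_\Delta$-basis of $D$ adapted so that $D^\#_{\alpha,0} \subseteq \sum_i \Fq\bs X_\alpha\js X_\alpha^{-m} e_i$ for some $m$; then $D^\#_\alpha \subseteq \sum_i E_\Delta^+[X_{\Delta\setminus\{\alpha\}}^{-1}] e_i$ after clearing a bounded power of $X_\alpha$, and intersecting over all $\alpha$ forces the $X_\beta$-denominators to vanish, landing $D^\#$ inside $X_\Delta^{-m'} \sum_i E_\Delta^+ e_i$ for some uniform $m'$. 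Noetherianity of $E_\Delta^+$ then finishes (i).

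For (ii), that $D = D^\#[X_\Delta^{-1}]$, the plan is to use the second assertion of Proposition \ref{Dhashalpha}: for any $x \in D$ and any $\alpha$, some $\psi_\alpha^{k_\alpha}(x) \in D^\#_\alpha$. Applying this successively for the different simple roots—using that $\psi_\alpha$ commutes with $\psi_\beta$ and that $D^\#_\beta$ is $\psi_\alpha$-stable for $\alpha \neq \beta$—one produces a single composite $\psi_S^{\,k}(x) \in \bigcap_\alpha D^\#_\alpha = D^\#$ for $S = \Delta$ and $k$ large. Since $x = \varphi_S^k(\psi_S^k(x)) + (\text{terms with }(1+X_{\alpha_j})^{i_j}\text{ factors})$ and each $\varphi_\alpha(D^\#) \subseteq D^\#$-type relation together with $X_\Delta = \prod X_\alpha$ being invertible in $D$, one checks $x \in D^\#[X_\Delta^{-1}]$; alternatively, more cleanly, $D^\#[X_\alpha^{-1}] \supseteq D^\#_\alpha[X_\alpha^{-1}] \cap (\cdots)$, but the slickest route is: $D \supseteq D^\#[X_\Delta^{-1}] \supseteq \psi_\Delta^k(x)$-generated submodule which contains $x$ after inverting $X_\Delta$, because $\varphi_S^k$ followed by inverting $X_\Delta$ recovers everything. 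I expect the main obstacle to be step (i), specifically the bookkeeping showing that the denominators in the various $X_\beta$ directions coming from different $D^\#_\alpha$'s are jointly bounded when one intersects; this is where one must be careful that "finitely generated over $E_\Delta^+[X_{\Delta\setminus\{\alpha\}}^{-1}]$" for each $\alpha$ combines to "finitely generated over $E_\Delta^+[X_\Delta^{-1}]$ with bounded pole order", and then that the $\psi$-stability (or direct containment in the other $D^\#_\beta$) kills the poles. The commutation of all the $\psi_\alpha$ among themselves and with the $\varphi_\beta$, $\gamma_\beta$ ($\beta \neq \alpha$), already established in the discussion preceding Proposition \ref{Dhashalpha} and in Proposition \ref{Dhashalpha} itself, is what makes the intersection well-behaved.
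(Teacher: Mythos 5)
Your stability argument and your plan for finite generation follow the paper's own proof: express the finitely many $E_\Delta^+[X_{\Delta\setminus\{\alpha\}}^{-1}]$-generators of each $D^\#_\alpha$ in one fixed $E_\Delta$-basis of $D$, clear a bounded power of $X_\alpha$, and use $\bigcap_{\alpha\in\Delta}E_\Delta^+[X_{\Delta\setminus\{\alpha\}}^{-1}]=E_\Delta^+$ together with noetherianity of $E_\Delta^+$. (You do not need a basis simultaneously ``adapted'' to every $\alpha$; any basis of $D$ works, exactly as in the paper.)

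The genuine gap is in your argument for $D=D^\#[X_\Delta^{-1}]$. First, the reconstruction of $x$ from $\psi_S^k(x)$ is incomplete: $x=\sum_{\bf i}\prod_j(1+X_{\alpha_j})^{i_j}\varphi_S^k\bigl(\psi_S^k(\prod_j(1+X_{\alpha_j})^{-i_j}x)\bigr)$, so you must put \emph{all} the components $\psi_S^k(\prod_j(1+X_{\alpha_j})^{-i_j}x)$ into $D^\#$, not only the one with ${\bf i}={\bf 0}$, and with a single $k$ (the set of translates grows with $k$, so uniformity needs an argument, e.g.\ the treillis trick used in the proof of Proposition \ref{Dhashalpha}). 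Second, and more seriously, your final step invokes ``$\varphi_\alpha(D^\#)\subseteq D^\#$-type relations''. Proposition \ref{Dhashalpha} only gives stability of $D^\#_\alpha$ under $\varphi_\beta$ for $\beta\neq\alpha$, and $D^\#$ is in general \emph{not} $\varphi_\alpha$-stable: already in one variable, for the trivial module $D=E$ one has $D^\#=X^{-1}\Fq\bs X\js$, while $\varphi(X^{-1})=((1+X)^p-1)^{-1}$ has a pole of order $p$. So ``one checks $x\in D^\#[X_\Delta^{-1}]$'' does not follow as written; one could repair it by proving something like $\varphi_\alpha(D^\#_\alpha)\subseteq X_\alpha^{-m}D^\#_\alpha$, but that is an extra argument you have not supplied. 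The paper's proof of this part avoids $\varphi$ and $\psi$ altogether: since $D=D^\#_\alpha[X_\alpha^{-1}]$, for each $\alpha$ there is $k_\alpha$ with $X_\alpha^{k_\alpha}x\in D^\#_\alpha$, and because $D^\#_\alpha$ is a module over $E_\Delta^+[X_{\Delta\setminus\{\alpha\}}^{-1}]$ it is stable under multiplication by the remaining variables, so $X_\Delta^{\max_\alpha k_\alpha}x\in D^\#_\alpha$ for every $\alpha$ and hence $X_\Delta^{\max_\alpha k_\alpha}x\in D^\#$.
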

\begin{proof}
By Proposition \ref{Dhashalpha}, $D^\#_\beta$ is stable under $\psi_\alpha^D$ and $\Gamma_\Delta$ for all $\alpha,\beta\in \Delta$ whence so is their intersection. Further, if $x\in D$ is arbitrary then we find an integer $k_\alpha\geq 0$ such that $X_\alpha^{k_\alpha}x\in D^\#_\alpha$ so that $X_\Delta^{\max_\alpha k_\alpha}x\in D^\#$. Finally, pick any basis $e_1,\dots,e_N$ of $D$. Now $D^\#_\alpha$ is finitely generated over $E_\Delta^+[X_{\Delta\setminus\{\alpha\}}^{-1}]$, say by $x_1,\dots,x_r$. Then we may write $x_j=\sum_{i=1}^N \lambda_{i,j}e_i$ for each $j\in \{1,\dots,r\}$ and $\lambda_{i,j}\in E_\Delta=E_\Delta^+[X_\Delta^{-1}]$. So there is an integer $k_\alpha\geq 0$ such that $X_\alpha^{k_\alpha}\lambda_{i,j}\in E_\Delta^+[X_{\Delta\setminus\{\alpha\}}^{-1}]$ for all $i$ and $j$. So if $x\in D^\#$ then writing $x=\sum_{i=1}^N\lambda_i e_i$ we have $X_\alpha^{k_\alpha}\lambda_i\in E_\Delta^+[X_{\Delta\setminus\{\alpha\}}^{-1}]$ whence $X_\Delta^{\max_\alpha k_\alpha}\lambda_i\in \bigcap_{\alpha\in\Delta} E_\Delta^+[X_{\Delta\setminus\{\alpha\}}^{-1}]=E_\Delta^+$. This shows $D^\#$ is contained in the $E_\Delta^+$-submodule generated by $\frac{1}{X_\Delta^{\max_\alpha k_\alpha}}e_1,\dots,\frac{1}{X_\Delta^{\max_\alpha k_\alpha}}e_N$ whence $D^\#$ is finitely generated as $E_\Delta^+$ is noetherian.
\end{proof}

\begin{rem}
The relationship between our construction of $D^\#$ and that of \cite{EGK} is not clear.
\end{rem}

We also need the analogue of Colmez' \cite[section II.2]{Colmez2} $D^{\nr}$ in this multivariable setting. Let $S\subseteq \Delta$ be any subset and put $D_S^{\nr}:=\bigcap_{k\geq 1}\varphi_S^k(D)$.

The multivariable analogue of $E^{sep}$ is defined \cite{MultVarGal} as
\begin{align*}
E^{sep}_\Delta:=E^{sep}_{\alpha_1}\otimes_{E_{\alpha_1}}\left(E^{sep}_{\alpha_2}\otimes_{E_{\alpha_2}}\left(\cdots(E^{sep}_{\alpha_{n-1}}\otimes_{E_{\alpha_{n-1}}}E_\Delta)\right)\right) \cong \varinjlim_{E_\alpha\leq E'_\alpha\leq E^{sep}_\alpha,\forall\alpha\in\Delta}E'_\Delta
\end{align*}
where we put (see \cite[Lemma 3.2]{MultVarGal}) 
\begin{align*}
E'_\Delta\cong E'_{\alpha_1}\otimes_{E_{\alpha_1}}\left(E'_{\alpha_2}\otimes_{E_{\alpha_2}}\left(\cdots(E'_{\alpha_{n-1}}\otimes_{E_{\alpha_{n-1}}}E_\Delta)\right)\right)
\end{align*}
and $\Delta=\{\alpha_1,\dots,\alpha_{n-1}\}$. This does not depend on the ordering of the finite set $\Delta$. The quasi-inverse equivalences of categories $\mathbb{D}_\Delta$ and $\mathbb{V}_\Delta$ \cite{MultVarGal,CKZ} between representations of $\GQpD$ over $\Fp$ and multivariable $(\varphi,\Gamma)$-modules over $E_\Delta$ are given as
\begin{align*}
\mathbb{D}_\Delta(V):=(E^{sep}_\Delta\otimes_{\mathbb{F}_p}V)^{\HQpD}\qquad\text{and}\qquad
\mathbb{V}_\Delta(D):=\bigcap_{\alpha\in\Delta}\left(E_\Delta^{sep}\otimes_{E_\Delta}D\right)^{\varphi_\alpha=\id}\ .
\end{align*}

\begin{lem}\label{intersectphiEsep}
For any subset $S\subseteq \Delta$ we have $$\bigcap_{k\geq 1}\varphi_S^k(E^{sep}_\Delta)=F^{sep}_{\alpha_1}\otimes_{F_{\alpha_1}}\left(F^{sep}_{\alpha_2}\otimes_{F_{\alpha_2}}\left(\cdots(F^{sep}_{\alpha_{n-1}}\otimes_{F_{\alpha_{n-1}}}E_{\Delta\setminus S})\right)\right)$$
where we put $F_{\alpha_j}:=\Fp$ (hence $F^{sep}_{\alpha_j}=\overline{\Fp}$) if $\alpha_j$ is in $S$ and $F_{\alpha_j}:=E_{\alpha_j}$ if $\alpha_j\in\Delta\setminus S$ ($j=1,\dots,n-1$).
\end{lem}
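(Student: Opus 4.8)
The plan is to compute $\bigcap_{k\geq 1}\varphi_S^k(E^{sep}_\Delta)$ by reducing to the well-known one-variable computation and then tensoring it up carefully. Recall that $\varphi_S=\prod_{\alpha\in S}\varphi_\alpha$, and that the various $\varphi_\alpha$ ($\alpha\in\Delta$) commute with each other, so $\varphi_S^k=\prod_{\alpha\in S}\varphi_\alpha^k$; hence $\bigcap_{k\geq 1}\varphi_S^k(E^{sep}_\Delta)=\bigcap_{k\geq 1}\bigl(\prod_{\alpha\in S}\varphi_\alpha^k\bigr)(E^{sep}_\Delta)$. The starting point in the single-variable case is the classical fact (Colmez, \cite{Colmez2}, section II.2) that $\bigcap_{k\geq 1}\varphi_\alpha^k(E^{sep}_\alpha)=\overline{\Fp}$, i.e. the intersection of the images of all iterates of Frobenius on the separable closure is exactly the algebraic closure of $\Fp$ sitting inside; equivalently, the $\varphi_\alpha$-invariants-up-to-higher-divisibility collapse onto the maximal unramified part. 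Dually, $\bigcap_{k\geq 1}\varphi_\alpha^k(E_\alpha)=\Fp$ since $E_\alpha=\Fp\bg X_\alpha\jg$ and $\varphi_\alpha(X_\alpha)=(1+X_\alpha)^p-1$ has positive valuation.

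First I would treat the case $|S|=1$, say $S=\{\alpha\}$. Using the tensor-factor description $E^{sep}_\Delta\cong E^{sep}_{\alpha}\otimes_{E_{\alpha}}E_{\Delta}$ (reorder $\Delta$ so that $\alpha=\alpha_1$; this is harmless since the construction is independent of the ordering), the operator $\varphi_\alpha^k$ acts only on the $E^{sep}_\alpha$-factor (semilinearly) and as the identity on the complementary $E_{\Delta\setminus\{\alpha\}}$-directions. The key point is that $E^{sep}_\Delta$ is a directed union of the finite free $E_\Delta$-modules $E'_\Delta$, and on each such finite layer $\varphi_\alpha^k$ becomes eventually "as divisible as we like" in the $X_\alpha$-direction while being invertible in the other variables; so an element lies in $\bigcap_k\varphi_\alpha^k(E^{sep}_\Delta)$ iff its $E^{sep}_\alpha$-component lies in $\bigcap_k\varphi_\alpha^k(E^{sep}_\alpha)=\overline{\Fp}$. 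This gives $\bigcap_k\varphi_\alpha^k(E^{sep}_\Delta)=\overline{\Fp}\otimes_{\Fp}E_{\Delta\setminus\{\alpha\}}$, which matches the claimed formula with $F_\alpha=\Fp$, $F^{sep}_\alpha=\overline{\Fp}$, and all other factors equal to $E_{\alpha_j}$. To make the "eventually divisible" statement precise I would invoke \cite[Proposition 3.2.5]{PalZabradi} (already used in the proof of Proposition \ref{Dhashalpha}): pass to an $E_\alpha$-basis on which $\varphi_\alpha$ has matrix over $\Fp\bg X_\alpha\jg$, and note that a $\varphi_\alpha$-étale module over $E_\alpha$ has its $\varphi_\alpha^k$-images shrinking into arbitrarily high powers of $X_\alpha$ times the standard lattice.

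Next I would bootstrap to general $S$ by induction on $|S|$, using the nested tensor structure. Write $\Delta=\{\alpha_1,\dots,\alpha_{n-1}\}$ with $S=\{\alpha_1,\dots,\alpha_m\}$. Since $\varphi_S^k=\varphi_{\alpha_1}^k\circ\varphi_{S\setminus\{\alpha_1\}}^k$ and these commute, and since $E^{sep}_\Delta\cong E^{sep}_{\alpha_1}\otimes_{E_{\alpha_1}}(E^{sep}_{\alpha_2}\otimes_{E_{\alpha_2}}(\cdots))$, the operator $\varphi_{\alpha_1}^k$ acts only on the outermost factor and the remaining $\varphi_{\alpha_j}^k$ ($2\leq j\leq m$) act on the inner factors. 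Applying the $|S|=1$ computation to the outermost variable gives $\bigcap_k\varphi_{\alpha_1}^k$ of the whole thing is $\overline{\Fp}\otimes_{\Fp}(E^{sep}_{\alpha_2}\otimes_{E_{\alpha_2}}(\cdots))$; then intersect further over the inner Frobenii by the inductive hypothesis applied to the inner tensor construction (whose outermost remaining variable is $\alpha_2$, etc.). Some care is needed that the intersections over different variables can be taken "one at a time" — this is fine because the images $\varphi_{\alpha_i}^k(-)$ for distinct $i$ are cofinal among finite intersections and each $\varphi_{\alpha_i}$ preserves the other factors, so $\bigcap_k\bigl(\bigcap_i\varphi_{\alpha_i}^k\bigr)(E^{sep}_\Delta)=\bigcap_i\bigl(\bigcap_k\varphi_{\alpha_i}^k(E^{sep}_\Delta)\bigr)$, and one commutes the intersections through the flat tensor products. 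The result is $F^{sep}_{\alpha_1}\otimes_{F_{\alpha_1}}(F^{sep}_{\alpha_2}\otimes_{F_{\alpha_2}}(\cdots(F^{sep}_{\alpha_{n-1}}\otimes_{F_{\alpha_{n-1}}}E_{\Delta\setminus S})))$ with $F_{\alpha_j}=\Fp$ for $\alpha_j\in S$ and $F_{\alpha_j}=E_{\alpha_j}$ otherwise, exactly as claimed; note $E_{\Delta\setminus S}$ appears at the bottom because for $\alpha_j\notin S$ no Frobenius power is applied in that direction, so $E^{sep}_{\alpha_j}$ is replaced by $E_{\alpha_j}$ and the bottom ring $E_\Delta$ collapses to $E_{\Delta\setminus S}$.

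The main obstacle I anticipate is the interchange-of-intersections step and the "eventual divisibility" claim over the infinite-dimensional ring $E^{sep}_\Delta$: one must check that an element of $E^{sep}_\Delta$, which lives in some finite layer $E'_\Delta$, can be tested for membership in $\bigcap_k\varphi_S^k(E^{sep}_\Delta)$ already within a finite layer, and that the intersection commutes past the (flat but infinite) extensions $E'_\alpha/E_\alpha$. The way I would handle this is to note that $E'_\Delta$ is finite free over $E_\Delta$, that $\varphi_S$ restricted to a suitable finite layer is again étale (the extensions $E'_\alpha/E_\alpha$ are finite separable, so Frobenius remains étale there), and then to run the one-variable divisibility estimate uniformly over a fixed basis — so that membership in all $\varphi_S^k$-images forces the relevant coordinates into $\bigcap_k\varphi_S^k$ of the base rings, which we have already identified. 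Flatness of localization and of the separable extensions then lets us pull the intersection outside the tensor products.
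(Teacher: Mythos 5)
There is a genuine gap, and it starts with a misreading of the statement. For $\alpha_j\in\Delta\setminus S$ the lemma sets $F_{\alpha_j}=E_{\alpha_j}$, so the corresponding factor is $F^{sep}_{\alpha_j}\otimes_{F_{\alpha_j}}=E^{sep}_{\alpha_j}\otimes_{E_{\alpha_j}}$: the separable closures in the directions \emph{outside} $S$ are retained, not discarded. Your answer for $|S|=1$, namely $\bigcap_{k}\varphi_\alpha^k(E^{sep}_\Delta)=\overline{\Fp}\otimes_{\Fp}E_{\Delta\setminus\{\alpha\}}$ ``with all other factors equal to $E_{\alpha_j}$'', is therefore not the right-hand side of the lemma, and it is in fact false: any $x\in E^{sep}_\beta\subset E^{sep}_\Delta$ with $\beta\neq\alpha$ lies in $\varphi_\alpha^k(E^{sep}_\Delta)$ for every $k$ (the partial Frobenius $\varphi_\alpha$ is the identity on that tensor factor), but such $x$ does not lie in $\overline{\Fp}\otimes_{\Fp}E_{\Delta\setminus\{\alpha\}}$. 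The source of the error is the ``tensor-factor description'' you start from, $E^{sep}_\Delta\cong E^{sep}_\alpha\otimes_{E_\alpha}E_\Delta$: this is not the definition of $E^{sep}_\Delta$, which is the nested tensor product containing $E^{sep}_{\alpha_j}$ for \emph{every} $j$. The same misreading propagates through your induction, so the final formula you arrive at (all non-$S$ separable closures collapsed to $E_{\alpha_j}$) proves a different, incorrect identity.

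Beyond the misidentification, the step ``$\varphi_\alpha^k$ acts only on the $E^{sep}_\alpha$-factor and as the identity in the other directions'' is too naive even in the corrected setting: the tensor product is taken over $E_\alpha$, and $\varphi_\alpha$ also acts on the innermost ring $E_\Delta$ through the variable $X_\alpha$, so one cannot simply intersect factorwise without an argument about how elements of the tensor product can lie in the images $\varphi_S^k(E^{sep}_\Delta)$. The paper's proof sidesteps both issues by working with the finite layers: by the concrete description in \cite[section 3.1]{MultVarGal} each $E'_\Delta$ is a Laurent series ring $(\bigotimes_{\alpha\in\Delta,\Fp}\mathbb{F}_{q_\alpha})\bs X'_\alpha,\alpha\in\Delta\js[X_\Delta^{-1}]$ over a finite ring, and one expands an element in the monomials $\prod_{\alpha\in\Delta\setminus S}{X'_\alpha}^{j_\alpha}$ with coefficients in $E'_S$; membership in $\bigcap_k\varphi_S^k$ is then tested coefficientwise, where Colmez's one-variable result $\bigcap_k\varphi^k(E^{sep})=\overline{\Fp}$ applies, and the non-$S$ directions (both the variables $X'_\beta$ and the finite residue extensions, hence in the limit the factors $E^{sep}_\beta$, $\beta\notin S$) survive untouched. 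If you want to salvage your approach, you would need to (i) use the correct nested description of $E^{sep}_\Delta$, (ii) prove the one-variable-at-a-time statement in the form ``$\bigcap_k\varphi_\alpha^k$ replaces $E^{sep}_\alpha\otimes_{E_\alpha}$ by $\overline{\Fp}\otimes_{\Fp}$ and removes the variable $X_\alpha$, leaving everything else intact'', which already requires the coefficientwise argument above rather than a formal factorwise intersection.
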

\begin{proof}
This follows from the concrete description (see \cite[section 3.1]{MultVarGal}) of $E'_\Delta$ as
\begin{align*}
E'_\Delta\cong (\bigotimes_{\alpha\in\Delta,\mathbb{F}_p}\mathbb{F}_{q_\alpha})\bs X'_\alpha,\alpha\in \Delta\js[X_\Delta^{-1}]
\end{align*}
by applying the one-variable analogue $\bigcap_{k\geq 1}\varphi^k(E^{sep})=\overline{\Fp}$ (see \cite[Prop.\ II.2.2(i)]{Colmez2}) for the coefficient of the monomial $\prod_{\alpha\in \Delta\setminus S}{X'_\alpha}^{j_\alpha}$ as an element in $E'_{S}$ for all indices $(j_\alpha)_{\alpha\in\Delta\setminus S}\in\mathbb{Z}^{\Delta\setminus S}$.
\end{proof}

\begin{pro}\label{Dnrmultvar}
$D_S^{\nr}$ is a free module over $E_{\Delta\setminus S}$ of rank $\dim_{\Fq}\mathbb{V}_\Delta(D)^{G'_{\Qp,S}}$ for any subset $S\subseteq\Delta$ where $G'_{\Qp,S}\leq G_{\Qp,S}$ is the commutator subgroup. Moreover, the map $E_\Delta\otimes_{E_{\Delta\setminus S}}D^{\nr}_S\to D$ induced by the inclusion $D^{\nr}_S\hookrightarrow D$ is always injective and it is bijective if and only if $G_{\Qp,S}$ acts on $\mathbb{V}_\Delta(D)$ via its maximal abelian quotient $G_{\Qp,S}^{ab}$.
\end{pro}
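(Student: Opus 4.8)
The plan is to reduce everything to the separable closure side via the equivalence $\mathbb{D}_\Delta,\mathbb{V}_\Delta$ and then apply Lemma \ref{intersectphiEsep}. First I would fix $V:=\mathbb{V}_\Delta(D)$, so that $D\cong\mathbb{D}_\Delta(V)=(E^{sep}_\Delta\otimes_{\Fq}V)^{\HQpD}$, and observe that $\varphi_S$ acts diagonally on $E^{sep}_\Delta\otimes_{\Fq}V$ (trivially on $V$), so it commutes with taking $\HQpD$-invariants. Hence $\varphi_S^k(D)=\bigl(\varphi_S^k(E^{sep}_\Delta)\otimes_{\Fq}V\bigr)^{\HQpD}$ for each $k$, and since $\HQpD$-invariants and the descending intersection over $k$ both commute with the (finite-dimensional) tensor by $V$ and are left-exact, I get
$$D^{\nr}_S=\Bigl(\bigl(\textstyle\bigcap_{k\geq1}\varphi_S^k(E^{sep}_\Delta)\bigr)\otimes_{\Fq}V\Bigr)^{\HQpD}.$$
By Lemma \ref{intersectphiEsep} the ring in the middle is $R_S:=F^{sep}_{\alpha_1}\otimes_{F_{\alpha_1}}(\cdots\otimes F^{sep}_{\alpha_{n-1}}\otimes_{F_{\alpha_{n-1}}}E_{\Delta\setminus S})$, where the factors indexed by $\alpha\in S$ contribute $\overline{\Fp}$ with the full Galois action of $\GQpa$, while the factors indexed by $\alpha\in\Delta\setminus S$ contribute $E^{sep}_\alpha$, i.e.\ the Galois action there is the usual $(\varphi,\Gamma)$-module action of $\GQpa$ on $E^{sep}_\alpha$ over $E_\alpha$.

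Next I would compute $(R_S\otimes_{\Fq}V)^{\HQpD}$ in two stages. The key structural point is that $R_S$ already has $E_{\Delta\setminus S}$-coefficients built in, and $\HQpD=\prod_{\alpha}\HQpa$ acts factor by factor. Taking invariants under $\prod_{\alpha\in\Delta\setminus S}\HQpa$ turns $E^{sep}_\alpha\otimes V$ into the multivariable $(\varphi,\Gamma)$-module $\mathbb{D}_{\Delta\setminus S}$ of $V$ restricted to $\GQpDa$-type groups — concretely one recovers a free $E_{\Delta\setminus S}$-module. Taking further invariants under $\prod_{\alpha\in S}\HQpa$, and using the classical one-variable fact $(\overline{\Fp}\otimes_{\Fq}W)^{H_{\Qp,\alpha}}$ picks out the part of $W$ on which $\HQpa$, hence its image in the Galois group modulo the commutator, acts — I arrive at the subspace of $V$ fixed by the commutator subgroups $G'_{\Qp,\alpha}$ for $\alpha\in S$, i.e.\ by $G'_{\Qp,S}$ (these commutator subgroups of the distinct factors commute with each other and with the whole complementary product). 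This yields that $D^{\nr}_S$ is free over $E_{\Delta\setminus S}$ of rank $\dim_{\Fq}V^{G'_{\Qp,S}}$, which is the first assertion. I expect the main obstacle to be bookkeeping the order of the tensor factors and checking that the Galois-invariance operations in different slots genuinely commute and are compatible with the description of $E'_\Delta$ from \cite[section 3.1]{MultVarGal}; this is the analogue of the coefficient-wise argument in the proof of Lemma \ref{intersectphiEsep} and should go through the same way, but it is where one must be careful that $\HQpa$ in one factor acts trivially on the $E^{sep}_\beta$ sitting in another factor.

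For the last two assertions: the inclusion $D^{\nr}_S\hookrightarrow D$ is a map of $E_{\Delta\setminus S}$-modules, so it extends to $E_\Delta\otimes_{E_{\Delta\setminus S}}D^{\nr}_S\to D$. Injectivity: since everything embeds $\HQpD$-equivariantly into $E^{sep}_\Delta\otimes_{\Fq}V$, it suffices to check injectivity of $E_\Delta\otimes_{E_{\Delta\setminus S}}R_S\to E^{sep}_\Delta$, which follows because $E^{sep}_\Delta$ is, for each $\alpha\in S$, free (indeed faithfully flat) over $E_\alpha$ — so tensoring up the $\overline{\Fp}$-slot to the $E^{sep}_\alpha$-slot is injective — and the other slots are unchanged; I would phrase this via the description of $E'_\Delta$ in Lemma \ref{intersectphiEsep} monomial-by-monomial, so that it reduces to the one-variable statement that $E_\alpha\otimes_{\Fq}\overline{\Fp}\hookrightarrow E^{sep}_\alpha$, which is clear. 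Bijectivity: by rank count over $E_\Delta$ (both sides are finite free once we know freeness), the map is an isomorphism iff $\dim_{\Fq}V^{G'_{\Qp,S}}=\dim_{\Fq}V$, i.e.\ iff $G'_{\Qp,S}$ acts trivially on $V$, which is exactly the condition that $G_{\Qp,S}$ act through its maximal abelian quotient $G_{\Qp,S}^{ab}$. (One should note that $D$ is free over $E_\Delta$ of rank $\dim_{\Fq}V$ since $\mathbb{D}_\Delta$ lands in étale $(\varphi,\Gamma)$-modules, so the rank comparison is legitimate.) This completes the proof.
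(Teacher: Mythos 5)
Most of your argument retraces the paper's own proof: reduce to the $E^{sep}_\Delta$-side via $\mathbb{D}_\Delta/\mathbb{V}_\Delta$, commute $\bigcap_k\varphi_S^k$ with taking $\HQpD$-invariants, invoke Lemma \ref{intersectphiEsep}, and then use local Kronecker--Weber (the action of $\HQpa$ on $\overline{\Fp}$ factors through $\HQpa/G'_{\Qp,\alpha}\cong\Gal(\overline{\Fp}/\Fp)$) together with Hilbert 90 and the fact that $\mathbb{D}_{\Delta\setminus S}$ preserves rank; reordering the invariants ($\Delta\setminus S$ first, then $S$) is harmless since the various factors commute, though at the intermediate stage your coefficients are infinite-dimensional and a colimit remark is needed. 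Your injectivity step is also acceptable in spirit, but the reduction to injectivity of $E_\Delta\otimes_{E_{\Delta\setminus S}}R_S\to E^{sep}_\Delta$ silently uses flatness of $E_\Delta$ over $E_{\Delta\setminus S}$ (true, but it must be said); the paper instead gets injectivity directly from the Hilbert-90 isomorphism $(\bigotimes_{\alpha\in S,\Fp}\overline{\Fp})\otimes_{\Fp}\bigl((\bigotimes_{\alpha\in S,\Fp}\overline{\Fp})\otimes_{\Fp}V^{G'_{\Qp,S}}\bigr)^{\Gamma_{\nr,S}}\cong(\bigotimes_{\alpha\in S,\Fp}\overline{\Fp})\otimes_{\Fp}V^{G'_{\Qp,S}}$ after faithfully flat base change.

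The genuine gap is in the bijectivity criterion. You assert that, once $G_{\Qp,S}$ acts through $G_{\Qp,S}^{ab}$, the map $E_\Delta\otimes_{E_{\Delta\setminus S}}D^{\nr}_S\to D$ is bijective ``by rank count'' because it is an injective map of free $E_\Delta$-modules of the same rank. This principle fails over $E_\Delta$ as soon as $|\Delta|\geq 2$: then $E_\Delta$ is not a field (it is a localization of a power series ring in several variables, of positive Krull dimension), and an injective map between free modules of equal finite rank need not be onto --- multiplication by any nonunit nonzerodivisor of $E_\Delta$ on $E_\Delta$ itself is a counterexample to the inference you are making. Equal ranks only tells you that the cokernel $C$ is a torsion $E_\Delta$-module, and one needs an extra input to kill it. The paper supplies exactly this: the map is $\Gamma_\Delta$-equivariant, so $C$ carries a semilinear $\Gamma_\Delta$-action, its global annihilator is therefore a nonzero $\Gamma_\Delta$-invariant ideal of $E_\Delta$, and by \cite[Prop.\ 2.1]{MultVar} such an ideal is the unit ideal, whence $C=0$. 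Without this ingredient (or some equivalent, e.g.\ an étaleness/determinant argument) the ``if'' direction of the last assertion is not proved; the ``only if'' direction by rank comparison is fine.
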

\begin{proof}
This is completely analogous to the one-variable case \cite[Prop.\ II.2.2(i)]{Colmez2}. By the local Kronecker--Weber Theorem $G'_{\Qp,\alpha}$ is a normal subgroup in $H_{\Qp,\alpha}$ with quotient $\Gamma_{\nr,\alpha}:=H_{\Qp,\alpha}/G'_{\Qp,\alpha}\cong \Gal(\overline{\Fp}/\Fp)$. Put $\Gamma_{\nr,S}:=\prod_{\alpha\in S}\Gamma_{\nr,\alpha}$, $V:=\mathbb{V}_\Delta(D)$ and compute
\begin{align*}
\bigcap_{k\geq 1}\varphi_S^k(D)=\bigcap_{k\geq 1}\varphi_S^k(E^{sep}_\Delta\otimes_{\mathbb{F}_p}V)^{\HQpD}=\left(\bigcap_{k\geq 1}\varphi_S^k(E^{sep}_\Delta)\otimes_{\mathbb{F}_p}V\right)^{\HQpD}=\\
=\left(E^{sep}_{\Delta\setminus S} \otimes_{\mathbb{F}_p}\left((\bigotimes_{\alpha\in S,\Fp}\overline{\Fp})\otimes_{\Fp}V^{G'_{\Qp,S}}\right)^{\Gamma_{\nr,S}}\right)^{H_{\Qp,\Delta\setminus S}}=\mathbb{D}_{\Delta\setminus S}\left(\left((\bigotimes_{\alpha\in S,\Fp}\overline{\Fp})\otimes_{\Fp}V^{G'_{\Qp,S}}\right)^{\Gamma_{\nr,S}}\right)
\end{align*}
using Lemma \ref{intersectphiEsep}. Finally, 
\begin{align*}
\left((\bigotimes_{\alpha\in S,\Fp}\overline{\Fp})\otimes_{\Fp}V^{G'_{\Qp,S}}\right)^{\Gamma_{\nr,S}}
\end{align*}
is a representation of $G_{\Qp,\Delta\setminus S}$ of dimension $\dim_{\Fp}V^{G'_{\Qp,S}}$ over $\Fp$ by Hilbert Theorem 90. The first part follows since $\mathbb{D}_{\Delta\setminus S}$ is an equivalence of categories preserving rank \cite[Thm.\ 3.15]{MultVarGal}. The injectivity of $E_\Delta\otimes_{E_{\Delta\setminus S}}D^{\nr}_S\to D$ also follows from the isomorphism
\begin{align*}
(\bigotimes_{\alpha\in S,\Fp}\overline{\Fp})\otimes_{\Fp}\left((\bigotimes_{\alpha\in S,\Fp}\overline{\Fp})\otimes_{\Fp}V^{G'_{\Qp,S}}\right)^{\Gamma_{\nr,S}}\cong (\bigotimes_{\alpha\in S,\Fp}\overline{\Fp})\otimes_{\Fp}V^{G'_{\Qp,S}}
\end{align*}
obtained also by Hilbert Theorem 90.

Finally, if $G_{\Qp,S}$ acts on $\mathbb{V}_\Delta(D)$ via its quotient $G_{\Qp,S}^{ab}$, we have $V^{G_{\Qp,S}'}=V$. Hence the rank of $D$ equals the rank of $E_\Delta\otimes_{E_{\Delta\setminus S}}D^{\nr}_S$ as free modules over $E_\Delta$ by Proposition \ref{Dnrmultvar}. Hence the cokernel $C$ of the map $E_\Delta\otimes_{E_{\Delta\setminus S}}D^{\nr}_S\hookrightarrow D$ is a torsion $E_\Delta$-module. Therefore the global annihilator of $C$ is a nonzero $\Gamma_\Delta$-invariant ideal in $E_\Delta$, so $C=0$ by \cite[Prop.\ 2.1]{MultVar}.
\end{proof}

\subsection{Lattices for noncommutative $(\varphi_\Delta,\Gamma_\Delta)$-modules}

Recall that $N_{\Delta,0}=N_0/H_{\Delta,0}$ decomposes as a direct product of subgroups $N_0^{(\alpha)}$ ($\alpha\in \Delta$). The set of simple roots $\Delta$ of $G=\GL_n(\Qp)$ can be described as $\Delta=\{\alpha_1,\dots,\alpha_{n-1}\}$ where $\alpha_j(\diag(a_1,\dots,a_n))=\frac{a_i}{a_{i+1}}$. Therefore its Iwasawa-algebra $\Fq\bs N_{\Delta,0}\js$ with coefficients in the finite field $\Fq$ is isomorphic to the power series ring $\Fq\bs X_{\alpha_1},\dots,X_{\alpha_{n-1}}\js=E_\Delta^+$. In particular, we may identify $E_\Delta=E_\Delta^+[X_\Delta^{-1}]$ with a localization $\mathbb{F}_q\bs N_{\Delta,0}\js [X_\Delta^{-1}]=:\mathbb{F}_q\bg N_{\Delta,0}\jg$ of the Iwasawa algebra. On the other hand, $T_+$ consists diagonal matrices $t\in T$ with $\alpha_j(t)\in\Zp\setminus\{0\}$ for all $j=1,\dots,n-1$. The elements $t_j:=\diag(\underbrace{p,\dots,p}_j,\underbrace{1,\dots,1}_{n-j})$ satisfy $\alpha_j(t_j)=p$ and $\alpha_i(t_j)=1$ for $i\neq j\in\{1,\dots,n-1\}$. So under the identification $E_\Delta\cong \Fq\bg N_{\Delta,0}\jg$, the operator $\varphi_{\alpha_j}$ corresponds to the conjugation by $t_j$. Further, for any element $\gamma=(\gamma_1,\dots,\gamma_{n-1})\in \Gamma_\Delta=\prod_{\alpha\in\Delta}\Gamma_\alpha$ we put $t_\gamma:=\diag(\chi_{cyc}(\gamma_1\dots\gamma_{n-1}),\chi_{cyc}(\gamma_2\dots\gamma_{n-1}),\dots,\chi_{cyc}(\gamma_{n-1}),1)$ satisfies $\alpha_j(t_\gamma)=\chi_{cyc}(\gamma_j)$, so the action of $\gamma$ corresponds to the conjugation by $t_\gamma$. Here $\chi_{cyc}\colon \Gamma_\alpha\cong \Gal(\Qp(\mu_{p^\infty})/\Qp)\to\Zp^\times$ denotes the cyclotomic character. In particular, $T_+$ is identified with the direct product $Z(G)\times\Gamma_\Delta\times\prod_{j\in \{1,\dots,n-1\}}t_j^{\mathbb{N}}$. We often use these identifications in the sequel without any further notice. Under these identifications (together with $Z(G)\cong \Qp^\times$) we have
\begin{thm}[Thm.\ 3.15 in \cite{MultVarGal}, Thm.\ 1.1 in \cite{CKZ}]\label{galequivcat}
The functors $\mathbb{V}$ and $\mathbb{D}$ are quasi-inverse equivalences of categories between the category $\mathcal{D}^{et}(T_+,\Fq\bg N_{\Delta,0}\jg)$ of étale $T_+$-modules over $\Fq\bg N_{\Delta,0}\jg$ and the category $\Rep_{\Fq}(\GQpD\times \Qp^\times)$ of continuous finite dimensional representations of the group $\GQpD\times \Qp^\times$ over $\Fq$.
\end{thm}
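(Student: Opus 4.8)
The statement is the combination of two ingredients: the multivariable Fontaine equivalence over the centre-free group $\GQpD$ --- which is the substance of \cite[Thm.\ 3.15]{MultVarGal} and is reproved more conceptually via Drinfeld's lemma in \cite{CKZ} --- together with an essentially formal reconciliation of the $T_+$-module structure with the $\GQpD\times\Qp^\times$-structure. The plan is to organise the argument around this split. First I would use the identification $T_+\cong Z(G)\times\Gamma_\Delta\times\prod_{j}t_j^{\mathbb{N}}$ recalled above and the fact that $Z(G)\cong\Qp^\times$ acts trivially by conjugation on $N_{\Delta,0}$. This has two consequences: the operators $\varphi_z$ for $z\in Z(G)$ are $E_\Delta$-linear (the associated ring automorphism is the identity), and, $Z(G)$ being a group, each $\varphi_z$ is automatically an isomorphism, so the étaleness requirement is vacuous on the centre. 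Hence an object of $\mathcal{D}^{et}(T_+,\Fq\bg N_{\Delta,0}\jg)$ is the same datum as an étale multivariable $(\varphi_\Delta,\Gamma_\Delta)$-module $D$ over $E_\Delta$ equipped with a continuous $E_\Delta$-linear action of $\Qp^\times$ commuting with every $\varphi_\alpha$ and every $\gamma\in\Gamma_\Delta$; dually, a representation of $\GQpD\times\Qp^\times$ over $\Fq$ is a representation of $\GQpD$ together with a continuous commuting $\Qp^\times$-action. This reduces the theorem to the centre-free equivalence plus the claim that $\mathbb{V}_\Delta$ and $\mathbb{D}_\Delta$ carry these commuting $\Qp^\times$-actions into one another.

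For the centre-free equivalence I would simply invoke \cite[Thm.\ 3.15]{MultVarGal}: the functors $\mathbb{D}_\Delta(V)=(E^{sep}_\Delta\otimes_{\Fp}V)^{\HQpD}$ and $\mathbb{V}_\Delta(D)=\bigcap_{\alpha\in\Delta}(E^{sep}_\Delta\otimes_{E_\Delta}D)^{\varphi_\alpha=\id}$ are quasi-inverse equivalences between étale $(\varphi_\Delta,\Gamma_\Delta)$-modules over $E_\Delta$ and $\Rep_{\Fq}(\GQpD)$, matching finite-dimensional representations over $\Fq$ with finitely generated $E_\Delta$-modules. If one instead wanted to prove this from scratch, the natural route would be induction on $|\Delta|$, building $E^{sep}_\Delta$ as an iterated base change of one-variable separable closures and applying at each variable the classical one-variable Fontaine equivalence together with faithfully flat descent; the essential input is that the constructions in distinct variables commute, which (in the language of perfectoid spaces) is exactly Drinfeld's lemma and is the engine of the alternative argument in \cite{CKZ}.

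It then remains to transport the centre. Given $D$ with its commuting $\Qp^\times$-action, let $\Qp^\times$ act on $E^{sep}_\Delta\otimes_{E_\Delta}D$ trivially on $E^{sep}_\Delta$ and via its given action on $D$; this commutes with each $\varphi_\alpha$, hence preserves $\mathbb{V}_\Delta(D)$, so the finite-dimensional $\Fq$-space $V=\mathbb{V}_\Delta(D)$ acquires a continuous $\Qp^\times$-action commuting with its $\GQpD$-action, i.e.\ a $\GQpD\times\Qp^\times$-structure. Conversely, on $\mathbb{D}_\Delta(V)=(E^{sep}_\Delta\otimes_{\Fp}V)^{\HQpD}$ the restriction $V|_{\Qp^\times}$ (acting trivially on $E^{sep}_\Delta$, where it commutes with the $\HQpD$-action) induces a continuous $E_\Delta$-linear action of $Z(G)$ commuting with $\varphi_\Delta$ and $\Gamma_\Delta$, which by the first step upgrades $\mathbb{D}_\Delta(V)$ to an object of $\mathcal{D}^{et}(T_+,\Fq\bg N_{\Delta,0}\jg)$. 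Functoriality of the enlarged functors $\mathbb{V}$ and $\mathbb{D}$ is immediate, and the unit and counit isomorphisms are those of the centre-free equivalence; one only has to observe that they are $\Qp^\times$- (resp.\ $Z(G)$-) equivariant, which holds because $\Qp^\times$ acts trivially on $E^{sep}_\Delta$ in all of these constructions.

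The only genuinely nontrivial step is the centre-free multivariable equivalence of the second step: the exactness of the functors $(E^{sep}_\Delta\otimes_{\Fp}(-))^{\HQpD}$ and $\bigcap_{\alpha}(E^{sep}_\Delta\otimes_{E_\Delta}(-))^{\varphi_\alpha=\id}$ and the fact that they are mutually quasi-inverse is precisely what \cite{MultVarGal} and \cite{CKZ} establish. The surrounding steps are bookkeeping; the only point requiring a moment's care is to confirm that the structure of an étale $T_+$-module imposes nothing on the $Z(G)$-part beyond continuity and $E_\Delta$-linearity, which is forced by the triviality of the conjugation action of $Z(G)$ on $N_{\Delta,0}$.
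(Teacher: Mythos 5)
Your proposal is correct and follows essentially the same route as the paper: the paper likewise treats the content as the cited multivariable Fontaine-type equivalence of \cite{MultVarGal} and \cite{CKZ}, combined with the identification $T_+\cong Z(G)\times\Gamma_\Delta\times\prod_j t_j^{\mathbb{N}}$ stated just before the theorem, so that the central $\Qp^\times\cong Z(G)$ contributes only a commuting linear continuous action transported formally through $\mathbb{V}$ and $\mathbb{D}$ (étaleness being automatic on the centre since $Z(G)$ is a group inside $T_+$). No further argument is needed beyond this bookkeeping, which you carry out correctly.
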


Let $D$ be an étale $T_+$-module over $\mathbb{F}_q\bg N_{\Delta,0}\jg$. Assume that $D_0\subset D$ is a finitely generated $\mathbb{F}_q\bs N_{\Delta,0}\js$-submodule such that $D_0[X_\Delta^{-1}]=D$ and $\psi_t^{(D)}(D_0)\subseteq D_0$ for all $t\in T_+$. We call such a submodule $D_0\subset D$ a \emph{$\psi^{(D)}$-invariant lattice in $D$}. Note that the étale property for the action of $t\in T_+$ on $D$ means that the map
\begin{eqnarray*}
\id\otimes\varphi_t\colon \mathbb{F}_q\bs N_{\Delta,0}\js\otimes_{\mathbb{F}_q\bs N_{\Delta,0}\js,\varphi_t}D &\to& D\\
\sum_{u\in J(N_0/tN_0t^{-1}H_{\Delta,0})}u\otimes x_{u,t}&\mapsto& \sum_{u\in J(N_0/tN_0t^{-1}H_{\Delta,0})}u\varphi_t(x_{u,t})=:x
\end{eqnarray*}
is a bijection. Here $x_{u,t}=\psi_t^{(D)}(u^{-1}x)$. So (once $D_0$ is stable under the action of the group $N_{\Delta,0}$) the condition that $\psi_t^{(D)}(D_0)\subseteq D_0$ is equivalent to the condition that $$D_0\subseteq \id\otimes\varphi_t(\mathbb{F}_q\bs N_{\Delta,0}\js\otimes_{\mathbb{F}_q\bs N_{\Delta,0}\js,\varphi_t}D_0)=\mathbb{F}_q\bs N_{\Delta,0}\js\varphi_t(D_0)\ .$$ Our goal is to show that there is a maximal compact $\mathbb{F}_q\bs N_0\js$-submodule $\mathbb{M}_\infty^{bd}(D_0)\subset \mathbb{M}_{\infty,0}(D)$ with the properties $(i)$ $\mathbb{M}_\infty^{bd}(D_0)$ is stable under $\psi_t$ for all $t\in T_+$ and $(ii)$ $\mathbb{M}_\infty^{bd}(D_0)$ maps into $D_0$ under the quotient map $\pr_{0,\infty}\colon\mathbb{M}_{\infty,0}(D)\twoheadrightarrow D$. It is not always possible to obtain a lift such that the map $\pr_{0,\infty}\colon \mathbb{M}_\infty^{bd}(D_0)\to D_0$ is onto.

Let $\mathbb{M}_k(D_0):=\pr_{0,k}^{-1}(D_0)\subset \mathbb{M}_{k,0}(D)$ be the full preimage of $D_0\subset D$ under the quotient map $\pr_{0,k}\colon\mathbb{M}_{k,0}(D)\twoheadrightarrow D$. Note that for $k>0$ the set $\mathbb{M}_k(D_0)$ is not compact (nor finitely generated over $\mathbb{F}_q\bs N_{\Delta,k}\js$ as it contains the kernel of $\pr_{0,k}$). Define
\begin{equation*}
\mathbb{M}_k^{bd}(D_0):=\bigcap_{t\in T_+}\mathbb{F}_q\bs N_{\Delta,k}\js\varphi_t(\mathbb{M}_k(D_0))\ .
\end{equation*}

\begin{lem}\label{bdk_compact}
$\mathbb{M}_k^{bd}(D_0)$ is a finitely generated $\mathbb{F}_q\bs N_{\Delta,k}\js$-module. In particular, it is compact.
\end{lem}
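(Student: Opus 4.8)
The plan is to bound $\mathbb{M}_k^{bd}(D_0)$ inside a finitely generated $\mathbb{F}_q\bs N_{\Delta,k}\js$-submodule of $\mathbb{M}_{k,0}(D)$: since the Iwasawa algebra $\mathbb{F}_q\bs N_{\Delta,k}\js$ is noetherian, such a bound forces $\mathbb{M}_k^{bd}(D_0)$ itself to be finitely generated, and a finitely generated module over the profinite ring $\mathbb{F}_q\bs N_{\Delta,k}\js$ is automatically compact, which gives the ``in particular''. By its definition $\mathbb{M}_k^{bd}(D_0)\subseteq\mathbb{F}_q\bs N_{\Delta,k}\js\varphi_t(\mathbb{M}_k(D_0))$ for \emph{every} $t\in T_+$, so it is enough to exhibit one $t$ for which $\mathbb{F}_q\bs N_{\Delta,k}\js\varphi_t(\mathbb{M}_k(D_0))$ is finitely generated over $\mathbb{F}_q\bs N_{\Delta,k}\js$; I would take $t=s^m$ for a sufficiently large integer $m$ (note $s^m\in T_+$).

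First I would split $\mathbb{M}_k(D_0)$. Lift a finite generating set of the finitely generated $\mathbb{F}_q\bs N_{\Delta,0}\js$-module $D_0$ to $\mathbb{M}_{k,0}(D)$ and let $\widetilde D_0\subseteq\mathbb{M}_{k,0}(D)$ be the finitely generated $\mathbb{F}_q\bs N_{\Delta,k}\js$-submodule they generate; then $\pr_{0,k}(\widetilde D_0)=D_0$, whence $\mathbb{M}_k(D_0)=\pr_{0,k}^{-1}(D_0)=\widetilde D_0+\Ker(\pr_{0,k})$. The module $\Ker(\pr_{0,k})$ is precisely the troublesome piece: it is finitely generated only over the localisation $\mathbb{F}_q\bg N_{\Delta,k}\jg$, not over $\mathbb{F}_q\bs N_{\Delta,k}\js$, which is why $\mathbb{M}_k(D_0)$ itself fails to be finitely generated. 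The crucial input, which I would read off from the construction of $\mathbb{M}_{k,0}$ in \cite[Lemma 4.1]{MultVar}, is that a fixed power $\varphi_s^m$ annihilates $\Ker(\pr_{0,k})$ once $m$ is large enough (e.g.\ $m\geq kn_0$). The mechanism is group-theoretic: $\pr_{0,k}$ is the reduction along $\mathbb{F}_q\bg N_{\Delta,k}\jg\to\mathbb{F}_q\bg N_{\Delta,0}\jg$, so $\Ker(\pr_{0,k})=I_k\mathbb{M}_{k,0}(D)$ with $I_k$ the kernel ideal of that ring map (extended to the localisation), generated by the elements $h-1$ for $h\in H_{\Delta,0}/H_{\Delta,k}$; since $H_{\Delta,0}$ is stable under conjugation by $T_+$ and $H_{\Delta,k}\supseteq s^kH_{\Delta,0}s^{-k}$, one gets $s^mH_{\Delta,0}s^{-m}\subseteq s^kH_{\Delta,0}s^{-k}\subseteq H_{\Delta,k}$ for all $m\geq k$, hence $\varphi_s^m(h)=1$ in $N_{\Delta,k}=N_0/H_{\Delta,k}$ and so $\varphi_s^m(I_k)=0$; by $\varphi_{s^m}$-semilinearity $\varphi_s^m$ then kills $I_k\mathbb{M}_{k,0}(D)$.

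Granting this, for $m$ large additivity of $\varphi_{s^m}$ gives $\varphi_{s^m}(\mathbb{M}_k(D_0))=\varphi_{s^m}(\widetilde D_0)+\varphi_{s^m}(\Ker\pr_{0,k})=\varphi_{s^m}(\widetilde D_0)$, and $\varphi_{s^m}$-semilinearity shows that $\varphi_{s^m}(\widetilde D_0)$ is contained in the $\mathbb{F}_q\bs N_{\Delta,k}\js$-submodule spanned by the images under $\varphi_{s^m}$ of the finitely many generators of $\widetilde D_0$. Thus $\mathbb{F}_q\bs N_{\Delta,k}\js\varphi_{s^m}(\mathbb{M}_k(D_0))$ is finitely generated over $\mathbb{F}_q\bs N_{\Delta,k}\js$, and as explained in the first paragraph its submodule $\mathbb{M}_k^{bd}(D_0)$ is then finitely generated, hence compact.

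The only genuinely non-formal ingredient is the claim that a fixed power of $\varphi_s$ annihilates $\Ker(\pr_{0,k})$; everything else (reducing the intersection to the single term $t=s^m$, the splitting $\mathbb{M}_k(D_0)=\widetilde D_0+\Ker\pr_{0,k}$, noetherianity of $\mathbb{F}_q\bs N_{\Delta,k}\js$) is bookkeeping. It is exactly here that finiteness of $k$ is used: no finite power of $\varphi_s$ has this annihilating property at level $\infty$, which is the source of the difficulty flagged in the introduction, namely that $\mathbb{F}_q\bg N_{\Delta,\infty}\jg$ contains compact $\mathbb{F}_q\bs N_0\js$-submodules which are not finitely generated, so that the finiteness statement at level $\infty$ cannot be obtained by this short reduction and instead needs the $\psi$-theoretic argument (Proposition~\ref{fingen+kertors}, Corollary~\ref{compact+psi-fingen}).
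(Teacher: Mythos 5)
Your proof is correct and is essentially the paper's own argument: both bound $\mathbb{M}_k^{bd}(D_0)$ inside the single term $\mathbb{F}_q\bs N_{\Delta,k}\js\varphi_{t}(\mathbb{M}_k(D_0))$ for one $t\in T_+$ conjugating $H_{\Delta,0}$ into $H_{\Delta,k}$ (the paper takes an abstract such $t_0$, you take $t=s^m$ with $m$ large, which works since $s^kH_{\Delta,0}s^{-k}\subseteq H_{\Delta,k}$ and $H_{\Delta,k}$ is stable under $T_+$-conjugation), observe that $\varphi_t$ then kills $\Ker(\pr_{0,k})$ so the image is controlled by the finitely generated $D_0$, and conclude by noetherianity of $\mathbb{F}_q\bs N_{\Delta,k}\js$. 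Your closing remark on why this breaks down at level $\infty$ matches the paper's motivation for the later $\psi$-theoretic argument.
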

\begin{proof}
Since $H_{\Delta,k}\lhd H_{\Delta,0}$ has finite index, there exists a $t_0\in T_+$ such that $t_0H_{\Delta,0}t_0^{-1}\leq H_{\Delta,k}$ so that $\varphi_{t_0}(\mathbb{M}_{k}(D_0))\cong \varphi_{t_0}(D_0)$ is finitely generated as a module over $\mathbb{F}_q\bs t_0N_{\Delta,0}t_0^{-1}\js\cong \mathbb{F}_q\bs t_0N_{\Delta,k}t_0^{-1}\js$ whence $\mathbb{F}_q\bs N_{\Delta,k}\js\varphi_{t_0}(\mathbb{M}_k(D_0))$ is finitely generated as a module over $\mathbb{F}_q\bs N_{\Delta,k}\js$. Since $\mathbb{F}_q\bs N_{\Delta,k}\js$ is noetherian, the lemma follows.
\end{proof}

Now for any $\psi^{(D)}$-invariant lattice $D_0\subset D$ we put $\mathbb{M}_\infty^{bd}(D_0):=\varprojlim_k \mathbb{M}_{k}^{bd}(D_0)$. By Lemma \ref{bdk_compact} we deduce that $\mathbb{M}_\infty^{bd}(D_0)$ is compact. However, it is unclear at this point whether $\mathbb{M}_\infty^{bd}(D_0)\neq 0$.

\begin{pro}\label{M_infty_D0_psi_descr}
For any $\psi^{(D)}$-invariant lattice $D_0\subset D$ we have 
$$\mathbb{M}_\infty^{bd}(D_0)=\{x\in \mathbb{M}_{\infty,0}(D)\mid \pr_{0,\infty}(\psi_t(u^{-1}x))\in D_0\text{ for all }t\in T_+,u\in N_0\}\ .$$
In particular, $\psi_t(\mathbb{M}_\infty^{bd}(D_0))\subseteq \mathbb{M}_\infty^{bd}(D_0)$ for all $t\in T_+$.
\end{pro}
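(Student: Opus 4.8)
The plan is to compute $\mathbb{M}_\infty^{bd}(D_0)=\varprojlim_k\mathbb{M}_k^{bd}(D_0)$ one layer at a time and then pass to the limit. First I would check that the transition maps $\mathbb{M}_{k',0}(D)\to\mathbb{M}_{k,0}(D)$ for $k'\geq k$, being $\varphi_t$-equivariant and compatible with the projections $\pr_{0,\bullet}$, carry $\mathbb{M}_{k'}(D_0)=\pr_{0,k'}^{-1}(D_0)$ into $\mathbb{M}_k(D_0)$ and hence $\mathbb{M}_{k'}^{bd}(D_0)$ into $\mathbb{M}_k^{bd}(D_0)$. Writing $\pi_k\colon\mathbb{M}_{\infty,0}(D)\to\mathbb{M}_{k,0}(D)$ for the canonical projection and using $\pr_{0,k}\circ\pi_k=\pr_{0,\infty}$, this unwinds the inverse limit to
\[ x\in\mathbb{M}_\infty^{bd}(D_0)\iff \pi_k(x)\in\Fq\bs N_{\Delta,k}\js\varphi_t(\mathbb{M}_k(D_0))\ \text{for every}\ k\geq0\ \text{and every}\ t\in T_+, \]
so the task reduces to controlling membership in $\Fq\bs N_{\Delta,k}\js\varphi_t(\mathbb{M}_k(D_0))$.

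The inclusion ``$\supseteq$'' of the asserted formula is the easy half. Suppose $\pr_{0,\infty}(\psi_t(u^{-1}x))\in D_0$ for all $t\in T_+$ and $u\in N_0$. Fix $k$ and $t$, pick a (finite, since $\alpha(t)\in\Zp\setminus\{0\}$ for all $\alpha$) set $J$ of representatives of $N_0/tN_0t^{-1}$, and use the étale decomposition $x=\sum_{v\in J}v\,\varphi_t\big(\psi_t(v^{-1}x)\big)$ in $\mathbb{M}_{\infty,0}(D)$. Applying the $\varphi_t$- and $N_0$-equivariant map $\pi_k$ gives $\pi_k(x)=\sum_{v\in J}\overline v\,\varphi_t\big(\pi_k(\psi_t(v^{-1}x))\big)$, and applying $\pr_{0,k}$ to each component yields $\pr_{0,k}\big(\pi_k(\psi_t(v^{-1}x))\big)=\pr_{0,\infty}\big(\psi_t(v^{-1}x)\big)\in D_0$, so $\pi_k(\psi_t(v^{-1}x))\in\pr_{0,k}^{-1}(D_0)=\mathbb{M}_k(D_0)$ and therefore $\pi_k(x)\in\Fq\bs N_{\Delta,k}\js\varphi_t(\mathbb{M}_k(D_0))$. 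Hence $x\in\mathbb{M}_\infty^{bd}(D_0)$.

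For the inclusion ``$\subseteq$'' I would work at the level of a single coset. Fix $t\in T_+$ and $u_0\in N_0$, and transport the level-$\infty$ decomposition of $x$ into the canonical level-$k$ étale decomposition $\pi_k(x)=\sum_{\overline w}\overline w\,\varphi_t(Z_{\overline w})$, the sum running over representatives of $N_0/tN_0t^{-1}H_{\Delta,k}$: grouping the $v\in J$ by their class mod $tN_0t^{-1}H_{\Delta,k}$ and collecting terms by $\varphi_t$-semilinearity exhibits $Z_{\overline w}$ as an explicit $N_0$-linear combination of the $\pi_k(\psi_t(v^{-1}x))$, so $\pr_{0,k}(Z_{\overline w})$ is the corresponding combination of the $\pr_{0,\infty}(\psi_t(v^{-1}x))$. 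If $x\in\mathbb{M}_\infty^{bd}(D_0)$, uniqueness of the level-$k$ decomposition forces each $Z_{\overline w}\in\mathbb{M}_k(D_0)$, i.e.\ each of these combinations lies in $D_0$. Now I would let $k\to\infty$: since $\bigcap_kH_{\Delta,k}=\{1\}$ and $N_0/tN_0t^{-1}$ is finite, for $k$ large each coset of $tN_0t^{-1}H_{\Delta,k}$ contains at most one element of $J$, so the combinations collapse to single terms and one reads off $\pr_{0,\infty}(\psi_t(u^{-1}x))\in D_0$ for all $u\in J$; since this membership is unchanged when $u$ is replaced by $u\cdot tvt^{-1}$ with $v\in N_0$ (the value only gets an $N_{\Delta,0}$-translate and $D_0$ is $N_{\Delta,0}$-stable), it holds for all $u\in N_0$. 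Taking the intersection over $t\in T_+$ gives the stated description. The final assertion is then formal: the right-hand side equals $\{x\mid\pr_{0,\infty}(\psi_b(x))\in D_0\ \text{for all}\ b\in B_+\}$ in the notation $\psi_b(x)=\psi_t(u^{-1}x)$, $b=ut$, and the relation $\psi_{b_1}\psi_{b_2}=\psi_{b_2b_1}$ together with $T_+B_+\subseteq B_+$ makes this set stable under every $\psi_t$.

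The step I expect to be the real obstacle is the inclusion ``$\subseteq$'': one has to pin down membership in $\Fq\bs N_{\Delta,k}\js\varphi_t(\mathbb{M}_k(D_0))$ in terms of étale components, and the delicate point is that this is a span over the non-localized compact Iwasawa algebra and that $\varphi_t$ is not injective on $\mathbb{M}_{k,0}(D)$ (nor on $\Fq\bs N_{\Delta,k}\js$) for $0<k<\infty$, so there is no honest level-$k$ counterpart of $\psi_t$; the argument must recover the level-$k$ components by pushing the level-$\infty$ expansion through $\pi_k$, invoke uniqueness of the étale decomposition over $\Fq\bg N_{\Delta,k}\jg$, and keep scrupulous track of the coset representatives through the limit. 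The limiting part, though elementary, also rests on the standard facts that $\bigcap_kH_{\Delta,k}=\{1\}$ and $[N_0:tN_0t^{-1}]<\infty$.
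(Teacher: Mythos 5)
Your overall architecture is sound: the unwinding of $\mathbb{M}_\infty^{bd}(D_0)=\varprojlim_k\mathbb{M}_k^{bd}(D_0)$ into the levelwise conditions, the easy inclusion ``$\supseteq$'', and the formal deduction of the $\psi_t$-stability from the displayed equality via $\psi_{b_1}\psi_{b_2}=\psi_{b_2b_1}$ and $T_+B_+\subseteq B_+$ are all correct (the paper instead obtains the stability inside the hard direction, by inserting an arbitrary auxiliary $t'\in T_+$ and a level $k_1$ with $H_{\Delta,k_1}\subseteq tt'H_{\Delta,0}(tt')^{-1}$ so as to prove the stronger claim $\psi_t(u^{-1}x)\in\mathbb{M}_\infty^{bd}(D_0)$; your shortcut is legitimate). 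The genuine gap is in the pivotal step of ``$\subseteq$'': the assertion that ``uniqueness of the level-$k$ decomposition forces each $Z_{\overline w}\in\mathbb{M}_k(D_0)$''. Precisely because $\varphi_t$ is \emph{not} injective on $\mathbb{M}_{k,0}(D)$ for $0<k<\infty$ (the point you yourself flag), the components of an étale expansion $\sum_{\overline w}\overline w\,\varphi_t(\cdot)$ at finite level are determined only modulo $\Ker(\varphi_t)$; so comparing your expansion with one having components in $\mathbb{M}_k(D_0)$ yields only $Z_{\overline w}\in\mathbb{M}_k(D_0)+\Ker(\varphi_t)$, and ``invoking uniqueness over $\Fq\bg N_{\Delta,k}\jg$'' cannot by itself place $Z_{\overline w}$ in $\mathbb{M}_k(D_0)$.

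The missing ingredient — and it is exactly where the paper's proof does its real work — is the containment $\Ker(\varphi_t)\subseteq\Ker(\pr_{0,k})$, which holds because $\pr_{0,k}$ is $\varphi_t$-equivariant and $\varphi_t$ \emph{is} injective on $D$; combined with $\Ker(\pr_{0,k})=\pr_{0,k}^{-1}(0)\subseteq\pr_{0,k}^{-1}(D_0)=\mathbb{M}_k(D_0)$, the ambiguity in the components becomes invisible after projecting to level $0$, and one may then conclude $\pr_{0,k}(Z_{\overline w})\in D_0$ as you want (this is the step ``$\in\Ker(\varphi_{tt'})\subseteq\Ker(\pr_{0,k_1})$'' in the paper). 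With that lemma supplied, your variant — working with $t$ itself at a level $k$ large enough that $H_{\Delta,k}\subseteq tN_0t^{-1}$, so the cosets of $tN_0t^{-1}H_{\Delta,k}$ no longer collapse and the combinations reduce to single terms, then using $N_{\Delta,0}$-stability of $D_0$ to pass from representatives to all $u\in N_0$ — does close the argument; but as written, the decisive step is asserted rather than proved, and ``uniqueness'' alone would not deliver it.
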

\begin{proof}
Let $x\in \mathbb{M}_\infty^{bd}(D_0)$, $t\in T_+$, and $k\geq 0$ be arbitrary. By putting $x_u:=\psi_t(u^{-1}x)$ for $u\in N_0$ we may write
$$x=\sum_{u\in J(N_0/tN_0t^{-1})}u\varphi_t(x_u)$$
for any set $J(N_0/tN_0t^{-1})\subset N_0$ of representatives of $N_0/tN_0t^{-1}$. Let $t'\in T_+$ be arbitrary and choose $k_1\geq k$ so that $H_{\Delta,k_1}\subseteq tt'H_{\Delta,0}(tt')^{-1}$. This means that $|N_{\Delta,k_1}:tt'N_{\Delta,k_1}(tt')^{-1}|=|N_0:tt'N_0(tt')^{-1}|$. Hence $J(N_{\Delta,k_1}/tt'N_{\Delta,k_1}(tt')^{-1}):=\{\pr_{k_1,\infty}(utvt^{-1})\mid u\in J(N_0/tN_0t^{-1}),v\in J(N_0/t'N_0{t'}^{-1})\}$ is a set of representatives of $N_{\Delta,k_1}/tt'N_{\Delta,k_1}(tt')^{-1}$ if $J(N_0/t'N_0{t'}^{-1})$ is a set of representatives of $N_0/t'N_0{t'}^{-1}$. We put $x_{u,v}:=\psi_{t'}(v^{-1}x_u)$ and compute
$$x_u=\sum_{v\in J(N_0/t'N_0{t'}^{-1})}v\varphi_{t'}(x_{u,v})
$$
whence
\begin{align*}
\pr_{k_1,\infty}(x)=\sum_{u\in J(N_0/tN_0t^{-1})}\pr_{k_1,\infty}(u)\varphi_t\left(\sum_{v\in J(N_0/t'N_0{t'}^{-1})}\pr_{k_1,\infty}(v)\varphi_{t'}(\pr_{k_1,\infty}(x_{u,v}))\right)=\\
=\sum_{\substack{u\in J(N_0/tN_0t^{-1})\\ v\in J(N_0/t'N_0{t'}^{-1})}}\pr_{k_1,\infty}(utvt^{-1})\varphi_{tt'}(\pr_{k_1,\infty}(x_{u,v}))\ .
\end{align*}
On the other hand, since $x\in \mathbb{M}_\infty^{bd}(D_0)$, we have 
\begin{align*}
\pr_{k_1,\infty}(x)\in\mathbb{M}_{k_1}^{bd}(D_0)\subseteq \id\otimes\varphi_{tt'}(\mathbb{F}_q\bs N_{\Delta,k_1}\js\otimes_{\mathbb{F}_q\bs N_{\Delta,k_1}\js,\varphi_{tt'}}\mathbb{M}_{k_1}(D_0))=\\
=\bigoplus_{w\in J(N_{\Delta,k_1}/tt'N_{\Delta,k_1}(tt')^{-1})}w\varphi_{tt'}(\mathbb{M}_{k_1}(D_0))\ ,
\end{align*}
so we may write 
\begin{align*}
\pr_{k_1,\infty}(x)=\sum_{\substack{u\in J(N_0/tN_0t^{-1})\\ v\in J(N_0/t'N_0{t'}^{-1})}}\pr_{k_1,\infty}(utvt^{-1})\varphi_{tt'}(\widetilde{x_{u,v}})
\end{align*}
for some $\widetilde{x_{u,v}}\in \mathbb{M}_{k_1}(D_0)$ ($u\in J(N_0/tN_0t^{-1}), v\in J(N_0/t'N_0{t'}^{-1})$). By the étale property of $\mathbb{M}_{k_1,0}(D)$, we have $\pr_{k_1,\infty}(x_{u,v})-\widetilde{x_{u,v}}\in \Ker(\varphi_{tt'})\subseteq \Ker(\pr_{0,k_1})$ showing $\pr_{0,\infty}(x_{u,v})=\pr_{0,k_1}(\pr_{k_1,\infty}(x_{u,v}))=\pr_{0,k_1}(\widetilde{x_{u,v}})\in D_0$, ie.\ $\pr_{k_1,\infty}(x_{u,v})\in \mathbb{M}_{k_1}(D_0)$ for all $u\in J(N_0/tN_0t^{-1}), v\in J(N_0/t'N_0{t'}^{-1})$. Therefore we deduce
\begin{align*}
\pr_{k,\infty}(x_u)=\sum_{v\in J(N_0/t'N_0{t'}^{-1})}\pr_{k,\infty}(v)\varphi_{t'}(\pr_{k,k_1}(\pr_{k_1,\infty}(x_{u,v})))\in\\
\in\mathbb{F}_q\bs N_{\Delta,k}\js\varphi_{t'}(\pr_{k,k_1}(\mathbb{M}_{k_1}(D_0)))\subseteq \mathbb{F}_q\bs N_{\Delta,k}\js\varphi_{t'}(\mathbb{M}_k(D_0))\ .
\end{align*}
Since $t'\in T_+$ was arbitrary, we obtain $\pr_{k,\infty}(x_u)\in \mathbb{M}_k^{bd}(D_0)$ for all $k\geq 0$ whence we also have $\psi_t(u^{-1}x)=x_u\in \mathbb{M}_\infty^{bd}(D_0)$.

For the other containment assume $x\in \mathbb{M}_{\infty,0}(D)$ satisfies $\pr_{0,\infty}(\psi_t(u^{-1}x))\in D_0$ for all $t\in T_+$ and $u\in N_0$. Put $x_u:=\psi_t(u^{-1}x)$ and compute
\begin{align*}
\pr_{k,\infty}(x)=\sum_{u\in J(N_0/tN_0t^{-1})}\pr_{k,\infty}(u)\varphi_t(\pr_{k,\infty}(x_u))\in \mathbb{F}_q\bs N_{\Delta,k}\js\varphi_{t}(\mathbb{M}_k(D_0))
\end{align*}
showing $\pr_{k,\infty}(x)\in \mathbb{M}_k^{bd}(D_0)$ for all $k\geq 0$ and therefore $x\in \mathbb{M}_\infty^{bd}(D_0)$.
\end{proof}

The map $\psi_t\colon \mathbb{M}_\infty^{bd}(D_0)\to \mathbb{M}_\infty^{bd}(D_0)$ may not be onto for all $t\in T_+$ even if we assume $\psi_t^{(D)}\colon D_0\to D_0$ to be onto. However, one can take $\bigcap_{t\in T_+}\psi_t(\mathbb{M}_\infty^{bd}(D_0))$ for which this property is automatic. 

\begin{lem}\label{nilpDeltaminusalpha}
For any $\alpha\in\Delta$ there is a system of neighbourhoods of $1$ in $H_{\Delta,0}$ consisting of subgroups of the form $s_{\Delta\setminus\{\alpha\}}^kH_{\Delta,0}s_{\Delta\setminus\{\alpha\}}^{-k}$.
\end{lem}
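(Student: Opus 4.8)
The plan is to realise conjugation by $s_{\Delta\setminus\{\alpha\}}=\prod_{\alpha_j\in\Delta\setminus\{\alpha\}}t_j$ as a topologically nilpotent (``contracting'') endomorphism of the compact group $H_{\Delta,0}$ — the exact analogue of the familiar fact that conjugation by $s=t_1\cdots t_{n-1}$ contracts all of $N_0$ — and then to conclude by a soft compactness argument. Write $U_k:=s_{\Delta\setminus\{\alpha\}}^{k}H_{\Delta,0}s_{\Delta\setminus\{\alpha\}}^{-k}$. First I would record the elementary observation that for any $t\in T_+$ and any $\gamma\in\Phi^+$, conjugation by $t$ maps the root subgroup $N_0\cap N_\gamma$ into itself and acts there, under the fixed identification $N_0\cap N_\gamma\cong\Zp$, by multiplication by $\gamma(t)\in\Zp\setminus\{0\}$. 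Since conjugation by $s_{\Delta\setminus\{\alpha\}}\in T_+$ is a group automorphism of $N$ respecting each root subgroup, and the multiplication map $\prod_{\gamma\in\Phi^+\setminus\Delta}(N_0\cap N_\gamma)\to H_{\Delta,0}$ is a homeomorphism for any fixed ordering of the factors, this yields $U_k=\prod_{\gamma\in\Phi^+\setminus\Delta}\gamma(s_{\Delta\setminus\{\alpha\}})^{k}(N_0\cap N_\gamma)$ for every $k\ge0$; under the identification $H_{\Delta,0}\cong\prod_\gamma\Zp$ this is the set $\prod_\gamma p^{k\,d_\gamma}\Zp$, where $d_\gamma:=\val\bigl(\gamma(s_{\Delta\setminus\{\alpha\}})\bigr)$.

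The only point that genuinely needs checking is that $d_\gamma\ge1$ for every non-simple positive root $\gamma$. Writing $\gamma=\alpha_i+\alpha_{i+1}+\cdots+\alpha_j$ with $1\le i\le j\le n-1$ — non-simplicity forces $i<j$, i.e.\ at least two simple summands — and using the relations $\alpha_m(t_\ell)=p$ for $m=\ell$ and $\alpha_m(t_\ell)=1$ otherwise, one computes $\gamma(t_\ell)=p$ when $\ell\in\{i,\dots,j\}$ and $\gamma(t_\ell)=1$ otherwise, so that $\gamma(s_{\Delta\setminus\{\alpha\}})=p^{c}$ with $c=\#\{\ell\in\{i,\dots,j\}:\alpha_\ell\ne\alpha\}\ge(j-i+1)-1=j-i\ge1$. (If $n=2$ there are no non-simple roots and $H_{\Delta,0}=\{1\}$, so the statement is trivial.) Hence $U_k\subseteq\prod_\gamma p^{k}(N_0\cap N_\gamma)$ for all $k$, and in particular $\bigcap_{k\ge0}U_k=\{1\}$.

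To finish I would observe that each $U_k$ is an \emph{open} subgroup of $H_{\Delta,0}$: it is a subgroup (a conjugate of $H_{\Delta,0}$, contained in $H_{\Delta,0}$ because $H_{\Delta,0}$ is stable under conjugation by $T_+$), and it has finite index $\prod_\gamma p^{k\,d_\gamma}$ by the decomposition above. Thus $(U_k)_{k\ge0}$ is a descending chain of open subgroups of the profinite group $H_{\Delta,0}$ with $\bigcap_kU_k=\{1\}$, and any such chain is automatically a fundamental system of neighbourhoods of $1$: for an arbitrary open $V\ni1$ the sets $U_k\setminus V$ form a decreasing chain of closed subsets of the compact space $H_{\Delta,0}$ with empty intersection, hence $U_k\subseteq V$ for some $k$. (Alternatively, one can bypass compactness and note directly that $U_k\subseteq\prod_\gamma p^{m}(N_0\cap N_\gamma)$ whenever $k\ge m$, and that the sets $\prod_\gamma p^{m}(N_0\cap N_\gamma)$ already form a neighbourhood base of $1$ in $H_{\Delta,0}$.) I do not expect any real obstacle here; the substance of the proof is the valuation computation $\val(\gamma(s_{\Delta\setminus\{\alpha\}}))\ge1$ for non-simple $\gamma$, together with the bookkeeping through the root-subgroup decomposition of $H_{\Delta,0}$.
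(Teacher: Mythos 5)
Your proof is correct and follows essentially the same route as the paper: decompose $H_{\Delta,0}=\prod_{\delta\in\Phi^+\setminus\Delta}(N_0\cap N_\delta)$ using total decomposition, observe that every nonsimple positive root has a simple constituent different from $\alpha$, and conclude $s_{\Delta\setminus\{\alpha\}}^kH_{\Delta,0}s_{\Delta\setminus\{\alpha\}}^{-k}\subseteq\prod_{\delta}(N_0\cap N_\delta)^{p^k}$, which is a neighbourhood basis of $1$. Your explicit valuation computation $\val\bigl(\gamma(s_{\Delta\setminus\{\alpha\}})\bigr)\ge 1$ via $\gamma=\alpha_i+\cdots+\alpha_j$ is precisely the direct $\GL_n$ verification the paper alludes to, and the extra compactness argument is harmless but not needed.
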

\begin{proof}
For $\GL_n(\mathbb{Q}_p)$ this can be seen directly, but here is a more conceptual proof: since $N_0$ is totally decomposed, so is $H_{\Delta,0}$ whence we may write $H_{\Delta,0}=\prod_{\delta\in\Phi^+\setminus\Delta}(N_0\cap N_\delta)$. Any nonsimple root $\delta$ is the sum of certain simple roots, but they are not multiples of $\alpha$. So $\delta$ has a constituent $\beta\neq\alpha\in\Delta$ in which case $s_\beta(N_0\cap N_\delta)s_\beta^{-1}\subseteq (N_0\cap N_\delta)^p$. Is particular, we have $s_{\Delta\setminus\{\alpha\}}^kH_{\Delta,0}s_{\Delta\setminus\{\alpha\}}^{-k}\subseteq \prod_{\delta\in\Phi^+\setminus\Delta}(N_0\cap N_\delta)^{p^k}$.
\end{proof}

\begin{pro}\label{prcontainedinDhash}
Assume that $M\subset \mathbb{M}_\infty(D)$ is a compact $\mathbb{F}_q\bs N_0\js$-submodule stable under $\psi_t$ for all $t\in T_+$ such that $\psi_t\colon M\to M$ is onto for all $t\in T_+$. Then $M$ is contained in $\bigcap_{t\in T_+}\psi_t(\mathbb{M}_\infty^{bd}(D^\#))$. In particular, $\bigcap_{t\in T_+}\psi_t(\mathbb{M}_\infty^{bd}(D^\#))$ is the largest compact $\mathbb{F}_q\bs N_0\js$-submodule on which each operator $\psi_t$ ($t\in T_+$) acts surjectively.
\end{pro}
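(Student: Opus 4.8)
\emph{Overview.} The plan is to reduce the assertion to the single inclusion $\pr_{0,\infty}(M)\subseteq D^\#$ inside $D$, to extract this inclusion from the maximality property of the modules $D^\#_\alpha$ in Proposition \ref{Dhashalpha}, and to treat the ``largest'' clause by a compactness argument. Write $D_0:=\pr_{0,\infty}(M)$. Since $M$ is stable under $N_0$ and under $\psi_t$ for all $t\in T_+$, for every $x\in M$, $u\in N_0$ and $t\in T_+$ we have $\psi_t(u^{-1}x)\in M$, hence $\pr_{0,\infty}(\psi_t(u^{-1}x))\in D_0$. Thus, by the description of $\mathbb{M}_\infty^{bd}$ in Proposition \ref{M_infty_D0_psi_descr} applied to the $\psi^{(D)}$-invariant lattice $D^\#$ (Proposition \ref{Dhashprop}), the inclusion $D_0\subseteq D^\#$ will give $M\subseteq\mathbb{M}_\infty^{bd}(D^\#)$; combining this with the hypothesis that $\psi_t\colon M\to M$ is onto, $M=\psi_t(M)\subseteq\psi_t(\mathbb{M}_\infty^{bd}(D^\#))$ for every $t\in T_+$, so $M\subseteq\bigcap_{t\in T_+}\psi_t(\mathbb{M}_\infty^{bd}(D^\#))$. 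Hence it remains to prove $D_0\subseteq D^\#$.

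\emph{Structure of $D_0$.} As the continuous image of the compact module $M$, the set $D_0$ is a compact $\mathbb{F}_q\bs N_{\Delta,0}\js=E_\Delta^+$-submodule of the finitely generated $E_\Delta$-module $D$; by the commutative finiteness fact recalled in the introduction (a compact $E_\Delta^+$-submodule of a finitely generated $E_\Delta$-module is finitely generated over $E_\Delta^+$) this forces $D_0$ to be finitely generated over $E_\Delta^+$. Moreover, formula \eqref{psireduction} together with the $N_0$-stability of $M$ (so that each summand $\psi_t(u^{-1}x)$ lies in $M$) shows $\psi_t^{(D)}(D_0)\subseteq D_0$ for every $t\in T_+$; in particular $\psi_\alpha(D_0)\subseteq D_0$ for each simple root $\alpha=\alpha_j\in\Delta$, as $\psi_\alpha=\psi_{t_j}^{(D)}$.

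\emph{Surjectivity of $\psi_t^{(D)}$ on $D_0$ — the crux.} I claim $\psi_t^{(D)}$ acts \emph{surjectively} on $D_0$ for every $t\in T_+$. Granting this, $D_0$ is a finitely generated $E_\Delta^+$-submodule — a fortiori a finitely generated $E_\Delta^+[X_{\Delta\setminus\{\alpha\}}^{-1}]$-submodule — of $D$ with $\psi_\alpha(D_0)=D_0$, so the maximality of $D^\#_\alpha$ in Proposition \ref{Dhashalpha} forces $D_0\subseteq D^\#_\alpha$ for every $\alpha\in\Delta$, whence $D_0\subseteq\bigcap_{\alpha\in\Delta}D^\#_\alpha=D^\#$, as required. (That finite generation together with $\psi_\alpha$-stability alone is \emph{not} enough — one really needs surjectivity — can be seen already in the one-variable case.) The claim is the technically heaviest point, and it is here that the surjectivity of $\psi_t$ on $M$ is used: putting $e_t:=\sum_{u\in J(H_{\Delta,0}/tH_{\Delta,0}t^{-1})}u^{-1}\in\mathbb{F}_q\bs H_{\Delta,0}\js\subseteq\mathbb{F}_q\bs N_0\js$, formula \eqref{psireduction} reads $\psi_t^{(D)}(\pr_{0,\infty}(x))=\pr_{0,\infty}(\psi_t(e_t x))$, so it suffices to prove $\psi_t(e_t M)=M$. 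This is deduced from $\psi_t(M)=M$ by decomposing elements of $M$ along the étale isomorphism $\id\otimes\varphi_t$ for $\mathbb{M}_{\infty,0}(D)$ over $\mathbb{F}_q\bs N_0\js$ and by exploiting that $e_t$ generates, up to a unit of $\mathbb{F}_q\bs H_{\Delta,0}\js$, a suitable power of the augmentation-type ideal attached to $tH_{\Delta,0}t^{-1}$, so that $e_t M$ together with $\ker(\psi_t|_M)$ exhausts $M$; the relevant facts about conjugation by elements of $T_+$ on the totally decomposed group $H_{\Delta,0}$ are those underlying Lemma \ref{nilpDeltaminusalpha}. When $n=2$ one has $H_{\Delta,0}=\{1\}$ and $\mathbb{M}_{\infty,0}(D)=D$, so $M=D_0$, $e_t=1$, and the claim is nothing but the hypothesis; the work is genuinely in the deformation for $n\geq 3$.

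\emph{The ``largest'' assertion.} Finally, $N:=\bigcap_{t\in T_+}\psi_t(\mathbb{M}_\infty^{bd}(D^\#))$ itself satisfies the hypotheses of the proposition: it is a compact $\mathbb{F}_q\bs N_0\js$-submodule of $\mathbb{M}_{\infty,0}(D)$, being an intersection of the compact $\mathbb{F}_q\bs N_0\js$-submodules $\psi_t(\mathbb{M}_\infty^{bd}(D^\#))$ (each of these is a submodule because $\psi_t$ is $\varphi_t$-semilinear, and compact as the continuous image of the compact $\mathbb{M}_\infty^{bd}(D^\#)$); it is stable under every $\psi_s$, since the $\psi_s$ commute pairwise and $\psi_s(\mathbb{M}_\infty^{bd}(D^\#))\subseteq\mathbb{M}_\infty^{bd}(D^\#)$ by Proposition \ref{M_infty_D0_psi_descr}; and each $\psi_s$ acts surjectively on $N$ — indeed, for $y\in N$ the sets $\psi_s^{-1}(y)\cap\psi_t(\mathbb{M}_\infty^{bd}(D^\#))$ ($t\in T_+$) are nonempty (because $y\in\psi_{st}(\mathbb{M}_\infty^{bd}(D^\#))=\psi_s(\psi_t(\mathbb{M}_\infty^{bd}(D^\#)))$), closed, and form a decreasing family of subsets of the compact $\mathbb{M}_\infty^{bd}(D^\#)$, so their intersection $\psi_s^{-1}(y)\cap N$ is nonempty. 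Hence $N$ is one of the modules to which the first assertion applies, and by that assertion it contains every other such module; thus $N$ is the largest compact $\mathbb{F}_q\bs N_0\js$-submodule on which each operator $\psi_t$ ($t\in T_+$) acts surjectively. The main obstacle throughout is the surjectivity transfer of the third paragraph.
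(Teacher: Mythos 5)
Your overall reduction (set $D_0:=\pr_{0,\infty}(M)$, show $D_0\subseteq D^\#$, then conclude via Proposition \ref{M_infty_D0_psi_descr} and surjectivity of $\psi_t$ on $M$) and your fleshing-out of the ``largest'' clause are fine and agree with the paper. The genuine gap is at what you yourself call the crux: the claim that $\psi_t^{(D)}$ acts \emph{surjectively} on $D_0$, which you propose to get from $\psi_t(e_tM)=M$ with $e_t=\sum_{u\in J(H_{\Delta,0}/tH_{\Delta,0}t^{-1})}u^{-1}$. Nothing in your sketch proves this. To produce, for a given $y\in M$, an element $x\in M$ with $\psi_t(e_tx)=\psi_t(y)$, the natural candidate is $x=\varphi_t(\psi_t(y))$ (for which the identity does hold, since representatives of the nontrivial cosets of $tH_{\Delta,0}t^{-1}$ in $H_{\Delta,0}$ lie outside $tN_0t^{-1}$); but $M$ is only assumed stable under the $\psi_t$, not under the $\varphi_t$, so this $x$ need not lie in $M$, and likewise your proposed decomposition ``$M=e_tM+(M\cap\Ker\psi_t)$'' has no justification: $e_tM\subseteq M$, but there is no reason why $\psi_t$ should kill $M$ modulo $e_tM$. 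The assertion that $e_t$ generates, up to a unit, a power of an augmentation-type ideal of the noncommutative ring $\Fq\bs H_{\Delta,0}\js$ is also unsubstantiated, and in any case it says nothing about membership in $M$. So the surjectivity transfer from $M$ (upstairs) to $D_0$ (downstairs through the trace formula \eqref{psireduction}) is exactly the difficulty, and it is not even clear that the transferred statement is true; since, as you note, $\psi_\alpha$-stability of $D_0$ alone does not suffice for the maximality argument of Proposition \ref{Dhashalpha}, your proof does not go through as written.

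The paper avoids proving any surjectivity on $D_0$. It shows $D_0\subseteq D^\#_\alpha$ directly, element by element: since $D_0$ is finitely generated over $E_\Delta^+$, Proposition \ref{Dhashalpha} yields a single $m$ with $(\psi_\alpha^{(D)})^m(D_0)\subseteq D^\#_\alpha$; given $\overline x\in D_0$ one lifts it to $x\in M$ and uses surjectivity of $\psi_\alpha^m$ \emph{on $M$} to write $x=\psi_\alpha^m(y)$; then, by Lemma \ref{nilpDeltaminusalpha}, one chooses $\overline t=s_{\Delta\setminus\{\alpha\}}^k$ with $\overline tH_{\Delta,0}\overline t^{-1}\subseteq s_\alpha^mH_{\Delta,0}s_\alpha^{-m}$, which permits regrouping the coset sum in \eqref{psireduction} so that $\psi_{\overline t}^{(D)}(\overline x)$ becomes a sum of terms $(\psi_\alpha^{(D)})^m\bigl(\pr_{0,\infty}(\psi_{\overline t}(v^{-1}y))\bigr)\in(\psi_\alpha^{(D)})^m(D_0)\subseteq D^\#_\alpha$; finally $\overline x$ itself is recovered from the $\psi_{\overline t}^{(D)}$-images of its $\prod_{\beta\neq\alpha}(1+X_\beta)^{j_\beta}$-translates via the étale decomposition of $D^\#_\alpha$ with respect to $\varphi_\beta$, $\beta\neq\alpha$ (last assertion of Proposition \ref{Dhashalpha}). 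This interchange of a $\psi$-operator in the directions away from $\alpha$ with $\psi_\alpha^m$, made possible by Lemma \ref{nilpDeltaminusalpha}, is the mechanism your sketch is missing; to salvage your route you would need an argument of comparable depth establishing $\psi_t^{(D)}(\pr_{0,\infty}(M))=\pr_{0,\infty}(M)$, which is currently absent.
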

\begin{proof}
Let $M$ be as in the statement and put $D_0:=\pr_{0,\infty}(M)$. Then $D_0$ is a compact $E_\Delta^+$-submodule (being the continuous image of the compact $M$) of $D$ hence it is finitely generated over $E_\Delta^+$. Further, by \eqref{psireduction} we have $\psi^{(D)}_t(D_0)\subseteq D_0$ for all $t\in T_+$. Moreover, for any $x\in M$, we also have $\psi_t(u^{-1}x)\in M$ for all $u\in N_0$ and $t\in T_+$. Therefore we obtain $x\in \mathbb{M}^{bd}_\infty(D_0)$ using Proposition \ref{M_infty_D0_psi_descr}. Since $\psi_t\colon M\to M$ is onto, we deduce $M\subseteq \bigcap_{t\in T_+}\psi_t(\mathbb{M}_\infty^{bd}(D_0))$. It remains to show $D_0\subseteq D^\#$.

Let $\alpha\in\Delta$ be arbitrary. Since $D_0$ is finitely generated, there exists an integer $m>0$ such that $(\psi^{(D)}_\alpha)^m(D_0)\subseteq D^\#_\alpha$. Pick an element $\overline{x}\in D_0$, so there is an $x\in M$ such that $\pr_{0,\infty}(x)=\overline{x}$. Further, $\psi_\alpha^m\colon M\to M$ is onto, so we may choose an element $y\in M$ with $\psi_\alpha^m(y)=x$. On the other hand, there is an integer $k>0$ such that $s_{\Delta\setminus\{\alpha\}}^kH_{\Delta,0}s_{\Delta\setminus\{\alpha\}}^{-k}\subseteq s_\alpha^m H_{\Delta,0}s_\alpha^{-m}$ by Lemma \ref{nilpDeltaminusalpha}. Put $\overline{t}:=s_{\Delta\setminus\{\alpha\}}^k$, so we compute 
\begin{align*}
\psi_{\overline{t}}^{(D)}(\overline{x})=\psi_{\overline{t}}^{(D)}(\pr_{0,\infty}(x))=\psi_{\overline{t}}^{(D)}(\pr_{0,\infty}(\psi_\alpha^m(y))\overset{\eqref{psireduction}}{=}\\
=\sum_{u\in J(H_{\Delta,0}/\overline{t}H_{\Delta,0}\overline{t}^{-1})}\pr_{0,\infty}\left(\psi_{\overline{t}}(u^{-1}\psi_\alpha^m(y))\right)=\sum_{u\in J(H_{\Delta,0}/\overline{t}H_{\Delta,0}\overline{t}^{-1})}\pr_{0,\infty}\left(\psi_{\overline{t}}(\psi_\alpha^m(s_\alpha^mu^{-1}s_\alpha^{-m}y))\right)\overset{u_1=s_\alpha^mus_\alpha^{-m}}{=}\\
=\sum_{u_1\in J(s_\alpha^mH_{\Delta,0}s_\alpha^{-m}/s_\alpha^m\overline{t}H_{\Delta,0}\overline{t}^{-1}s_\alpha^{-m})}\pr_{0,\infty}\left(\psi_{\overline{t}}(\psi_\alpha^m(u_1^{-1}y))\right)\overset{u_1=vw}{=}\\
=\sum_{v\in J(s_\alpha^mH_{\Delta,0}s_\alpha^{-m}/\overline{t}H_{\Delta,0}\overline{t}^{-1})}
\sum_{w\in J(\overline{t}H_{\Delta,0}\overline{t}^{-1}/s_\alpha^m\overline{t}H_{\Delta,0}\overline{t}^{-1}s_\alpha^{-m})}
\pr_{0,\infty}\left(\psi_\alpha^m(\psi_{\overline{t}}(w^{-1}v^{-1}y))\right)\overset{w_1=\overline{t}^{-1}w\overline{t}}{=}\\
=\sum_{v\in J(s_\alpha^mH_{\Delta,0}s_\alpha^{-m}/\overline{t}H_{\Delta,0}\overline{t}^{-1})}
\sum_{w_1\in J(H_{\Delta,0}/s_\alpha^mH_{\Delta,0}s_\alpha^{-m})}
\pr_{0,\infty}\left(\psi_\alpha^m(w_1^{-1}\psi_{\overline{t}}(v^{-1}y))\right)\overset{\eqref{psireduction}}{=}\\
=\sum_{v\in J(s_\alpha^mH_{\Delta,0}s_\alpha^{-m}/\overline{t}H_{\Delta,0}\overline{t}^{-1})}(\psi_\alpha^{(D)})^m(\pr_{0,\infty}(\psi_{\overline{t}}(v^{-1}y))\in (\psi^{(D)}_\alpha)^m(D_0)\subseteq D^\#_\alpha\ .
\end{align*}

Since $\overline{x}\in D_0$ was arbitrary, we may apply it to $\left(\prod_{\beta\in\Delta\setminus\{\alpha\}}(1+X_\beta)^{j_\beta}\right)\overline{x}$ in place of $\overline{x}$ ($j_\beta=0,\dots,p^k-1$, $\beta\in\Delta\setminus\{\alpha\}$), too so that $$x\in \bigoplus_{(j_\beta)_{\beta\in\Delta\setminus\{\alpha\}}\in \{0,1\dots,p^m-1\}^{\Delta\setminus\{\alpha\}}}\left(\prod_{\beta\in\Delta\setminus\{\alpha\}}(1+X_\beta)^{j_\beta}\right)\varphi_{\overline{t}}(D^\#_\alpha)=D^\#_\alpha\ .$$ Since $\alpha\in\Delta$ was arbitrary, we deduce $D_0\subseteq D^\#=\bigcap_{\alpha\in\Delta}D^\#_\alpha$ as claimed.
\end{proof}

\begin{rem}
In view of Proposition \ref{prcontainedinDhash} and \cite[Lemma 4.16]{MultVar}, if we have $\bigcap_{t\in T_+}\psi_t(\mathbb{M}_\infty^{bd}(D^\#))=0$ for some étale $T_+$-module $D$ over $E_\Delta$ then there does not exist a smooth representation $\pi$ of $\GL_n(\mathbb{Q}_p)$ and a surjective morphism $D^\vee_\Delta(\pi)\twoheadrightarrow D$ of étale $T_+$-modules. In particular, $D$ cannot be in the image of the functor $D^\vee_\Delta$. At this point we cannot rule out this possibility for general $D$.
\end{rem}

\begin{que}
Let $D$ be an étale $T_+$-module  over $E_\Delta$. Is the map $\pr_{0,\infty}\colon \bigcap_{t\in T_+}\psi_t(\mathbb{M}_\infty^{bd}(D^\#))\to D^\#$ onto?
\end{que}

\subsection{Finiteness properties of noncommutative lattices}

Our main result in this section is Theorem \ref{Mbdfingen} and its corollaries.

 The group $N_0$ has an open uniform subgroup $U$ which contains $s^kN_0s^{-k}$ for $k\gg 0$ as the latter form a basis of neighbourhoods of $1$. However, the conjugation by $s^k$ induces an isomorphism $N_0\cong s^kN_0s^{-k}$ along which one can pull back any $p$-valuation on $U$ (in the sense of \cite[Chapter V]{MR2810332}) obtaining a $p$-valuation on $N_0$ (see also \cite[Lemma 1]{exact}\footnote{There is a misprint in the definition of the $p$-valuation $\omega$ on $N_0$ in \cite{exact}. The correct formula should read $\omega(g):=\min_{\alpha\in\Phi^+}(v_p(g_\alpha)+m_\alpha)$.} for a direct construction). Moreover, we may restrict this $p$-valuation to any subgroup in $N_0$. If we choose the $p$-valuation this way, it induces a filtration on the Iwasawa algebra $\mathbb{F}_q\bs H_{\Delta,0}\js$ such that the graded ring is a polynomial ring in $d:=|\Phi^+\setminus\Delta|=\frac{(n-2)(n-1)}{2}$ variables over $\mathbb{F}_q$. However, this way the grading is by the total degree but for technical reasons we need to equip $\mathbb{F}_q\bs H_{\Delta,0}\js$ with a filtration such that the graded pieces are $1$-dimensional over $\mathbb{F}_q$ generated by monomials, ie.\ such that the grading is by the lexicographic order. We achieve this as follows using that $N_0$ (hence $H_{\Delta,0}$) is nilpotent. Fix a total ordering $\leq$ of the set $\Phi^+$ refining the partial order by the degree: in particular, $\beta\leq \gamma$ if $\gamma-\beta$ is in $\Phi^+$ ($\beta,\gamma\in \Phi^+$), so we write $\Phi^+\setminus\Delta=\{\beta_1<\dots<\beta_d\}$. Pick a topological generator $u_i\in N_0\cap N_{\beta_i}$ ($i=1,\dots,d$) and put ${\bf b}^{\bf j}:=(u_1-1)^{j_1}\cdots(u_d-1)^{j_d}\in \mathbb{F}_q\bs H_{\Delta,0}\js$ for ${\bf j}=(j_1,\dots,j_d)\in (\mathbb{Z}^{\geq 0})^d$. Since $H_{\Delta,0}$ is homeomorphic to $\mathbb{Z}_p^d$, the elements of the Iwasawa algebra $\mathbb{F}_q\bs H_{\Delta,0}\js$ can uniquely be written as a noncommutative formal power series $$\sum_{{\bf j}\in(\mathbb{Z}^{\geq 0})^{d}}a_{\bf j}{\bf b}^{\bf j}$$
where $a_{\bf j}\in\mathbb{F}_q$ (see eg.\ \cite[Chapter IV]{MR2810332}). 
\begin{lem}\label{subgroupfilt}
The subgroups $H_r:=\prod_{i=r+1}^d(N_0\cap N_{\beta_i})$ are normal in $H_{\Delta,0}$ for all $1\leq r\leq d-1$ and are normalized by $N_0T_0$, too. The image of $N_0\cap N_{\beta_r}$ is contained in the center of the quotient group $H_{\Delta,0}/H_r$.
\end{lem}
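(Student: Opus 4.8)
The plan is to reduce the whole statement to the Chevalley commutator relations together with the single feature of the construction that matters here: the total order $\beta_1<\dots<\beta_d$ on $\Phi^+\setminus\Delta$ refines the partial order by degree. First I would record the only structural input I need. For distinct positive roots $\beta,\gamma$ one has
$[N_0\cap N_\beta,\,N_0\cap N_\gamma]\subseteq\prod_{\substack{a,b\geq1\\ a\beta+b\gamma\in\Phi^+}}(N_0\cap N_{a\beta+b\gamma})$ (for $G=\GL_n(\Qp)$ in fact only the term $a=b=1$ can occur), and each root subgroup $N_0\cap N_\beta$ is abelian. From the degree-refining property I would extract: the heights $\mathrm{ht}(\beta_i)$ are non-decreasing in $i$, and whenever $\gamma=a\beta_i+b\beta_j$ (with $a,b\geq1$, and $\beta_j$ allowed to be a simple root) is a root, then $\gamma$ is non-simple and $\mathrm{ht}(\gamma)=a\,\mathrm{ht}(\beta_i)+b\,\mathrm{ht}(\beta_j)>\max\!\big(\mathrm{ht}(\beta_i),\mathrm{ht}(\beta_j)\big)$, hence $\gamma=\beta_\ell$ for some index $\ell>\max(i,j)$. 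In particular the indices $\{r+1,\dots,d\}$ cut out a closed subset of $\Phi^+$, so by total decomposition of $N_0$ the set $H_r=\prod_{i>r}(N_0\cap N_{\beta_i})$ is a subgroup of $H_{\Delta,0}$, generated by the $N_0\cap N_{\beta_i}$ with $i>r$.

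Granting this, I would prove normality and normalization by $N_0T_0$ by commutator bookkeeping. For $i>r$ and any $j\in\{1,\dots,d\}$ the commutator $[N_0\cap N_{\beta_i},\,N_0\cap N_{\beta_j}]$ is either trivial or contained in some $N_0\cap N_{\beta_\ell}$ with $\ell>\max(i,j)\geq i>r$, hence in $H_r$; the same estimate with a simple root $\alpha$ in place of $\beta_j$ gives $[N_0\cap N_{\beta_i},\,N_0\cap N_\alpha]\subseteq H_r$, since $\mathrm{ht}(\beta_i+\alpha)>\mathrm{ht}(\beta_i)\geq\mathrm{ht}(\beta_r)$. Thus for a generator $v\in N_0\cap N_{\beta_i}$ of $H_r$ ($i>r$) and any $u$ in a root subgroup of $N_0$ one has $uvu^{-1}=[u,v]\,v\in H_r$; since $N_0=\prod_{\beta\in\Phi^+}(N_0\cap N_\beta)$ and $H_{\Delta,0}=\prod_{\beta\in\Phi^+\setminus\Delta}(N_0\cap N_\beta)$, this shows that $H_r$ is a normal subgroup of $H_{\Delta,0}$ and is normalized by $N_0$. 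For $T_0$ I would use that every $t\in T_0$ satisfies $\beta(t)\in\Zp^\times$ for all $\beta\in\Phi^+$, so $t(N_0\cap N_\beta)t^{-1}=N_0\cap N_\beta$ and $tH_rt^{-1}=H_r$; since $T_0$ normalizes $N_0$, the group $N_0T_0$ is generated by $N_0$ and $T_0$, each of which normalizes $H_r$.

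For the last clause I would verify that $N_0\cap N_{\beta_r}$ commutes modulo $H_r$ with every generator $N_0\cap N_{\beta_j}$ of $H_{\Delta,0}$: for $j=r$ the subgroup commutes with itself because $N_0\cap N_{\beta_r}$ is abelian, while for $j\neq r$ the commutator $[N_0\cap N_{\beta_r},\,N_0\cap N_{\beta_j}]$ is trivial or lies in $N_0\cap N_{\beta_\ell}$ with $\beta_\ell=a\beta_r+b\beta_j$ a root of height strictly larger than $\mathrm{ht}(\beta_r)$, so $\ell>r$ and the commutator is in $H_r$; hence the image of $N_0\cap N_{\beta_r}$ in $H_{\Delta,0}/H_r$ is central. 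I do not expect a serious obstacle: the computation is routine once the Chevalley formula is in place. The one point that genuinely requires the total order to refine the degree order — rather than being arbitrary — is the implication ``$a\beta_i+b\beta_j$ has strictly larger height than $\beta_i$'' $\Rightarrow$ ``$a\beta_i+b\beta_j=\beta_\ell$ with $\ell>i$'', and this is exactly the property that makes $H_{\Delta,0}=H_0\supseteq H_1\supseteq\dots\supseteq H_{d-1}$ a genuine, central-type filtration.
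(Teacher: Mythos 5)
Your proof is correct and is essentially the paper's own argument: the paper's proof is a one-line appeal to the Chevalley commutator formula together with the fact that the chosen total order refines the order by degree, which is exactly the bookkeeping you carry out in detail (closedness of $\{\beta_{r+1},\dots,\beta_d\}$, commutators landing in higher-indexed root subgroups, and $\beta(t)\in\Zp^\times$ for $t\in T_0$).
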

\begin{proof}
This follows from our choice of ordering refining the partial order by the degree of positive roots using the commutator formula \cite[Proposition 8.2.3]{MR1642713} (see also the proof of \cite[Lemma 1]{exact}).
\end{proof}

In particular, the central elements $(u_r-1)^{j}$ ($j\geq 0$) generate two-sided ideals $I_r^{j}$ in the Iwasawa algebra $\mathbb{F}_q\bs H_{\Delta,0}/H_r\js$ such that the associated graded ring 
$$\gr\mathbb{F}_q\bs H_{\Delta,0}/H_r\js:=\bigoplus_{j=0}^\infty I_r^j/I_r^{j+1} $$
is isomorphic to the polynomial ring $\mathbb{F}_q\bs H_{\Delta,0}/H_{r-1}\js[Y_r]$ (putting $H_0:=H_{\Delta,0}$). We pull back all these filtrations to $\mathbb{F}_q\bs H_{\Delta,0}\js$ via the quotient map $\mathbb{F}_q\bs H_{\Delta,0}\js\to \mathbb{F}_q\bs H_{\Delta,0}/H_r\js$. More precisely, equip $(\mathbb{Z}^{\geq 0})^d$ with the lexicographic order, ie.\ ${\bf j}< {\bf j'}$ if ${\bf j}\neq {\bf j'}$ and $j_i<j'_i$ for the largest $i\in \{1,\dots,d\}$ such that $j_i\neq j'_i$ and define the filtration by two-sided ideals
\begin{align}
\Fil^{\bf j}\mathbb{F}_q\bs H_{\Delta,0}\js:=\left\{\sum_{{\bf j'}\in(\mathbb{Z}^{\geq 0})^{d}}a_{\bf j'}{\bf b}^{\bf j'}\mid a_{\bf j'}=0\text{ for all }{\bf j'}<{\bf j}\right\}\ ,\notag\\
\Fil^{{\bf j}+}\mathbb{F}_q\bs H_{\Delta,0}\js:=\left\{\sum_{{\bf j'}\in(\mathbb{Z}^{\geq 0})^{d}}a_{\bf j'}{\bf b}^{\bf j'}\mid a_{\bf j'}=0\text{ for all }{\bf j'}\leq {\bf j}\right\}\ .\label{multifilt}
\end{align}
\begin{pro}\label{multigraded}
The graded ring $\bigoplus_{{\bf j}\in (\mathbb{Z}^{\geq 0})^d}\Fil^{\bf j}/\Fil^{{\bf j}+}$ associated to the filtration \eqref{multifilt} is isomorphic to the polynomial ring $\mathbb{F}_q[Y_1,\dots,Y_d]$.
\end{pro}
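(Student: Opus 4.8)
\emph{Plan.} Since every element of $\mathbb{F}_q\bs H_{\Delta,0}\js$ has a unique expansion $\sum_{\bf j}a_{\bf j}{\bf b}^{\bf j}$, each graded piece $\Fil^{\bf j}/\Fil^{{\bf j}+}$ is one-dimensional over $\mathbb{F}_q$, spanned by the class $\overline{{\bf b}^{\bf j}}$ of ${\bf b}^{\bf j}$. The whole statement therefore reduces to the single congruence
\[
{\bf b}^{\bf j}{\bf b}^{\bf k}\equiv{\bf b}^{{\bf j}+{\bf k}}\pmod{\Fil^{({\bf j}+{\bf k})+}\mathbb{F}_q\bs H_{\Delta,0}\js}\qquad\text{for all }{\bf j},{\bf k}\in(\mathbb{Z}^{\geq 0})^d .
\]
Indeed, as the lexicographic order on $(\mathbb{Z}^{\geq 0})^d$ is compatible with addition, this at once yields $\Fil^{\bf j}\cdot\Fil^{\bf k}\subseteq\Fil^{{\bf j}+{\bf k}}$ and $\Fil^{\bf j}\cdot\Fil^{{\bf k}+}+\Fil^{{\bf j}+}\cdot\Fil^{\bf k}\subseteq\Fil^{({\bf j}+{\bf k})+}$ (in particular the $\Fil^{\bf j}$ are two-sided ideals, as asserted just above the Proposition), and it shows that $Y_i\mapsto\overline{u_i-1}$ extends to a graded $\mathbb{F}_q$-algebra homomorphism $\mathbb{F}_q[Y_1,\dots,Y_d]\to\bigoplus_{\bf j}\Fil^{\bf j}/\Fil^{{\bf j}+}$, which is surjective because $\overline{{\bf b}^{\bf j}}$ is the image of $Y_1^{j_1}\cdots Y_d^{j_d}$, and injective because each graded piece is one-dimensional.

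\emph{Proof of the congruence.} I would argue by induction on $d$, peeling off the subgroup $Z:=N_0\cap N_{\beta_d}$; equivalently, one reads $\Fil^\bullet\mathbb{F}_q\bs H_{\Delta,0}\js$ as the refinement of the $I_d$-adic filtration on $\mathbb{F}_q\bs H_{\Delta,0}\js$ by the $I_{d-1}$-adic filtration on its graded quotients $I_d^{j_d}/I_d^{j_d+1}\cong\mathbb{F}_q\bs H_{\Delta,0}/H_{d-1}\js$, then by the $I_{d-2}$-adic filtration, and so on down to the $I_1$-adic filtration on $\mathbb{F}_q\bs H_{\Delta,0}/H_1\js$, iterating the isomorphisms between the associated graded of $\mathbb{F}_q\bs H_{\Delta,0}/H_r\js$ for the $I_r$-adic filtration and $\mathbb{F}_q\bs H_{\Delta,0}/H_{r-1}\js[Y_r]$ recalled above. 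For $d=1$ the algebra is $\mathbb{F}_q\bs u_1-1\js$ with $\Fil^{(j)}=(u_1-1)^j\mathbb{F}_q\bs u_1-1\js$, and there is nothing to prove. For the inductive step, note that $\beta_d$ is the largest element of $\Phi^+$ for the chosen ordering (which refines the partial order by degree), so $\beta_d+\beta_i\notin\Phi^+$ for all $i$ and hence $u_d$ is central in $H_{\Delta,0}$ (cf.\ Lemma \ref{subgroupfilt}); consequently $u_d-1$ is a non-zero-divisor in $\mathbb{F}_q\bs H_{\Delta,0}\js$ (the associated graded for the total-degree filtration recalled above is a polynomial ring, hence a domain), and $\mathbb{F}_q\bs H_{\Delta,0}\js/(u_d-1)\cong\mathbb{F}_q\bs H_{\Delta,0}/Z\js=:\bar R$ (with $H_{d-1}=Z$), a group algebra in the $d-1$ variables $u_1,\dots,u_{d-1}$ carrying the evident analogues of the data of Lemma \ref{subgroupfilt}. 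Writing ${\bf b}^{\bf j}={\bf b}^{\bf j}_{<d}(u_d-1)^{j_d}$ with ${\bf b}^{\bf j}_{<d}:=\prod_{i<d}(u_i-1)^{j_i}$ and using that $u_d-1$ is central, we get ${\bf b}^{\bf j}{\bf b}^{\bf k}={\bf b}^{\bf j}_{<d}{\bf b}^{\bf k}_{<d}(u_d-1)^{j_d+k_d}$; the induction hypothesis applied to $\bar R$ gives ${\bf b}^{\bf j}_{<d}{\bf b}^{\bf k}_{<d}\equiv{\bf b}^{{\bf j}+{\bf k}}_{<d}$ modulo the lexicographic filtration step of $\bar R$ strictly above $(j_1+k_1,\dots,j_{d-1}+k_{d-1})$, hence modulo $(u_d-1)\mathbb{F}_q\bs H_{\Delta,0}\js$ together with a lift of that ideal; multiplying back by $(u_d-1)^{j_d+k_d}$ (which carries ${\bf b}^{\bf l}$ to ${\bf b}^{{\bf l}+(j_d+k_d){\bf e}_d}$) and using that the $d$-th coordinate is the most significant one in the lexicographic order, one checks that all the error terms lie in $\Fil^{({\bf j}+{\bf k})+}\mathbb{F}_q\bs H_{\Delta,0}\js$, which is the desired congruence.

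\emph{Main difficulty.} The delicate point is the verification that $\Fil^\bullet\mathbb{F}_q\bs H_{\Delta,0}\js$ actually is this iterated refinement — equivalently, that the monomials ${\bf b}^{\bf j}$ form a system of representatives simultaneously compatible with all $d$ of the $I_r$-adic filtrations, so that $\overline{{\bf b}^{\bf j}}$ corresponds to $Y_1^{j_1}\cdots Y_d^{j_d}$ under the iterated isomorphism. This is precisely where the choice of an ordering of $\Phi^+$ refining the degree partial order, and Lemma \ref{subgroupfilt} (centrality of $u_r-1$ modulo $H_r$, so that the $I_r$ are honest two-sided ideals with the stated polynomial graded quotient), get used: the commutators produced when reordering the factors $(u_i-1)$ lie in the augmentation ideals of the $H_r$ with $r$ large, and so contribute only in strictly higher lexicographic degree. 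A related subtlety, which forces one to argue via the quotient $\mathbb{F}_q\bs H_{\Delta,0}\js/(u_d-1)$ and the non-zero-divisor property rather than via an algebra splitting, is that the $\mathbb{F}_q$-span of the ${\bf b}^{\bf j}_{<d}$ inside $\mathbb{F}_q\bs H_{\Delta,0}\js$ is \emph{not} a subring lifting $\bar R$.
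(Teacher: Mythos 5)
Your proof is correct and follows essentially the same route as the paper: the paper also argues by induction through the tower of quotients $\mathbb{F}_q\bs H_{\Delta,0}/H_r\js$, using the centrality of $u_r-1$ from Lemma \ref{subgroupfilt} to pull the powers of $(u_r-1)$ out of the monomial products, which is exactly your step of peeling off the most significant central variable $u_d$ and invoking the $(d-1)$-variable case for $\mathbb{F}_q\bs H_{\Delta,0}/Z\js=\mathbb{F}_q\bs H_{\Delta,0}/H_{d-1}\js$. Your reduction to the single monomial congruence and the explicit bookkeeping of the error terms only make explicit what the paper leaves implicit in ``the claim follows from the inductional statement for $r-1$''.
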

\begin{proof}
The above filtration induces a filtration on the Iwasawa algebra $\mathbb{F}_q\bs H_{\Delta,0}/H_r\js$ for any $1\leq r\leq d$. We claim that the graded ring $\bigoplus_{{\bf j}\in (\mathbb{Z}^{\geq 0})^d}\Fil^{\bf j}\mathbb{F}_q\bs H_{\Delta,0}/H_r\js/\Fil^{{\bf j}+}\mathbb{F}_q\bs H_{\Delta,0}/H_r\js$ is isomorphic to $\mathbb{F}_q[Y_1,\dots,Y_r]$ for all $1\leq r\leq d$ (hence the statement in case $r=d$). We prove this by induction on $r$. If $r=1$ then the group $H_{\Delta,0}/H_1\cong N_0\cap N_{\beta_1}\cong \mathbb{Z}_p$ is commutative so there is nothing to prove. For any integer $r$ put ${\bf j}^{(r)}:=(j_1,\dots,j_r,0,\dots,0)$ for the $r$th truncation of ${\bf j}=(j_1,\dots,j_d)\in(\mathbb{Z}^{\geq 0})^d$. Let $r>1$. By Lemma \ref{subgroupfilt}, $u_r-1$ lies in the center of $H_{\Delta,0}/H_r$, so we have 
$${\bf b}^{{\bf j}^{(r)}}\cdot {\bf b}^{{\bf j'}^{(r)}}={\bf b}^{{\bf j}^{(r-1)}}\cdot {\bf b}^{{\bf j'}^{(r-1)}}(u_r-1)^{j_r+j'_r}$$
for all ${\bf j},{\bf j'}\in (\mathbb{Z}^{\geq 0})^d$. The claim follows from the inductional statement for $r-1$.
\end{proof}

The filtration \eqref{multifilt} induces a filtration 
\begin{align}
\Fil^{\bf j}\mathbb{M}_{\infty,0}(D):=(\Fil^{\bf j}\mathbb{F}_q\bs H_{\Delta,0}\js)\mathbb{M}_{\infty,0}(D) \qquad\text{and}\qquad 
\Fil^{{\bf j}+}\mathbb{M}_{\infty,0}(D):=(\Fil^{{\bf j}+}\mathbb{F}_q\bs H_{\Delta,0}\js)\mathbb{M}_{\infty,0}(D) \label{multfiltmod}
\end{align}
on $\mathbb{M}_{\infty,0}(D)$ by closed $\Fq\bs N_0\js$-submodules for any étale $T_+$-module $D$ over $E_\Delta$. In case ${\bf j}={\bf 0}=(0,\dots,0)$ we have $\Fil^{{\bf 0}+}\mathbb{M}_{\infty,0}(D)=\Ker(\pr_{0,\infty})$ by construction. Further, we have the following comparison of this filtration with the kernels of the projection maps $\pr_{k,\infty}\colon \mathbb{M}_{\infty,0}(D)\to \mathbb{M}_{k,0}(D)$.
\begin{lem}\label{comparefilpr}
For an integer $k\geq 0$ let $k_i\geq 0$ be the unique integer such that $H_{\Delta,k}\cap N_{\beta_i}=(H_{\Delta,0}\cap N_{\beta_i})^{p^{k_i}}$ and put ${\bf \tilde{k}}:=(p^{k_1}-1,\dots,p^{k_d}-1)$. We have $\Fil^{{\bf \tilde{k}}+}\mathbb{M}_{\infty,0}(D)\subset \Ker(\pr_{k,\infty})$, but $\Fil^{\bf \tilde{k}}\mathbb{M}_{\infty,0}(D)\not\subset \Ker(\pr_{k,\infty})$.
\end{lem}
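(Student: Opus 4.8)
The plan is to reduce the statement to commutative algebra inside the Iwasawa algebra $\Fq\bs H_{\Delta,0}\js$ modulo the finite $p$-group $H_{\Delta,0}/H_{\Delta,k}$, and then to transport the resulting inclusions of ideals up to $\mathbb{M}_{\infty,0}(D)$ along $\pr_{k,\infty}$. We may assume $D\neq 0$, since otherwise $\mathbb{M}_{\infty,0}(D)=0$ and there is nothing to prove. Let $J_k\subseteq\Fq\bs H_{\Delta,0}\js$ be the kernel of the natural surjection $\Fq\bs H_{\Delta,0}\js\twoheadrightarrow\Fq\bs H_{\Delta,0}/H_{\Delta,k}\js$, that is, the closed two-sided ideal topologically generated by $\{h-1\mid h\in H_{\Delta,k}\}$. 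From \cite{MultVar} I would use: that $\pr_{k,\infty}$ is a surjective, continuous, $N_0$-equivariant homomorphism of modules over the reduction map $\Fq\bg N_{\Delta,\infty}\jg\to\Fq\bg N_{\Delta,k}\jg$; that $\Ker(\pr_{0,k})=I(H_{\Delta,0}/H_{\Delta,k})\,\mathbb{M}_{k,0}(D)$, so that $\mathbb{M}_{k,0}(D)$ is a lift of the free $E_\Delta$-module $D$ along the surjection $\Fq\bg N_{\Delta,k}\jg\to E_\Delta$ whose kernel is nilpotent (it is generated by the augmentation ideal of the finite $p$-group $H_{\Delta,0}/H_{\Delta,k}$), whence, by a Nakayama-type argument, $\mathbb{M}_{k,0}(D)$ is a free $\Fq\bg N_{\Delta,k}\jg$-module of rank $r:=\rk_{E_\Delta}D\geq 1$. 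A first consequence: since $H_{\Delta,k}$ acts trivially on the $\Fq\bg N_{\Delta,k}\jg$-module $\mathbb{M}_{k,0}(D)$ and $\Ker(\pr_{k,\infty})$ is closed, we get $\overline{J_k\,\mathbb{M}_{\infty,0}(D)}\subseteq\Ker(\pr_{k,\infty})$.

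For the first inclusion it then suffices to show $\Fil^{{\bf \tilde{k}}+}\Fq\bs H_{\Delta,0}\js\subseteq J_k$ and multiply by $\mathbb{M}_{\infty,0}(D)$. The ideal $\Fil^{{\bf \tilde{k}}+}\Fq\bs H_{\Delta,0}\js$ is the closed $\Fq$-span of the monomials ${\bf b}^{\bf j}$ with ${\bf j}>{\bf \tilde{k}}$ in the lexicographic order. Given such a ${\bf j}$, let $i_0$ be the largest index with $j_{i_0}\neq p^{k_{i_0}}-1$; then ${\bf j}>{\bf \tilde{k}}$ forces $j_{i_0}\geq p^{k_{i_0}}$, so $(u_{i_0}-1)^{p^{k_{i_0}}}$ appears as a factor of ${\bf b}^{\bf j}$. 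Inside the commutative subring $\Fq\bs N_0\cap N_{\beta_{i_0}}\js$ one has the characteristic-$p$ identity $(u_{i_0}-1)^{p^{k_{i_0}}}=u_{i_0}^{p^{k_{i_0}}}-1$, and $u_{i_0}^{p^{k_{i_0}}}\in(H_{\Delta,0}\cap N_{\beta_{i_0}})^{p^{k_{i_0}}}=H_{\Delta,k}\cap N_{\beta_{i_0}}\subseteq H_{\Delta,k}$, so $(u_{i_0}-1)^{p^{k_{i_0}}}$ is one of the topological generators of $J_k$; since $J_k$ is two-sided, ${\bf b}^{\bf j}\in J_k$. As $J_k$ is closed this yields $\Fil^{{\bf \tilde{k}}+}\Fq\bs H_{\Delta,0}\js\subseteq J_k$, hence $\Fil^{{\bf \tilde{k}}+}\mathbb{M}_{\infty,0}(D)\subseteq\overline{J_k\,\mathbb{M}_{\infty,0}(D)}\subseteq\Ker(\pr_{k,\infty})$.

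For the negative statement I would exhibit an explicit element of $\Fil^{\bf \tilde{k}}\mathbb{M}_{\infty,0}(D)$ lying outside $\Ker(\pr_{k,\infty})$, namely ${\bf b}^{\bf \tilde{k}}y$, where (using surjectivity of $\pr_{k,\infty}$) $y\in\mathbb{M}_{\infty,0}(D)$ is a preimage of a member $e_1$ of a $\Fq\bg N_{\Delta,k}\jg$-basis of $\mathbb{M}_{k,0}(D)$. Since ${\bf b}^{\bf \tilde{k}}\in\Fil^{\bf \tilde{k}}\Fq\bs H_{\Delta,0}\js$ we have ${\bf b}^{\bf \tilde{k}}y\in\Fil^{\bf \tilde{k}}\mathbb{M}_{\infty,0}(D)$, while $\pr_{k,\infty}({\bf b}^{\bf \tilde{k}}y)=\overline{{\bf b}^{\bf \tilde{k}}}\,e_1$, where $\overline{{\bf b}^{\bf \tilde{k}}}$ is the image of ${\bf b}^{\bf \tilde{k}}$ under $\Fq\bs H_{\Delta,0}/H_{\Delta,k}\js\hookrightarrow\Fq\bs N_{\Delta,k}\js$. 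As $e_1$ spans a free rank-one $\Fq\bs N_{\Delta,k}\js$-submodule, $\overline{{\bf b}^{\bf \tilde{k}}}\,e_1\neq 0$ is equivalent to $\overline{{\bf b}^{\bf \tilde{k}}}\neq 0$, i.e.\ to ${\bf b}^{\bf \tilde{k}}\notin J_k$. To prove ${\bf b}^{\bf \tilde{k}}\notin J_k$ I would pass to associated graded rings for the filtration \eqref{multifilt}: by Proposition \ref{multigraded} the associated graded ring of $\Fq\bs H_{\Delta,0}\js$ is the polynomial ring $\Fq[Y_1,\dots,Y_d]$, and the symbol of ${\bf b}^{\bf \tilde{k}}$ is $Y_1^{p^{k_1}-1}\cdots Y_d^{p^{k_d}-1}$. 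The computation of the previous paragraph shows that the symbol ideal $\operatorname{gr}J_k$ contains $(Y_1^{p^{k_1}},\dots,Y_d^{p^{k_d}})$; conversely $\Fq[Y_1,\dots,Y_d]/\operatorname{gr}J_k$ is the associated graded of $\Fq\bs H_{\Delta,0}\js/J_k=\Fq\bs H_{\Delta,0}/H_{\Delta,k}\js$, of $\Fq$-dimension $|H_{\Delta,0}:H_{\Delta,k}|=\prod_{i=1}^d p^{k_i}=\dim_\Fq\Fq[Y_1,\dots,Y_d]/(Y_1^{p^{k_1}},\dots,Y_d^{p^{k_d}})$, so $\operatorname{gr}J_k=(Y_1^{p^{k_1}},\dots,Y_d^{p^{k_d}})$. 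Since $Y_1^{p^{k_1}-1}\cdots Y_d^{p^{k_d}-1}$ lies in no $(Y_i^{p^{k_i}})$, it is not in $\operatorname{gr}J_k$, hence ${\bf b}^{\bf \tilde{k}}\notin J_k$, as wanted.

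I expect the main obstacle to be this last step: identifying $\operatorname{gr}J_k$ exactly requires knowing the index $|H_{\Delta,0}:H_{\Delta,k}|=\prod_{i=1}^d p^{k_i}$, which in turn rests on the structure of $H_{\Delta,k}$ established in \cite{MultVar} (concretely, that $H_{\Delta,k}$ is totally decomposed, so $|H_{\Delta,0}:H_{\Delta,k}|=\prod_i|(H_{\Delta,0}\cap N_{\beta_i}):(H_{\Delta,k}\cap N_{\beta_i})|=\prod_i p^{k_i}$). The rest is bookkeeping with the lexicographic filtration together with the structural properties of the noncommutative deformations $\mathbb{M}_{k,0}$ recalled above, and the compatibility of topologies (closedness of $\Ker(\pr_{k,\infty})$, continuity of $\pr_{k,\infty}$) is routine in the profinite/noetherian setting.
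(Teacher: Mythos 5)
Your argument is correct in substance, and the first inclusion is proved exactly as in the paper: for ${\bf j}>{\bf \tilde{k}}$ the largest index of disagreement forces $j_{i_0}\geq p^{k_{i_0}}$, and $(u_{i_0}-1)^{p^{k_{i_0}}}=u_{i_0}^{p^{k_{i_0}}}-1$ dies modulo $H_{\Delta,k}$, so $\Fil^{{\bf \tilde{k}}+}$ lands in $\Ker(\pr_{k,\infty})$. Where you genuinely diverge is the negative statement: the paper notes that in characteristic $p$ one has $(u_{\beta_i}-1)^{p^{k_i}-1}=\sum_{r=0}^{p^{k_i}-1}u_{\beta_i}^r$, so that ${\bf b}^{\bf \tilde{k}}$ is literally the sum $\sum_{u\in J(H_{\Delta,0}/H_{\Delta,k})}u$ over a system of coset representatives and is therefore visibly nonzero in $\Fq[H_{\Delta,0}/H_{\Delta,k}]$, whereas you identify $\operatorname{gr}J_k=(Y_1^{p^{k_1}},\dots,Y_d^{p^{k_d}})$ by a dimension count in the graded ring of Proposition \ref{multigraded} and rule out the symbol $Y_1^{p^{k_1}-1}\cdots Y_d^{p^{k_d}-1}$. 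Both routes rest on the same structural input from \cite{MultVar}, namely $|H_{\Delta,0}:H_{\Delta,k}|=\prod_i p^{k_i}$ (equivalently, that the monomials $u_1^{r_1}\cdots u_d^{r_d}$ with $0\leq r_i<p^{k_i}$ represent the cosets of $H_{\Delta,k}$ without repetition), so your version is longer but equally legitimate; the trace-element identity is what turns the paper's argument into two lines.

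Two points to tighten. First, your ``Nakayama-type'' derivation of freeness of $\mathbb{M}_{k,0}(D)$ over $\Fq\bg N_{\Delta,k}\jg$ does not work as stated: a finitely generated module whose reduction modulo a nilpotent two-sided ideal is free need not be free (already $\mathbb{Z}/p$ over $\mathbb{Z}/p^2$ fails). The freeness you need is instead immediate from the construction $\mathbb{M}_{k,0}(D)\cong \Fq\bg N_{\Delta,k}\jg\otimes_{\varphi_{s^{kn_0}},\Fq\bg N_{\Delta,0}\jg}D$ recalled in the proof of Proposition \ref{Dwprojlim}, so cite that rather than rederive it. Second, to pass from ${\bf b}^{\bf \tilde{k}}\notin J_k$ to $\overline{{\bf b}^{\bf \tilde{k}}}\,e_1\neq 0$ you implicitly use that $\overline{{\bf b}^{\bf \tilde{k}}}$ remains nonzero in the localization $\Fq\bg N_{\Delta,k}\jg$ (the Iwasawa algebra $\Fq\bs N_{\Delta,k}\js$ is not a domain, since $N_{\Delta,k}$ has the finite $p$-subgroup $H_{\Delta,0}/H_{\Delta,k}$); this step is equally implicit in the paper's proof and is supplied by the structure theory in \cite{MultVar}, but it deserves a word. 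Neither point is a fatal gap; both are repaired by citing the structural facts you were already attributing to \cite{MultVar}.
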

\begin{proof}
If $j'_i\geq p^{k_i}$ for some $i\in \{1,\dots,d\}$ then ${\bf b}^{\bf j'}$ maps to $0$ in $\mathbb{F}_q[H_{\Delta,0}/H_{\Delta,k}]$ since $(u_{\beta_i}-1)^{j'_i}=(u_{\beta_i}-1)^{j'_i-p^{k_i}}(u_{\beta_i}^{p^{k_i}}-1)$ and $u_{\beta_i}^{p^{k_i}}$ is in $H_{\Delta,k}$. Hence we have $\Fil^{{\bf \tilde{k}}+}\mathbb{M}_{\infty,0}(D)\subset \Ker(\pr_{k,\infty})$. For the other statement note that $(u_{\beta_i}-1)^{p^{k_i}-1}=\frac{(u_{\beta_i}^{p^{k_i}}-1)}{u_{\beta_i}-1}=\sum_{r=0}^{p^{k_i}-1}u_{\beta_i}^r$ whence ${\bf b}^{\bf \tilde{k}}=\sum_{u\in J(H_{\Delta,0}/H_{\Delta,k})}u$ for some system of representatives $J(H_{\Delta,0}/H_{\Delta,k})$. In particular ${\bf b}^{\bf \tilde{k}}\mathbb{M}_{\infty,0}(D)\subseteq \Fil^{\bf \tilde{k}}\mathbb{M}_{\infty,0}(D)$, but ${\bf b}^{\bf \tilde{k}}\mathbb{M}_{\infty,0}(D)\not\subset \Ker(\pr_{k,\infty})$.
\end{proof}
We will also need the following continuity result.
\begin{lem}\label{intersect+}
For any compact $\Fq\bs N_0\js$-submodule $M\leq \mathbb{M}_{\infty,0}(D)$ and index ${\bf j}^{(0)}\in (\mathbb{Z}^{\geq 0})^d$ we have $$M+\Fil^{{\bf j}^{(0)}}\mathbb{M}_{\infty,0}(D)= \bigcap_{{\bf j'}<{\bf j}^{(0)}}(M+\Fil^{{\bf j'}+}\mathbb{M}_{\infty,0}(D))\ .$$
\end{lem}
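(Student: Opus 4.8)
The plan is to prove the two inclusions separately, the non‑trivial one by a compactness reduction to the case $M=0$ followed by a layer‑by‑layer analysis. The inclusion ``$\subseteq$'' is formal: if ${\bf j'}<{\bf j}^{(0)}$ in the lexicographic order, then every ${\bf i}$ with ${\bf i}\leq {\bf j'}$ also satisfies ${\bf i}<{\bf j}^{(0)}$, so $\Fil^{{\bf j}^{(0)}}\Fq\bs H_{\Delta,0}\js\subseteq\Fil^{{\bf j'}+}\Fq\bs H_{\Delta,0}\js$ by \eqref{multifilt}, hence $\Fil^{{\bf j}^{(0)}}\mathbb{M}_{\infty,0}(D)\subseteq\Fil^{{\bf j'}+}\mathbb{M}_{\infty,0}(D)$, and therefore $M+\Fil^{{\bf j}^{(0)}}\mathbb{M}_{\infty,0}(D)$ lies in each term of the intersection on the right.

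For ``$\supseteq$'' I would first strip off $M$. Given $x$ in the right‑hand side, put $C_{{\bf j'}}:=\{m\in M\mid x-m\in\Fil^{{\bf j'}+}\mathbb{M}_{\infty,0}(D)\}$ for ${\bf j'}<{\bf j}^{(0)}$. Each $C_{{\bf j'}}$ is nonempty by hypothesis and closed in $M$, since $\Fil^{{\bf j'}+}\mathbb{M}_{\infty,0}(D)$ is closed and $m\mapsto x-m$ is continuous. As the lexicographic order is total and ${\bf a}\leq{\bf b}$ forces $\Fil^{{\bf b}+}\mathbb{M}_{\infty,0}(D)\subseteq\Fil^{{\bf a}+}\mathbb{M}_{\infty,0}(D)$, the family $(C_{{\bf j'}})$ is totally ordered by reverse inclusion and hence has the finite intersection property; compactness of $M$ then produces $m_\infty\in\bigcap_{{\bf j'}<{\bf j}^{(0)}}C_{{\bf j'}}$, i.e.\ $x-m_\infty\in\bigcap_{{\bf j'}<{\bf j}^{(0)}}\Fil^{{\bf j'}+}\mathbb{M}_{\infty,0}(D)$. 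So it suffices to treat $M=0$, namely to prove $\bigcap_{{\bf j'}<{\bf j}^{(0)}}\Fil^{{\bf j'}+}\mathbb{M}_{\infty,0}(D)=\Fil^{{\bf j}^{(0)}}\mathbb{M}_{\infty,0}(D)$; note this is the only place compactness enters.

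For the case $M=0$ I would work in the finite layers, using $\mathbb{M}_{\infty,0}(D)=\varprojlim_k\mathbb{M}_{k,0}(D)$. Fix $k$ and let ${\bf \tilde{k}}$ be as in Lemma \ref{comparefilpr}. The computation in the proof of that lemma shows $\pr_{k,\infty}$ annihilates the classes of the ${\bf b}^{\bf i}$ with some $i_l\geq p^{k_l}$, so $\pr_{k,\infty}(\Fil^{{\bf j'}+}\mathbb{M}_{\infty,0}(D))=\bar I_{{\bf j'}}\mathbb{M}_{k,0}(D)$, where $\bar I_{{\bf j'}}$ is the $\Fq$‑span of the classes of those ${\bf b}^{\bf i}$ with ${\bf j'}<{\bf i}$ and $i_l<p^{k_l}$ for all $l$. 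Only finitely many such $\bar I_{{\bf j'}}$ arise and, the order being total, they are totally ordered by inclusion; hence $\bigcap_{{\bf j'}<{\bf j}^{(0)}}\pr_{k,\infty}(\Fil^{{\bf j'}+}\mathbb{M}_{\infty,0}(D))=\bigl(\bigcap_{{\bf j'}<{\bf j}^{(0)}}\bar I_{{\bf j'}}\bigr)\mathbb{M}_{k,0}(D)$. Since ``${\bf i}>{\bf j'}$ for all ${\bf j'}<{\bf j}^{(0)}$'' is equivalent to ``${\bf i}\geq{\bf j}^{(0)}$'', Proposition \ref{multigraded} (together with the monomial description of the layer‑$k$ filtration) identifies $\bigcap_{{\bf j'}<{\bf j}^{(0)}}\bar I_{{\bf j'}}$ with the image of $\Fil^{{\bf j}^{(0)}}\Fq\bs H_{\Delta,0}\js$, so $\bigcap_{{\bf j'}<{\bf j}^{(0)}}\pr_{k,\infty}(\Fil^{{\bf j'}+}\mathbb{M}_{\infty,0}(D))=\pr_{k,\infty}(\Fil^{{\bf j}^{(0)}}\mathbb{M}_{\infty,0}(D))$ for every $k$. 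Finally, if $x\in\bigcap_{{\bf j'}<{\bf j}^{(0)}}\Fil^{{\bf j'}+}\mathbb{M}_{\infty,0}(D)$, then $\pr_{k,\infty}(x)\in\pr_{k,\infty}(\Fil^{{\bf j}^{(0)}}\mathbb{M}_{\infty,0}(D))$ for each $k$, so there is $y_k\in\Fil^{{\bf j}^{(0)}}\mathbb{M}_{\infty,0}(D)$ with $x-y_k\in\Ker(\pr_{k,\infty})$; an element of $\Ker(\pr_{k,\infty})$ maps to $0$ in $\mathbb{M}_{m,0}(D)$ for all $m\leq k$, so $y_k\to x$ in the projective‑limit topology, and $\Fil^{{\bf j}^{(0)}}\mathbb{M}_{\infty,0}(D)$ being closed we conclude $x\in\Fil^{{\bf j}^{(0)}}\mathbb{M}_{\infty,0}(D)$.

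The main obstacle is the reduction to finite layers in the case $M=0$: the filtration \eqref{multfiltmod} runs along the $H_{\Delta,0}$‑direction whereas the topology on $\mathbb{M}_{\infty,0}(D)$ is that of the projective limit over $k$, and one must see the genuinely infinite intersection over ${\bf j'}<{\bf j}^{(0)}$ collapse layer by layer. Lemma \ref{comparefilpr} is exactly the bridge here, since it pins down how far one must climb in the ${\bf b}^{\bf j}$‑filtration before a graded piece is killed in $\mathbb{M}_{k,0}(D)$; granting it, everything else reduces to the total‑orderedness of the lexicographic order (for the finite intersection property and for selecting the smallest $\bar I_{{\bf j'}}$) and to the compactness of $M$.
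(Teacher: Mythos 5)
Your argument is correct, but it follows a genuinely different route from the paper's. You first use compactness of $M$ abstractly: the sets $C_{{\bf j'}}=\{m\in M\mid x-m\in\Fil^{{\bf j'}+}\mathbb{M}_{\infty,0}(D)\}$ are nonempty, closed and nested along the total order, so the finite intersection property hands you a single $m_\infty$ and reduces everything to the pure filtration identity $\bigcap_{{\bf j'}<{\bf j}^{(0)}}\Fil^{{\bf j'}+}\mathbb{M}_{\infty,0}(D)=\Fil^{{\bf j}^{(0)}}\mathbb{M}_{\infty,0}(D)$, which you then verify layer by layer in $\mathbb{M}_{k,0}(D)$, where the truncated filtration has only finitely many distinct steps, and conclude by closedness of $\Fil^{{\bf j}^{(0)}}\mathbb{M}_{\infty,0}(D)$. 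The paper instead keeps $M$ and the filtration intertwined and runs one sequential argument: for a non-successor ${\bf j}^{(0)}$ it builds an explicit increasing chain ${\bf h}^{(1)}<{\bf h}^{(2)}<\cdots<{\bf j}^{(0)}$ (zero in the first $r-1$ slots, $n$ in the $r$th), writes $x=y_n+z_n$, decomposes $z_n={\bf b}^{{\bf h}^{(n)}}z_n'+w_n$ with $w_n\in\Fil^{{\bf j}^{(0)}}\mathbb{M}_{\infty,0}(D)$, notes ${\bf b}^{{\bf h}^{(n)}}\mathbb{M}_{\infty,0}(D)\to 0$ by Lemma \ref{comparefilpr}, and extracts a convergent subsequence of $y_n$ from compactness. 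Your version buys a cleaner separation of concerns (the role of compactness of $M$ is isolated, and the $M=0$ identity is stated as a standalone fact), at the cost of needing slightly more structure in the finite layers: the step identifying $\bigcap_{{\bf j'}<{\bf j}^{(0)}}\bar I_{{\bf j'}}$ with the image of $\Fil^{{\bf j}^{(0)}}\Fq\bs H_{\Delta,0}\js$ uses that the classes $\overline{{\bf b}^{\bf i}}$ with $i_l<p^{k_l}$ for all $l$ are linearly independent in $\Fq[H_{\Delta,0}/H_{\Delta,k}]$; Proposition \ref{multigraded} alone (a statement about $\Fq\bs H_{\Delta,0}\js$) does not give this, but it does follow from the observation in the proof of Lemma \ref{comparefilpr} that the products $\prod_l u_l^{r_l}$ with $0\leq r_l<p^{k_l}$ form a system of representatives of $H_{\Delta,0}/H_{\Delta,k}$ (so the box monomials span and have the right cardinality), and you should make that explicit. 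The paper's proof avoids this point entirely, since it only needs that ${\bf b}^{{\bf h}^{(n)}}$ eventually dies in each finite layer.
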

\begin{proof}
The containment $\subseteq$ is trivial, let us prove the other containment. Moreover, if ${\bf j}^{(0)}$ is a successor of some ${\bf j'}$ then we have $\Fil^{{\bf j'}+}\mathbb{M}_{\infty,0}(D)=\Fil^{{\bf j}^{(0)}}\mathbb{M}_{\infty,0}(D)$, so there is nothing to prove. Otherwise we have $j^{(0)}_1=\dots=j^{(0)}_{r}=0$, but $j^{(0)}_{r+1}\neq 0$ for some $1\leq r< d$. Let $h^{(n)}_1=\dots=h^{(n)}_{r-1}=0$, $h^{(n)}_r=n$, $h^{(0)}_{r+1}=j^{(0)}_{r+1}-1,$ and $h^{(n)}_i=j^{(0)}_i$ for all $r+1<i\leq d$ so that ${\bf h}^{(1)}<{\bf h}^{(2)}<\cdots<{\bf h}^{(n)}<\cdots<{\bf j}^{(0)} \in (\mathbb{Z}^{\geq 0})^d$. Further, write $x=y_n+z_n$ with $y_n\in M$ and $z_n\in \Fil^{{\bf h}^{(n)}+}\mathbb{M}_{\infty,0}(D)$. Note that for any index ${\bf h}^{(n)}<{\bf j}\in (\mathbb{Z}^{\geq 0})^d$ we either have ${\bf j}^{(0)}\leq {\bf j}$ or $j_{i}=h^{(n)}_i$ for all $r\leq i\leq d$. In the latter case ${\bf b}^{\bf j}$ is a multiple of ${\bf b}^{{\bf h}^{(n)}}$ (as we have $h^{(n)}_i=0$ for all $1\leq i<r$). Therefore we may write $z_n={\bf b}^{{\bf h}^{(n)}}z_n'+w_n$ where $z_n'\in \mathbb{M}_{\infty,0}(D)$ and $w_n\in \Fil^{{\bf j}^{(0)}}\mathbb{M}_{\infty,0}(D)$. Since $M$ is compact, we may pass to a subsequence to achieve that $y_n$ is convergent. On the other hand we have ${\bf b}^{{\bf h}^{(n)}}z_n'\to 0$ regardless what $z_n'$ is since for all $k\geq 0$ we have $\pr_{k,\infty}({\bf b}^{{\bf h}^{(n)}}\mathbb{M}_{\infty,0}(D))=0$ for $n\geq p^{k_r}$ where $u_r^{p^{k_r}}\in H_{\Delta,k}$. Therefore $w_n=x-y_n-{\bf b}^{{\bf h}^{(n)}}z_n'$ is convergent, too. However, $\Fil^{{\bf j}^{(0)}}\mathbb{M}_{\infty,0}(D)$ is closed in $\mathbb{M}_{\infty,0}(D)$ whence $x=\lim y_n+\lim w_n\in M+\Fil^{{\bf j}^{(0)}}\mathbb{M}_{\infty,0}(D)$.
\end{proof}

By Proposition \ref{multigraded}, the multiplication by ${\bf b}^{\bf j}$ induces an isomorphism
\begin{align*}
{\bf b}^{\bf j}\colon D\cong \Fil^{{\bf 0}}\mathbb{M}_{\infty,0}(D)/\Fil^{{\bf 0}+}\mathbb{M}_{\infty,0}(D)\overset{\sim}{\to} \Fil^{{\bf j}}\mathbb{M}_{\infty,0}(D)/\Fil^{{\bf j}+}\mathbb{M}_{\infty,0}(D)
\end{align*}
for any ${\bf j}\in (\mathbb{Z}^{\geq 0})^d$. So under this identification we put 
\begin{align}
\Fil^{\bf j}\mathfrak{M}:=\mathfrak{M}\cap \Fil^{\bf j}\mathbb{M}_{\infty,0}(D),\qquad \Fil^{{\bf j}+}\mathfrak{M}:=\mathfrak{M}\cap \Fil^{{\bf j}+}\mathbb{M}_{\infty,0}(D)\ ,\notag\\
D^{\bf j}(\mathfrak{M}):=\Fil^{\bf j}\mathfrak{M}/\Fil^{{\bf j}+}\mathfrak{M}
\cong (\mathfrak{M}\cap \Fil^{\bf j}\mathbb{M}_{\infty,0}(D)+ \Fil^{{\bf j}+}\mathbb{M}_{\infty,0}(D))/\Fil^{{\bf j}+}\mathbb{M}_{\infty,0}(D) \subset D \label{filtonMbd}
\end{align}
for any compact $\Fq\bs N_0\js$-submodule $\mathfrak{M}\leq \mathbb{M}_{\infty,0}(D)$ and let $D^{\bf j}:=D^{\bf j}(\mathbb{M}^{bd}_\infty(D^\#))$.

\begin{lem}\label{strictlybiggerjbiggerDj}
Let $\mathfrak{M}\leq \mathbb{M}_{\infty,0}(D)$ be a compact $\Fq\bs N_0\js$-submodule stable under the action of $T_0$. Then $D^{\bf j}(\mathfrak{M})$ is an $E_\Delta^+$-submodule in $D$ stable under the action of $T_0$ for all ${\bf j}\in (\mathbb{Z}^{\geq 0})^d$. Further, we have $D^{\bf j}(\mathfrak{M})\subseteq D^{{\bf j}+{\bf j'}}(\mathfrak{M})$ for all ${\bf j'}\in (\mathbb{Z}^{\geq 0})^d$.
\end{lem}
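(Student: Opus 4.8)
The plan is to first give a concrete description of $D^{\bf j}(\mathfrak{M})$ as a subset of $D$ and then read off all three assertions. Recall (Proposition \ref{multigraded}, as used just before \eqref{filtonMbd}) that multiplication by ${\bf b}^{\bf j}$ carries $\Fil^{\bf 0}\mathbb{M}_{\infty,0}(D)=\mathbb{M}_{\infty,0}(D)$ into $\Fil^{\bf j}\mathbb{M}_{\infty,0}(D)$ and $\Fil^{{\bf 0}+}\mathbb{M}_{\infty,0}(D)=\Ker(\pr_{0,\infty})$ into $\Fil^{{\bf j}+}\mathbb{M}_{\infty,0}(D)$, inducing the isomorphism ${\bf b}^{\bf j}\colon D\overset{\sim}{\to}\Fil^{\bf j}\mathbb{M}_{\infty,0}(D)/\Fil^{{\bf j}+}\mathbb{M}_{\infty,0}(D)$ that sends $z$ to the class of ${\bf b}^{\bf j}\tilde z$ for any lift $\tilde z\in\mathbb{M}_{\infty,0}(D)$ of $z$. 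Let $\pi_{\bf j}\colon\Fil^{\bf j}\mathbb{M}_{\infty,0}(D)\to D$ be the quotient map followed by the inverse of this isomorphism, so that $\pi_{\bf j}(y)=z$ if and only if $y-{\bf b}^{\bf j}\tilde z\in\Fil^{{\bf j}+}\mathbb{M}_{\infty,0}(D)$; then by construction $D^{\bf j}(\mathfrak{M})=\pi_{\bf j}(\mathfrak{M}\cap\Fil^{\bf j}\mathbb{M}_{\infty,0}(D))$. Since $\mathfrak{M}$ is compact and $\Fil^{\bf j}\mathbb{M}_{\infty,0}(D)$ is closed, $\mathfrak{M}\cap\Fil^{\bf j}\mathbb{M}_{\infty,0}(D)$ is compact and hence $D^{\bf j}(\mathfrak{M})$ is a closed $\Fq$-subspace of $D$; so for the first assertion it suffices to prove stability under the group $N_{\Delta,0}$ and under $T_0$, invoking continuity of the $E_\Delta^+$-action on $D$.

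The key point is that $\pi_{\bf j}$ is suitably equivariant. By Lemma \ref{subgroupfilt}, conjugation by any element of $N_0T_0$ preserves each $H_r$, hence preserves the two-sided ideals $\Fil^{\bf j}\Fq\bs H_{\Delta,0}\js$ and $\Fil^{{\bf j}+}\Fq\bs H_{\Delta,0}\js$, and therefore the closed submodules $\Fil^{\bf j}\mathbb{M}_{\infty,0}(D)$ and $\Fil^{{\bf j}+}\mathbb{M}_{\infty,0}(D)$; in particular left multiplication by $g\in N_0$, and $\varphi_t$ for $t\in T_0$, act on $\Fil^{\bf j}\mathbb{M}_{\infty,0}(D)$. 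A commutator computation---using that the chosen total order on $\Phi^+$ refines the order by degree, so that $[N_0\cap N_{\beta_r},N_0]\subseteq H_r$, together with $tu_rt^{-1}=u_r^{c_r}$ with $c_r\in\Zp^\times$ for $t\in T_0$---gives that $g\,{\bf b}^{\bf j}g^{-1}\equiv{\bf b}^{\bf j}$ and $\varphi_t({\bf b}^{\bf j})\equiv c_{\bf j}{\bf b}^{\bf j}$ modulo $\Fil^{{\bf j}+}\Fq\bs H_{\Delta,0}\js$, for $g\in N_0$, $t\in T_0$, and some $c_{\bf j}\in\Fq^\times$. Passing to the graded quotient via ${\bf b}^{\bf j}$, this means $\pi_{\bf j}$ intertwines the action of $g\in N_0$ on $\Fil^{\bf j}\mathbb{M}_{\infty,0}(D)$ with that of its image $\bar g\in N_{\Delta,0}$ on $D$, and intertwines $\varphi_t$ with $c_{\bf j}\varphi_t$. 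As $\mathfrak{M}$ is an $\Fq\bs N_0\js$-submodule stable under $T_0$, the set $\mathfrak{M}\cap\Fil^{\bf j}\mathbb{M}_{\infty,0}(D)$ is stable under $N_0$ and under $T_0$; applying $\pi_{\bf j}$ shows that $D^{\bf j}(\mathfrak{M})$ is stable under $N_{\Delta,0}$ and under $T_0$ (the scalar $c_{\bf j}$ is harmless, $D^{\bf j}(\mathfrak{M})$ being an $\Fq$-subspace), which together with closedness proves the first assertion.

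For the inclusion $D^{\bf j}(\mathfrak{M})\subseteq D^{{\bf j}+{\bf j'}}(\mathfrak{M})$, take $z\in D^{\bf j}(\mathfrak{M})$, choose $y\in\mathfrak{M}\cap\Fil^{\bf j}\mathbb{M}_{\infty,0}(D)$ with $\pi_{\bf j}(y)=z$ and a lift $\tilde z$ of $z$, so $y-{\bf b}^{\bf j}\tilde z\in\Fil^{{\bf j}+}\mathbb{M}_{\infty,0}(D)$, and set $y':={\bf b}^{{\bf j'}}y$. Then $y'\in\mathfrak{M}$ because ${\bf b}^{{\bf j'}}\in\Fq\bs H_{\Delta,0}\js\subseteq\Fq\bs N_0\js$ and $\mathfrak{M}$ is a module, and $y'\in\Fil^{{\bf j}+{\bf j'}}\mathbb{M}_{\infty,0}(D)$ since ${\bf b}^{{\bf j'}}\in\Fil^{{\bf j'}}\Fq\bs H_{\Delta,0}\js$ and the filtration is multiplicative, $\Fil^{{\bf j'}}\cdot\Fil^{\bf j}\subseteq\Fil^{{\bf j}+{\bf j'}}$ (Proposition \ref{multigraded}). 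Moreover, using $\Fil^{{\bf j'}}\cdot\Fil^{{\bf j}+}\subseteq\Fil^{({\bf j}+{\bf j'})+}$ and the leading-term identity ${\bf b}^{{\bf j'}}{\bf b}^{\bf j}\equiv{\bf b}^{{\bf j}+{\bf j'}}$ modulo $\Fil^{({\bf j}+{\bf j'})+}\Fq\bs H_{\Delta,0}\js$ (again from Proposition \ref{multigraded} and its proof), one gets $y'={\bf b}^{{\bf j'}}{\bf b}^{\bf j}\tilde z+{\bf b}^{{\bf j'}}(y-{\bf b}^{\bf j}\tilde z)\equiv{\bf b}^{{\bf j}+{\bf j'}}\tilde z$ modulo $\Fil^{({\bf j}+{\bf j'})+}\mathbb{M}_{\infty,0}(D)$. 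Hence $\pi_{{\bf j}+{\bf j'}}(y')=z$, so $z\in D^{{\bf j}+{\bf j'}}(\mathfrak{M})$.

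I expect the only real obstacle to be the equivariance step of the second paragraph: since the filtration lives on the non-commutative ring $\Fq\bs H_{\Delta,0}\js$, one has to check with some care that left multiplication by $N_0$ and the operators $\varphi_t$ ($t\in T_0$) really descend to the expected operators on the one-dimensional graded quotients $\Fil^{\bf j}/\Fil^{{\bf j}+}\cong D$, which is exactly where the nilpotency of $N_0$, the choice of a total ordering of $\Phi^+$ refining the degree, and Lemma \ref{subgroupfilt} are needed, via the commutator estimate $[N_0\cap N_{\beta_r},N_0]\subseteq H_r$. The multiplicativity of the filtration and the leading-term formula driving the monotonicity are already contained in Proposition \ref{multigraded} and its proof, so that part is routine.
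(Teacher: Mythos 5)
Your proof is correct and follows essentially the same route as the paper's: stability of the filtration under conjugation by $N_0T_0$ (Lemma \ref{subgroupfilt}) plus the graded-ring structure of Proposition \ref{multigraded} gives the first assertion, and multiplication by ${\bf b}^{\bf j'}$ with the leading-term identity gives the containment. The only difference is cosmetic: where you invoke compactness of $\mathfrak{M}$ and closedness of $D^{\bf j}(\mathfrak{M})$ to upgrade $N_{\Delta,0}$-stability to $E_\Delta^+$-stability, the paper obtains the $E_\Delta^+$-module structure directly from the fact that $\Fil^{\bf j}\mathfrak{M}$ and $\Fil^{{\bf j}+}\mathfrak{M}$ are $\Fq\bs N_0\js$-submodules stable under $T_0$, so no topological argument is needed.
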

\begin{proof}
By lemma \ref{subgroupfilt} the filtration $\Fil^\bullet$ on $\mathbb{F}_q\bs H_{\Delta,0}\js$ is stable under conjugation by $N_0T_0$. Hence $\Fil^{\bf j}\mathbb{M}_{\infty,0}(D)$ and $\Fil^{{\bf j}+}\mathbb{M}_{\infty,0}(D)$ are $\mathbb{F}_q\bs N_0\js$-submodules in $\mathbb{M}_{\infty,0}(D)$ stable under the action of $T_0$. The first part of the statement follows from this since $\mathfrak{M}$ is also a $\mathbb{F}_q\bs N_0\js$-submodule stable under the action of $T_0$. The multiplication by ${\bf b}^{\bf j'}$ induces an injective homomorphism $D^{\bf j}(\mathfrak{M})\hookrightarrow D^{{\bf j}+{\bf j'}}(\mathfrak{M})$ of $E_\Delta^+$-modules.
\end{proof}

We call a subset $Y\subseteq D$ \emph{bounded} if it is contained in some finitely generated $E_\Delta^+$-submodule $D_0\leq D$.

\begin{pro}\label{fingen+kertors}
Let $\mathfrak{M}\leq \mathbb{M}_{\infty,0}(D)$ be a compact $\Fq\bs N_0\js$-submodule stable under the action of $T_0$. Assume that $\bigcup_{{\bf j}\in (\mathbb{Z}^{\geq 0})^d}D^{\bf j}(\mathfrak{M})$ is bounded in $D$. Then we have \begin{enumerate}[$(a)$]
\item $\mathfrak{M}$ is finitely generated as a module over $\Fq\bs N_0\js$. 
\item Assume, in addition, that we have $D^{\bf 0}(\mathfrak{M})[X_\Delta^{-1}]=D$. Then the kernel of the map $\Fq\bs N_{\Delta,0}\js \otimes_{\Fq\bs N_0\js}\mathfrak{M}\to D^{\bf 0}(\mathfrak{M})$ is killed by some power of $X_\Delta$. In particular, we have $\Fq\bg N_{\Delta,0}\jg \otimes_{\Fq\bs N_0\js}\mathfrak{M}\cong D$.
\end{enumerate}
\end{pro}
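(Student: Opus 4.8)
The plan is to pass to the associated graded for the lexicographic filtration \eqref{multfiltmod} and reduce both assertions to Noetherian commutative algebra over $E_\Delta^+$.

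For part $(a)$: first show that $\gr\mathfrak{M}=\bigoplus_{\bf j}D^{\bf j}(\mathfrak{M})$ is finitely generated over $\gr\Fq\bs N_0\js$. Extending Proposition \ref{multigraded} from $\Fq\bs H_{\Delta,0}\js$ to $\Fq\bs N_0\js$ (the filtration \eqref{multifilt} is stable under conjugation by $N_0$ by Lemma \ref{subgroupfilt}, and commutators of simple with non‑simple root groups raise the filtration), the graded ring $\gr\Fq\bs N_0\js$ is the polynomial ring $E_\Delta^+[Y_1,\dots,Y_d]$, where $Y_i$ is the symbol of $u_i-1$; by construction $Y_i$ acts on $\gr\mathfrak{M}$ through the inclusions $D^{\bf j}(\mathfrak{M})\hookrightarrow D^{{\bf j}+e_i}(\mathfrak{M})$ of Lemma \ref{strictlybiggerjbiggerDj} (with $e_i$ the $i$‑th standard basis vector), each realized as the identity on the graded pieces under their canonical identification with $D$. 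The boundedness hypothesis provides a finitely generated $E_\Delta^+$‑submodule $\widetilde{D}\leq D$ containing all $D^{\bf j}(\mathfrak{M})$, so $\gr\mathfrak{M}$ embeds as a submodule of the Noetherian $E_\Delta^+[Y_1,\dots,Y_d]$‑module $\widetilde{D}\otimes_{E_\Delta^+}E_\Delta^+[Y_1,\dots,Y_d]$ and is therefore finitely generated. Lifting finitely many homogeneous generators to $m_1,\dots,m_s\in\mathfrak{M}$ and setting $\mathfrak{M}':=\sum_l\Fq\bs N_0\js m_l$ (a finitely generated, hence compact, hence closed submodule of $\mathfrak{M}$), the fact that the chosen symbols generate $\gr\mathfrak{M}$ gives $\Fil^{\bf j}\mathfrak{M}=\Fil^{\bf j}\mathfrak{M}'+\Fil^{{\bf j}+}\mathfrak{M}$ for every ${\bf j}$. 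One then proves $\mathfrak{M}=\mathfrak{M}'+\Fil^{\bf j}\mathfrak{M}$ for all ${\bf j}$ by transfinite induction over the lexicographically ordered set $(\mathbb{Z}^{\geq 0})^d$: successor steps use the previous identity, while the limit steps invoke Lemma \ref{intersect+} applied to the compact module $\mathfrak{M}'$. Since $\mathfrak{M}'$ is closed and $\bigcap_{\bf j}\Fil^{\bf j}\mathbb{M}_{\infty,0}(D)=0$ (the $\Fil^{\bf j}$ being cofinal with the kernels $\Ker(\pr_{k,\infty})$ by Lemma \ref{comparefilpr}), this forces $\mathfrak{M}=\mathfrak{M}'$, finitely generated over $\Fq\bs N_0\js$.

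For part $(b)$: with $J$ the two‑sided ideal of $\Fq\bs N_0\js$ generated by $\{h-1\mid h\in H_{\Delta,0}\}$, we have $\Fq\bs N_{\Delta,0}\js\otimes_{\Fq\bs N_0\js}\mathfrak{M}=\mathfrak{M}/J\mathfrak{M}=:\overline{\mathfrak{M}}$, finitely generated over the Noetherian ring $E_\Delta^+$ by $(a)$, and the map in question is the surjection $\overline{\mathfrak{M}}\twoheadrightarrow\mathfrak{M}/\Fil^{{\bf 0}+}\mathfrak{M}=D^{\bf 0}(\mathfrak{M})$ with kernel $A/B$, where $A:=\Fil^{{\bf 0}+}\mathfrak{M}$ and $B:=J\mathfrak{M}$. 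As $A/B$ is finitely generated over $E_\Delta^+$, it suffices to annihilate it by a power of $X_\Delta$; equivalently, to show that $\overline{\mathfrak{M}}[X_\Delta^{-1}]\to D^{\bf 0}(\mathfrak{M})[X_\Delta^{-1}]=D$ (already surjective by the extra hypothesis) is an isomorphism, which also gives the last claim. For the induced filtrations one has $\gr A=\bigoplus_{{\bf j}>{\bf 0}}D^{\bf j}(\mathfrak{M})$ and $\gr B\supseteq(Y_1,\dots,Y_d)\gr\mathfrak{M}$ (since $(u_i-1)\cdot\Fil^{{\bf j}-e_i}\mathfrak{M}\subseteq J\mathfrak{M}\cap\Fil^{\bf j}\mathfrak{M}$ and $Y_i$ induces the identity on the graded pieces $\cong D$), so $\gr(A/B)=\gr A/\gr B$ is a quotient of $\gr\mathfrak{M}/(Y_1,\dots,Y_d)\gr\mathfrak{M}$. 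The latter is finitely generated and $(\mathbb{Z}^{\geq 0})^d$‑graded over $E_\Delta^+$, hence concentrated in finitely many degrees. Moreover, for ${\bf j}>{\bf 0}$, picking $i$ with $j_i\geq 1$ realizes $\gr^{\bf j}(A/B)$ as a quotient of $D^{\bf j}(\mathfrak{M})/D^{{\bf j}-e_i}(\mathfrak{M})$, hence of $D_\infty/D^{\bf 0}(\mathfrak{M})$ with $D_\infty:=\sum_{\bf j}D^{\bf j}(\mathfrak{M})$; since $D^{\bf 0}(\mathfrak{M})[X_\Delta^{-1}]=D\supseteq D_\infty$ and $D_\infty$ is finitely generated over $E_\Delta^+$, a fixed power $X_\Delta^K$ satisfies $X_\Delta^K D_\infty\subseteq D^{\bf 0}(\mathfrak{M})$, so $X_\Delta^K\gr(A/B)=0$. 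Because $\gr(A/B)$ is supported in finitely many lexicographic degrees, the filtration of $A/B$ has only finitely many distinct terms (again by Lemma \ref{intersect+} together with separatedness one gets $\Fil^{\bf j}(A/B)=0$ past the support), so $A/B$ carries a finite filtration with each successive quotient killed by $X_\Delta^K$, hence is killed by a power of $X_\Delta$.

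The main obstacle is the lifting step in $(a)$ (and the analogous finiteness of the filtration of $A/B$ in $(b)$): reconstructing a module from its associated graded when the index set is the lexicographically ordered $(\mathbb{Z}^{\geq 0})^d$, of order type $\omega^d$, rather than $\mathbb{Z}$. Naïve successive approximation reaches only the first limit ordinal, and it is precisely the continuity statement Lemma \ref{intersect+} that allows the induction to be carried through all of $\omega^d$. Everything else is Noetherian bookkeeping over $E_\Delta^+$; the two hypotheses enter only through making $\gr\mathfrak{M}$ finitely generated in $(a)$ (boundedness of $\bigcup_{\bf j}D^{\bf j}(\mathfrak{M})$) and forcing the annihilation by a power of $X_\Delta$ in $(b)$ ($D^{\bf 0}(\mathfrak{M})[X_\Delta^{-1}]=D$).
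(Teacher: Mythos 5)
Your proof is correct, and it reaches the conclusion by a genuinely different route in the key finiteness step. The paper never forms the associated graded module over $\gr\Fq\bs N_0\js$: instead it runs a combinatorial argument directly on the family $\{D^{\bf j}(\mathfrak{M})\}$ (a directed graph on $(\mathbb{Z}^{\geq 0})^d$ with no infinite paths because $E_\Delta^+$ is noetherian, then K\"onig's Lemma), producing a stabilization degree $r$ with $D^{\bf j}(\mathfrak{M})=D^{\bf j'}(\mathfrak{M})$ for componentwise comparable indices with $\sum_i j_i=r$; the finite generating set is then obtained by lifting $E_\Delta^+$-generators of the $D^{\bf j}(\mathfrak{M})$ with $\sum_i j_i\leq r$, and part $(b)$ is run with the same threshold $r$ via the identities \eqref{bigjequal} and \eqref{smalljmultip}. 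You instead extract the finiteness from noetherianity of $E_\Delta^+[Y_1,\dots,Y_d]$ acting on $\gr\mathfrak{M}=\bigoplus_{\bf j}D^{\bf j}(\mathfrak{M})$, embedded in $\widetilde D\otimes_{E_\Delta^+}E_\Delta^+[Y_1,\dots,Y_d]$, which simultaneously gives the generators for $(a)$ and, via the finitely supported $\gr\mathfrak{M}/(Y_1,\dots,Y_d)\gr\mathfrak{M}$ and a uniform exponent $K$ with $X_\Delta^K\sum_{\bf j}D^{\bf j}(\mathfrak{M})\subseteq D^{\bf 0}(\mathfrak{M})$, the input for $(b)$. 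After that the two arguments converge: both rely on Lemma \ref{intersect+} (your transfinite induction is the same mechanism as the paper's minimal-counterexample argument over the $\omega^d$-ordered index set), on Lemma \ref{comparefilpr}, and on compactness of $\mathfrak{M}'$ resp.\ of $(H_{\Delta,0}-1)\mathfrak{M}$ for separatedness. The one point you should spell out is the step you flag yourself: that the filtered structure over the noncommutative $\Fq\bs N_0\js$ really induces the asserted $E_\Delta^+[Y_1,\dots,Y_d]$-module structure on $\gr\mathfrak{M}$ and that homogeneous generation lifts, i.e.\ that $u{\bf b}^{\bf j}\equiv{\bf b}^{\bf j}u \pmod{\Fil^{{\bf j}+}}$ for $u\in N_0$; this does hold, because the ordering of $\Phi^+$ refines the degree so the relevant commutators land in $\Fil^{{\bf j}+}$ (Lemma \ref{subgroupfilt} and the argument of Proposition \ref{multigraded}), but it is an extension of Proposition \ref{multigraded} from $\Fq\bs H_{\Delta,0}\js$ to $\Fq\bs N_0\js$ that the paper deliberately avoids by its graph argument, which only needs noetherianity of $E_\Delta^+$ and the inclusions of Lemma \ref{strictlybiggerjbiggerDj}. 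What your route buys is a cleaner, more structural statement (finite generation of the associated graded) that handles $(a)$ and $(b)$ uniformly; what the paper's route buys is avoiding any discussion of $\gr\Fq\bs N_0\js$, and its stabilization threshold $r$ is reused verbatim in the proof of Proposition \ref{MbdcapDwfin}. Two small points of hygiene in your part $(b)$: the application of Lemma \ref{intersect+} is to the compact module $B=(H_{\Delta,0}-1)\mathfrak{M}$ (compact thanks to part $(a)$), combined with the modular law $B\subseteq A$ to transfer the continuity to the image filtration on $A/B$; and $\gr^{\bf j}(A/B)$ is a subquotient rather than a quotient of $D_\infty/D^{\bf 0}(\mathfrak{M})$ --- neither affects the argument.
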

\begin{proof}
Let $\mathfrak{M}\leq \mathbb{M}_{\infty,0}(D)$ be a compact $\Fq\bs N_0\js$-submodule stable under the action of $T_0$ and assume that $D^{\bf j}(\mathfrak{M})\leq D_0$ for all ${\bf j}\in (\mathbb{Z}^{\geq 0})^d$ and for some finitely generated $E_\Delta^+$-submodule $D_0$ in $D$.

Consider the directed graph on the set $(\mathbb{Z}^{\geq 0})^d$ of vertices where we have an edge from ${\bf j}$ to ${\bf j'}\in (\mathbb{Z}^{\geq 0})^d$ if and only if all the following $3$ conditions are satisfied:
\begin{enumerate}[$(i)$]
\item $1+\sum_{i=1}^dj_i=\sum_{i=1}^dj'_i$;
\item $j_i\leq j'_i$ for all $i\in\{1,\dots,d\}$;
\item There exists an index ${\bf j''}\in (\mathbb{Z}^{\leq 0})^d$ such that $j'_i\leq j''_i$ for all $i\in\{1,\dots,d\}$ and $D^{\bf j}(\mathfrak{M})\neq D^{\bf j''}(\mathfrak{M})$.
\end{enumerate}
Note that $j_i\leq j'_i\leq j''_i$ for all $i\in\{1,\dots,d\}$ implies $D^{\bf j}(\mathfrak{M})\subseteq D^{\bf j'}(\mathfrak{M})\subseteq D^{\bf j''}(\mathfrak{M})\subseteq D_0$ by Lemma \ref{strictlybiggerjbiggerDj}. Hence by the noetherian property of $E_\Delta^+$ this graph does not have any infinite path. Therefore by K\"onig's Lemma this graph can only contain finitely many edges. In particular, there exists an integer $r\geq 0$ such that for all ${\bf j},{\bf j'}\in (\mathbb{Z}^{\geq 0})^d$ with $j_i\leq j'_i$ and $\sum_{i=1}^dj_i=r$ we have $D^{\bf j}(\mathfrak{M})=D^{\bf j'}(\mathfrak{M})$. Pick a finite set $\{x_1^{({\bf j})},\dots,x_{N_{\bf j}}^{({\bf j})}\}$ of generators of $D^{\bf j}(\mathfrak{M})$ as $E_\Delta^+$-modules for all ${\bf j}\in (\mathbb{Z}^{\geq 0})^d$ with $\sum_{i=1}^dj_i\leq r$. Further, lift these elements to $\{y_1^{({\bf j})},\dots,y_{N_{\bf j}}^{({\bf j})}\}\subset \Fil^{\bf j}\mathfrak{M}$. We claim that the finite set 
\begin{align}\label{setofgen}
\bigcup_{\substack{{\bf j}\in (\mathbb{Z}^{\geq 0})^d\\ \sum_{i=1}^dj_i\leq r}}\{y_1^{({\bf j})},\dots,y_{N_{\bf j}}^{({\bf j})}\}
\end{align}
generates $\mathfrak{M}$. Let $M\leq \mathfrak{M}$ be the $\mathbb{F}_q\bs N_0\js$-submodule generated by \eqref{setofgen}. Assume that $\mathfrak{M}\not\subseteq M$. By compactness of $M$ we have $M=\varprojlim_k \pr_{k,\infty}(M)=\varprojlim_k (M+\Ker(\pr_{k,\infty})/\Ker(\pr_{k,\infty}))$. So there is a $k\geq 0$ such that $\mathfrak{M}\not\subseteq M+\Ker(\pr_{k,\infty})$. By Lemma \ref{comparefilpr} we also obtain $\mathfrak{M}\not\subseteq M+\Fil^{\bf \tilde{k}+}\mathbb{M}_{\infty,0}(D)$. Since the set $(\mathbb{Z}^{\geq 0})^d$ is well-ordered, there is a smallest ${\bf j}^{(0)}\in (\mathbb{Z}^{\geq 0})^d$ such that $\mathfrak{M}\not\subseteq M+\Fil^{{\bf j}^{(0)}+}\mathbb{M}_{\infty,0}(D)$. Pick an element $x\in \mathfrak{M}$ such that $x\notin M+\Fil^{{\bf j}^{(0)}+}\mathbb{M}_{\infty,0}(D)$. By the minimality of ${\bf j}^{(0)}$, we have $$x\in \bigcap_{{\bf j'}<{\bf j}^{(0)}}(M+\Fil^{{\bf j'}+}\mathbb{M}_{\infty,0}(D))\ .$$

By Lemma \ref{intersect+} we may write $x=y+z$ with $y\in M$ and $z\in \Fil^{{\bf j}^{(0)}}\mathbb{M}_{\infty,0}(D)$. Since $M\subseteq \mathfrak{M}$, we also have $z\in \mathfrak{M}$. There is a ${\bf j}^{(1)}$ with $j^{(1)}_i\leq j^{(0)}_i$ for all $i=1,\dots,d$ and $\sum_{i=1}^dj^{(1)}_i\leq r$ such that $D^{{\bf j}^{(1)}}=D^{{\bf j}^{(0)}}$ so that we obtain elements $\lambda_1,\dots,\lambda_{N_{{\bf j}^{(1)}}}\in\mathbb{F}_q\bs N_0\js$ such that $$z-\sum_{\ell=1}^{N_{{\bf j}^{(1)}}}\lambda_\ell{\bf b}^{{\bf j}^{(0)}-{\bf j}^{(1)}}y_\ell^{{\bf j}^{(1)}}\in \Fil^{{\bf j}^{(0)}+}\mathbb{M}_{\infty,0}(D)$$
which contradicts to our assumption $x=y+z\notin M+\Fil^{{\bf j}^{(0)}+}\mathbb{M}_{\infty,0}(D)$ since $y+\sum_{\ell=1}^{N_{{\bf j}^{(1)}}}\lambda_\ell{\bf b}^{{\bf j}^{(0)}-{\bf j}^{(1)}}y_\ell^{{\bf j}^{(1)}}$ lies in $M$. This proves part $(a)$. Further, the argument above also shows $\Fil^{{\bf j}^{(0)}}\mathfrak{M}\subseteq (H_{\Delta,0}-1)\mathfrak{M}+\Fil^{{\bf j}^{(0)}+}\mathfrak{M}$ whence
\begin{align}\label{bigjequal}
(H_{\Delta,0}-1)\mathfrak{M}+\Fil^{{\bf j}^{(0)}}\mathfrak{M}=(H_{\Delta,0}-1)\mathfrak{M}+\Fil^{{\bf j}^{(0)}+}\mathfrak{M}
\end{align}
for all ${\bf j}^{(0)}\in (\mathbb{Z}^{\geq 0})^d$ with $j_1^{(0)}+\dots+j_d^{(0)}>r$ since we have ${\bf b}^{{\bf j}^{(0)}-{\bf j}^{(1)}}\in \Fil^{{\bf 0}+}\Fq\bs H_{\Delta,0}\js=(H_{\Delta,0}-1)\Fq\bs H_{\Delta,0}\js$ in this case as ${\bf j}^{(0)}-{\bf j}^{(1)}\neq {\bf 0}$.

For part $(b)$ assume, in addition, that we have $D^{\bf 0}(\mathfrak{M})[X_\Delta^{-1}]=D$. In particular, for all ${\bf j}\in(\mathbb{Z}^{\geq 0})^d$ there exists an integer $r_{\bf j}\in\mathbb{Z}^{\geq 0}$ such that $X_\Delta^{r_{\bf j}}D^{\bf j}(\mathfrak{M})\subseteq D^{\bf 0}(\mathfrak{M})$. We deduce
\begin{align}\label{smalljmultip}
X_\Delta^{r_{\bf j}}\left((H_{\Delta,0}-1)\mathfrak{M}+\Fil^{\bf j}\mathfrak{M}\right)\subseteq(H_{\Delta,0}-1)\mathfrak{M}+\Fil^{{\bf j}+}\mathfrak{M}
\end{align}
for all ${\bf j}\neq {\bf 0}$ as we have ${\bf b}^{\bf j}\in (H_{\Delta,0}-1)\Fq\bs H_{\Delta,0}\js $. On the other hand, we have $\Fq\bs N_{\Delta,0}\js \otimes_{\Fq\bs N_0\js}\mathfrak{M}=\mathfrak{M}/(H_{\Delta,0}-1)\mathfrak{M}$ and $D^{\bf 0}(\mathfrak{M})=\mathfrak{M}/\Fil^{{\bf 0}+}\mathfrak{M}$, so the kernel of the map $\Fq\bs N_{\Delta,0}\js \otimes_{\Fq\bs N_0\js}\mathfrak{M}\to D^{\bf 0}(\mathfrak{M})$ can be identified with $$\Fil^{{\bf 0}+}\mathfrak{M}/(H_{\Delta,0}-1)\mathfrak{M}\ .$$ There are only finitely many indices ${\bf j}\in(\mathbb{Z}^{\geq 0})^d$ with $j_1+\dots+j_d\leq r$, so combining \eqref{bigjequal} and \eqref{smalljmultip} with Lemma \ref{intersect+} we deduce 
\begin{align}\label{almostkilled}
\left(\prod_{\substack{{\bf j}\in (\mathbb{Z}^{\geq 0})^d,\\ j_1+\dots+j_d\leq r}}X_\Delta^{r_{\bf j}}\right)\left(\Fil^{{\bf 0}+}\mathfrak{M}/(H_{\Delta,0}-1)\mathfrak{M} \right)\subseteq \left((H_{\Delta,0}-1)\mathfrak{M}+\Fil^{{\bf j}^{(0)}+}\mathfrak{M}\right)/(H_{\Delta,0}-1)\mathfrak{M}
\end{align}
for all indices ${\bf j}^{(0)}\in (\mathbb{Z}^{\geq 0})^d$. Finally, part $(b)$ follows noting that the intersection of the right hand side of \eqref{almostkilled} for varying ${\bf j}^{(0)}\in (\mathbb{Z}^{\geq 0})^d$ is $0$ by Lemma \ref{comparefilpr} and the compactness of $(H_{\Delta,0}-1)\mathfrak{M}$.
\end{proof}

Now our goal is to show that $\mathbb{M}^{bd}_\infty(D^{\#})$ satisfies the assumptions of Prop.\ \ref{fingen+kertors}.

\begin{lem}\label{tracepsipr}
Let $x\in \mathbb{M}_{\infty,0}(D)$ and $t\in T_+$. Assume $\pr_{k,\infty}(x)=0$ and $tH_{\Delta,0}t^{-1}\subseteq H_{\Delta,k}$. Then we have $$\sum_{u\in J(H_{\Delta,k}/tH_{\Delta,0}t^{-1})}\pr_{0,\infty}(\psi_t(u^{-1}x))=0\ .$$
\end{lem}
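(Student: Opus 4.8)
The plan is to read off the desired vanishing as one ``isotypic component'' of the identity $\pr_{k,\infty}(x)=0$, after expanding $x$ through the étale structure of $\mathbb{M}_{\infty,0}(D)$; the point that makes this work is that every element of $H_{\Delta,k}$ maps to $1$ in $N_{\Delta,k}=N_0/H_{\Delta,k}$, so that the representatives in $J(H_{\Delta,k}/tH_{\Delta,0}t^{-1})$ all contribute to the same component. First I would record the group-theoretic fact that $H_{\Delta,k}\cap tN_0t^{-1}=tH_{\Delta,0}t^{-1}$: since $H_{\Delta,0}$ is $T_+$-stable, $t$-conjugation descends to $N_{\Delta,0}=N_0/H_{\Delta,0}$, where it is injective (as $\alpha(t)\neq 0$ for every $\alpha\in\Delta$), whence $H_{\Delta,0}\cap tN_0t^{-1}=tH_{\Delta,0}t^{-1}$; intersecting with $H_{\Delta,k}$ and using $tH_{\Delta,0}t^{-1}\subseteq H_{\Delta,k}\subseteq H_{\Delta,0}$ gives the claim. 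It follows that one may choose the representatives $J(H_{\Delta,k}/tH_{\Delta,0}t^{-1})$ inside $H_{\Delta,k}$ and extend them to a set $J(N_0/tN_0t^{-1})=\{cu : c\in J(N_0/tN_0t^{-1}H_{\Delta,k}),\ u\in J(H_{\Delta,k}/tH_{\Delta,0}t^{-1})\}$, with $1$ representing the trivial coset of $J(N_0/tN_0t^{-1}H_{\Delta,k})$.

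Next I would establish a ``level-$k$ factorisation'' of $\varphi_t$. Because $\Ker(\pr_{0,\infty})=(H_{\Delta,0}-1)\mathbb{M}_{\infty,0}(D)$ and, for $u\in H_{\Delta,0}$ and $m\in\mathbb{M}_{\infty,0}(D)$, one has $\varphi_t\bigl((u-1)m\bigr)=(tut^{-1}-1)\varphi_t(m)$ with $tut^{-1}\in tH_{\Delta,0}t^{-1}\subseteq H_{\Delta,k}$ acting trivially on $\mathbb{M}_{k,0}(D)$, we get $\varphi_t(\Ker\pr_{0,\infty})\subseteq\Ker\pr_{k,\infty}$. Hence there is a well-defined additive map $\overline{\varphi_t}\colon D\to\mathbb{M}_{k,0}(D)$ with $\overline{\varphi_t}\circ\pr_{0,\infty}=\pr_{k,\infty}\circ\varphi_t$. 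Since $\pr_{0,\infty}=\pr_{0,k}\circ\pr_{k,\infty}$ is $\varphi_t$-equivariant we obtain $\pr_{0,k}\circ\overline{\varphi_t}=\varphi_t^{(D)}$, so $\overline{\varphi_t}$ is injective (as $\varphi_t^{(D)}$ is injective on $D$), and the image of $\overline{\varphi_t}$ lies in $\varphi_t(\mathbb{M}_{k,0}(D))$ by $\varphi_t$-equivariance of $\pr_{k,\infty}$.

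Now I would apply $\pr_{k,\infty}$ to the étale expansion $x=\sum_{w\in J(N_0/tN_0t^{-1})}w\,\varphi_t(\psi_t(w^{-1}x))$. Setting $d_w:=\pr_{0,\infty}(\psi_t(w^{-1}x))\in D$ and using the factorisation gives $0=\pr_{k,\infty}(x)=\sum_{w}\overline{w}\,\overline{\varphi_t}(d_w)$ in $\mathbb{M}_{k,0}(D)$, where $\overline{w}$ denotes the image of $w$ in $N_{\Delta,k}$. Grouping $w=cu$ as above and using $\overline{u}=1$ for $u\in H_{\Delta,k}$ yields $0=\sum_{c}\overline{c}\,\overline{\varphi_t}\bigl(\sum_{u}d_{cu}\bigr)$. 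By the étale property of $\mathbb{M}_{k,0}(D)$ over $\Fq\bg N_{\Delta,k}\jg$ one has $\mathbb{M}_{k,0}(D)=\bigoplus_{\overline{c}}\overline{c}\,\varphi_t(\mathbb{M}_{k,0}(D))$, the sum running over the images of $J(N_0/tN_0t^{-1}H_{\Delta,k})$, and each $\overline{\varphi_t}(\sum_u d_{cu})$ lies in $\varphi_t(\mathbb{M}_{k,0}(D))$; hence every component of $0$ vanishes. Taking $c=1$ (so that $w=u$ runs precisely over $J(H_{\Delta,k}/tH_{\Delta,0}t^{-1})$) and using injectivity of $\overline{\varphi_t}$ on $D$ gives $\sum_{u}\pr_{0,\infty}(\psi_t(u^{-1}x))=0$. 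That this sum does not depend on the chosen representatives is the usual observation that replacing $u$ by $ug$ with $g\in tH_{\Delta,0}t^{-1}$ multiplies $\psi_t(u^{-1}x)$ by an element of $H_{\Delta,0}$, which $\pr_{0,\infty}$ kills.

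The step I expect to be the main obstacle is making the level-$k$ étale decomposition rigorous: for $k>0$ the operator $\varphi_t$ on $\mathbb{M}_{k,0}(D)$ is not injective, so the identity $\mathbb{M}_{k,0}(D)=\bigoplus_{\overline{c}}\overline{c}\,\varphi_t(\mathbb{M}_{k,0}(D))$ cannot be argued as in the injective case but must be deduced from the defining étale isomorphism $\id\otimes\varphi_t\colon \Fq\bg N_{\Delta,k}\jg\otimes_{\Fq\bg N_{\Delta,k}\jg,\varphi_t}\mathbb{M}_{k,0}(D)\xrightarrow{\ \sim\ }\mathbb{M}_{k,0}(D)$ together with the freeness of $\Fq\bg N_{\Delta,k}\jg$ over $\varphi_t(\Fq\bg N_{\Delta,k}\jg)=\Fq\bg tN_0t^{-1}H_{\Delta,k}/H_{\Delta,k}\jg$ (the index set being $N_{\Delta,k}$ modulo the image of $t$-conjugation, i.e.\ the images of $J(N_0/tN_0t^{-1}H_{\Delta,k})$), noting also that left multiplication by $\overline{c}$ is injective on each summand, being simply a group action. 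The only other point requiring a word is the identification of $\Ker(\pr_{k,\infty})$ with (the closure of) $(H_{\Delta,k}-1)\mathbb{M}_{\infty,0}(D)$ — equivalently, that $\pr_{k,\infty}$ is $N_0$-equivariant with $H_{\Delta,k}$ acting trivially on $\mathbb{M}_{k,0}(D)$ — which one reads off from $\mathbb{M}_{\infty,0}(D)=\varprojlim_k\mathbb{M}_{k,0}(D)$ and the description of the transition maps.
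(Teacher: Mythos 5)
Your argument is correct, but it follows a genuinely different route from the paper's. The paper stays entirely at the level of the group algebra: it uses $\mathbb{M}_{k,0}(D)=H_0(H_{\Delta,k},\mathbb{M}_{\infty,0}(D))$ to write $x=\sum_j(v_j-1)x_j$ with $v_j\in H_{\Delta,k}$, observes that for each $j$ the trace element $\sum_{u\in J(H_{\Delta,k}/tH_{\Delta,0}t^{-1})}u^{-1}(v_j-1)$ lies in $(tH_{\Delta,0}t^{-1}-1)\Fq\bs H_{\Delta,0}\js$ (because both $u^{-1}$ and $u^{-1}v_j$ run through representatives of $tH_{\Delta,0}t^{-1}\backslash H_{\Delta,k}$), and then pulls $tw t^{-1}-1$ through $\psi_t$ to get a factor $w-1$ with $w\in H_{\Delta,0}$, which $\pr_{0,\infty}$ kills. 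You instead project the level-$\infty$ étale expansion of $x$ down to level $k$, factor $\pr_{k,\infty}\circ\varphi_t$ through an injective map $\overline{\varphi_t}\colon D\to\mathbb{M}_{k,0}(D)$, and read off the claim as the $c=1$ component of $\pr_{k,\infty}(x)=0$; your coset bookkeeping ($H_{\Delta,k}\cap tN_0t^{-1}=tH_{\Delta,0}t^{-1}$, hence $J(N_0/tN_0t^{-1})=\{cu\}$), the well-definedness and injectivity of $\overline{\varphi_t}$, and the independence of the choice of representatives are all fine. What your route buys is a transparent "isotypic component" interpretation; what it costs is the level-$k$ decomposition $\mathbb{M}_{k,0}(D)=\bigoplus_{\overline{c}}\overline{c}\,\varphi_t(\mathbb{M}_{k,0}(D))$, indexed by $J(N_0/tN_0t^{-1}H_{\Delta,k})$, which you correctly flag as the main point: since $\varphi_t$ is not injective on $\mathbb{M}_{k,0}(D)$ for $k>0$, one must deduce it from the defining étale isomorphism together with the freeness of $\Fq\bg N_{\Delta,k}\jg$ over the image $\varphi_t(\Fq\bg N_{\Delta,k}\jg)$ with the coset basis — a statement that is true (and of exactly the same type as the finite-level decomposition invoked in the proof of Proposition \ref{M_infty_D0_psi_descr}, with index set the image of conjugation by $t$ in $N_{\Delta,k}$), but which you only sketch and which requires a little care at the localized level; the paper's computation avoids this input altogether, which is why it is shorter.
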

\begin{proof}
We have $\mathbb{M}_{k,0}(D)=H_0(H_{\Delta,k},\mathbb{M}_{\infty,0}(D))$ by the discussion before \cite[Proposition 4.7]{MultVar}. So $\pr_{k,\infty}(x)=0$ implies that $x$ can be written as a finite sum $x=\sum_j(v_j-1)x_j$ with $v_j\in H_{\Delta,k}$. Hence we have $\sum_{u\in J(H_{\Delta,k}/tH_{\Delta,0}t^{-1})}u^{-1}(v_j-1)\in (tH_{\Delta,0}t^{-1}-1)\mathbb{F}_q\bs H_{\Delta,0}\js$ for all $j\in J$ since both $u^{-1}$ and $u^{-1}v_j$ run on a set of representatives of the right cosets $tH_{\Delta,0}t^{-1}\backslash H_{\Delta,k}$. In particular, we have $\sum_{u\in J(H_{\Delta,k}/tH_{\Delta,0}t^{-1})}u^{-1}(v_j-1)=\sum_l (tw_{l,j}t^{-1}-1)\lambda_{l,j}$ for some $w_{l,j}\in H_{\Delta,0}$ and $\lambda_{l,j}\in \mathbb{F}_q\bs H_{\Delta,0}\js$ ($l\in L$ for some finite set $L$). So we compute
\begin{align*}
\sum_{u\in J(H_{\Delta,k}/tH_{\Delta,0}t^{-1})}\pr_{0,\infty}(\psi_t(u^{-1}x))=\sum_j\sum_{u\in J(H_{\Delta,k}/tH_{\Delta,0}t^{-1})}\pr_{0,\infty}(\psi_t(u^{-1}(v_j-1)x_j))=\\ 
=\sum_l\sum_j\sum_{u\in J(H_{\Delta,k}/tH_{\Delta,0}t^{-1})}\pr_{0,\infty}(\psi_t((tw_{l,j}t^{-1}-1)\lambda_{l,j}x_j))=\\
=\sum_l\sum_j\sum_{u\in J(H_{\Delta,k}/tH_{\Delta,0}t^{-1})}\pr_{0,\infty}((w_{l,j}-1)\psi_t(\lambda_{l,j}x_j))=0\ .
\end{align*}
\end{proof}
\begin{lem}\label{onlyonetermnotinkerpr}
Suppose that there is an element $x_0\in \mathbb{M}_{\infty,0}(D)$ and $\lambda_0\in \mathbb{F}_q\bs H_{\Delta,0}\js$ such that $\lambda_0 x_0+x_1\in \mathbb{M}^{bd}_\infty(D^\#)$ for some $x_1\in \mathbb{M}_{\infty,0}(D)$ with $\pr_{k,\infty}(x_1)=0\neq \pr_{k,\infty}(\lambda_0 x_0)$ for some integer $k\geq 0$. Then we have $\pr_{0,\infty}(x_0)\in D^\#$.
\end{lem}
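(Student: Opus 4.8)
The idea is to reduce the statement to Proposition \ref{prcontainedinDhash}, which identifies $\bigcap_{t\in T_+}\psi_t(\mathbb{M}^{bd}_\infty(D^\#))$ as the largest compact $\Fq\bs N_0\js$-submodule of $\mathbb{M}_{\infty,0}(D)$ on which every $\psi_t$ acts surjectively, and whose images under $\pr_{0,\infty}$ land in $D^\#$. So I would try to produce, starting from $\lambda_0 x_0 + x_1 \in \mathbb{M}^{bd}_\infty(D^\#)$, an element of $\mathbb{M}^{bd}_\infty(D^\#)$ (or of the subspace on which $\psi$-operators are surjective) whose image under $\pr_{0,\infty}$ is $\pr_{0,\infty}(x_0)$, up to a controllable error.

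First I would exploit the hypothesis $\pr_{k,\infty}(x_1) = 0$ together with Lemma \ref{tracepsipr}: choosing $t\in T_+$ with $tH_{\Delta,0}t^{-1}\subseteq H_{\Delta,k}$ (such $t$ exists since $H_{\Delta,k}$ has finite index), the sum $\sum_{u\in J(H_{\Delta,k}/tH_{\Delta,0}t^{-1})}\pr_{0,\infty}(\psi_t(u^{-1}x_1)) = 0$. Applying $\psi_t$ (after translating by $u^{-1}$ and summing) to $\lambda_0 x_0 + x_1\in \mathbb{M}^{bd}_\infty(D^\#)$ and using the $\psi_t$-stability of $\mathbb{M}^{bd}_\infty(D^\#)$ from Proposition \ref{M_infty_D0_psi_descr}, the $x_1$-contribution to $\pr_{0,\infty}$ vanishes, and I am left with $\sum_u \pr_{0,\infty}(\psi_t(u^{-1}\lambda_0 x_0))$ lying in $D^\#$ (more precisely in $\pr_{0,\infty}(\mathbb{M}^{bd}_\infty(D^\#)) \subseteq D^\#$, since $D^\# = \pr_{0,\infty}(\mathbb{M}^{bd}_\infty(D^\#))$ will follow from Proposition \ref{prcontainedinDhash} applied appropriately; if one only has containment, one still gets an element of $D^\#$). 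Rewriting this sum via \eqref{psireduction}-type manipulations, it equals $\psi_t^{(D)}$ of something built from $\pr_{0,\infty}(\lambda_0 x_0)$. The key point is that, because $tH_{\Delta,0}t^{-1}\subseteq H_{\Delta,k}$ and conjugation by $t$ induces an isomorphism, the operator $\psi_t^{(D)}$ composed with $\varphi_t^{(D)}$ is the identity on $D$, so I recover $\overline{\lambda_0}\,\pr_{0,\infty}(x_0)$ up to applying $\psi_t^{(D)}$ and multiplying by the image $\overline{\lambda_0}$ of $\lambda_0$ in $E_\Delta^+$ — and crucially $\pr_{k,\infty}(\lambda_0 x_0)\neq 0$ guarantees $\overline{\lambda_0}\neq 0$ in the relevant quotient, so $\overline{\lambda_0}$ is a nonzero element of $E_\Delta^+$.

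Next I would use the choice of $t$ more carefully, following the device flagged in the introduction: for each simple root $\alpha$ the element $t_{\overline\alpha}$ with $\alpha(t_{\overline\alpha})$ a unit but $(t_{\overline\alpha}^k H_{\Delta,0} t_{\overline\alpha}^{-k})_k$ a neighbourhood basis of $1$ in $H_{\Delta,0}$ (Lemma \ref{nilpDeltaminusalpha}, whose $s_{\Delta\setminus\{\alpha\}}$ plays exactly this role). Picking $t$ to be a product of such elements, $\varphi_t^{(D)}$ acts on the variables $X_\alpha$ by units, so that $\psi_t^{(D)}$ does not introduce any $X_\Delta$-denominators; combined with $D^\# = D^\#[X_\Delta^{-1}]\cap(\text{fin.\ gen.\ }E_\Delta^+)$ being $\psi_\alpha$-stable (Proposition \ref{Dhashprop}), I conclude $\overline{\lambda_0}\,\pr_{0,\infty}(x_0)\in D^\#$ after possibly iterating. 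Then I would invoke the fact that $D^\# = \bigcap_\alpha D^\#_\alpha$ with each $D^\#_\alpha$ finitely generated over $E_\Delta^+[X_{\Delta\setminus\{\alpha\}}^{-1}]$ and $\psi_\alpha$-surjective: the property "$\overline{\lambda_0}\, y\in D^\#$" for a fixed nonzero $\overline{\lambda_0}\in E_\Delta^+$ propagates to "$y\in D^\#$" because $D^\#$ is saturated — one uses Proposition \ref{Dhashalpha}, which says any $x$ with $\psi_\alpha^m(x)\in D^\#_\alpha$ is controlled, together with the element $t_{\overline\alpha}$ trick to strip off $\overline{\lambda_0}$ by applying enough $\psi$'s.

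\textbf{Main obstacle.} The delicate step is the last one: passing from "$\overline{\lambda_0}\,\pr_{0,\infty}(x_0)\in D^\#$" to "$\pr_{0,\infty}(x_0)\in D^\#$". This requires showing $D^\#$ is saturated with respect to multiplication by nonzero elements of $E_\Delta^+$ that are coprime to $X_\Delta$ — equivalently, that the $\psi$-action lets one divide. This is exactly where one must combine the maximality of $D^\#_\alpha$ among finitely generated $\psi_\alpha$-stable $E_\Delta^+[X_{\Delta\setminus\{\alpha\}}^{-1}]$-submodules with the neighbourhood-basis property of $t_{\overline\alpha}$, so that applying high powers of $\psi_{t_{\overline\alpha}}$ to $\overline{\lambda_0}\,\pr_{0,\infty}(x_0)$ (whose $\psi$-iterates generate a finitely generated $\psi$-stable module, hence sit inside $D^\#_\alpha$) forces $\pr_{0,\infty}(x_0)$ itself into $D^\#_\alpha$ for every $\alpha$. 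I expect the bookkeeping of coset representatives in the double application of \eqref{psireduction} (as in the displayed computation of Proposition \ref{prcontainedinDhash}) to be the most calculation-heavy part, but conceptually routine.
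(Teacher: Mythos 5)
There is a genuine gap at the centre of your argument, namely the step where you claim that $\sum_{u\in J(H_{\Delta,k}/tH_{\Delta,0}t^{-1})}\pr_{0,\infty}\bigl(\psi_t(u^{-1}\lambda_0 x_0)\bigr)$ recovers $\overline{\lambda_0}\,\pr_{0,\infty}(x_0)$ (up to $\psi_t^{(D)}$) for a \emph{nonzero} $\overline{\lambda_0}\in E_\Delta^+$. First, $\psi_t$ does not commute with multiplication by $\lambda_0\in\Fq\bs H_{\Delta,0}\js$ (only $\psi_t(\varphi_t(\mu)y)=\mu\psi_t(y)$ holds), so the sum does not factor through any "image of $\lambda_0$"; the clean identity \eqref{psireduction} is only available when the summand is exactly $u^{-1}x$ over a full set of coset representatives. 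Second, even granting some reduction, the natural image of $\lambda_0$ in $E_\Delta^+=\Fq\bs N_{\Delta,0}\js$ is its augmentation (since $H_{\Delta,0}\mapsto 1$), a scalar which the hypothesis $\pr_{k,\infty}(\lambda_0 x_0)\neq 0$ does \emph{not} force to be nonzero: that hypothesis only says $\lambda_0$ is nonzero in $\Fq[H_{\Delta,0}/H_{\Delta,k}]$. In the very application this lemma is designed for (Proposition \ref{kerintersect}) one has $\lambda_0={\bf b}^{\bf\tilde k}=\sum_{u\in J(H_{\Delta,0}/H_{\Delta,k})}u$, whose augmentation is $|H_{\Delta,0}/H_{\Delta,k}|=0$ in $\Fq$, so your $\overline{\lambda_0}$ would vanish and the whole strategy collapses before reaching your flagged "main obstacle". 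That obstacle (saturation of $D^\#$ under division by a fixed nonzero element of $E_\Delta^+$) is moreover left unproved, and the paper never needs any such statement.

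The missing idea is how the paper disposes of $\lambda_0$ entirely instead of trying to divide by it: argue by contradiction, pick $\alpha$ with $\pr_{0,\infty}(x_0)\notin D^\#_\alpha$ and, using that $D^\#_\alpha$ is étale for $\varphi_{\Delta\setminus\{\alpha\}}$ (Proposition \ref{Dhashalpha}), choose a translate $\overline v\in N_{\Delta,0}$ with $(\psi^{(D)}_{\Delta\setminus\{\alpha\}})^r(\overline v^{-1}\pr_{0,\infty}(x_0))\notin D^\#_\alpha$, where $t=s_{\Delta\setminus\{\alpha\}}^r$ satisfies $tH_{\Delta,0}t^{-1}\subseteq H_{\Delta,k}$ by Lemma \ref{nilpDeltaminusalpha} (so the choice of $t$ cannot be arbitrary as in your sketch: it must avoid the $\alpha$-direction, and a translate $\overline v$ is indispensable because knowing $\psi_t^{(D)}(\pr_{0,\infty}(x_0))\in D^\#$ for the single coset $u=1$ proves nothing). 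Then comes the key socle argument: since the image of $v^{-1}\lambda_0 v$ in $\Fq[H_{\Delta,0}/H_{\Delta,k}]$ is nonzero and the socle of this module is one-dimensional, spanned by $\sum_{w\in J(H_{\Delta,0}/H_{\Delta,k})}w^{-1}$, there is $\lambda_1\in\Fq\bs H_{\Delta,0}\js$ with $\lambda_1(v^{-1}\lambda_0v)\equiv\sum_w w^{-1}$ modulo $(H_{\Delta,k}-1)\Fq\bs H_{\Delta,0}\js$. Multiplying $\lambda_0x_0+x_1$ by $\lambda_1v^{-1}$ (legitimate since $\mathbb{M}^{bd}_\infty(D^\#)$ is an $\Fq\bs N_0\js$-module), applying $\sum_u\pr_{0,\infty}(\psi_t(u^{-1}\cdot))$, and killing both the $x_1$-term and the error term by Lemma \ref{tracepsipr}, the double coset sum reassembles via \eqref{psireduction} into exactly $\psi_t^{(D)}(\overline v^{-1}\pr_{0,\infty}(x_0))\in D^\#\subseteq D^\#_\alpha$, contradicting the choice of $\overline v$. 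Your outline has the right ingredients (Lemma \ref{tracepsipr}, Proposition \ref{M_infty_D0_psi_descr}, the role of $s_{\Delta\setminus\{\alpha\}}$), but without the socle step the factor $\lambda_0$ cannot be removed, and with it no division in $D^\#$ is ever required.
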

\begin{proof}
Assume for contradiction that $\pr_{0,\infty}(x_0)\notin D^\#$. In particular, we have $\pr_{0,\infty}(x_0)\neq 0$. Since $D^\#=\bigcap_{\alpha\in\Delta}D^\#_\alpha$, there is an $\alpha\in \Delta$ such that $\pr_{0,\infty}(x_0)\notin D^\#_\alpha$. By Lemma \ref{nilpDeltaminusalpha} there is an integer $r\geq 1$ such that $s_{\Delta\setminus\{\alpha\}}^r H_{\Delta,0} s_{\Delta\setminus\{\alpha\}}^{-r}\subseteq H_{\Delta,k}$. Further, since $D^\#_\alpha$ is an étale $\varphi_{s_{\Delta\setminus\{\alpha\}}}$-module by Proposition \ref{Dhashalpha}, there is an element $\overline{v}\in N_{\Delta,0}$ such that $(\psi_{\Delta\setminus\{\alpha\}}^{D})^r(\overline{v}^{-1}\pr_{0,\infty}(x_0))\notin D^\#_\alpha$. Pick any lift $v\in N_0$ of $\overline{v}\in N_{\Delta,0}=N_0/H_{\Delta,0}$. Since both $H_{\Delta,0}$ and $H_{\Delta,k}$ are normal subgroups in $N_0$, $v^{-1}\lambda_0 v$ lies in $\mathbb{F}_q\bs H_{\Delta,0}\js$ and its image in $\mathbb{F}_q[H_{\Delta,0}/H_{\Delta,k}]$ is nonzero by our assumption $\pr_{k,\infty}(\lambda_0 x_0)\neq 0$. Now note that the socle of the left $\mathbb{F}_q\bs H_{\Delta,0}\js$-module $\mathbb{F}_q[H_{\Delta,0}/H_{\Delta,k}]$ is one dimensional, generated by the element $\sum_{w\in J(H_{\Delta,0}/H_{\Delta,k})}w^{-1}$. Since any nonzero submodule intersects the socle nontrivially, the nonzero submodule $\mathbb{F}_q\bs H_{\Delta,0}\js(v^{-1}\lambda_0v)\leq \mathbb{F}_q[H_{\Delta,0}/H_{\Delta,k}]$ must contain the socle, ie.\ there is an element $\lambda_1\in \mathbb{F}_q\bs H_{\Delta,0}\js$ such that 
\begin{align}\label{tracemultiple}
\lambda_1(v^{-1}\lambda_0v)=\lambda_2+\sum_{w\in J(H_{\Delta,0}/H_{\Delta,k})}w^{-1} 
\end{align}
for some $\lambda_2\in \Ker(\mathbb{F}_q\bs H_{\Delta,0}\js\to \mathbb{F}_q[H_{\Delta,0}/H_{\Delta,k}])=(H_{\Delta,k}-1)\mathbb{F}_q\bs H_{\Delta,0}\js$. Hence we put $t:=s_{\Delta\setminus\{\alpha\}}^r$ and compute
\begin{align*}
D^\#\supseteq\pr_{0,\infty}(\mathbb{M}^{bd}_\infty(D^\#) )\ni \sum_{u\in J(H_{\Delta,k}/t H_{\Delta,0} t^{-1})} \pr_{0,\infty}\left( \psi_t\left( u^{-1}\lambda_1v^{-1}(\lambda_0 x_0+x_1)\right)\right)\overset{\eqref{tracemultiple}}{=}\\
=\sum_{u\in J(H_{\Delta,k}/t H_{\Delta,0} t^{-1})} \pr_{0,\infty}\left( \psi_t\left( u^{-1}(\lambda_2v^{-1}x_0+\lambda_1v^{-1}x_1+ \sum_{w\in J(H_{\Delta,0}/H_{\Delta,k})}w^{-1}v^{-1}x_0)\right)\right)\overset{\text{Lemma }\ref{tracepsipr}}{=}\\
=\sum_{w\in J(H_{\Delta,0}/H_{\Delta,k})}\sum_{u\in J(H_{\Delta,k}/t H_{\Delta,0} t^{-1})} \pr_{0,\infty}\left( \psi_t\left( u^{-1}w^{-1}v^{-1}x_0\right)\right)\overset{w_1=wu}{=}\\
=\sum_{w_1\in J(H_{\Delta,0}/t H_{\Delta,0} t^{-1})} \pr_{0,\infty}\left( \psi_t\left( w_1^{-1}v^{-1}x_0\right)\right)\overset{\eqref{psireduction}}{=}\psi_t^{D}\left(\pr_{0,\infty}\left(v^{-1}x_0\right)\right)=\psi_t^{D}\left(\overline{v}^{-1}\pr_{0,\infty}\left(x_0\right)\right)\ . 
\end{align*}
This contradicts to our assumption that $(\psi_{\Delta\setminus\{\alpha\}}^{D})^r(\overline{v}^{-1}\pr_{0,\infty}(x_0))\notin D^\#_\alpha$, so the statement follows.
\end{proof}

\begin{pro}\label{kerintersect}
We have $D^{\bf j}\subseteq D^\#$ for all ${\bf j}\in (\mathbb{Z}^{\geq 0})^d$. In particular, $\mathbb{M}^{bd}_\infty(D^\#)$ satisfies the assumptions of Proposition \ref{fingen+kertors}.
\end{pro}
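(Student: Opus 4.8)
The plan is to prove the displayed inclusion $D^{\bf j}\subseteq D^\#$ for every ${\bf j}\in(\mathbb Z^{\ge 0})^d$; the ``in particular'' then follows at once. Indeed, $D^\#$ is finitely generated over $E_\Delta^+$ by Proposition~\ref{Dhashprop}, hence bounded, so $\bigcup_{\bf j}D^{\bf j}=\bigcup_{\bf j}D^{\bf j}(\mathbb{M}^{bd}_\infty(D^\#))$ is bounded; moreover $\mathbb{M}^{bd}_\infty(D^\#)$ is a compact $\Fq\bs N_0\js$-submodule of $\mathbb{M}_{\infty,0}(D)$ by Lemma~\ref{bdk_compact}, and it is stable under $T_0$ because both $D^\#$ (Proposition~\ref{Dhashprop}) and the filtration \eqref{multfiltmod} (Lemma~\ref{subgroupfilt}) are. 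Thus all hypotheses of Proposition~\ref{fingen+kertors} are met.

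For the inclusion I would argue by transfinite induction on ${\bf j}$ along the lexicographic order on $(\mathbb Z^{\ge 0})^d$, which is a well-order. The base case ${\bf j}={\bf 0}$ is clear: $\Fil^{{\bf 0}+}\mathbb{M}_{\infty,0}(D)=\Ker(\pr_{0,\infty})$, so $D^{\bf 0}=\pr_{0,\infty}(\mathbb{M}^{bd}_\infty(D^\#))$, and the description of $\mathbb{M}^{bd}_\infty(D^\#)$ in Proposition~\ref{M_infty_D0_psi_descr}, applied with $t=u=1$ (so that $\psi_1=\id$), gives $\pr_{0,\infty}(\mathbb{M}^{bd}_\infty(D^\#))\subseteq D^\#$. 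Now fix ${\bf j}>{\bf 0}$ and assume $D^{{\bf j}'}\subseteq D^\#$ for all ${\bf j}'<{\bf j}$. Let $\bar x\in D^{\bf j}$ and choose a lift $y\in\mathbb{M}_{\infty,0}(D)$ with $\pr_{0,\infty}(y)=\bar x$; unwinding \eqref{filtonMbd} we may write ${\bf b}^{\bf j}y=m+z$ with $m\in\mathbb{M}^{bd}_\infty(D^\#)$ and $z\in\Fil^{{\bf j}+}\mathbb{M}_{\infty,0}(D)$ (note $\pr_{0,\infty}(m)={\bf b}^{\bf j}\bar x=0$, since $H_{\Delta,0}$ acts trivially on $D$). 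If ${\bf b}^{\bf j}y=0$ then $\bar x=0\in D^\#$, because ${\bf b}^{\bf j}\colon D\overset{\sim}{\to}\Fil^{\bf j}\mathbb{M}_{\infty,0}(D)/\Fil^{{\bf j}+}\mathbb{M}_{\infty,0}(D)$ is an isomorphism (Proposition~\ref{multigraded}); so assume ${\bf b}^{\bf j}y\ne 0$. The goal is to bring the relation ${\bf b}^{\bf j}y=m+z$ into the shape required by Lemma~\ref{onlyonetermnotinkerpr}: find $\lambda_0\in\Fq\bs H_{\Delta,0}\js$ (a suitable multiple of ${\bf b}^{\bf j}$), an element $x_1\in\mathbb{M}_{\infty,0}(D)$, and an integer $k\ge 0$ with $\lambda_0 y+x_1\in\mathbb{M}^{bd}_\infty(D^\#)$ and $\pr_{k,\infty}(x_1)=0\ne\pr_{k,\infty}(\lambda_0 y)$; the lemma then yields $\bar x=\pr_{0,\infty}(y)\in D^\#$.

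The main obstacle — and the technical heart of the proof — is exactly the construction of $\lambda_0$, $x_1$ and $k$. The naive choice $\lambda_0={\bf b}^{\bf j}$, $x_1=-z$ fails, because the lexicographic filtration on $\Fq\bs H_{\Delta,0}\js$ is \emph{not} $\mathfrak m$-adic (for instance $\Fil^{{\bf j}+}\ne\mathfrak m\,\Fil^{\bf j}$), so $z$ need not lie in any $\Ker(\pr_{k,\infty})$ and cannot simply be ``divided by ${\bf b}^{\bf j}$''. The remedy is to pass to a large level $k$ — choose $k$ with $p^{k_i}>j_i$ for all $i$ and $\pr_{k,\infty}({\bf b}^{\bf j}y)\ne 0$ — and to use Lemma~\ref{comparefilpr}: since $\Fil^{\tilde{\bf k}+}\mathbb{M}_{\infty,0}(D)\subseteq\Ker(\pr_{k,\infty})$ for $\tilde{\bf k}=(p^{k_1}-1,\dots,p^{k_d}-1)$, only finitely much of $z$ is visible at level $k$, so modulo $\Ker(\pr_{k,\infty})$ one reduces $z$ to a finite combination of higher monomials ${\bf b}^{{\bf j}'}$ with ${\bf j}<{\bf j}'\le\tilde{\bf k}$; these finitely many terms then have to be cleared against $\mathbb{M}^{bd}_\infty(D^\#)$, and this is where the careful bookkeeping enters — matching the lexicographic order on indices against the non‑injectivity of $\varphi_t$ at the finite levels $\mathbb{M}_{k,0}(D)$, using the graded and commutator structure of $\Fq\bs H_{\Delta,0}\js$ (Proposition~\ref{multigraded}, Lemma~\ref{subgroupfilt}) and the $\psi$-characterization of $\mathbb{M}^{bd}_\infty(D^\#)$ from Proposition~\ref{M_infty_D0_psi_descr}, together with the inductive hypothesis. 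The final step, namely checking that the resulting $\lambda_0 y$ is genuinely nonzero at level $k$, is precisely the socle argument already packaged in Lemmas~\ref{tracepsipr}–\ref{onlyonetermnotinkerpr}: one multiplies by an element hitting the one‑dimensional socle of the left $\Fq\bs H_{\Delta,0}\js$-module $\Fq[H_{\Delta,0}/H_{\Delta,k}]$ and applies a contracting $\psi_t$ with $t=s_{\Delta\setminus\{\alpha\}}^r$, exactly as in the proof of Lemma~\ref{onlyonetermnotinkerpr}.
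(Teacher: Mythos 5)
Your reduction of the ``in particular'' part and your base case are fine, but the inductive step has a genuine gap: the entire content of the proposition is the production of the pair $(\lambda_0,x_1)$ and the level $k$ to which Lemma \ref{onlyonetermnotinkerpr} is applied, and you never construct them. You correctly observe that the naive choice $\lambda_0={\bf b}^{\bf j}$, $x_1=-z$ fails, but the ``remedy'' you describe --- reducing $z$ modulo $\Ker(\pr_{k,\infty})$ to finitely many higher monomials and then ``clearing'' them against $\mathbb{M}^{bd}_\infty(D^\#)$ by unspecified bookkeeping, using the inductive hypothesis in some unspecified way --- is exactly the step that needs a proof, and as stated there is no argument that such a clearing is possible (nor is it clear what the hypothesis $D^{{\bf j}'}\subseteq D^\#$ for ${\bf j}'<{\bf j}$ would contribute: it is a statement about images in the graded piece $D$, and it does not let you modify $z$ by elements of $\mathbb{M}^{bd}_\infty(D^\#)$ inside $\mathbb{M}_{\infty,0}(D)$).

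The missing idea is a one-line trick that makes any induction unnecessary: multiply the whole relation by the single complementary monomial ${\bf b}^{{\bf \tilde k}-{\bf j}}$. Choose $k$ with $p^{k_i}>j_i$ for all $i$ and set $\lambda_0:={\bf b}^{{\bf \tilde k}-{\bf j}}{\bf b}^{\bf j}$ and $x_1:={\bf b}^{{\bf \tilde k}-{\bf j}}x_2$ (in your notation $x_1=-{\bf b}^{{\bf \tilde k}-{\bf j}}z$). Since $\mathbb{M}^{bd}_\infty(D^\#)$ is an $\Fq\bs N_0\js$-submodule, $\lambda_0 x_0+x_1={\bf b}^{{\bf \tilde k}-{\bf j}}({\bf b}^{\bf j}x_0+x_2)$ still lies in $\mathbb{M}^{bd}_\infty(D^\#)$; by Proposition \ref{multigraded} one has ${\bf b}^{{\bf \tilde k}-{\bf j}}\Fil^{{\bf j}+}\mathbb{M}_{\infty,0}(D)\subseteq\Fil^{{\bf \tilde k}+}\mathbb{M}_{\infty,0}(D)\subseteq\Ker(\pr_{k,\infty})$ by Lemma \ref{comparefilpr}, so $\pr_{k,\infty}(x_1)=0$, i.e.\ the \emph{whole} error term dies at level $k$ rather than merely becoming a finite sum of higher monomials; and $\pr_{k,\infty}(\lambda_0x_0)\neq0$ whenever $\overline{x}\neq0$, because modulo $\Fil^{{\bf \tilde k}+}$ the element $\lambda_0$ agrees with ${\bf b}^{{\bf \tilde k}}=\sum_{u\in J(H_{\Delta,0}/H_{\Delta,k})}u$, whose action on $\mathbb{M}_{k,0}(D)$ kills exactly $(H_{\Delta,0}-1)\mathbb{M}_{k,0}(D)=\Ker(\pr_{0,k})$, and $\pr_{0,k}(\pr_{k,\infty}(x_0))=\overline{x}\neq0$. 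Lemma \ref{onlyonetermnotinkerpr} then gives $\overline{x}=\pr_{0,\infty}(x_0)\in D^\#$ directly, for every ${\bf j}$ at once.
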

\begin{proof}
Let ${\bf j}=(j_1,\dots,j_d)\in (\mathbb{Z}^{\geq 0})^d$ be fixed. Let $k$ be large enough such that $p^{k_i}>j_i$ for all $i=1,\dots,d$ (see Lemma \ref{comparefilpr} for the definition of $k_i$, $i=1,\dots,d$). There exists such a $k$ since the subgroups $H_{\Delta,k}$ form a system of open neighbourhoods of $1$ in $H_{\Delta,0}$.

Pick an element $x\in D^{\bf j}$, so there is an element $x_2\in \Fil^{{\bf j}+}\mathbb{M}_{\infty,0}(D)$ and $x_0\in \mathbb{M}_{\infty,0}(D)$ such that $\pr_{0,\infty}(x_0)=x$ and ${\bf b}^{\bf j}x_0+x_2\in \mathbb{M}^{bd}_\infty(D^\#)$. By Lemma \ref{comparefilpr} and Proposition \ref{multigraded} we have $\pr_{k,\infty}({\bf b}^{{\bf \tilde{k}}-{\bf j}}x_2)=0$ and $\pr_{k,\infty}({\bf b}^{{\bf \tilde{k}}-{\bf j}}{\bf b}^{\bf j}x_0)\neq 0$. We deduce $x\in D^\#$ from Lemma \ref{onlyonetermnotinkerpr} where $\lambda_0:={\bf b}^{{\bf \tilde{k}}-{\bf j}}{\bf b}^{\bf j}$ and $x_1:={\bf b}^{{\bf \tilde{k}}-{\bf j}}x_2$.
\end{proof}

\begin{thm}\label{Mbdfingen}
$\mathbb{M}_\infty^{bd}(D^\#)$ is finitely generated over $\mathbb{F}_q\bs N_0\js$. 
\end{thm}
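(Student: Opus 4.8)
The plan is to obtain the theorem as a direct application of Proposition \ref{fingen+kertors}(a) to the submodule $\mathfrak M:=\mathbb{M}_\infty^{bd}(D^\#)$ of $\mathbb{M}_{\infty,0}(D)$. That proposition has two hypotheses on $\mathfrak M$: that it be a compact $\Fq\bs N_0\js$-submodule stable under $T_0$, and that $\bigcup_{{\bf j}\in(\mathbb{Z}^{\geq 0})^d}D^{\bf j}(\mathfrak M)$ be bounded in $D$. So the whole task reduces to verifying these for $\mathfrak M=\mathbb{M}_\infty^{bd}(D^\#)$.

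The boundedness hypothesis is already in hand: Proposition \ref{kerintersect} gives $D^{\bf j}(\mathbb{M}_\infty^{bd}(D^\#))=D^{\bf j}\subseteq D^\#$ for every ${\bf j}\in(\mathbb{Z}^{\geq 0})^d$, and $D^\#$ is a finitely generated $E_\Delta^+$-submodule of $D$ by Proposition \ref{Dhashprop}; hence $\bigcup_{\bf j}D^{\bf j}$ is contained in the bounded set $D^\#$. Compactness of $\mathbb{M}_\infty^{bd}(D^\#)$ was already recorded right after its definition, as a projective limit of the finitely generated modules $\mathbb{M}_k^{bd}(D^\#)$ of Lemma \ref{bdk_compact}.

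So the one point that still wants an argument is that $\mathbb{M}_\infty^{bd}(D^\#)$ is stable under $T_0$. For this I would invoke the intrinsic description of $\mathbb{M}_\infty^{bd}(D^\#)$ in Proposition \ref{M_infty_D0_psi_descr} as $\{x\mid \pr_{0,\infty}(\psi_t(u^{-1}x))\in D^\#\text{ for all }t\in T_+,\ u\in N_0\}$ (applicable because $D^\#$ is a $\psi^{(D)}$-invariant lattice in $D$, by Proposition \ref{Dhashprop}). Given $t_0\in T_0$, the operator $\varphi_{t_0}$ is invertible (as $t_0\in T_+\cap T_+^{-1}$), one has $u^{-1}t_0x=\varphi_{t_0}(u'x)$ for a suitable $u'\in N_0$, and $\psi_t$ commutes with $\varphi_{t_0}$; using the $\varphi_{t_0}$-equivariance of $\pr_{0,\infty}$, the membership condition for $t_0x$ reduces to $t_0\cdot\pr_{0,\infty}(\psi_t(u'x))\in D^\#$, i.e.\ to $t_0D^\#=D^\#$. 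The last equality holds because $D^\#$ is $\Gamma_\Delta$-stable (Proposition \ref{Dhashprop}) and $T_0$ is generated by $\Gamma_\Delta$ together with the centre $Z(G)$, which acts $E_\Delta$-linearly and commutes with each $\psi_\alpha$, so the maximality characterization in Proposition \ref{Dhashalpha} forces each $D^\#_\alpha$, hence $D^\#$, to be $Z(G)$-stable.

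With all three conditions verified, Proposition \ref{fingen+kertors}(a) gives that $\mathbb{M}_\infty^{bd}(D^\#)$ is finitely generated over $\Fq\bs N_0\js$. In effect there is no real obstacle left at this stage: the substantive input — the filtration-and-K\"onig's-lemma argument of Proposition \ref{fingen+kertors} and the $\psi$-descent of Proposition \ref{kerintersect} (via Lemmas \ref{tracepsipr} and \ref{onlyonetermnotinkerpr}) — has already been established, and the proof is the short bookkeeping just outlined; the only mildly technical point is the $T_0$-equivariance check, which is essentially forced by the commutation relations among $\psi_t$, $\varphi_{t_0}$ and $\pr_{0,\infty}$.
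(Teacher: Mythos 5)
Your proposal is correct and follows essentially the same route as the paper, whose proof of Theorem \ref{Mbdfingen} is exactly the application of Proposition \ref{fingen+kertors}$(a)$ via Propositions \ref{Dhashprop} and \ref{kerintersect}. Your explicit verification of the $T_0$-stability of $\mathbb{M}_\infty^{bd}(D^\#)$ (via Proposition \ref{M_infty_D0_psi_descr}, the $\Gamma_\Delta$-stability of $D^\#$ and the maximality argument for $Z(G)$) is a correct filling-in of a point the paper leaves implicit, not a departure from its argument.
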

\begin{proof}
By Propositions \ref{Dhashprop} and \ref{kerintersect} we may apply Proposition \ref{fingen+kertors}$(a)$ to deduce the result.
\end{proof}

\begin{cor}\label{compact+psi-fingen}
Let $\mathfrak{M}\subset \mathbb{M}_{\infty,0}(D)$ be a compact $\mathbb{F}_q\bs N_0\js$-submodule stable under $\psi_t$ for all $t\in T_+$ such that $\psi_t\colon \mathfrak{M}\to \mathfrak{M}$ is onto for all $t\in T_+$. Then $\mathfrak{M}$ is finitely generated over $\mathbb{F}_q\bs N_0\js$.
\end{cor}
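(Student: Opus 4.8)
The plan is to deduce this statement directly from the two main results of the present subsection, Proposition~\ref{prcontainedinDhash} and Theorem~\ref{Mbdfingen}, combined with the noetherianity of the Iwasawa algebra $\Fq\bs N_0\js$. The key observation is that the hypotheses imposed on $\mathfrak{M}$ are exactly those of Proposition~\ref{prcontainedinDhash}, so that $\mathfrak{M}$ is automatically forced to sit inside a module we already know to be finitely generated, and finite generation then descends along the inclusion by a noetherian argument.

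Concretely, first I would invoke Proposition~\ref{prcontainedinDhash}: since $\mathfrak{M}\subset \mathbb{M}_{\infty,0}(D)$ is a compact $\Fq\bs N_0\js$-submodule stable under all $\psi_t$ ($t\in T_+$) with each $\psi_t\colon \mathfrak{M}\to\mathfrak{M}$ surjective, we obtain the chain of inclusions $\mathfrak{M}\subseteq \bigcap_{t\in T_+}\psi_t(\mathbb{M}_\infty^{bd}(D^\#))\subseteq \mathbb{M}_\infty^{bd}(D^\#)$ (equivalently, $\mathfrak{M}$ lies in the largest compact $\Fq\bs N_0\js$-submodule of $\mathbb{M}_{\infty,0}(D)$ on which every $\psi_t$ acts surjectively, by the last sentence of that proposition). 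Next, by Theorem~\ref{Mbdfingen} the module $\mathbb{M}_\infty^{bd}(D^\#)$ is finitely generated over $\Fq\bs N_0\js$. Finally, since $N_0$ is a compact $p$-adic analytic group (it is uniform-by-finite as a pro-$p$ group, cf.\ the discussion preceding Lemma~\ref{subgroupfilt}), its completed group algebra $\Fq\bs N_0\js$ is left and right noetherian; hence every $\Fq\bs N_0\js$-submodule of a finitely generated $\Fq\bs N_0\js$-module is again finitely generated, and in particular so is $\mathfrak{M}$.

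There is essentially no remaining obstacle here, as all the substantive work has already been carried out: Theorem~\ref{Mbdfingen} rests on Proposition~\ref{fingen+kertors}$(a)$ via the containment $D^{\bf j}\subseteq D^\#$ established in Proposition~\ref{kerintersect}, and the inclusion $\mathfrak{M}\subseteq \mathbb{M}_\infty^{bd}(D^\#)$ is exactly Proposition~\ref{prcontainedinDhash}. The one point worth being slightly careful about is that Theorem~\ref{Mbdfingen} asserts finite generation over $\Fq\bs N_0\js$ itself (not merely over the commutative quotient $\Fq\bs N_{\Delta,0}\js$), so the passage to the submodule $\mathfrak{M}$ is legitimate by noetherianity of $\Fq\bs N_0\js$ without any additional argument.
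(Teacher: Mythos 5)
Your proposal is correct and is essentially identical to the paper's own proof: one applies Proposition~\ref{prcontainedinDhash} to place $\mathfrak{M}$ inside $\mathbb{M}_\infty^{bd}(D^\#)$, then Theorem~\ref{Mbdfingen} together with the noetherianity of $\Fq\bs N_0\js$ yields finite generation of the submodule $\mathfrak{M}$. Nothing is missing.
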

\begin{proof}
Since $\mathbb{F}_q\bs N_0\js$ is noetherian, $\mathfrak{M}$ is finitely generated by Theorem \ref{Mbdfingen} and Proposition \ref{prcontainedinDhash}.
\end{proof}

\begin{cor}\label{Minftyadmissible}
Let $\pi$ be a smooth representation of $\GL_n(\mathbb{Q}_p)$ over $\mathbb{F}_q$ and let $M_0\in \mathcal{M}^0_\Delta(\pi^{H_{\Delta,0}})$. Then $B_+M_0$ is admissible as a representation of $N_0$. 
\end{cor}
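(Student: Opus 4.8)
The plan is to reduce the statement to the finiteness results of this section, namely Theorem \ref{Mbdfingen} and Corollary \ref{compact+psi-fingen}, via the noncommutative deformation attached to $M_0$. Since $N_0$ is a compact $p$-adic analytic group, admissibility of the smooth $\Fq$-representation $B_+M_0$ of $N_0$ is equivalent to finite generation of its Pontryagin dual $(B_+M_0)^\vee$ over $\Fq\bs N_0\js$, so it suffices to establish the latter.

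First I would put $D:=M_0^\vee[X_\Delta^{-1}]$, an étale $T_+$-module over $E_\Delta$; because $M_0\in\mathcal{M}^0_\Delta(\pi^{H_{\Delta,0}})$, the map $M_0^\vee\hookrightarrow D$ is injective and $M_0^\vee$ is a $\psi^{(D)}$-invariant lattice in $D$. I would then invoke \cite[Cor.\ 4.22]{MultVar} to obtain the continuous, $\Fq\bs N_0\js$-linear, $T_+^{-1}$-equivariant map $\beta:=\beta_{M_0,\mathcal{C}_0}\colon\pi^\vee\to\mathbb{M}_{\infty,0}(D)$, recording the two properties I will use: that $\beta(t^{-1}\cdot x)=\psi_t(\beta(x))$ for $t\in T_+$ (with $t^{-1}\cdot$ the dual action of $t^{-1}\in G$ on $\pi^\vee$), and that by construction $\pr_{0,\infty}\circ\beta$ is the canonical composite $\pi^\vee\twoheadrightarrow(\pi^{H_{\Delta,0}})^\vee\twoheadrightarrow M_0^\vee\hookrightarrow D$, i.e.\ restriction of functionals to $M_0$ followed by the inclusion $M_0^\vee\hookrightarrow D$.

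The core is then to identify $\mathfrak{M}:=\beta(\pi^\vee)\subseteq\mathbb{M}_{\infty,0}(D)$ as a module to which Corollary \ref{compact+psi-fingen} applies and which surjects onto $(B_+M_0)^\vee$. As the continuous image of the compact $\pi^\vee$, $\mathfrak{M}$ is a compact $\Fq\bs N_0\js$-submodule, stable under all $\psi_t$ by the $T_+^{-1}$-equivariance of $\beta$; it is moreover $\psi_t$-surjective because $t\in T_+\subseteq G$ acts invertibly on $\pi$, so $\ell(t^{-1}\,\cdot\,)\in\pi^\vee$ and $\psi_t\big(\beta(\ell(t^{-1}\,\cdot\,))\big)=\beta(\ell)$ for every $\ell\in\pi^\vee$. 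Hence $\mathfrak{M}$ is finitely generated over $\Fq\bs N_0\js$ by Corollary \ref{compact+psi-fingen}. For the surjection I would check that $\ker\beta$ annihilates $B_+M_0$: for $\ell\in\ker\beta$, $b=ut\in N_0T_+=B_+$ and $v\in M_0$, writing $(b^{-1}\cdot\ell)(v')=\ell(bv')$ one computes $\ell(bv)=\pr_{0,\infty}\big(\beta(t^{-1}\cdot(u^{-1}\cdot\ell))\big)(v)=\pr_{0,\infty}\big(\psi_t(u^{-1}\cdot\beta(\ell))\big)(v)=0$ using the two recorded properties of $\beta$. Since $B_+M_0=\sum_{b\in B_+}bM_0$, this gives $\ker\beta\subseteq(B_+M_0)^\perp$, so the $N_0$-equivariant restriction map $\pi^\vee\twoheadrightarrow(B_+M_0)^\vee$ factors through $\beta$ and produces a surjective $\Fq\bs N_0\js$-module homomorphism $\mathfrak{M}\twoheadrightarrow(B_+M_0)^\vee$; finite generation of $(B_+M_0)^\vee$, hence admissibility as a representation of $N_0$, follows.

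The main obstacle is not in any of these verifications, which are formal once the machinery is in place, but in correctly importing from \cite{MultVar} the map $\beta_{M_0,\mathcal{C}_0}$ together with the precise statement that $\pr_{0,\infty}\circ\beta_{M_0,\mathcal{C}_0}$ recovers the tautological restriction $\pi^\vee\to M_0^\vee$: this compatibility is exactly what allows the purely formal argument above to connect the a priori enormous noncommutative deformation $\mathbb{M}_{\infty,0}(D)$ to the finite subspace $B_+M_0\subseteq\pi$, while the genuine content — that the relevant compact submodule $\mathfrak{M}$ is finitely generated over $\Fq\bs N_0\js$ — is supplied by Theorem \ref{Mbdfingen} and Corollary \ref{compact+psi-fingen}.
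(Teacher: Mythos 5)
Your proof is correct and follows essentially the same route as the paper: map $\pi^\vee$ into $\mathbb{M}_{\infty,0}(D)$, observe that the image is a compact $\Fq\bs N_0\js$-submodule on which every $\psi_t$ acts surjectively because $t\in T_+$ acts invertibly on $\pi$, and conclude finite generation from Theorem \ref{Mbdfingen} together with Proposition \ref{prcontainedinDhash} (i.e.\ Corollary \ref{compact+psi-fingen}). The only cosmetic difference is that the paper cites \cite[Lemma 4.16]{MultVar} to identify the image of $\pi^\vee$ with $(B_+M_0)^\vee$ outright, whereas you re-derive the needed half of that identification (a surjection of the image onto $(B_+M_0)^\vee$) by hand.
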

\begin{proof}
By \cite[Lemma 4.16]{MultVar}, the image of $\pi^{\vee}$ in $\mathbb{M}_{\infty,0}(D)$ is $(B_+M_0)^{\vee}$. The map $\pi^\vee\twoheadrightarrow (B_+M_0)^\vee\hookrightarrow \mathbb{M}_{\infty,0}(D)$ is $\psi_t$ equivariant for all $t\in T_+$ where $\psi_t\colon (B_+M_0)^\vee\to (B_+M_0)^\vee$ is the Pontryagin dual of the multiplication by $t$ map $B_+M_0\overset{t\cdot}{\rightarrow}B_+M_0$. Since the multiplication by $t$ is injective on $B_+M_0$ (as it is invertible on $\pi$), $\psi_t\colon (B_+M_0)^\vee\to (B_+M_0)^\vee$ is onto for all $t\in T_+$. By Theorem \ref{Mbdfingen} and Proposition \ref{prcontainedinDhash} we deduce that $(B_+M_0)^\vee$ is finitely generated over $\mathbb{F}_q\bs N_0\js$ whence its Pontryagin dual $B_+M_0$ is $N_0$-admissible.
\end{proof}

\begin{cor}\label{kerneltorsion}
Let $\pi$ be a smooth representation of $\GL_n(\mathbb{Q}_p)$ over $\mathbb{F}_q$, $M_0\in \mathcal{M}^0_\Delta(\pi^{H_{\Delta,0}})$, and put $M_\infty:=B_+M_0$. Then the kernel of the map $\Fq\bs N_{\Delta,0}\js\otimes_{\Fq\bs N_0\js}M_\infty^\vee\to M_0^\vee$ induced by the inclusion $M_0\leq M_\infty^{H_{\Delta,0}}$ is killed by some power of $X_\Delta$.
\end{cor}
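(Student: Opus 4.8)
The plan is to apply Proposition~\ref{fingen+kertors}$(b)$ to the compact $\Fq\bs N_0\js$-submodule $\mathfrak{M}:=M_\infty^\vee$ of $\mathbb{M}_{\infty,0}(D)$, where $D:=M_0^\vee[X_\Delta^{-1}]$ is the finitely generated étale $T_+$-module over $E_\Delta$ attached to $M_0$ (and $M_0^\vee\hookrightarrow D$ because $M_0\in\mathcal{M}^0_\Delta(\pi^{H_{\Delta,0}})$). Here I would use the map $\beta_{M_0}\colon\pi^\vee\to\mathbb{M}_{\infty,0}(D)$ of \cite{MultVar}, whose image is $(B_+M_0)^\vee=M_\infty^\vee$ by \cite[Lemma 4.16]{MultVar} and which by construction satisfies $\pr_{0,\infty}\circ\beta_{M_0}=(\pi^\vee\twoheadrightarrow M_0^\vee\hookrightarrow D)$, so that $M_\infty^\vee$ sits inside $\mathbb{M}_{\infty,0}(D)$ as required.

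Next I would check the hypotheses of Proposition~\ref{fingen+kertors}. The module $\mathfrak{M}$ is compact (being the Pontryagin dual of the discrete module $M_\infty\subset\pi$) and stable under $T_0$, since $M_\infty=B_+M_0=N_0T_+M_0$ is; and because multiplication by $t\in T_+$ is invertible on $\pi$, hence injective on $M_\infty$, its Pontryagin dual $\psi_t$ is surjective on $\mathfrak{M}$. Thus $\mathfrak{M}$ satisfies the hypotheses of Proposition~\ref{prcontainedinDhash}, which gives $\mathfrak{M}\subseteq\mathbb{M}_\infty^{bd}(D^\#)$; consequently $D^{\bf j}(\mathfrak{M})\subseteq D^{\bf j}\subseteq D^\#$ for all ${\bf j}\in(\mathbb{Z}^{\geq 0})^d$ by Proposition~\ref{kerintersect}, and since $D^\#$ is a finitely generated $E_\Delta^+$-module by Proposition~\ref{Dhashprop}, the union $\bigcup_{{\bf j}}D^{\bf j}(\mathfrak{M})$ is bounded in $D$. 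I would then identify $D^{\bf 0}(\mathfrak{M})$: since $\Fil^{{\bf 0}+}\mathbb{M}_{\infty,0}(D)=\Ker(\pr_{0,\infty})$, one has $D^{\bf 0}(\mathfrak{M})=\pr_{0,\infty}(\mathfrak{M})$, and this equals $M_0^\vee\subset D$ by the displayed compatibility of $\beta_{M_0}$ with $\pr_{0,\infty}$ together with surjectivity of $\pi^\vee\twoheadrightarrow M_0^\vee$. In particular $D^{\bf 0}(\mathfrak{M})[X_\Delta^{-1}]=D$, so Proposition~\ref{fingen+kertors}$(b)$ applies and yields that the kernel of $\Fq\bs N_{\Delta,0}\js\otimes_{\Fq\bs N_0\js}\mathfrak{M}\to D^{\bf 0}(\mathfrak{M})$ is killed by a power of $X_\Delta$.

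The remaining step is to match this assertion with the one in the statement, and this purely formal bookkeeping is the only delicate point. Pontryagin duality identifies $\Fq\bs N_{\Delta,0}\js\otimes_{\Fq\bs N_0\js}\mathfrak{M}=\mathfrak{M}/(H_{\Delta,0}-1)\mathfrak{M}=(M_\infty^\vee)_{H_{\Delta,0}}$ with $(M_\infty^{H_{\Delta,0}})^\vee$, and, using $M_0\subseteq M_\infty^{H_{\Delta,0}}$ (which holds because $M_0\subseteq M_\infty$ and $M_0\subseteq\pi^{H_{\Delta,0}}$), the surjection of Proposition~\ref{fingen+kertors}$(b)$ onto $D^{\bf 0}(\mathfrak{M})=M_0^\vee$ — being induced by $\pr_{0,\infty}$, i.e.\ by restriction of functionals from $M_\infty$ to $M_0$ — is exactly the Pontryagin dual of the inclusion $M_0\hookrightarrow M_\infty^{H_{\Delta,0}}$. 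Hence its kernel is the kernel appearing in the statement. The main thing to be careful about throughout is keeping the three maps $\pi^\vee\twoheadrightarrow M_\infty^\vee$, $\pi^\vee\twoheadrightarrow M_0^\vee$ and $\pr_{0,\infty}|_{\mathfrak{M}}\colon\mathfrak{M}\to D$ consistent with the commutative triangle dictated by $M_0\hookrightarrow M_\infty\hookrightarrow\pi$; once that is pinned down, the corollary is immediate from Proposition~\ref{fingen+kertors}$(b)$.
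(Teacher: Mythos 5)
Your argument is correct and is essentially the paper's proof: the paper likewise deduces the corollary from Proposition \ref{fingen+kertors}$(b)$ applied to $\mathfrak{M}=M_\infty^\vee$, noting the identification $M_0^\vee=D^{\bf 0}(M_\infty^\vee)$, with the hypothesis checks (compactness, $T_0$-stability, surjectivity of the $\psi_t$, containment in $\mathbb{M}^{bd}_\infty(D^\#)$ via Proposition \ref{prcontainedinDhash} and boundedness via Proposition \ref{kerintersect}) being exactly those already carried out for Corollary \ref{Minftyadmissible}. You have merely written out the duality bookkeeping that the paper leaves implicit.
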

\begin{proof}
This follows from Proposition \ref{fingen+kertors}$(b)$ noting the identification $M_0^\vee=D^{\bf 0}(M_\infty^\vee)$.
\end{proof}

\subsection{$G$-equivariant sheaves on $G/B$}

Following \cite[end of section 4]{MultVar} we choose a total ordering $\geq$ of the Weyl group $N_G(T)/T$ refining the Bruhat order $\succeq$. This gives a decreasing filtration of $G/B$ by open $B$-invariant subsets $\Fil^{w}(G/B):=\bigcup_{w_1\geq w}\mathcal{C}^{w_1}\subseteq G/B$ and $\Fil^{>w}(G/B):=\bigcup_{w_1> w}\mathcal{C}^{w_1}\subseteq G/B$ for $w\in N_G(T)/T$ where $\mathcal{C}^{w_1}:=Bw_1B/B=Nw_1B/B\subset G/B$. Put $\mathcal{C}_0^{w_1}:=N_0w_1B/B\subset\mathcal{C}^{w_1}\subset G/B$. Since $N=\bigcup_{k\geq 0}s^{-k}N_0s^k$, we obtain $\mathcal{C}^w=\bigcup_{k\geq 0}s^{-k}\mathcal{C}_0^w$ for all $w\in N_G(T)/T$. The bottom term of the filtration is $\Fil^{w_0}(G/B)=\mathcal{C}^{w_0}$ for the element $w_0\in N_G(T)/T$ of maximal length and we have $\Fil^1(G/B)=G/B$. Following \cite{SchVZ} put $\mathcal{C}_0:=\mathcal{C}_0^{w_0}:=N_0w_0B/B\subset G/B$, and $\mathcal{C}:=\mathcal{C}^{w_0}:=Nw_0B/B\subset G/B$. Both $\mathcal{C}$ and $\mathcal{C}_0$ are open in $G/B$. Moreover, $\mathcal{C}\subset G/B$ is dense and $\mathcal{C}_0$ is compact.

Recall the following multiplication rule of Bruhat cells: $$Bw_\alpha B\cdot Bw_1B=\begin{cases}Bw_\alpha w_1B&\text{if }\ell(w_\alpha w_1)=\ell(w_1)+1\\
Bw_1B\cup Bw_\alpha w_1 B&\text{if }\ell(w_\alpha w_1)=\ell(w_1)-1\end{cases}$$
where $w_\alpha\in N_G(T)/T$ denotes the simple reflection corresponding to the simple root $\alpha\in\Delta$, $w_1\in N_G(T)/T$ and $\ell$ is the length. Write $w$ as a minimal product of $\ell(w)$ simple reflections $w=w_{\alpha_1}\cdots w_{\alpha_{\ell(w)}}$ for simple roots $\alpha_1,\dots,\alpha_{\ell(w)}\in \Delta$. Then $w^{-1}=w_{\alpha_{\ell(w)}}\cdots w_{\alpha_1}$ is also a minimal product and $\ell(ww_0)=\ell(w_0)-\ell(w)$. In particular, we deduce 
\begin{align*}
BwBw_0B=Bw_{\alpha_1}\cdots w_{\alpha_{\ell(w)}}w_0B\cup Bw_{\alpha_2}\cdots w_{\alpha_{\ell(w)}}w_0B\cup\cdots\cup Bw_{\alpha_{\ell(w)}}w_0B\cup Bw_0B\quad\text{whence}\\
w\mathcal{C}=wNw_0B/B\subseteq \Fil^{ww_0}(G/B)\ .
\end{align*}
\begin{lem}\label{Cwsubsetww_0C}
We have $w_0^{-1}w^{-1}\mathcal{C}_0^w=\mathcal{C}_0\cap w_0^{-1}w^{-1}\mathcal{C}^w=N_{0,w_0^{-1}w^{-1}}w_0B/B$.
\end{lem}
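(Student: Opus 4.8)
The plan is to unwind the definitions on both sides and to use the Bruhat-type multiplication calculation carried out just above, together with the fact that $N_0$ is totally decomposed. Recall $\mathcal{C}_0^w = N_0 w B/B$ and $\mathcal{C}^w = N w B/B$, and $\mathcal{C}_0 = N_0 w_0 B/B$, $\mathcal{C} = N w_0 B/B$. First I would rewrite the left-hand side: since $w_0^{-1}w^{-1}$ normalizes $T$, we have $w_0^{-1}w^{-1}\mathcal{C}_0^w = w_0^{-1}w^{-1}N_0 w B/B = (w_0^{-1}w^{-1}N_0 w w_0)\, w_0^{-1}B \cdot (w_0 B/B)$; using $w_0 B w_0^{-1} = \overline{B}$ one reduces this to understanding the unipotent subgroup $N_0' := w_0^{-1}w^{-1}N_0 w w_0 \cap N$. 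More efficiently, I would argue directly: an element of $w_0^{-1}w^{-1}\mathcal{C}_0^w$ is $w_0^{-1}w^{-1} n w B/B$ for $n\in N_0$; writing $n = n_+ n_-$ relative to the decomposition $N_0 = (N_0\cap wN_0w^{-1})(N_0\cap wN_0^-w^{-1})$ — i.e. using $\Phi^+ = \Phi^+_w \sqcup (\Phi^+\setminus\Phi^+_w)$ and total decomposition of $N_0$ — one sees that the part $n_-$ corresponding to roots $\alpha\in\Phi^+$ with $w^{-1}\alpha\in\Phi^-$ gets absorbed: $w^{-1}n_- w \in N^-$, and pushing it past $wB$ (after the further conjugation by $w_0^{-1}$) lands it in $B$, so it contributes nothing modulo $B$.

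The key steps, in order: (1) decompose $N_0 = N_{0,w}\cdot N_{0,w}^{c}$ where $N_{0,w} = N_0\cap wN_0w^{-1} = \prod_{\alpha\in\Phi^+_w}(N_0\cap N_\alpha)$ and $N_{0,w}^c = \prod_{\alpha\in\Phi^+\setminus\Phi^+_w}(N_0\cap N_\alpha)$, which is legitimate by total decomposition; (2) observe $w^{-1}N_{0,w}^c w \subseteq N^- = $ lower unipotent, hence $w^{-1}n_-w\, wB/B = w^{-1}n_-w B/B$ lies in $\overline{B}/B$-type cells but more to the point, after applying $w_0^{-1}$ the whole expression $w_0^{-1}w^{-1}N_{0,w}^c wB/B$ simplifies since $w_0^{-1}(\text{stuff in }\overline{N})w_0 \subseteq N$ and what remains modulo $B$ is controlled; (3) for the "good" part, compute $w_0^{-1}w^{-1}N_{0,w}w\cdot(w_0 B/B) = w_0^{-1}w^{-1}N_{0,w}w w_0\cdot (w_0^{-1}B)\cdot(w_0B/B)$, and identify $w_0^{-1}(w^{-1}N_{0,w}w)w_0$ with $\prod_{\alpha\in\Phi^+_w}(N_0\cap N_{w_0^{-1}w^{-1}\alpha})$, whose positive-root constituents are exactly those indexed by $\Phi^+_{w_0^{-1}w^{-1}} = \Phi^+\cap (w_0^{-1}w^{-1})\Phi^+$, giving $N_{0,w_0^{-1}w^{-1}}$; (4) check the reverse inclusion $N_{0,w_0^{-1}w^{-1}}w_0B/B \subseteq \mathcal{C}_0 \cap w_0^{-1}w^{-1}\mathcal{C}^w$ — the containment in $\mathcal{C}_0$ is immediate since $N_{0,w_0^{-1}w^{-1}}\subseteq N_0$, and the containment in $w_0^{-1}w^{-1}\mathcal{C}^w$ follows from $w_0^{-1}w^{-1}\mathcal{C}^w = w_0^{-1}w^{-1}NwB/B \supseteq w_0^{-1}w^{-1}(wN^-w^{-1}\cap N_0\text{-type})\dots$, or more simply from running step (3) in reverse; (5) finally reconcile the two descriptions $w_0^{-1}w^{-1}\mathcal{C}_0^w$ and $\mathcal{C}_0\cap w_0^{-1}w^{-1}\mathcal{C}^w$ by noting both equal $N_{0,w_0^{-1}w^{-1}}w_0B/B$.

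The main obstacle I anticipate is bookkeeping the root subgroups correctly through the two conjugations by $w$ and by $w_0$: one must verify that conjugation by $w_0^{-1}w^{-1}$ sends the subset $\Phi^+_w\subseteq\Phi^+$ precisely onto the set of roots $\delta$ with $w_0 w \delta\in\Phi^+$, i.e. that $(w_0^{-1}w^{-1})(\Phi^+_w) = \Phi^+_{w_0^{-1}w^{-1}}$ as subsets of $\Phi$ after intersecting with $\Phi^+$, and — crucially — that the "bad" constituents (those landing in $\Phi^-$) genuinely disappear modulo $B$ rather than contributing extra lower-triangular terms. This is where I would invoke the minimal-length product decomposition $w = w_{\alpha_1}\cdots w_{\alpha_{\ell(w)}}$ and the identity $\ell(ww_0) = \ell(w_0)-\ell(w)$ already recorded above, so that $\ell(w_0^{-1}w^{-1}) = \ell(w_0w) = \ell(w_0)-\ell(w)$ and $\Phi^+_{w_0^{-1}w^{-1}}$ has exactly the expected cardinality $|\Phi^+_w| = \ell(w)$; a dimension/cardinality count then forces the inclusions obtained in steps (3)–(4) to be equalities. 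The rest is a routine, if slightly tedious, manipulation of the Bruhat cell $NwB/B$ using that $N\cap wN^-w^{-1}$ acts freely on the relevant $B$-coset.
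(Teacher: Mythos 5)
Your overall route is the same as the paper's: decompose $N_0$ (using that it is totally decomposed) into the two products of root subgroups determined by $w$, absorb one factor into $B$, conjugate the remaining factor by $(ww_0)^{-1}$ to produce $N_{0,w_0^{-1}w^{-1}}$, and finally compute the intersection with $\mathcal{C}_0$ using that $N\to Nw_0B/B$ is injective. However, you have interchanged the two factors, and the error is not merely notational. The factor that dies modulo $B$ is $N_{0,w}=N_0\cap wN_0w^{-1}$ (roots $\alpha\in\Phi^+_w$, i.e.\ $w^{-1}\alpha\in\Phi^+$), because $w^{-1}N_{0,w}w\subseteq N\subseteq B$; the part $n_-$ with $w^{-1}n_-w\in\overline{N}$ does \emph{not} "contribute nothing modulo $B$", since $\overline{N}\not\subseteq B$ and after the further conjugation it ends up to the \emph{left} of $w_0B/B$, not inside $B$. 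Consequently the surviving factor is $N_0\cap w\overline{N}w^{-1}=\prod_{\alpha\in\Phi^+\setminus\Phi^+_w}(N_0\cap N_\alpha)$, and it is \emph{this} factor whose conjugate $w_0^{-1}w^{-1}\bigl(N_0\cap w\overline{N}w^{-1}\bigr)ww_0$ equals $N_{0,w_0^{-1}w^{-1}}$, because $w_0^{-1}w^{-1}$ carries $\Phi^+\setminus\Phi^+_w$ bijectively onto $\Phi^+_{w_0^{-1}w^{-1}}$. Your step (3) instead conjugates $N_{0,w}$: but for $\alpha\in\Phi^+_w$ one has $w^{-1}\alpha\in\Phi^+$, hence $w_0^{-1}w^{-1}\alpha\in\Phi^-$, so $w_0^{-1}w^{-1}N_{0,w}ww_0\subseteq\overline{N}$ has \emph{no} positive-root constituents and cannot yield $N_{0,w_0^{-1}w^{-1}}$.

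The cardinality check you propose as a safeguard in fact detects the swap rather than repairing it: $|\Phi^+_w|=|\Phi^+|-\ell(w)$, whereas $|\Phi^+_{w_0^{-1}w^{-1}}|=\ell(w)=|\Phi^+\setminus\Phi^+_w|$, so your claimed identity $|\Phi^+_{w_0^{-1}w^{-1}}|=|\Phi^+_w|=\ell(w)$ fails for general $w$. Once the two factors are exchanged (and the decomposition written with the $N_0\cap wNw^{-1}$-factor adjacent to $wB$ so that the absorption is legitimate), the remainder of your plan — the same computation for the full cell $\mathcal{C}^w=NwB/B$, and the identification $\bigl(w_0^{-1}w^{-1}Nww_0\cap N\bigr)w_0B/B\cap N_0w_0B/B=\bigl(w_0^{-1}w^{-1}Nww_0\cap N_0\bigr)w_0B/B$ via injectivity of $n\mapsto nw_0B$, which you only gesture at with the "acts freely" remark — is precisely the paper's argument, and no length-combinatorial input beyond $w_0\Phi^+=\Phi^-$ is actually needed.
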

\begin{proof}
We have $N=(N\cap w\overline{N}w^{-1})\cdot (N\cap wNw^{-1})$, so we compute 
\begin{align*}
\mathcal{C}^{w}=NwB/B=(N\cap w \overline{N}w^{-1})\cdot (N\cap wNw^{-1})wB/B=\\
=ww_0(w_0^{-1}w^{-1}Nww_0\cap N)\cdot (w_0^{-1}w^{-1}Nww_0\cap \overline{N})w_0B/B=\\
=ww_0(w_0^{-1}w^{-1}Nww_0\cap N)w_0(w^{-1}Nw\cap N)B/B=ww_0(w_0^{-1}w^{-1}Nww_0\cap N)w_0B/B\subset ww_0\mathcal{C}\ .
\end{align*}
Similarly, since $N_0$ is totally decomposed, we also have $N_0=(N_0\cap w\overline{N}w^{-1})\cdot (N_0\cap wNw^{-1})$ and
\begin{align*}
\mathcal{C}^{w}_0=N_0wB/B=ww_0(w_0^{-1}w^{-1}Nww_0\cap N_0)w_0B/B=ww_0N_{0,w_0^{-1}w^{-1}}w_0B/B\subset ww_0\mathcal{C}_0\ .
\end{align*}
The statement follows noting that the map $N\to Nw_0B/B$ is a bijection whence $(w_0^{-1}w^{-1}Nww_0\cap N)w_0B/B\cap N_0w_0B/B=(w_0^{-1}w^{-1}Nww_0\cap N_0)w_0B/B$.
\end{proof}

\begin{lem}\label{kersheaf}
Assume that $\mathfrak{Y}$ is a $G$-equivariant sheaf of abelian groups on $G/B$ in the $p$-adic topology of $G/B$ and let $w\in N_G(T)/T$. Then $\mathfrak{Y}^w$ is a $B$-equivariant sheaf on $\mathcal{C}^{w}\subset G/B$ where we put $$\mathfrak{Y}^w(U\cap \mathcal{C}^{w}):=\Ker\left(\res_{\Fil^{>w}(G/B)}^{\Fil^w(G/B)}\colon \mathfrak{Y}(U\cap \Fil^w(G/B))\to \mathfrak{Y}(U\cap \Fil^{>w}(G/B))\right)$$
for any open subset $U\subseteq G/B$.
\end{lem}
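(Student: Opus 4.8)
The plan is to use that $\mathcal{C}^{w}=\Fil^{w}(G/B)\setminus\Fil^{>w}(G/B)$ is closed inside the open subset $\Fil^{w}(G/B)\subseteq G/B$, so that the open subsets of $\mathcal{C}^{w}$ (in the subspace topology) are precisely the sets $U\cap\mathcal{C}^{w}$ with $U$ open in $G/B$, and then to reduce every assertion to the sheaf axioms for $\mathfrak{Y}$ together with its $G$-equivariant (hence $B$-equivariant) structure.

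\textbf{Well-definedness as a presheaf.} First I would check that $\Ker\!\big(\res_{\Fil^{>w}(G/B)}^{\Fil^{w}(G/B)}\colon\mathfrak{Y}(U\cap\Fil^{w}(G/B))\to\mathfrak{Y}(U\cap\Fil^{>w}(G/B))\big)$ depends only on $U\cap\mathcal{C}^{w}$ and not on the choice of $U$. Given $U_1,U_2$ open in $G/B$ with $U_1\cap\mathcal{C}^{w}=U_2\cap\mathcal{C}^{w}$, replacing both by $U:=U_1\cap U_2$ it suffices to treat the case $U\subseteq U_1$ with $U\cap\mathcal{C}^{w}=U_1\cap\mathcal{C}^{w}$. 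Then every point of $U_1\cap\Fil^{w}(G/B)$ lying outside $U$ lies outside $\mathcal{C}^{w}$, hence in $\Fil^{>w}(G/B)$; therefore $\{U\cap\Fil^{w}(G/B),\,U_1\cap\Fil^{>w}(G/B)\}$ is an open cover of $U_1\cap\Fil^{w}(G/B)$ whose two members have intersection $U\cap\Fil^{>w}(G/B)$. Applying the separatedness and gluing axioms of $\mathfrak{Y}$ to this cover, the restriction $\mathfrak{Y}(U_1\cap\Fil^{w}(G/B))\to\mathfrak{Y}(U\cap\Fil^{w}(G/B))$ carries $\{s: s|_{U_1\cap\Fil^{>w}(G/B)}=0\}$ isomorphically onto $\{t: t|_{U\cap\Fil^{>w}(G/B)}=0\}$: such an $s$ is determined by $s|_{U\cap\Fil^{w}(G/B)}$, and conversely such a $t$ glues with the zero section on $U_1\cap\Fil^{>w}(G/B)$ (they agree on the overlap precisely because $t$ lies in that kernel). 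This yields the required canonical identification. The restriction maps of $\mathfrak{Y}^{w}$, and their functoriality, are then inherited from those of $\mathfrak{Y}$: for $V'\subseteq V$ open in $\mathcal{C}^{w}$ choose opens $U'\subseteq U$ in $G/B$ with $V'=U'\cap\mathcal{C}^{w}$, $V=U\cap\mathcal{C}^{w}$, and observe that restriction along $U'\subseteq U$ commutes with restriction to $\Fil^{>w}(G/B)$, hence preserves the kernels.

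\textbf{Sheaf axioms.} Given an open cover $V=\bigcup_{i}V_i$ in $\mathcal{C}^{w}$, choose opens $U_i\subseteq G/B$ with $V_i=U_i\cap\mathcal{C}^{w}$ and put $U:=\bigcup_i U_i$, so that $U\cap\mathcal{C}^{w}=V$ and $\{U_i\cap\Fil^{w}(G/B)\}_i$ (resp.\ $\{U_i\cap\Fil^{>w}(G/B)\}_i$) covers $U\cap\Fil^{w}(G/B)$ (resp.\ $U\cap\Fil^{>w}(G/B)$); by the previous step I may compute $\mathfrak{Y}^{w}(V)$, $\mathfrak{Y}^{w}(V_i)$, $\mathfrak{Y}^{w}(V_i\cap V_j)$ using $U$, $U_i$, $U_i\cap U_j$ respectively. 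Separatedness of $\mathfrak{Y}^{w}$ then reduces to that of $\mathfrak{Y}$ on the cover $\{U_i\cap\Fil^{w}(G/B)\}_i$ of $U\cap\Fil^{w}(G/B)$. For gluing, a compatible family $(t_i)_i$ with $t_i\in\mathfrak{Y}(U_i\cap\Fil^{w}(G/B))$ and $t_i|_{U_i\cap\Fil^{>w}(G/B)}=0$ glues to a unique $t\in\mathfrak{Y}(U\cap\Fil^{w}(G/B))$; and $t\in\mathfrak{Y}^{w}(V)$ because $t|_{U\cap\Fil^{>w}(G/B)}$ restricts to $0$ on each $U_i\cap\Fil^{>w}(G/B)$, hence vanishes by separatedness of $\mathfrak{Y}$. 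The case $V=\emptyset$ is immediate as $\mathfrak{Y}(\emptyset)=0$.

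\textbf{$B$-equivariance and the main difficulty.} Since $B$ acts on $G/B$ preserving the $B$-invariant open sets $\Fil^{w}(G/B)$, $\Fil^{>w}(G/B)$ and the locally closed subset $\mathcal{C}^{w}=BwB/B$, and $\mathfrak{Y}$ is $G$-equivariant, the structure isomorphisms $b_{*}\colon\mathfrak{Y}(U\cap\Fil^{w}(G/B))\xrightarrow{\ \sim\ }\mathfrak{Y}(bU\cap\Fil^{w}(G/B))$ commute with restriction to $\Fil^{>w}(G/B)$, so they restrict to isomorphisms $\mathfrak{Y}^{w}(V)\xrightarrow{\ \sim\ }\mathfrak{Y}^{w}(bV)$, and the cocycle identity is inherited from that of $\mathfrak{Y}$; this is the desired $B$-equivariant structure. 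The only step that uses more than formal sheaf theory is the well-definedness, and the essential input there is the topological fact that $\mathcal{C}^{w}$ is closed in $\Fil^{w}(G/B)$ with open complement $\Fil^{>w}(G/B)$, which is exactly what forces the ``extra'' points of a larger $U$ into the region where the relevant sections already vanish. I expect this to be the main (though mild) obstacle; once it is in place, the remaining verifications are routine diagram chases with the sheaf axioms of $\mathfrak{Y}$.
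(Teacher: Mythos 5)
Your proof is correct and follows essentially the same route as the paper: the paper simply packages your verifications by observing that $U\mapsto\mathfrak{Y}(U\cap\Fil^{w}(G/B))$ and $U\mapsto\mathfrak{Y}(U\cap\Fil^{>w}(G/B))$ are the $B$-equivariant sheaves $\iota(w)_*\iota(w)^{-1}\mathfrak{Y}$ and $\iota(>w)_*\iota(>w)^{-1}\mathfrak{Y}$ on $G/B$, so the kernel of the restriction morphism is a $B$-equivariant sheaf supported on $\mathcal{C}^{w}$, which is closed in the open set $\Fil^{w}(G/B)$. Your explicit checks of well-definedness, the sheaf axioms, and equivariance are exactly the unpacked content of that formalism, with the same key topological input ($\mathcal{C}^{w}=\Fil^{w}(G/B)\setminus\Fil^{>w}(G/B)$ closed in $\Fil^{w}(G/B)$).
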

\begin{proof}
Since $\Fil^w(G/B)$ (resp.\ $\Fil^{>w}(G/B)$) is open in $G/B$, $\mathfrak{Y}_{\mid \Fil^w}\colon U\mapsto \mathfrak{Y}(U\cap \Fil^w(G/B))$ (resp.\ $\mathfrak{Y}_{\mid \Fil^{>w}}\colon U\mapsto \mathfrak{Y}(U\cap \Fil^{>w}(G/B))$) is $\iota(w)_*\iota(w)^{-1}\mathfrak{Y}$ (resp.\ $\iota(>w)_*\iota(>w)^{-1}\mathfrak{Y}$) where $\iota(w)\colon \Fil^w(G/B)\hookrightarrow G/B$ (resp.\ $\iota(>w)\colon \Fil^{>w}(G/B)\hookrightarrow G/B$) is the inclusion map. So $\res_{\Fil^{>w}(G/B)}^{\Fil^w(G/B)}\colon \mathfrak{Y}_{\mid \Fil^w}\to \mathfrak{Y}_{\mid \Fil^{>w}}$ is a $B$-equivariant map of $B$-equivariant sheaves on $G/B$ whose kernel is a sheaf supported on $\mathcal{C}^w$.
\end{proof}

Let $D_0$ be an étale $T_+$-module over $\Fq\bs N_0\js$, ie.\ $D_0$ is a (not necessarily finitely generated) left $\Fq\bs N_0\js$-module together with a semilinear action of the monoid $T_+$ given by maps $\varphi_t\colon D_0\to D_0$ for $t\in T_+$ such that $\id\otimes\varphi_t\colon \Fq\bs N_0\js\otimes_{\Fq\bs N_0\js,\varphi_t}D_0\to D_0$ is an isomorphism for all $t\in T_+$. The $2$ particular cases of interest for such an étale $T_+$-module $D_0$ over $\Fq\bs N_0\js$ are the following: On one hand, one can take $D_0$ to be $D_\infty=\mathbb{M}_{\infty,0}(D)$ for some étale $T_+$-module $D$ over $\Fq\bg N_{\Delta,0}\jg$. In this case $D_\infty$ is an étale $T_+$-module over $\Fq\bg N_{\Delta,\infty}\jg$ hence also over $\Fq\bs N_0\js$ via the injective ring homomorphism $\Fq\bs N_0\js\hookrightarrow \Fq\bg N_{\Delta,\infty}\jg$. On the other hand, one could take $D_0$ to be the étale hull $D_0=\widetilde{D_1}$ \cite[section 2.3]{links} of a compact $\Fq\bs N_0\js$-module $D_1$ together with a compatible action of the operators $\psi_t$ for all $t\in T_+$ such that $\psi_t\colon D_1\to D_1$ is onto for all $t\in T_+$. 

Then $D_0$ is an étale $\Fq[B_+]$-module in the sense of \cite[section 3]{SchVZ} by \cite[Prop.\ 3.29]{SchVZ}. So by \cite[Theorem 3.32]{SchVZ} we can associate to $D_0$ a $B$-equivariant sheaf $\mathfrak{Y}_{D_0}$ on $\mathcal{C}$ with sections $\mathfrak{Y}_{D_0}(\mathcal{C}_0)=D_0$ on the compact open $\mathcal{C}_0$. Further, for any $u\in N_0$ and $t\in T_+$ the restriction maps (see \cite[Proposition 3.16]{SchVZ}) are given by $\res^{\mathcal{C}_0}_{ut\mathcal{C}_0}:=u\circ\varphi_t\circ\psi_t\circ u^{-1}\colon D_0\to u\varphi_t(D_0)=:\mathfrak{Y}_D(ut\mathcal{C}_0)\subseteq D_0$. Here $\psi_t$ is the canonical left inverse of $\varphi_t$. Any compact open subset $U$ in $\mathcal{C}_0$ is the disjoint union $U=\coprod_{i\in I} u_it_i\mathcal{C}_0$ of finitely many open subsets of the form $u_it_i\mathcal{C}_0$ ($u_i\in N_0$, $t_i\in T_+$, $i\in I$), so we put $\mathfrak{Y}_{D_0}(\coprod_{i\in I} u_it_i\mathcal{C}_0):=\bigoplus_{i\in I}\mathfrak{Y}_{D_0}(u_it_i\mathcal{C}_0))$. In particular, whenever $U\subseteq \mathcal{C}_0$ then $\mathfrak{Y}_{D_0}(U)$ can be identified as a subspace in $D$. Finally, if $U\subseteq \mathcal{C}_0$ is any open subset then $U=\bigcup_{k\geq 1}U_k$ is the ascending union of compact open subsets $U_k\subseteq U$ and we have $\mathfrak{Y}_{D_0}(U)=\varprojlim_k \mathfrak{Y}_{D_0}(U_k)$. For any open subsets $U\subseteq V\subseteq \mathcal{C}_0$ we put  $\res^{V}_{U}\colon \mathfrak{Y}_{D_0}(V)\to \mathfrak{Y}_{D_0}(U)$. By \cite[Theorem 3.32]{SchVZ} one can extend this sheaf to a $B$-equivariant sheaf on $\mathcal{C}$. Since $\mathcal{C}=\bigcup_{k\geq 0}s^{-k}\mathcal{C}_0$, the global sections are $\mathfrak{Y}_{D_0}(\mathcal{C})=\varprojlim_{\psi_s}D_0=\{(x_k)_{k\geq 0}\subset D_0\mid \psi_s(x_{k+1})=x_k\text{ for all }k\geq 0\}$. 

\begin{lem}\label{varphibequivres}
Assume that $U\cap b\mathcal{C}_0= bU$ for some $b\in B_+$ and open subset $U\subseteq \mathcal{C}_0$. Then $\res^{\mathcal{C}_0}_U$ is $\varphi_b$-equivariant.
\end{lem}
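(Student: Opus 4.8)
The plan is to read the identity directly off the construction of $\mathfrak{Y}_{D_0}$ recalled above. First I would pin down the meaning of the statement: ``$\res^{\mathcal{C}_0}_U$ is $\varphi_b$-equivariant'' asserts that $\varphi_b\circ\res^{\mathcal{C}_0}_U=\res^{\mathcal{C}_0}_{bU}\circ\varphi_b$ as maps $\mathfrak{Y}_{D_0}(\mathcal{C}_0)\to\mathfrak{Y}_{D_0}(bU)$, where $\varphi_b$ denotes the structure morphism of the $B$-equivariant sheaf $\mathfrak{Y}_{D_0}$ of \cite[Theorem 3.32]{SchVZ} (so on $\mathfrak{Y}_{D_0}(\mathcal{C}_0)=D_0$ it is $x\mapsto u\varphi_t(x)$ for $b=ut\in N_0T_+$, and in general it carries $\mathfrak{Y}_{D_0}(V)$ isomorphically onto $\mathfrak{Y}_{D_0}(bV)$, compatibly with restriction maps). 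Since $b\mathcal{C}_0\subseteq\mathcal{C}_0$ for every $b\in B_+$ (because $tw_0B=w_0B$ and $tN_0t^{-1}\subseteq N_0$), the hypothesis $U\cap b\mathcal{C}_0=bU$ is equivalent to $bU\subseteq U$, which is exactly what makes the left-hand $\varphi_b\colon\mathfrak{Y}_{D_0}(U)\to\mathfrak{Y}_{D_0}(bU)$ and the right-hand $\res^{\mathcal{C}_0}_{bU}$ both maps between subspaces of $D_0$; with this understood the asserted equality is the compatibility of the $B$-action on $\mathfrak{Y}_{D_0}$ with restriction maps, rewritten so that every open in sight lies inside $\mathcal{C}_0$ (using that $\res^{\mathcal{C}_0}_{b\mathcal{C}_0}$ is idempotent with image $\mathfrak{Y}_{D_0}(b\mathcal{C}_0)$).

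Next I would reduce to the case where $U=u't'\mathcal{C}_0$ is a single basic open. Every open $U\subseteq\mathcal{C}_0$ is an ascending union of compact opens, each a finite disjoint union of basic opens; $\mathfrak{Y}_{D_0}$ sends such disjoint unions to direct sums and ascending unions to projective limits, and $b$ acts as a homeomorphism of $G/B$, so one may choose the decomposition of $U$ fine enough that each translate of a part is again a basic open contained in a single part of the decomposition. This reduces the claim to basic opens.

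For $U=u't'\mathcal{C}_0$ and $b=ut$ one checks that $bU=utu't'\mathcal{C}_0=u(tu't^{-1})(tt')\mathcal{C}_0$ is again basic, with $tu't^{-1}\in N_0$ and $tt'\in T_+$ since $t,t'\in T_+$; write $bU=u''t''\mathcal{C}_0$ with $u''=u(tu't^{-1})$, $t''=tt'$. Using the explicit restriction maps $\res^{\mathcal{C}_0}_{vs\mathcal{C}_0}=v\circ\varphi_s\circ\psi_s\circ v^{-1}$ of \cite[Proposition 3.16]{SchVZ}, the explicit structure morphism on a basic open $\varphi_b(v\varphi_s(z))=u(tvt^{-1})\varphi_{ts}(z)$ (obtained by writing $\varphi_b=\varphi_u\circ\varphi_t$ with $\varphi_u$ left translation by $u$ and using the monoid law), together with $\psi_s\varphi_s=\id$, $\varphi_{tt'}=\varphi_t\varphi_{t'}$, $\psi_{tt'}=\psi_{t'}\psi_t$, the semilinearity $\varphi_t(\lambda)\varphi_t(z)=\varphi_t(\lambda z)$ and $\varphi_t(g)=tgt^{-1}$ in $\Fq\bs N_0\js$ for $g\in N_0$, both sides of the desired equation evaluate to $u(tu't^{-1})\,\varphi_{tt'}\big(\psi_{t'}(u'^{-1}x)\big)$; the one step with any content is $\psi_{t''}\big((u'')^{-1}u\,\varphi_t(x)\big)=\psi_{t'}\psi_t\varphi_t(u'^{-1}x)=\psi_{t'}(u'^{-1}x)$, which uses $(u'')^{-1}u=tu'^{-1}t^{-1}=\varphi_t(u'^{-1})$.

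I expect the main obstacle to be not this computation on basic opens, which is routine once the conventions are fixed, but the bookkeeping: identifying precisely which structure morphism ``$\varphi_b$'' is meant on each open, matching $bU$ with a disjoint union of basic opens lying inside $U$, and verifying that the hypothesis $U\cap b\mathcal{C}_0=bU$ descends to the pieces of a sufficiently fine decomposition so that the basic-open case assembles back to the general one.
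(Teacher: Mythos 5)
The identity you actually verify, $\varphi_b\circ\res^{\mathcal{C}_0}_U=\res^{\mathcal{C}_0}_{bU}\circ\varphi_b$, is not the content of the lemma: it is the plain $B_+$-equivariance of the sheaf $\mathfrak{Y}_{D_0}$ and holds for \emph{every} open $U\subseteq\mathcal{C}_0$ with no hypothesis at all --- note that your computation on basic opens never uses $U\cap b\mathcal{C}_0=bU$, which should have been a warning sign. What the lemma asserts (and what is needed later, e.g.\ in Lemma \ref{phipsiinvariantDw} to see that $\Ker(\res^{\mathcal{C}_0}_{U_w})$ is stable under $\varphi_t$) is $\res^{\mathcal{C}_0}_U(\varphi_b(x))=\varphi_b(\res^{\mathcal{C}_0}_U(x))$, with the \emph{same} open $U$ on both sides. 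To pass from your identity to this one you still need $\res^{\mathcal{C}_0}_U(\varphi_b(x))=\res^{\mathcal{C}_0}_{bU}(\varphi_b(x))$: since $\varphi_b(x)$ is a section supported on $b\mathcal{C}_0$, its restriction to $U$ only sees $U\cap b\mathcal{C}_0$, and it is exactly the hypothesis $U\cap b\mathcal{C}_0=bU$ that identifies this with $bU$. That step is missing from your argument, so the actual statement is not proved.

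Moreover, your claim that the hypothesis is equivalent to $bU\subseteq U$ is false, and the lemma genuinely fails under that weaker condition, so no argument using only $bU\subseteq U$ can succeed. For instance take $n=2$, $b=t=\diag(p,1)$ and $U=t\mathcal{C}_0$: then $bU=t^2\mathcal{C}_0\subseteq U$ but $U\cap b\mathcal{C}_0=U\neq bU$; and for $x=u\varphi_t(y)$ with $u\in N_0\setminus tN_0t^{-1}$ one has $\res^{\mathcal{C}_0}_U(x)=\varphi_t(\psi_t(u\varphi_t(y)))=0$, while $\res^{\mathcal{C}_0}_U(\varphi_t(x))=\varphi_t(\psi_t((tut^{-1})\varphi_{t^2}(y)))=\varphi_t(u\varphi_t(y))\neq 0$ in general, so $\Ker(\res^{\mathcal{C}_0}_U)$ is not $\varphi_t$-stable. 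The paper's proof is precisely the step you skipped, in a two-line form: multiplying the hypothesis by $b^{-1}$ gives $\mathcal{C}_0\cap b^{-1}U=U$, whence $\res^{\mathcal{C}_0}_U(\varphi_b(x))=\varphi_b(\res^{\mathcal{C}_0}_{\mathcal{C}_0\cap b^{-1}U}(x))=\varphi_b(\res^{\mathcal{C}_0}_U(x))$; your explicit computation on basic opens is correct but only reproves the unconditional equivariance, not this.
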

\begin{proof}
By multiplying the identity $U\cap b\mathcal{C}_0=bU$ with $b^{-1}$ we obtain $\mathcal{C}_0\cap b^{-1}U=U$. So we compute $$\res^{\mathcal{C}_0}_U(\varphi_b(x))=\varphi_b(\res^{\mathcal{C}_0}_{\mathcal{C}_0\cap b^{-1}U}(x))=\varphi_b(\res^{\mathcal{C}_0}_U(x))\ .$$
\end{proof}

Put $U_w:=\mathcal{C}_0\setminus w_0^{-1}w^{-1}\mathcal{C}^w\subseteq \mathcal{C}_0$. Further, write $D_w\subseteq D_0$ for the kernel of $$\res^{\mathcal{C}_0}_{U_w} \colon D_0=\mathfrak{Y}_{D_0}(\mathcal{C}_0)\to \mathfrak{Y}_{D_0}(U_w) \ .$$

\begin{lem}\label{phipsiinvariantDw}
 We have $U_w\cap b\mathcal{C}_0= bU_w$ for all $b\in N_{0,w_0^{-1}w^{-1}}T_+$. In particular, $D_w$ is invariant under the action of the group $N_{0,w_0^{-1}w^{-1}}$ and of the operators $\varphi_t$ and $\psi_t$ for all $t\in T_+$. Further, we have $\psi_t(u^{-1}D_w)=0$ whenever $utN_0t^{-1}\cap N_{0,w_0^{-1}w^{-1}}=\emptyset$ for some $u\in N_0$ and $t\in T_+$.
\end{lem}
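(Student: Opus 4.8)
The plan is to transport the whole statement into the group $N_0$ via the homeomorphism $N_0\cong\mathcal{C}_0$, $n\mapsto nw_0B/B$, and then read everything off from Lemma~\ref{Cwsubsetww_0C}, Lemma~\ref{varphibequivres}, and the explicit description of the sheaf $\mathfrak{Y}_{D_0}$. Write $w':=w_0^{-1}w^{-1}$ and $H:=N_{0,w'}=\prod_{\alpha\in\Phi^+_{w'}}(N_0\cap N_\alpha)$. By Lemma~\ref{Cwsubsetww_0C} the subset $w'\mathcal{C}^w\cap\mathcal{C}_0=N_{0,w'}w_0B/B$ corresponds under this homeomorphism to the (closed) subgroup $H\subseteq N_0$, so $U_w$ corresponds to $N_0\setminus H$. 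For $b=ut$ with $u\in N_0$, $t\in T_+$ one has $b\cdot nw_0B=u(tnt^{-1})(tw_0)B=u(tnt^{-1})w_0B$ because $w_0^{-1}tw_0\in T\subseteq B$; hence $b\mathcal{C}_0$ corresponds to the coset $u\cdot tN_0t^{-1}$, and, using that $t\in T_+$ carries each root subgroup $N_0\cap N_\alpha$ into itself, $bU_w$ corresponds to $u\cdot t(N_0\setminus H)t^{-1}=(u\cdot tN_0t^{-1})\setminus(u\cdot tHt^{-1})$ while $U_w\cap b\mathcal{C}_0$ corresponds to $(N_0\setminus H)\cap(u\cdot tN_0t^{-1})=(u\cdot tN_0t^{-1})\setminus(H\cap u\cdot tN_0t^{-1})$. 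Thus for $u\in H$ the desired identity $U_w\cap b\mathcal{C}_0=bU_w$ is equivalent to the purely group-theoretic assertion $H\cap(u\cdot tN_0t^{-1})=u\cdot tHt^{-1}$.

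For this assertion: the inclusion $tHt^{-1}\subseteq H$ holds since $t\in T_+$ scales each $N_0\cap N_\alpha$ ($\alpha\in\Phi^+_{w'}$) into itself, so together with $u\in H$ one gets $u\cdot tHt^{-1}\subseteq H$, and of course $u\cdot tHt^{-1}\subseteq u\cdot tN_0t^{-1}$ --- this is ``$\supseteq$''. Conversely, if $h\in H$ and $h=u\,(tnt^{-1})$ with $n\in N_0$, then $tnt^{-1}=u^{-1}h\in H\subseteq\prod_{\alpha\in\Phi^+_{w'}}N_\alpha$; since $t\in T$ normalises the subgroup $\prod_{\alpha\in\Phi^+_{w'}}N_\alpha$ of $N$, this forces $n\in\prod_{\alpha\in\Phi^+_{w'}}N_\alpha$, whence $n\in(\prod_{\alpha\in\Phi^+_{w'}}N_\alpha)\cap N_0=H$ by the total decomposition of $N_0$, so $h\in u\cdot tHt^{-1}$. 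This proves the first assertion of the lemma; I would record the special case $u=1$, which reads $U_w\cap t\mathcal{C}_0=tU_w$ and in particular gives $tU_w\subseteq U_w$.

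Now, $D_w=\Ker(\res^{\mathcal{C}_0}_{U_w})$, and since $U_w\cap b\mathcal{C}_0=bU_w$ for all $b\in N_{0,w'}T_+$, Lemma~\ref{varphibequivres} says that $\res^{\mathcal{C}_0}_{U_w}$ is $\varphi_b$-equivariant for every such $b$. Taking $b=u\in N_{0,w'}$ (so that $\varphi_u$ is multiplication by $u$) gives $\res^{\mathcal{C}_0}_{U_w}(ux)=u\,\res^{\mathcal{C}_0}_{U_w}(x)$, hence $uD_w\subseteq D_w$; as $N_{0,w'}$ is a group, $D_w$ is $N_{0,w'}$-stable. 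Taking $b=t\in T_+$ gives $\res^{\mathcal{C}_0}_{U_w}(\varphi_t(x))=\varphi_t(\res^{\mathcal{C}_0}_{U_w}(x))$, hence $\varphi_t(D_w)\subseteq D_w$. For $\psi_t$, let $x\in D_w$; using $\varphi_t\psi_t(x)=\res^{\mathcal{C}_0}_{t\mathcal{C}_0}(x)$, the transitivity $\res^{\mathcal{C}_0}_{U_w}\circ\res^{\mathcal{C}_0}_{t\mathcal{C}_0}=\res^{\mathcal{C}_0}_{U_w\cap t\mathcal{C}_0}=\res^{\mathcal{C}_0}_{tU_w}$ of the restriction maps of $\mathfrak{Y}_{D_0}$, and $tU_w\subseteq U_w$, one computes $\varphi_t(\res^{\mathcal{C}_0}_{U_w}(\psi_t(x)))=\res^{\mathcal{C}_0}_{U_w}(\varphi_t\psi_t(x))=\res^{\mathcal{C}_0}_{tU_w}(x)=\res^{U_w}_{tU_w}(\res^{\mathcal{C}_0}_{U_w}(x))=0$, and the injectivity of $\varphi_t$ forces $\res^{\mathcal{C}_0}_{U_w}(\psi_t(x))=0$, i.e.\ $\psi_t(D_w)\subseteq D_w$. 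Finally, if $utN_0t^{-1}\cap N_{0,w'}=\emptyset$ for some $u\in N_0$, $t\in T_+$, then the coset $utN_0t^{-1}$ is disjoint from $H$, i.e.\ $ut\mathcal{C}_0\subseteq U_w$, so $\res^{\mathcal{C}_0}_{ut\mathcal{C}_0}=\res^{U_w}_{ut\mathcal{C}_0}\circ\res^{\mathcal{C}_0}_{U_w}$ vanishes on $D_w$; since $\res^{\mathcal{C}_0}_{ut\mathcal{C}_0}(x)=u\varphi_t\psi_t(u^{-1}x)$, this gives $u\varphi_t\psi_t(u^{-1}x)=0$ for $x\in D_w$, and left multiplying by $u^{-1}$ and using the injectivity of $\varphi_t$ yields $\psi_t(u^{-1}x)=0$.

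I expect the real content to sit in the first two paragraphs: setting up the dictionary between open subsets of $\mathcal{C}_0$, cosets inside $N_0$ and the operators $\varphi_b$, and in particular the inclusion ``$\subseteq$'' of $H\cap(u\cdot tN_0t^{-1})=u\cdot tHt^{-1}$ --- the one place where one genuinely uses that $t\in T$ normalises $\prod_{\alpha\in\Phi^+_{w'}}N_\alpha$ and that $N_0$ is totally decomposed. Everything in the third paragraph is then formal manipulation with the sheaf $\mathfrak{Y}_{D_0}$ of \cite{SchVZ}, relying only on Lemma~\ref{varphibequivres}, transitivity of the restriction maps, and the injectivity of each $\varphi_t$.
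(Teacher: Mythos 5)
Your proof is correct and follows essentially the same route as the paper: the identity $U_w\cap b\mathcal{C}_0=bU_w$ is verified by the same root-subgroup/total-decomposition argument (packaged as the coset identity $N_{0,w_0^{-1}w^{-1}}\cap utN_0t^{-1}=u\,tN_{0,w_0^{-1}w^{-1}}t^{-1}$ rather than pointwise), and the group-, $\varphi_t$- and vanishing statements are deduced from Lemma \ref{varphibequivres} and the explicit restriction maps $u\circ\varphi_t\circ\psi_t\circ u^{-1}$ exactly as in the text. The only real divergence is your $\psi_t$-invariance step: where the paper decomposes $x=\sum_{u\in J(N_0/tN_0t^{-1})}u\varphi_t(\psi_t(u^{-1}x))$, kills the terms with $utN_0t^{-1}\cap N_{0,w_0^{-1}w^{-1}}=\emptyset$ and then extracts the remaining components, you read off $\varphi_t\bigl(\res^{\mathcal{C}_0}_{U_w}(\psi_t(x))\bigr)=\res^{\mathcal{C}_0}_{tU_w}(x)=0$ directly from the $t\mathcal{C}_0$-component and cancel the injective $\varphi_t$ --- a correct and slightly more economical argument (the paper's version gives the stronger conclusion $\psi_t(u^{-1}x)\in D_w$ for all representatives $u\in N_{0,w_0^{-1}w^{-1}}$, but only the case $u=1$ is needed for the lemma).
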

\begin{proof}
Let $b=ut\in N_{0,w_0^{-1}w^{-1}}T_+$ and $vw_0B\in U_w$ for some $v\in N_0$. By Lemma \ref{Cwsubsetww_0C} we have $v\notin N_{0,w_0^{-1}w^{-1}}$. By construction, $N_{0,w_0^{-1}w^{-1}}$ is the intersection of $N_0$ with the product of certain root subgroups, so we obtain $tvt^{-1}\notin N_{0,w_0^{-1}w^{-1}}$ and $utvt^{-1}\notin N_{0,w_0^{-1}w^{-1}}$, too. This shows $bvw_0B\in U_w$ hence $bU_w\subseteq U_w\cap b\mathcal{C}_0$. On the other hand, if $vw_0B\in N_{0,w_0^{-1}w^{-1}}w_0B/B=\mathcal{C}_0\setminus U_w$ then we have $v\in N_{0,w_0^{-1}w^{-1}}$ whence $utvt^{-1}\in N_{0,w_0^{-1}w^{-1}}$ showing $bvw_0B\notin U_w$. We deduce $U_w\cap b\mathcal{C}_0=bU_w$ as claimed. The $N_{0,w_0^{-1}w^{-1}}$- and $\varphi_t$-invariance of $\Ker(\res^{\mathcal{C}_0}_{U_w})$ follows directly from Lemma \ref{varphibequivres}. For the statement on the $\psi$-invariance let $t\in T_+$. Note that whenever $utN_0t^{-1}\cap N_{0,w_0^{-1}w^{-1}}$ is nonempty, we may indeed choose a representative $u'\in utN_0t^{-1}\cap N_{0,w_0^{-1}w^{-1}}$ of the left coset $utN_0t^{-1}\in N_0/tN_0t^{-1}$, so there is a set $J(N_0/tN_0t^{-1})$ of representatives consisting of such elements whenever $utN_0t^{-1}\cap N_{0,w_0^{-1}w^{-1}}\neq\emptyset$ and of arbitrary representatives when $utN_0t^{-1}\cap N_{0,w_0^{-1}w^{-1}}=\emptyset$. Further, we may, as well, assume $1\in J(N_0/tN_0t^{-1})$. We put $J'(N_0/tN_0t^{-1}):= J(N_0/tN_0t^{-1})\cap N_{0,w_0^{-1}w^{-1}}$. For any $x\in D_0$ with $\res^{\mathcal{C}_0}_{U_w}(x)=0$ $u\in J(N_0/tN_0t^{-1})\setminus J'(N_0/tN_0t^{-1})$ we have $ut\mathcal{C}_0\subseteq U_w$, so we obtain 
\begin{align*}
u\varphi_t(\psi_t(u^{-1}x))=\res^{\mathcal{C}_0}_{utN_0t^{-1}}(x)=\res^{U_w}_{utN_0t^{-1}}\circ\res^{\mathcal{C}_0}_{U_w}(x)=0
\end{align*}
showing $\psi_t(u^{-1}x)=0$ as $\varphi_t\colon D_0\to D_0$ is injective. Further, we compute
\begin{align*}
0=\res^{\mathcal{C}_0}_{U_w}(x)=\res^{\mathcal{C}_0}_{U_w}(\sum_{u\in J(N_0/tN_0t^{-1})}u\varphi_t(\psi_t(u^{-1}x)))=\res^{\mathcal{C}_0}_{U_w}(\sum_{u\in J'(N_0/tN_0t^{-1})}u\varphi_t(\psi_t(u^{-1}x)))=\\
=\sum_{u\in J'(N_0/tN_0t^{-1})}u\varphi_t(\res^{\mathcal{C}_0}_{U_w}(\psi_t(u^{-1}x)))\ .
\end{align*}
Therefore we get $0=\psi_t(u^{-1}0)=\res^{\mathcal{C}_0}_{U_w}(\psi_t(u^{-1}x))$ which means $\psi_t(u^{-1}x)$ lies in the kernel of $\res^{\mathcal{C}_0}_{U_w}$ as desired.
\end{proof}

Now assume we are in the case $D_0=D_\infty=\mathbb{M}_{\infty,0}(D)$ for some finitely generated étale $T_+$-module $D$ over $\Fq\bg N_{\Delta,0}\jg$ so that $D_\infty$ is an étale $T_+$-module over $\Fq\bg N_{\Delta,\infty}\jg$.

\begin{pro}\label{Dwprojlim}
Assume $D_\infty=\mathbb{M}_{\infty,0}(D)$ is an étale $T_+$-module over $\Fq\bg N_{\Delta,\infty}\jg$. Then we have
\begin{align*}
D_w:=\Ker(\res^{\mathcal{C}_0}_{U_w} \colon D_\infty=\mathfrak{Y}_{D_\infty}(\mathcal{C}_0)\to \mathfrak{Y}_{D_\infty}(U_w))=\varprojlim_k \Fq\bg N_{\Delta,k,w_0^{-1}w^{-1}}\jg\otimes_{\varphi_{s^{kn}},\Fq\bg N_{\Delta\setminus S,0}\jg}D_S^{\nr}\ .
\end{align*} 
In particular, $D_w$ is a free module of rank $\dim_{\Fq}\mathbb{V}_\Delta(D)^{G'_{\Qp,S}}$ over $\Fq\bg N_{\Delta,\infty,w_0^{-1}w^{-1}}\jg$ for all $w\in N_G(T)/T$ where $S:=\Delta\setminus\Delta_{w_0^{-1}w^{-1}}$.
\end{pro}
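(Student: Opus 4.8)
Set $w':=w_0^{-1}w^{-1}$, so $S=\Delta\setminus\Delta_{w'}$ and $N_{0,w'}=\prod_{\alpha\in\Phi^+_{w'}}(N_0\cap N_\alpha)$. The plan has three steps: first convert the sheaf-theoretic definition of $D_w$ into an explicit $\varphi_s$-stable intersection inside $D_\infty$; next identify that intersection one finite layer $\mathbb{M}_{k,0}(D)$ at a time; and finally take the projective limit, recognising the answer as the noncommutative deformation (for the group $N_{0,w'}$) of $D_S^{\nr}$, whose rank is given by Proposition \ref{Dnrmultvar}.

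\emph{Step 1.} Under $\mathcal{C}_0=N_0w_0B/B\xrightarrow{\sim}N_0$, Lemma \ref{Cwsubsetww_0C} identifies $\mathcal{C}_0\cap w'\mathcal{C}^w$ with $N_{0,w'}$, hence $U_w$ with $N_0\setminus N_{0,w'}$, while $s^m\mathcal{C}_0$ corresponds to $s^mN_0s^{-m}$; thus the $us^m\mathcal{C}_0$ ($u\in N_0$, $m\geq 0$) form a basis of the topology of $\mathcal{C}_0$, and $us^m\mathcal{C}_0\subseteq U_w$ iff $u\notin N_{0,w'}(s^mN_0s^{-m})$. Compactness of $N_{0,w'}$ gives $\bigcap_{m\geq 0}N_{0,w'}(s^mN_0s^{-m})=N_{0,w'}$, so these basic opens already cover $U_w$. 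Combining the restriction rule $\res^{\mathcal{C}_0}_{us^m\mathcal{C}_0}=u\circ\varphi_s^m\circ\psi_s^m\circ u^{-1}$ with the injectivity of $\varphi_s$ on $D_\infty$, a section $x\in D_\infty$ lies in $D_w=\Ker(\res^{\mathcal{C}_0}_{U_w})$ if and only if, for every $m$, all coefficients of the étale expansion $x=\sum_u u\varphi_s^m(\psi_s^m(u^{-1}x))$ attached to cosets not meeting $N_{0,w'}$ vanish; that is,
$$D_w=\bigcap_{m\geq 0}\Fq\bs N_{0,w'}\js\,\varphi_s^m(D_\infty),$$
a decreasing intersection of compact (hence closed) $\Fq\bs N_{0,w'}\js$-submodules of $D_\infty$, in agreement with Lemma \ref{phipsiinvariantDw}.

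\emph{Step 2.} As $D_w$ is closed in $D_\infty=\varprojlim_k\mathbb{M}_{k,0}(D)$, one has $D_w=\varprojlim_k\pr_{k,\infty}(D_w)$, and as $\pr_{k,\infty}$ is surjective, $N_0$-equivariant and $\varphi_s$-equivariant, a compactness (Mittag--Leffler) argument yields $\pr_{k,\infty}(D_w)=\bigcap_{m\geq 0}\Fq\bs N_{\Delta,k,w'}\js\,\varphi_s^m(\mathbb{M}_{k,0}(D))$. The decisive point is that, for each fixed $k$, this equals $\Fq\bg N_{\Delta,k,w'}\jg\otimes_{\varphi_{s^{kn}},\Fq\bg N_{\Delta\setminus S,0}\jg}D_S^{\nr}$. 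Splitting the operator $\varphi_s^m$ along the partition $\Delta=\Delta_{w'}\sqcup S$ into its components $\varphi_{\Delta_{w'}}^m$ and $\varphi_S^m$: the root directions of $\Phi^+_{w'}$---over which $\mathbb{M}_{k,0}(D)$ is étale and along which $N_{0,w'}$-translates are available---reconstitute the full localised Iwasawa module over $\Fq\bg N_{\Delta,k,w'}\jg$ in the limit (this also produces the finite quotient $N_{\Delta,k,w'}\twoheadrightarrow N_{\Delta,0,w'}$ and the twist by $\varphi_{s^{kn}}$), whereas the directions in $S$, lacking any compensating translation, descend onto $\bigcap_{m\geq 1}\varphi_S^m(D)=D_S^{\nr}$. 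I would make this precise either by peeling off one root direction at a time and invoking Colmez' computation of $D^{\nr}$ in \cite[\S II.2]{Colmez2}, or Galois-theoretically, in the spirit of the proof of Proposition \ref{Dnrmultvar}---expressing $\mathbb{M}_{k,0}(D)$ through $E^{sep}_\Delta$ and applying Lemma \ref{intersectphiEsep} and Hilbert 90. \textbf{This identification of the $\varphi_s$-stable intersection is the main obstacle}: the delicate bookkeeping is to track precisely which power of $X_{\Delta_{w'}}$ and which $\varphi_s$-twist appear at layer $k$, and the interplay between the various localisations.

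\emph{Step 3.} Taking $\varprojlim_k$ and using $\varprojlim_k\Fq\bg N_{\Delta,k,w'}\jg=\Fq\bg N_{\Delta,\infty,w'}\jg$ gives the stated formula for $D_w$; equivalently, $D_w$ is the value at $D_S^{\nr}$ of the noncommutative deformation $\mathbb{M}_{\infty,0}$ built as in \cite{MultVar} from the $T$-stable, totally decomposed subgroup $N_{0,w'}$ (whose $\Delta$-quotient is $N_{\Delta,0,w'}$). By (the proof of) Proposition \ref{Dnrmultvar}, $D_S^{\nr}$ is $\mathbb{D}_{\Delta\setminus S}$ of a representation of $G_{\Qp,\Delta\setminus S}$ of dimension $\dim_{\Fq}\mathbb{V}_\Delta(D)^{G'_{\Qp,S}}$, hence an étale $(\varphi_{\Delta\setminus S},\Gamma_{\Delta\setminus S})$-module, free of that rank over $\Fq\bg N_{\Delta\setminus S,0}\jg$. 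Since each $\Fq\bg N_{\Delta,k,w'}\jg$ is free of finite rank over $\varphi_{s^{kn}}(\Fq\bg N_{\Delta\setminus S,0}\jg)$, every term of the projective system is free of rank $\dim_{\Fq}\mathbb{V}_\Delta(D)^{G'_{\Qp,S}}$ over $\Fq\bg N_{\Delta,k,w'}\jg$, compatibly with the transition maps, so the analogue of \cite[Theorem D]{MultVar} for $N_{0,w'}$ shows $D_w$ is free of rank $\dim_{\Fq}\mathbb{V}_\Delta(D)^{G'_{\Qp,S}}$ over $\Fq\bg N_{\Delta,\infty,w'}\jg$.
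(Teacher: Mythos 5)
Your skeleton (reformulate $D_w$ as a $\varphi$-stable intersection, identify it layer by layer, pass to the limit and quote Proposition \ref{Dnrmultvar} for the rank) is the same shape as the paper's argument, and your Step 1 reformulation $D_w=\bigcap_{m\geq 0}\Fq\bs N_{0,w_0^{-1}w^{-1}}\js\,\varphi_s^m(D_\infty)$ is a correct consequence of Lemmata \ref{Cwsubsetww_0C} and \ref{phipsiinvariantDw}. But the statement you yourself flag as ``the main obstacle'' is precisely the content of the proposition, and you have not supplied it: you assert, without proof, that the intersection at level $k$ equals $\Fq\bg N_{\Delta,k,w_0^{-1}w^{-1}}\jg\otimes_{\varphi_{s^{kn}},\Fq\bg N_{\Delta\setminus S,0}\jg}D_S^{\nr}$, and you also assert the interchange $\pr_{k,\infty}(D_w)=\bigcap_m\Fq\bs N_{\Delta,k,w_0^{-1}w^{-1}}\js\varphi_s^m(\mathbb{M}_{k,0}(D))$, which is delicate because $\varphi_s$ is \emph{not} injective on the finite layers $\mathbb{M}_{k,0}(D)$ for $k>0$. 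Neither of your two suggested ways of making this precise closes the gap as stated: Colmez' one-variable computation of $D^{\nr}$ and the Galois-theoretic description via $E^{sep}_\Delta$ (Lemma \ref{intersectphiEsep}, Hilbert 90) live over the commutative quotient at level $0$, whereas the identification has to be carried out over the noncommutative rings $\Fq\bg N_{\Delta,k}\jg$, where no $E^{sep}$-type description of $\mathbb{M}_{k,0}(D)$ is available.

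What is actually needed, and what the paper does, is a two-step argument you do not have. First, one shows $\pr_{0,\infty}(x)\in D^{\nr}_S$ for $x\in D_w$ by exploiting the element $t_S=\prod_{\alpha\in S}t_\alpha$: it centralizes $N_{\Delta,0,w_0^{-1}w^{-1}}$, so if $\overline{u}\notin t_S^kN_{\Delta,0}t_S^{-k}$ then $ut_S^k\mathcal{C}_0\subseteq U_w$, whence $\psi_S^k(u^{-1}x)=0$ by Lemma \ref{phipsiinvariantDw}; feeding this into the reduction formula \eqref{psireduction} gives $\psi_{t_S^k}^{(D)}(\overline{u}^{-1}\pr_{0,\infty}(x))=0$ for all such $\overline{u}$, and the \'etale property then forces $\pr_{0,\infty}(x)\in\varphi_S^k(D)$ for every $k$. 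This is the precise mechanism behind your slogan that the $S$-directions ``lack a compensating translation,'' and it is what lets one write $\pr_{k,\infty}(x)=\sum_u u\,\varphi_{s^{kn}}(x_{u,k})$ with $u$ ranging over $N_{\Delta,k,w_0^{-1}w^{-1}}/s^{kn}N_{\Delta,k,w_0^{-1}w^{-1}}s^{-kn}$ and $x_{u,k}\in D^{\nr}_S$. Second, one must prove the \emph{converse} inclusion: any $y\in D_\infty$ admitting such expansions at every layer lies in $D_w$; the paper checks that for $vs^r\mathcal{C}_0\subseteq U_w$ one has $\psi_{s^r}(v^{-1}y)\in\Ker(\pr_{kn-r,\infty})$ for all $k$, and lets $k\to\infty$. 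Your proposal addresses neither direction beyond the heuristic. Finally, for the freeness over $\Fq\bg N_{\Delta,\infty,w_0^{-1}w^{-1}}\jg$, your appeal to ``the analogue of \cite[Theorem D]{MultVar}'' hides the point that free generators must be lifted compatibly along the projective system; the paper does this concretely via \cite[Lemma 4.5]{MultVar}, and some such lifting argument is needed rather than an assertion of compatibility.
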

\begin{proof}
Let $x$ be an element in $D_w$. 
\begin{lem}\label{prinnr}
We have $\pr_{0,\infty}(x)\in D_S^{\nr}$.
\end{lem}
\begin{proof}
Put $t_S:=\prod_{\alpha\in S}t_\alpha$. Then $t_S$ commutes with elements in $N_{\Delta,0,w_0^{-1}w^{-1}}=\prod_{\alpha\in \Delta\cap w_0^{-1}w^{-1}(\Phi^+)}(N_\alpha\cap N_0)=\prod_{\alpha\in\Delta\setminus S}(N_\alpha\cap N_0)$, so we have $N_{\Delta,0,w_0^{-1}w^{-1}}=t^k_SN_{\Delta,0,w_0^{-1}w^{-1}}t_S^{-k}\leq t_S^kN_{\Delta,0}t_S^{-k}$ for all $k\geq 1$. Assume that $u\in N_0$ is such an element that the image $\overline{u}$ of $u$ in $N_{\Delta,0}=N_0/H_{\Delta,0}$ does not lie in the subgroup $t_S^k N_{\Delta,0}t_S^{-k}$. Hence the image of $ut_S^kN_0t_S^{-k}$ in $N_{\Delta,0}$ is disjoint from the subgroup $N_{\Delta,0,w_0^{-1}w^{-1}}$. Therefore $ut^k_SN_0t_S^{-k}$ is disjoint from $N_{0,w_0^{-1}w^{-1}}$ showing $ut_S^k\mathcal{C}_0\cap w_0^{-1}w^{-1}\mathcal{C}^w=\emptyset$ by Lemma \ref{Cwsubsetww_0C}. So we obtain $\psi_S^k(u^{-1}x)=0$ by Lemma \ref{phipsiinvariantDw}. Using \eqref{psireduction} we compute
\begin{align*}
x_{\overline{u}}:=\psi_{t^k_S}^{(D)}(\overline{u}^{-1}\pr_{0,\infty}(x))=\psi_{t^k_S}^{(D)}(\pr_{0,\infty}(u^{-1}x))=\pr_{0,\infty}\left(\sum_{v\in J(H_{\Delta,0}/t_S^kH_{\Delta,0}t_S^{-k})}\psi_S^k(v^{-1}u^{-1}x)\right)=0
\end{align*}
since $\overline{uv}=\overline{u}$ does not lie in the subgroup $t_S^kN_{\Delta,0}t_S^{-k}$. Finally, by the étale property of the action of $\varphi_S^k$ on $D$ we may write
\begin{align*}
\pr_{0,\infty}(x)=\sum_{\overline{u}\in N_{\Delta,0}/t_S^kN_{\Delta,0}t_S^{-k}}\overline{u}\varphi_S^k(x_{\overline{u}})=\varphi_S^k(x_1)\in \varphi_S^k(D)\ .
\end{align*}
Since $k\geq 1$ was arbitrary, we deduce $\pr_{0,\infty}(x)\in \bigcap_{k\geq 1}\varphi_S^k(D)=D_S^{\nr}$.
\end{proof}
By the discussion of \cite[section 4.1]{MultVar} and noting $n>\deg(\beta\circ\xi)$ for all $\beta\in \Phi^+$, $s^{kn}N_{\Delta,k}s^{-kn}$ lies in the center of the group $N_{\Delta,k}$ and we have
\begin{align*}
D_\infty=\varprojlim_k\mathbb{M}_{k,0}(D)=\varprojlim_k\Fq\bg N_{\Delta,k}\jg\otimes_{\varphi_{s^{kn_0}},\Fq\bg N_{\Delta,0}\jg}D
\end{align*}
by construction \cite[section 4.2]{MultVar}. Combining Lemma \ref{phipsiinvariantDw} with Lemma \ref{prinnr} we can write $\pr_{k,\infty}(x)$ as
\begin{align*}
\pr_{k,\infty}(x)=\sum_{u\in J(N_{\Delta,k,w_0^{-1}w^{-1}}/s^{kn}N_{\Delta,k,w_0^{-1}w^{-1}}s^{-k})}u\varphi_{s^{kn}}(x_{u,k})
\end{align*}
where $x_{u,k}=\pr_{0,\infty}(\psi_{s^{kn}}(u^{-1}x))\in D^{\nr}_S$ is uniquely determined. Conversely, assume that $y\in D_\infty$ is such an element that 
\begin{align}\label{prkyeq}
\pr_{k,\infty}(y)=\sum_{u\in J(N_{\Delta,k,w_0^{-1}w^{-1}}/s^{kn}N_{\Delta,k,w_0^{-1}w^{-1}}s^{-k})}u\varphi_{s^{kn}}(y_{u,k})
\end{align}
for some elements $y_{u,k}=\pr_{0,\infty}(\psi_{s^{kn}}(u^{-1}y))\in D$ and every $k\geq 1$. We claim that $y$ lies in $D_w\leq D_\infty$. Indeed, let $v\in N_0$ be such that $vs^r\mathcal{C}_0\subseteq U_w$ for some $r\geq 1$. Then we have $vs^rv_1s^{kn-r}\mathcal{C}_0\subseteq vs^r\mathcal{C}_0\subseteq U_w$ for all $v_1\in N_0$ and $k\geq r$. In particular, the image of $vs^rv_1s^{-r}$ under the quotient map $N_0\twoheadrightarrow N_{\Delta,k}$ is not contained in $N_{\Delta,k,w_0^{-1}w^{-1}}s^{kn}N_{\Delta,k}s^{-kn}$. From \eqref{prkyeq} we deduce $\pr_{0,\infty}(\psi_{s^{kn}}(s^rv_1^{-1}s^{-r}v^{-1}y))=0$ for all $v_1\in N_0$. Hence we compute
\begin{align*}
\psi_{s^r}(v^{-1}y)=\sum_{v_1\in J(N_0/s^{kn-r}N_0s^{-kn+r})}v_1\varphi_{s^{kn-r}}\circ\psi_{s^{kn}}(s^rv_1^{-1}s^{-r}v^{-1}y)\in \\
\in N_0\varphi_{s^{kn-r}}(\Ker(\pr_{0,\infty}))\subseteq \Ker(\pr_{kn-r,\infty})
\end{align*}
showing $\pr_{kn-r,\infty}(\psi_{s^r}(v^{-1}y))=0$. Letting $k\to\infty$ we deduce $\psi_{s^r}(v^{-1}y)=0$ whence $\res^{\mathcal{C}_0}_{vs^r\mathcal{C}_0}(y)=v\varphi_{s^r}\circ\psi_{s^r}(v^{-1}y)=0$. Since $U_w$ can be written as the union of open subsets of the form $vs^r\mathcal{C}_0$, we obtain $\res^{\mathcal{C}_0}_{U_w}(y)=0$, too, showing $y\in D_w$. Therefore we have
\begin{align*}
\pr_{k,\infty}(D_w)=\bigoplus_{u\in J(N_{\Delta,k,w_0^{-1}w^{-1}}/s^{kn}N_{\Delta,k,w_0^{-1}w^{-1}}s^{-k})}u\varphi_{s^{kn}}(D^{\nr}_S)=\Fq\bg N_{\Delta,k,w_0^{-1}w^{-1}}\jg\otimes_{\varphi_{s^{kn}},\Fq\bg N_{\Delta\setminus S,0}\jg}D_S^{\nr}\ .
\end{align*}
The first statement follows by taking the limit with respect to $k\to\infty$. Further, $\pr_{k,\infty}(D_w)$ is a free module of rank $\dim_{\Fq}\mathbb{V}_\Delta(D)^{G'_{\Qp,S}}$ over $\Fq\bg N_{\Delta,k,w_0^{-1}w^{-1}}\jg$ by Prop.\ \ref{Dnrmultvar}. Finally, for any integers $k_1\leq k_2$ and finite set $x_1,\dots,x_\ell$ of free generators of $\pr_{k_1,\infty}(D_w)$ can be lifted to a set of free generators of $\pr_{k_2,\infty}(D_w)$ by \cite[Lemma 4.5]{MultVar}, so the last statement also follows by taking the limit.
\end{proof}

\begin{pro}\label{MbdcapDwfin}
Assume $D_\infty=\mathbb{M}_{\infty,0}(D)$ is an étale $T_+$-module over $\Fq\bg N_{\Delta,\infty}\jg$. Then $\mathbb{M}^{bd}_\infty(D^\#)\cap D_w$ is finitely generated as a module over $\Fq\bs N_{0,w_0^{-1}w^{-1}}\js$.
\end{pro}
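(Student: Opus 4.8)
The plan is to transport the machinery of this section to the subgroup $N':=N_{0,w_0^{-1}w^{-1}}$, using that by Proposition \ref{Dwprojlim} the module $D_w$ is exactly the noncommutative deformation $\mathbb{M}_{\infty,0}$ built from $N'$ and from the étale $T_+$-module $D_S^{\nr}$ over $\Fq\bg N_{\Delta\setminus S,0}\jg=\Fq\bg N_{\Delta,0,w_0^{-1}w^{-1}}\jg$ (with $S=\Delta\setminus\Delta_{w_0^{-1}w^{-1}}$), in place of $N_0$ and $D$ over $\Fq\bg N_{\Delta,0}\jg$. Here one first checks that $D_S^{\nr}$ really is a finitely generated étale $T_+$-module over $\Fq\bg N_{\Delta\setminus S,0}\jg$: it is free of finite rank by Proposition \ref{Dnrmultvar}, it is $\varphi_\alpha$-stable for $\alpha\in\Delta\setminus S$ while $\varphi_S$ acts on it invertibly, and the étale property over the smaller ring follows. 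One then equips $H':=H_{\Delta,0}\cap N'=\prod_{\delta\in(\Phi^+\setminus\Delta)\cap\Phi^+_{w_0^{-1}w^{-1}}}(N_0\cap N_\delta)$ with the lexicographic filtration attached to an ordering of these roots refining the order by degree; since $\Phi^+_{w_0^{-1}w^{-1}}$ is closed under addition, the Chevalley commutator relations keep the relevant root subgroups inside $H'$, so the analogues of Lemma \ref{subgroupfilt} and Proposition \ref{multigraded} hold verbatim, and then so do the analogues of Propositions \ref{M_infty_D0_psi_descr}, \ref{fingen+kertors}, \ref{kerintersect}, of the maximal lattices $D^\#_\alpha$ and $D^\#$, and of Theorem \ref{Mbdfingen}. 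No compatibility with the filtration of $\Fq\bs H_{\Delta,0}\js$ is needed -- and such compatibility in general \emph{fails}, since $\Phi^+_{w_0^{-1}w^{-1}}$ need not be a lower set for the degree order -- only the intrinsic statements for $H'$ are used.

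First I would record that $\mathfrak{M}:=\mathbb{M}^{bd}_\infty(D^\#)\cap D_w$ is a compact $\Fq\bs N'\js$-submodule of $D_w$, stable under $T_0$ and under all the operators $\psi_t$ ($t\in T_+$): it is closed in the compact module $\mathbb{M}^{bd}_\infty(D^\#)$ (Lemma \ref{bdk_compact}); it is $N'$-stable because $\mathbb{M}^{bd}_\infty(D^\#)$ is an $\Fq\bs N_0\js$-module and $D_w$ is $N'$-stable (Lemma \ref{phipsiinvariantDw}); and it is $\psi_t$- and $T_0$-stable because both $\mathbb{M}^{bd}_\infty(D^\#)$ (Proposition \ref{M_infty_D0_psi_descr} together with the $\Gamma_\Delta$- and $Z(G)$-stability of $D^\#$ coming from Proposition \ref{Dhashprop}) and $D_w$ (Lemma \ref{phipsiinvariantDw}) enjoy these stabilities.

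Next I would identify $\mathfrak{M}$ with the relative bounded deformation $\mathbb{M}^{bd,N'}_\infty(D^\#\cap D_S^{\nr})$. By Proposition \ref{M_infty_D0_psi_descr}, $\mathbb{M}^{bd}_\infty(D^\#)$ consists of the $x$ with $\pr_{0,\infty}(\psi_t(u^{-1}x))\in D^\#$ for all $t\in T_+$ and $u\in N_0$; for $x\in D_w$ one has $\psi_t(u^{-1}x)=0$ whenever $utN_0t^{-1}\cap N'=\emptyset$ (Lemma \ref{phipsiinvariantDw}), and replacing an arbitrary coset representative by one lying in $N'$ only changes $\pr_{0,\infty}(\psi_t(u^{-1}x))$ by the action of an element of $N_{\Delta,0}\subseteq E_\Delta^+$, which preserves $D^\#$. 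So on $D_w$ the defining condition reduces to $\pr_{0,\infty}(\psi_t(u^{-1}x))\in D^\#\cap D_S^{\nr}$ for all $t\in T_+$ and $u\in N'$ (using $\pr_{0,\infty}(D_w)=D_S^{\nr}$), which is precisely the description of $\mathbb{M}^{bd,N'}_\infty(D^\#\cap D_S^{\nr})$ furnished by the relative version of Proposition \ref{M_infty_D0_psi_descr}. Now $D^\#\cap D_S^{\nr}$ is a compact $E_{\Delta\setminus S}^+=\Fq\bs N_{\Delta,0,w_0^{-1}w^{-1}}\js$-submodule of the finitely generated $E_{\Delta\setminus S}$-module $D_S^{\nr}$, hence finitely generated over $E_{\Delta\setminus S}^+$ by the commutative finiteness fact (a compact submodule of a finitely generated module over a multivariate Laurent-series ring is finitely generated over the power-series subring); moreover it is $\psi_\alpha$-stable for all $\alpha\in\Delta\setminus S$ (both $D^\#$ and $D_S^{\nr}$ are, the latter because $\psi_\alpha$ commutes with $\varphi_S$ when $\alpha\notin S$), so by the maximality in the relative Proposition \ref{Dhashalpha} it is contained in the relative $(D_S^{\nr})^\#$.

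Finally, the relative analogues of Propositions \ref{kerintersect} and \ref{fingen+kertors}$(a)$ (equivalently, the relative analogue of Theorem \ref{Mbdfingen}) show that $\mathbb{M}^{bd,N'}_\infty((D_S^{\nr})^\#)$ is finitely generated over $\Fq\bs N'\js$; since $\mathfrak{M}=\mathbb{M}^{bd,N'}_\infty(D^\#\cap D_S^{\nr})\subseteq\mathbb{M}^{bd,N'}_\infty((D_S^{\nr})^\#)$ and $\Fq\bs N'\js$ is noetherian, $\mathfrak{M}$ is finitely generated over $\Fq\bs N'\js=\Fq\bs N_{0,w_0^{-1}w^{-1}}\js$, as claimed. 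I expect the main obstacle to be the bookkeeping needed to make the relative machinery precise -- verifying that $D_S^{\nr}$ is a finitely generated étale $T_+$-module over $\Fq\bg N_{\Delta\setminus S,0}\jg$ whose associated noncommutative deformation is $D_w$, that Proposition \ref{multigraded} goes through for the intrinsic lexicographic filtration of $\Fq\bs H'\js$, and that the reductions in the identification $\mathfrak{M}=\mathbb{M}^{bd,N'}_\infty(D^\#\cap D_S^{\nr})$ together with the inclusion $D^\#\cap D_S^{\nr}\subseteq(D_S^{\nr})^\#$ are correct -- rather than any genuinely new idea; once these are in place the argument is a transcription of the preceding subsection.
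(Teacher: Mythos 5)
Your overall architecture (relativize the whole machinery to $N'=N_{0,w_0^{-1}w^{-1}}$, view $D_w$ via Proposition \ref{Dwprojlim} as the noncommutative deformation of $D^{\nr}_S$, and identify $\mathbb{M}^{bd}_\infty(D^\#)\cap D_w$ with a relative bounded deformation) is genuinely different from the paper's proof, which never relativizes: it keeps the global filtration by ${\bf b}^{\bf j}$-monomials of $H_{\Delta,0}$, computes the graded pieces of $D_w$ explicitly (Lemma \ref{gradedDw}: they vanish unless ${\bf j}$ is supported on $\Phi^+_{w_0^{-1}w^{-1}}\setminus\Delta$, and equal $D^{\nr}_S$ there), combines this with Proposition \ref{kerintersect} to get $D^{\bf j}_{bd,w}\subseteq D^{\nr}_S\cap D^\#$, and then reruns the generator-lifting/compactness argument of Theorem \ref{Mbdfingen} over $\Fq\bs N_{0,w_0^{-1}w^{-1}}\js$ using Lemma \ref{intersect+w}. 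Most of your bookkeeping steps (stability of $\mathfrak{M}$, the reduction of the boundedness condition to representatives $u\in N'$ with values in $D^\#\cap D^{\nr}_S$, finite generation of $D^\#\cap D^{\nr}_S$ over $E^+_{\Delta\setminus S}$) are fine in spirit.

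There is, however, a genuine gap at the step where you conclude $D^\#\cap D^{\nr}_S\subseteq (D^{\nr}_S)^\#$ ``by the maximality in the relative Proposition \ref{Dhashalpha}'' from the mere $\psi_\alpha$-stability of $D^\#\cap D^{\nr}_S$. The maximality in Proposition \ref{Dhashalpha} is only among finitely generated submodules $D_0$ on which $\psi_\alpha$ acts \emph{surjectively} ($\psi_\alpha(D_0)=D_0$); $\psi_\alpha$-stable lattices need not be contained in $D^\#_\alpha$ at all — for instance $X_\alpha^{-N}D^\#_\alpha$ is $\psi_\alpha$-stable for every $N\geq 0$, since $\psi_\alpha(X_\alpha^{-N}d)=X_\alpha^{-N}\psi_\alpha(\varphi_\alpha(X_\alpha^{N})X_\alpha^{-N}d)$ and $\varphi_\alpha(X_\alpha^{N})X_\alpha^{-N}\in E_\Delta^+$. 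Neither $D^\#$ nor its intersection with $D^{\nr}_S$ is known to be $\psi_\alpha$-surjective (only $D^\#_\alpha$ is), so the containment is unproved, and with it the inclusion $\mathbb{M}^{bd,N'}_\infty(D^\#\cap D^{\nr}_S)\subseteq \mathbb{M}^{bd,N'}_\infty((D^{\nr}_S)^\#)$ and the final appeal to the relative Theorem \ref{Mbdfingen} collapse. The missing input is precisely a boundedness statement for the graded pieces of $\mathfrak{M}$, which is what the paper supplies by a different route (Proposition \ref{kerintersect} for the global module together with Lemma \ref{gradedDw}); if you replace your $\#$-containment step by that computation — i.e.\ bound the graded pieces of $\mathbb{M}^{bd}_\infty(D^\#)\cap D_w$ inside $D^{\nr}_S\cap D^\#$ and then run the relative analogue of Proposition \ref{fingen+kertors}$(a)$ — your relative framework can be repaired, but as written the step would fail.
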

\begin{proof}
This is completely analogous to Proposition \ref{fingen+kertors}, so we adapt its proof to this situation using Prop.\ \ref{Dwprojlim}. Recall we have a filtration \eqref{multfiltmod} indexed by the lexicographically ordered set $(\mathbb{Z}^{\geq 0})^d$ on $D_\infty$. This induces a filtration on both $D_w$ and $\mathbb{M}^{bd}_\infty(D^\#)\cap D_w$ (and also on $\mathbb{M}^{bd}_\infty(D^\#)$, see \eqref{filtonMbd}), so we put
\begin{align*}
D^{\bf j}_{w}:=(D_w\cap \Fil^{\bf j}\mathbb{M}_{\infty,0}(D))/(D_w\cap \Fil^{{\bf j}+}\mathbb{M}_{\infty,0}(D)) \cong\\
\cong (D_w\cap \Fil^{\bf j}\mathbb{M}_{\infty,0}(D)+ \Fil^{{\bf j}+}\mathbb{M}_{\infty,0}(D))/\Fil^{{\bf j}+}\mathbb{M}_{\infty,0}(D) \subseteq  D \qquad\text{and} \\
D^{\bf j}_{bd,w}:=(D_w\cap\mathbb{M}^{bd}_\infty(D^\#)\cap \Fil^{\bf j}\mathbb{M}_{\infty,0}(D))/(D_w\cap \mathbb{M}^{bd}_\infty(D^\#)\cap \Fil^{{\bf j}+}\mathbb{M}_{\infty,0}(D))\cong\\
\cong (D_w\cap \mathbb{M}^{bd}_\infty(D^\#)\cap \Fil^{\bf j}\mathbb{M}_{\infty,0}(D)+ \Fil^{{\bf j}+}\mathbb{M}_{\infty,0}(D))/\Fil^{{\bf j}+}\mathbb{M}_{\infty,0}(D) \subseteq D^{\bf j}\cap D_w^{\bf j}\subset D 
\end{align*}
for the graded pieces. Note that the monomial ${\bf b}^{\bf j}\in\Fq\bs H_{\Delta,0}\js$ lies in the subring $\Fq\bs N_{0,w_0^{-1}w^{-1}}\cap H_{\Delta,0}\js$ if and only if $j_r=0$ for all $r\in\{1,\dots,d\}$ with $\beta_r\in(\Phi^+\setminus\Delta)\cap w_0^{-1}w^{-1}(\Phi^-)=\Phi^+_{w_0^{-1}w^{-1}}\setminus\Delta$. We put $(\mathbb{Z}^{\geq 0})^{\Phi^+_{w_0^{-1}w^{-1}}\setminus\Delta}\subseteq (\mathbb{Z}^{\geq 0})^{\Phi^+\setminus\Delta}=(\mathbb{Z}^{\geq 0})^d$ for this set of indices. This observation leads to the following
\begin{lem}\label{gradedDw}
We have
\begin{align*}
D^{{\bf j}^{(0)}}_w=\begin{cases} D^{\nr}_S &\text{if }{\bf j}^{(0)}\in (\mathbb{Z}^{\geq 0})^{\Phi^+_{w_0^{-1}w^{-1}}\setminus\Delta} \\ 0&\text{otherwise, } \end{cases}
\end{align*}
where $S:=\Delta\cap w_0^{-1}w^{-1}(\Phi^-)$.
\end{lem}
\begin{proof}
Pick an index ${\bf j}^{(0)}\in(\mathbb{Z}^{\geq 0})^d$ and let $k$ be a positive integer. We write $k_r$ for the smallest integer such that $u_{\beta_r}^{p^k_r}\in H_{\Delta,k}$ ($r=1,\dots,d$). For large enough $k$, we have $p^{k_r}\geq j_r+1$ for all $r=1,\dots,d$, so we have $\Ker(\pr_{k,\infty})\subseteq \Fil^{{\bf j}+}D_\infty$ as $\Ker(\pr_{k,\infty})$ is generated by $(u_{\beta_r}^{p^{k_r}}-1)D_\infty$ ($r=1,\dots,d$). Since $D_k=\mathbb{M}_{k,0}(D)=\Fq\bg N_{\Delta,k}\jg\otimes_{\Fq\bg N_{\Delta,0}\jg,\varphi_{s^{kn}}}D$, we may decompose $D_k$ as a direct sum
\begin{align}
\pr_{k,\infty}(D_\infty)=D_k=\bigoplus_{{\bf j}\in \bigtimes_{r=1}^d\{0,\dots,p^{k_r}-1\}}\bigoplus_{i_1,\dots,i_{n-1}\in\{0,\dots,p^{kn}-1\}}{\bf b}^{\bf j}\prod_{j=1}^{n-1}u_{\alpha_j}^{i_j}\varphi_{s^{kn}}(D)\label{prkDinftydecompose}
\end{align}
where $u_{\alpha_j}\in N_0\cap N_{\alpha_j}$ is a topological generator for all $j=1,\dots,n-1$ with $\Delta=\{\alpha_1,\dots,\alpha_{n-1}\}$. Similarly, by noting $N_{0,w_0^{-1}w^{-1}}=\prod_{\gamma\in \Phi^+\cap w_0^{-1}w^{-1}(\Phi^+) }(N_0\cap N_\gamma)$ and Prop.\ \ref{Dwprojlim} we may decompose 
\begin{align}
\pr_{k,\infty}(D_w)=\bigoplus_{\substack{{\bf j}\in \bigtimes_{r=1}^d\{0,\dots,p^{k_r}-1\}\\ j_r=0\text{ if }\beta_r\notin w_0^{-1}w^{-1}(\Phi^+)}}\bigoplus_{\substack{i_1,\dots,i_{n-1}\in\{0,\dots,p^{kn}-1\}\\ i_j=0 \text{ if }\alpha_j\notin w_0^{-1}w^{-1}(\Phi^+)}}{\bf b}^{\bf j}\prod_{j=1}^{n-1}u_{\alpha_j}^{i_j}\varphi_{s^{kn}}(D^{\nr}_S)\ .\label{prkDwdecompose}
\end{align}
Assume that $x\in D_w\cap \Fil^{{\bf j}^{(0)}}D_\infty$. Then by \eqref{prkDinftydecompose} we may uniquely write
\begin{align*}
\pr_{k,\infty}(x)=\sum_{{\bf j}\in \bigtimes_{r=1}^d\{0,\dots,p^{k_r}-1\}}\sum_{i_1,\dots,i_{n-1}\in\{0,\dots,p^{kn}-1\}}{\bf b}^{\bf j}\prod_{j=1}^{n-1}u_{\alpha_j}^{i_j}\varphi_{s^{kn}}(x_{{\bf j},{\bf i}})
\end{align*}
and we have $x_{{\bf j},{\bf i}}=0$ unless ${\bf j}\geq {\bf j}^{(0)}$ as $x$ lies in $\Fil^{{\bf j}^{(0)}}D_\infty$. Further, $x\in D_w$ implies that $x_{{\bf j},{\bf i}}=0$ unless ${\bf j}\in (\mathbb{Z}^{\geq 0})^{\Phi^+_{w_0^{-1}w^{-1}}\setminus\Delta}$ and ${\bf i}\in (\mathbb{Z}^{\geq 0})^{\Delta_{w_0^{-1}w^{-1}}}$. Further, $x\in D_w$ also shows $x_{{\bf j},{\bf i}}\in D^{\nr}_S$ for all indices ${\bf j}$ and ${\bf i}$. Finally, the image of $x$ in $D_w^{{\bf j}^{(0)}}=D_w\cap \Fil^{{\bf j}^{(0)}}D_\infty+\Fil^{{\bf j}^{(0)}+}D_\infty/\Fil^{{\bf j}^{(0)}+}D_\infty$ is $0$ unless ${\bf j}\in (\mathbb{Z}^{\geq 0})^{\Phi^+_{w_0^{-1}w^{-1}}\setminus\Delta}$, and equals  $$\sum_{i_1,\dots,i_{n-1}\in\{0,\dots,p^{kn}-1\}}\prod_{j=1}^{n-1}u_{\alpha_j}^{i_j}\varphi_{s^{kn}}(x_{{\bf j}^{(0)},{\bf i}})\in D_S^{\nr}$$
otherwise. We deduce the containment $\subseteq$ in the statement.

Conversely, note that $D_w^{\bf 0}=\pr_{0,\infty}(D_w)=D^{\nr}_S$ by Prop.\ \ref{Dwprojlim} and the multiplication by ${\bf b}^{\bf j}$ induces an injective map $D_w^{\bf 0}\hookrightarrow D_w^{\bf j}$ whenever ${\bf b}^{\bf j}$ lies in $\Fq\bs N_{0,w_0^{-1}w^{-1}}\cap H_{\Delta,0}\js$ as $D_w$ is an $\Fq\bs N_{0,w_0^{-1}w^{-1}}\cap H_{\Delta,0}\js$-module.
\end{proof}

Combining Prop.\ \ref{kerintersect} and Lemma \ref{gradedDw} we deduce $D_{bd,w}^{\bf j}=0$ if ${\bf j}\notin (\mathbb{Z}^{\geq 0})^{\Phi^+_{w_0^{-1}w^{-1}}\setminus\Delta}$ and $D_{bd,w}^{\bf j}\subseteq D^{\nr}_S\cap D^\#$ if ${\bf j}\in (\mathbb{Z}^{\geq 0})^{\Phi^+_{w_0^{-1}w^{-1}}\setminus\Delta}$. As in the proof of Thm.\ \ref{Mbdfingen}, there exists an integer $r\geq 0$ such that for all ${\bf j},{\bf j'}\in (\mathbb{Z}^{\geq 0})^{\Phi^+_{w_0^{-1}w^{-1}}\setminus\Delta}$ with $j_i\leq j'_i$ and $\sum_{i=1}^dj_i=r$ we have $D_{bd,w}^{\bf j}=D_{bd,w}^{\bf j'}$. Pick a finite set $\{x_1^{({\bf j})},\dots,x_{N_{\bf j}}^{({\bf j})}\}$ of generators of $D_{bd,w}^{\bf j}$ as $E_{\Delta\setminus S}^+$-modules for all ${\bf j}\in (\mathbb{Z}^{\geq 0})^{\Phi^+_{w_0^{-1}w^{-1}}\setminus\Delta}$ with $\sum_{i=1}^dj_i\leq r$. Further, lift these elements to $\{y_1^{({\bf j})},\dots,y_{N_{\bf j}}^{({\bf j})}\}\subset \Fil^{\bf j}(D_w\cap\mathbb{M}_\infty^{bd}(D^\#))$. We claim that the finite set 
\begin{align}
\bigcup_{\substack{{\bf j}\in (\mathbb{Z}^{\geq 0})^{\Phi^+_{w_0^{-1}w^{-1}}\setminus\Delta}\\ \sum_{i=1}^dj_i\leq r}} \{y_1^{({\bf j})},\dots,y_{N_{\bf j}}^{({\bf j})}\} \label{setofgenw}
\end{align}
generates $D_w\cap\mathbb{M}^{bd}_\infty(D^\#)$. Let $M\leq D_w\cap\mathbb{M}^{bd}_\infty(D^\#)$ be the $\mathbb{F}_q\bs N_{0,w_0^{-1}w^{-1}}\js$-submodule generated by \eqref{setofgenw}. Assume that $D_w\cap\mathbb{M}^{bd}_\infty(D^\#)\not\subseteq M$. By compactness of $M$ we have $M=\varprojlim_k \pr_{k,\infty}(M)=\varprojlim_k M+\Ker(\pr_{k,\infty})/\Ker(\pr_{k,\infty})$. So there is a $k\geq 0$ such that $D_w\cap \mathbb{M}^{bd}_\infty(D^\#)\not\subseteq M+\Ker(\pr_{k,\infty})$. By Lemma \ref{comparefilpr} we also obtain $D_w\cap\mathbb{M}^{bd}_\infty(D^\#)\not\subseteq M+\Fil^{\bf \tilde{k}}\mathbb{M}_{\infty,0}(D)$ hence $D_w\cap\mathbb{M}^{bd}_\infty(D^\#)\not\subseteq (M+\Fil^{\bf \tilde{k}}\mathbb{M}_{\infty,0}(D))\cap D_w=M+\Fil^{\bf \tilde{k}}D_w$. Since the set $(\mathbb{Z}^{\geq 0})^{\Phi^+_{w_0^{-1}w^{-1}}\setminus\Delta}$ is well-ordered, there is a smallest ${\bf j}^{(0)}\in (\mathbb{Z}^{\geq 0})^d$ such that $D_w\cap\mathbb{M}^{bd}_\infty(D^\#)\not\subseteq M+\Fil^{{\bf j}^{(0)}+}D_w$. By Lemma \ref{gradedDw} we have ${\bf j}^{(0)}\in (\mathbb{Z}^{\geq 0})^{\Phi^+_{w_0^{-1}w^{-1}}\setminus\Delta}$. So from now on we restrict the filtration on $D_w$, on $D_w\cap \mathbb{M}^{bd}_\infty(D^\#)$, and on $M$ to the well-ordered subset $(\mathbb{Z}^{\geq 0})^{\Phi^+_{w_0^{-1}w^{-1}}\setminus\Delta}\subseteq (\mathbb{Z}^{\geq 0})^d$. Pick an element $x\in D_w\cap\mathbb{M}^{bd}_\infty(D^\#)$ such that $x\notin M+\Fil^{{\bf j}^{(0)}+}D_w$. By the minimality of ${\bf j}^{(0)}$, we have $$x\in \bigcap_{{\bf j}^{(0)}>{\bf j'}\in (\mathbb{Z}^{\geq 0})^{\Phi^+_{w_0^{-1}w^{-1}}\setminus\Delta}}(M+\Fil^{{\bf j'}+}D_w)\ .$$
\begin{lem}\label{intersect+w}
We have $$M+\Fil^{{\bf j}^{(0)}}D_w= \bigcap_{{\bf j}^{(0)}>{\bf j'}\in (\mathbb{Z}^{\geq 0})^{\Phi^+_{w_0^{-1}w^{-1}}\setminus\Delta}}(M+\Fil^{{\bf j'}+}D_w)\ .$$
\end{lem}
\begin{proof}
This is completely analogous to the proof of Lemma \ref{intersect+}, but for the convenience of the reader we give details. The containment $\subseteq$ is trivial, let us prove the other containment. Put $d_w:=|\Phi^+_{w_0^{-1}w^{-1}}\setminus\Delta|$ and renumber the indices of ${\bf j}$ by $1,\dots,d_w$ in an order preserving way, so we identify $(\mathbb{Z}^{\geq 0})^{\Phi^+_{w_0^{-1}w^{-1}}\setminus\Delta}$ with $(\mathbb{Z}^{\geq 0})^{d_w}$. If ${\bf j}^{(0)}\in $ is a successor of some ${\bf j'}\in (\mathbb{Z}^{\geq 0})^{d_w}$ then we have $\Fil^{{\bf j'}+}D_w=\Fil^{{\bf j}^{(0)}}D_w$, so there is nothing to prove. Otherwise we have $j^{(0)}_1=\dots=j^{(0)}_{r}=0$, but $j^{(0)}_{r+1}\neq 0$ for some $1\leq r< d_w$. Let $h^{(n)}_1=\dots=h^{(n)}_{r-1}=0$, $h^{(n)}_r=n$, $h^{(0)}_{r+1}=j^{(0)}_{r+1}-1,$ and $h^{(n)}_i=j^{(0)}_i$ for all $r+1<i\leq d$ so that ${\bf h}^{(1)}<{\bf h}^{(2)}<\cdots<{\bf h}^{(n)}<\cdots<{\bf j}^{(0)} \in (\mathbb{Z}^{\geq 0})^{d_w}$. Further, write $x=y_n+z_n$ with $y_n\in M$ and $z_n\in \Fil^{{\bf h}^{(n)}+}D_w$. Note that for any index ${\bf h}^{(n)}<{\bf j}\in (\mathbb{Z}^{\geq 0})^{d_w}$ we either have ${\bf j}^{(0)}\leq {\bf j}$ or $j_{i}=h^{(n)}_i$ for all $r\leq i\leq d$. In the latter case ${\bf b}^{\bf j}$ is a multiple of ${\bf b}^{{\bf h}^{(n)}}$ (as we have $h^{(n)}_i=0$ for all $1\leq i<r$). Therefore we may write $z_n={\bf b}^{{\bf h}^{(n)}}z_n'+w_n$ where $z_n'\in D_w$ and $w_n\in \Fil^{{\bf j}^{(0)}}D_w$. Since $M$ is compact, we may pass to a subsequence to achieve that $y_n$ is convergent. On the other hand we have ${\bf b}^{{\bf h}^{(n)}}z_n'\to 0$ regardless what $z_n'$ is since for all $k\geq 0$ we have $\pr_{k,\infty}({\bf b}^{{\bf h}^{(n)}}D_w)=0$ for $n\geq p^{k_r}$ where $u_r^{p^{k_r}}\in H_{\Delta,k}$. Therefore $w_n=x-y_n-{\bf b}^{{\bf h}^{(n)}}z_n'$ is convergent, too. However, $\Fil^{{\bf j}^{(0)}}D_w$ is closed in $D_w$ whence $x=\lim y_n+\lim w_n\in M+\Fil^{{\bf j}^{(0)}}D_w$.
\end{proof}
By Lemma \ref{intersect+w} we may write $x=y+z$ with $y\in M$ and $z\in \Fil^{{\bf j}^{(0)}}D_w$. Since $M\subseteq D_w\cap\mathbb{M}_\infty^{bd}(D^\#)$, we also have $z\in D_w\cap\mathbb{M}_\infty^{bd}(D^\#)$. There is a ${\bf j}^{(1)}\in (\mathbb{Z}^{\geq 0})^{d_w}$ with $j^{(1)}_i\leq j^{(0)}_i$ for all $i=1,\dots,d_w$ and $\sum_{i=1}^{d_w}j^{(1)}_i\leq r$ such that $D^{{\bf j}^{(1)}}_{bd,w}=D^{{\bf j}^{(0)}}_{bd,w}$ so that we obtain elements $\lambda_1,\dots,\lambda_{N_{{\bf j}^{(1)}}}\in\mathbb{F}_q\bs N_{0,w_0^{-1}w^{-1}}\js$ such that $$z-\sum_{\ell=1}^{N_{{\bf j}^{(1)}}}\lambda_\ell{\bf b}^{{\bf j}^{(0)}-{\bf j}^{(1)}}y_\ell^{{\bf j}^{(1)}}\in \Fil^{{\bf j}^{(0)}+}D_w$$
which contradicts to our assumption $x=y+z\notin M+\Fil^{{\bf j}^{(0)}+}D_w$ since $y+\sum_{\ell=1}^{N_{{\bf j}^{(1)}}}\lambda_\ell{\bf b}^{{\bf j}^{(0)}-{\bf j}^{(1)}}y_\ell^{{\bf j}^{(1)}}$ lies in $M$.
\end{proof}

Let now $\pi$ be a smooth representation of $G=\GL_n(\Qp)$ over $\Fq$ and $M\in \mathcal{M}^0_\Delta(\pi^{H_{\Delta,0}})$. Then $M_\infty:=B_+M$ is a $B_+$-subrepresentation of $\pi$ and its Pontryagin dual $M_\infty^\vee$ equals the image of $\pi^\vee$ in $\mathbb{M}_{\infty,0}(M^\vee[X_\Delta^{-1}])$. In \cite[Section 4.5]{MultVar} a $G$-equivariant sheaf $\mathfrak{Y}_{\pi,M}$ of topological $\Fq$-vectorspaces is defined on $G/B$ (in the $p$-adic topology of $G/B$) such that the space $\mathfrak{Y}_{\pi,M}(\mathcal{C}_0)$ of sections on the open compact cell $\mathcal{C}_0=N_0w_0B$ is the étale hull $\widetilde{M_\infty^\vee}$ of $M_\infty^\vee$ in the sense of \cite[section 2.3]{links}. Further, by \cite[Cor.\ 4.21]{MultVar} there is a $G$-equivariant continuous map $\beta_{G/B,M}\colon \pi^\vee\to \mathfrak{Y}_{\pi,M}(G/B)$ whose image is the Pontryagin dual of the $G$-subrepresentation of $\pi$ generated by $M$.

For each $w\in N_G(T)/T$ we define $$\Fil^w_M(\pi^\vee):=\beta_{G/B,M}^{-1}(\Ker(\res_{\Fil^w(G/B)}^{G/B}\colon \mathfrak{Y}_{\pi,M}(G/B)\to  \mathfrak{Y}_{\pi,M}(\Fil^w(G/B))))\ .$$
This is an ascending filtration of $\pi^\vee$ by closed $B$-subrepresentations. Taking Pontryagin duals we obtain a descending filtration $\Fil^w_M(\pi):=(\pi^\vee/\Fil^w_M(\pi^\vee))^\vee\leq \pi$ whose graded pieces $\mathrm{gr}^w_M(\pi)$ are smooth representations of $B$ over $\Fq$. In particular, we have  
\begin{align*}
\mathrm{gr}^w_M(\pi)^\vee\cong (\Fil^w_M(\pi)/\Fil^{>w}_M(\pi))^\vee\cong \left((\pi^\vee/\Fil^w_M(\pi^\vee))^\vee\big{/} (\pi^\vee/\Fil^{>w}_M(\pi^\vee))^\vee\right)^\vee\cong\\ 
\cong \left((\res_{\Fil^w(G/B)}\circ\beta_{G/B,M}(\pi^\vee))^\vee\big{/} (\res_{\Fil^{>w}(G/B)}\circ\beta_{G/B,M}(\pi^\vee))^\vee\right)^\vee\cong \beta_{\Fil^w(G/B),M}(\pi^\vee)\cap \Ker(\res_{\Fil^{>w}(G/B)}^{\Fil^{w}(G/B)})\ .
\end{align*}
We obtain a continuous and $B$-equivariant embedding $\beta_{\mathcal{C}^w,M}\colon \mathrm{gr}^w_M(\pi)^\vee\hookrightarrow \mathfrak{Y}_{\pi,M}^w(\mathcal{C}^w)$ into the global sections  of the sheaf $\mathfrak{Y}_{\pi,M}^w$ provided by Lemma \ref{kersheaf} in case $\mathfrak{Y}=\mathfrak{Y}_{\pi,M}$. Put $$M_{w,\infty}=\left(\res^{\mathcal{C}^w}_{\mathcal{C}_0^w}\circ\beta_{\mathcal{C}^w,M}(\mathrm{gr}_{M}^{w}(\pi)^\vee)\right)^\vee\leq \mathrm{gr}^w(\pi)\ .$$ This is a $B_+$-subrepresentation of $\mathrm{gr}_M^w(\pi)$ as we have $B_+\mathcal{C}_0^w\subseteq \mathcal{C}_0^w$. By Lemma \ref{Cwsubsetww_0C} we may also identify $M_{w,\infty}^\vee\cong \beta_{ww_0\mathcal{C}_0,M}(\pi^\vee)\cap \Ker(\res^{ww_0\mathcal{C}_0}_{ww_0\mathcal{C}_0\cap\Fil^{>w}(G/B)})$ as a subquotient of the global sections of $\mathfrak{Y}_{\pi,M}$. Further, in case $w=w_0$ we have $M_{w_0,\infty}=M_\infty$. We need the following weaker, but more general version of \cite[Lemma 4.23]{MultVar}.

\begin{lem}\label{M'containedinMw}
$M_{w,\infty}\leq \mathrm{gr}_{M}^{w}(\pi)$ is a generating $B_+$-subrepresentation for any $w\in N_G(T)/T$. In particular, for any $M'\in \mathcal{M}^0_\Delta(\mathrm{gr}_{M}^{w}(\pi)^{H_{\Delta,0}})$ we have $M'\subseteq M_{w,\infty}$.
\end{lem}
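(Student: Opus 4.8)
The plan is to deduce both assertions from the $B$-equivariant sheaf $\mathfrak{Y}^w_{\pi,M}$ on $\mathcal{C}^w$ of Lemma~\ref{kersheaf}, combined with the finiteness results of \S\ref{sec:finiteness}; throughout write $\sigma:=\mathrm{gr}^w_M(\pi)$. The key geometric input for the generating property is that $\mathcal{C}^w=\bigcup_{k\geq 0}s^{-k}\mathcal{C}_0^w$ is an \emph{increasing} union of compact open subsets: since $s\in T$ normalizes $N$, $s^kwB=wB$, $N=\bigcup_k s^{-k}N_0s^k$ and $N_0\subseteq s^{-1}N_0s\subseteq\cdots$, one has $s^{-k}\mathcal{C}_0^w=(s^{-k}N_0s^k)wB/B$ with these sets ascending. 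Hence $\mathfrak{Y}^w_{\pi,M}(\mathcal{C}^w)=\varprojlim_k\mathfrak{Y}^w_{\pi,M}(s^{-k}\mathcal{C}_0^w)$, and since $\beta_{\mathcal{C}^w,M}$ is a closed $B$-equivariant embedding, $\sigma^\vee$ is identified with $\varprojlim_k\res^{\mathcal{C}^w}_{s^{-k}\mathcal{C}_0^w}(\sigma^\vee)$. By $B$-equivariance $\res^{\mathcal{C}^w}_{s^{-k}\mathcal{C}_0^w}=s^{-k}\circ\res^{\mathcal{C}^w}_{\mathcal{C}_0^w}\circ s^k$ and $\sigma^\vee$ is $s^k$-stable, so the $k$-th term is the $s^{-k}$-translate of $\res^{\mathcal{C}^w}_{\mathcal{C}_0^w}(\sigma^\vee)=M_{w,\infty}^\vee$. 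Dualising the inverse limit gives $\sigma=\bigcup_{k\geq 0}s^{-k}M_{w,\infty}$, an increasing union because $sM_{w,\infty}\subseteq M_{w,\infty}$; as each $s^{-k}$ lies in $B$, this exhibits $M_{w,\infty}$ as a generating $B_+$-subrepresentation of $\sigma$.

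For the ``in particular'' part, let $M'\in\mathcal{M}^0_\Delta(\sigma^{H_{\Delta,0}})$ with generators $v_1,\dots,v_r$ over $\Fq\bs N_{\Delta,0}\js[F_\Delta]$. By the generating property there is $k\geq 0$ with $s^kv_i\in M_{w,\infty}$ for all $i$, and since $B_+(s^{-k}M_{w,\infty})\subseteq s^{-k}M_{w,\infty}$ the $B_+$-subrepresentation $B_+M'$ generated by the $v_i$ satisfies $B_+M'\subseteq s^{-k}M_{w,\infty}$. To bring $k$ down to $0$ I would use the remaining conditions defining $\mathcal{M}^0_\Delta$. By \cite[Lemma~4.16]{MultVar} the dual of $B_+M'\hookrightarrow\sigma$ identifies $(B_+M')^\vee$ with the image of $\sigma^\vee$ in $\mathbb{M}_{\infty,0}(D')$, $D':=(M')^\vee[X_\Delta^{-1}]$; this image is a compact $\Fq\bs N_0\js$-submodule on which each $\psi_t$ ($t\in T_+$) is surjective, so by Proposition~\ref{prcontainedinDhash} it lies in $\bigcap_{t\in T_+}\psi_t(\mathbb{M}_\infty^{bd}((D')^{\#}))$, and its $\mathbf{0}$-th graded piece for the filtration \eqref{filtonMbd} equals $(M')^\vee$ by Corollary~\ref{kerneltorsion} (applied to $\sigma$). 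Combining this with the explicit restriction maps $\res^{\mathcal{C}_0^w}_{ut\mathcal{C}_0^w}=u\varphi_t\psi_tu^{-1}$ of $\mathfrak{Y}^w_{\pi,M}$, which render the $\psi_t$-equivariant surjection $\sigma^\vee\twoheadrightarrow(B_+M')^\vee$ compatible with $\res^{\mathcal{C}^w}_{\mathcal{C}_0^w}$, one shows that $\sigma^\vee\twoheadrightarrow(B_+M')^\vee$ factors through $\sigma^\vee\twoheadrightarrow M_{w,\infty}^\vee=\res^{\mathcal{C}^w}_{\mathcal{C}_0^w}(\sigma^\vee)$. Dualising this factorisation gives $B_+M'\subseteq M_{w,\infty}$, hence $M'\subseteq M_{w,\infty}$; for $w=w_0$ this is consistent with $M_{w_0,\infty}=M_\infty$ and recovers \cite[Lemma~4.23]{MultVar}.

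I expect the generating property to be essentially formal and the descent from $B_+M'\subseteq s^{-k}M_{w,\infty}$ to $B_+M'\subseteq M_{w,\infty}$ to be the main obstacle: comparing orthogonal complements directly is circular, and one must genuinely feed the $\mathcal{M}^0_\Delta$-hypotheses on $M'$ through the finiteness results of \S\ref{sec:finiteness} (Theorem~\ref{Mbdfingen}, Proposition~\ref{prcontainedinDhash}, Corollary~\ref{kerneltorsion}) and the functoriality of the étale-hull and sheaf constructions of \cite{SchVZ}, so as to pin down $(B_+M')^\vee$ as a $\psi$-stable compact submodule whose image under $\res^{\mathcal{C}^w}_{\mathcal{C}_0^w}$ is governed by its $\mathbf{0}$-th graded piece $(M')^\vee$.
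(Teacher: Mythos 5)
Your first paragraph (the generating property) is correct and is essentially the paper's argument: write $\mathcal{C}^w=\bigcup_k s^{-k}\mathcal{C}_0^w$, use $B$-equivariance of the restriction maps to identify $\res^{\mathcal{C}^w}_{s^{-k}\mathcal{C}_0^w}\circ\beta_{\mathcal{C}^w,M}(\mathrm{gr}^w_M(\pi)^\vee)$ with $s^{-k}\cdot M_{w,\infty}^\vee$, and dualise the projective limit to get $\mathrm{gr}^w_M(\pi)=\bigcup_k s^{-k}M_{w,\infty}$.

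The second half has a genuine gap. From $s^k m_i\in M_{w,\infty}$ you only obtain $B_+M'\subseteq s^{-k}M_{w,\infty}$, and your proposed descent to $k=0$ is never actually carried out: the sentence ``one shows that $\sigma^\vee\twoheadrightarrow(B_+M')^\vee$ factors through $\sigma^\vee\twoheadrightarrow M_{w,\infty}^\vee$'' is exactly the dual reformulation of the containment $B_+M'\subseteq M_{w,\infty}$ that is to be proved, and the inputs you list (Proposition \ref{prcontainedinDhash}, Corollary \ref{kerneltorsion}, the explicit restriction maps) concern the structure of $(B_+M')^\vee$ inside $\mathbb{M}_{\infty,0}(D')$ and do not by themselves produce such a factorisation through the restriction to $\mathcal{C}_0^w$ of the sheaf built from $M$. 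The missing idea is much more elementary and is the actual point of the paper's one-line argument: for an object of $\mathcal{M}_\Delta$ one has $M'\subseteq B_+s^kM'$ for every $k\geq 0$ (this is the cited \cite[Lemma 4.16]{MultVar}, a consequence of the Hecke-module structure — each generator is recovered from its $s^k$-translate by applying powers of $F_\Delta$, which are $N_0$-linear combinations of translates by elements of $T_+$). Since $s^kM'$ is generated by $s^km_1,\dots,s^km_r$ up to the $B_+$-action, this gives directly $M'\subseteq B_+\{s^km_1,\dots,s^km_r\}\subseteq M_{w,\infty}$, using only that $M_{w,\infty}$ is $B_+$-stable. In particular, contrary to your closing expectation, the extra conditions defining $\mathcal{M}^0_\Delta$ (admissibility, $T_0$-stability, $X_\Delta$-torsion-freeness of the dual) play no role in this step; what is needed is finite generation over $\Fq\bs N_{\Delta,0}\js[F_\Delta]$ together with the containment $M'\subseteq B_+s^kM'$, which your argument never establishes.
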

\begin{proof}
We write $\mathcal{C}^w=NwB/B=\bigcup_{n\geq 0} s^{-n}\mathcal{C}^w_0$. So we compute
\begin{align*}
\mathrm{gr}_{M}^{w}(\pi)^\vee\cong \beta_{\mathcal{C}^w,M}(\mathrm{gr}_{M}^{w}(\pi)^\vee) =\varprojlim_n\res^{\mathcal{C}^w}_{s^{-n}\mathcal{C}^w_0}\circ\beta_{\mathcal{C}^w,M}(\mathrm{gr}_{M}^{w}(\pi)^\vee)=\\
=\varprojlim_ns^{-n}\res^{\mathcal{C}^w}_{\mathcal{C}^w_0}\circ\beta_{\mathcal{C}^w,M}(\mathrm{gr}_{M}^{w}(\pi)^\vee)=\varprojlim_n s^{-n}\cdot M_{w,\infty}^\vee=(\bigcup_{n}s^{-n}M_{w,\infty})^\vee\ .
\end{align*}
Now let $M'\in \mathcal{M}^0_\Delta(\mathrm{gr}_{M}^{w}(\pi)^{H_{\Delta,0}})$ be arbitrary. Then $M'$ is generated by a finite set $\{m_1,\dots,m_r\}\subset M'$ as a module over $\Fq\bs N_{\Delta,0}\js [F_\Delta]$. So there exists an integer $k\geq 0$ such that $s^km_i\in M_{w,\infty}$ for all $i=1,\dots,r$. By \cite[Lemma 4.16]{MultVar} we have $M'\subseteq B_+s^kM'=B_+\{s^km_1,\dots,s^km_r\}\subseteq M_{w,\infty}$.
\end{proof}

\begin{lem}\label{ww0bij}
The map
\begin{eqnarray*}
ww_0\colon M_\infty^\vee\cap D_w=\beta_{\mathcal{C}_0,M}(\pi^\vee)\cap D_w& \to & M_{w,\infty}^\vee\\
\beta_{\mathcal{C}_0,M}(\mu)&\mapsto & \beta_{ww0\mathcal{C}_0,M}(ww_0\mu)
\end{eqnarray*}
is a bijection.
\end{lem}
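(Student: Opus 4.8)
The plan is to realise this map as the restriction to a subspace of the isomorphism on sections induced by the $G$-action of the permutation matrix $ww_0$ on the $G$-equivariant sheaf $\mathfrak{Y}_{\pi,M}$. Since $\mathfrak{Y}_{\pi,M}$ is $G$-equivariant, for every open $V\subseteq G/B$ the element $ww_0\in G$ induces an isomorphism $ww_0\colon \mathfrak{Y}_{\pi,M}(V)\to\mathfrak{Y}_{\pi,M}(ww_0V)$ commuting with all restriction maps; and since both $\beta_{\mathcal{C}_0,M}$ and $\beta_{ww_0\mathcal{C}_0,M}$ are $\res^{G/B}_{(\cdot)}\circ\beta_{G/B,M}$, the $G$-equivariance of $\beta_{G/B,M}$ gives $ww_0\cdot\beta_{\mathcal{C}_0,M}(\mu)=\beta_{ww_0\mathcal{C}_0,M}(ww_0\mu)$ inside $\mathfrak{Y}_{\pi,M}(ww_0\mathcal{C}_0)$. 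Thus the displayed arrow is exactly $ww_0$ applied to the subspace $M_\infty^\vee\cap D_w\subseteq \mathfrak{Y}_{\pi,M}(\mathcal{C}_0)$, so it is automatically injective and all the content lies in identifying the image.

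The key combinatorial step is the identity $ww_0U_w=ww_0\mathcal{C}_0\cap\Fil^{>w}(G/B)$. To prove it, first use Lemma \ref{Cwsubsetww_0C}: $ww_0$ carries $\mathcal{C}_0\setminus U_w=w_0^{-1}w^{-1}\mathcal{C}_0^w$ onto $\mathcal{C}_0^w$, hence carries $\mathcal{C}_0$ onto the disjoint union $ww_0\mathcal{C}_0=ww_0U_w\sqcup\mathcal{C}_0^w$. Since $w_0^2=1$, applying the computation just before Lemma \ref{Cwsubsetww_0C} with $ww_0$ in place of $w$ gives $ww_0\mathcal{C}_0\subseteq ww_0\mathcal{C}\subseteq \Fil^w(G/B)$. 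Now $\mathcal{C}_0^w\subseteq\mathcal{C}^w$ is disjoint from $\Fil^{>w}(G/B)$ by the Bruhat decomposition, and $ww_0U_w\cap\mathcal{C}^w=ww_0\big(U_w\cap w_0^{-1}w^{-1}\mathcal{C}^w\big)=\emptyset$ forces $ww_0U_w\subseteq\Fil^w(G/B)\setminus\mathcal{C}^w=\Fil^{>w}(G/B)$; intersecting the disjoint decomposition of $ww_0\mathcal{C}_0$ with $\Fil^{>w}(G/B)$ then yields the claimed identity. Because the $G$-action on $\mathfrak{Y}_{\pi,M}$ commutes with restrictions, $ww_0$ thus carries $D_w=\Ker\big(\res^{\mathcal{C}_0}_{U_w}\big)$ isomorphically onto $\Ker\big(\res^{ww_0\mathcal{C}_0}_{ww_0\mathcal{C}_0\cap\Fil^{>w}(G/B)}\big)$.

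Finally, combining these observations and using $ww_0\pi^\vee=\pi^\vee$, the map sends $M_\infty^\vee\cap D_w=\beta_{\mathcal{C}_0,M}(\pi^\vee)\cap D_w$ bijectively onto $\beta_{ww_0\mathcal{C}_0,M}(\pi^\vee)\cap\Ker\big(\res^{ww_0\mathcal{C}_0}_{ww_0\mathcal{C}_0\cap\Fil^{>w}(G/B)}\big)$, which is precisely $M_{w,\infty}^\vee$ under the identification recorded immediately before the statement. I expect the main obstacle to be purely the bookkeeping: keeping the various Bruhat-theoretic subsets straight while verifying $ww_0U_w=ww_0\mathcal{C}_0\cap\Fil^{>w}(G/B)$, and confirming that every $\beta_{?,M}$ entering the argument really is $\res^{G/B}_{?}\circ\beta_{G/B,M}$ so that $G$-equivariance applies uniformly; once this is arranged the proof is formal.
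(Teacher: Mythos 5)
Your proof is correct and is in substance the paper's own argument: both rest on the $G$-equivariance of $\mathfrak{Y}_{\pi,M}$ and of $\beta_{G/B,M}$ (giving $\beta_{ww_0\mathcal{C}_0,M}(\pi^\vee)=ww_0\cdot M_\infty^\vee$), on Lemma \ref{Cwsubsetww_0C}, and on the identification of $M_{w,\infty}^\vee$ with $\beta_{ww_0\mathcal{C}_0,M}(\pi^\vee)\cap\Ker(\res^{ww_0\mathcal{C}_0}_{ww_0\mathcal{C}_0\cap\Fil^{>w}(G/B)})$ recorded just before the statement. The only difference is presentational: the paper phrases it as exhibiting $M_{w,\infty}$ as the quotient $ww_0M_\infty/K_{w,\infty}$ and then dualizes, whereas you work directly on the dual side and spell out the set-theoretic identity $ww_0U_w=ww_0\mathcal{C}_0\cap\Fil^{>w}(G/B)$ that the paper leaves implicit in ``taking duals using Lemma \ref{Cwsubsetww_0C}''.
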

\begin{proof}
Note that we have $\beta_{ww_0\mathcal{C}_0,M}(\pi^\vee)=ww_0\cdot \beta_{\mathcal{C}_0,M}(\pi^\vee)=ww_0\cdot M_\infty^\vee=(ww_0 M_\infty)^\vee$. Hence $M_{w,\infty}\cong ww_0M_\infty/K_{w,\infty}$ is the quotient of $ww_0M_\infty=\{ww_0x\mid x\in M_\infty\}$ by the subspace $$K_{w,\infty}:=\left\{ww_0x\in ww_0M_\infty \mid \beta_{ww_0\mathcal{C}_0,M}(\mu)(ww_0x)=0\text{ for all }\mu\in \pi^\vee\text{ with }\beta_{ww_0\mathcal{C}_0\cap\Fil^{>w}(G/B), M}(\mu)=0 \right\}\ .$$
The lemma follows by taking duals using Lemma \ref{Cwsubsetww_0C}.
\end{proof}

\begin{pro}\label{Mbdwkerfingen}
Assume that $\pi$ is a smooth representation of $\GL_n(\mathbb{Q}_p)$ over $\Fq$ and $M\in \mathcal{M}_\Delta^0(\pi^{H_{\Delta,0}})$. Then $M_{w,\infty}^\vee$ is finitely generated over $\Fq\bs N_{0,ww_0}\js$ for all $w\in N_G(T)/T$. In particular, if $w$ is different from $w_0$ then $M_{w,\infty}^\vee$ is a finitely generated torsion $\Fq\bs N_0\js$-module. 
\end{pro}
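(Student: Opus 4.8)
The plan is to transport the assertion, via the bijection of Lemma~\ref{ww0bij}, to the finite generation already established in Proposition~\ref{MbdcapDwfin}. Recall that $M_\infty^\vee$ is the image of $\pi^\vee$ in $D_\infty:=\mathbb{M}_{\infty,0}(D)$ with $D:=M^\vee[X_\Delta^{-1}]$, and that by \cite[Lemma~4.16]{MultVar} it equals $(B_+M)^\vee$; exactly as in the proof of Corollary~\ref{Minftyadmissible} it is a compact $\Fq\bs N_0\js$-submodule of $D_\infty$ on which every $\psi_t$ ($t\in T_+$) acts surjectively (since $t$ is invertible on $\pi$). Hence Proposition~\ref{prcontainedinDhash} gives $M_\infty^\vee\subseteq \bigcap_{t\in T_+}\psi_t(\mathbb{M}_\infty^{bd}(D^\#))\subseteq \mathbb{M}_\infty^{bd}(D^\#)$.

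First I would intersect with $D_w$. By Lemma~\ref{phipsiinvariantDw} the subspace $D_w\subseteq D_\infty$ is closed and stable under $N_{0,w_0^{-1}w^{-1}}$, and $M_\infty^\vee$ is a closed $\Fq\bs N_0\js$-submodule, so $M_\infty^\vee\cap D_w$ is a closed $\Fq\bs N_{0,w_0^{-1}w^{-1}}\js$-submodule of $\mathbb{M}_\infty^{bd}(D^\#)\cap D_w$. The latter is finitely generated over the noetherian Iwasawa algebra $\Fq\bs N_{0,w_0^{-1}w^{-1}}\js$ by Proposition~\ref{MbdcapDwfin}, hence so is $M_\infty^\vee\cap D_w$. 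Next I would push this forward along the bijection $ww_0\colon M_\infty^\vee\cap D_w\to M_{w,\infty}^\vee$ of Lemma~\ref{ww0bij}. As $w_0$ is an involution we have $w_0^{-1}w^{-1}=(ww_0)^{-1}$, and conjugation by $ww_0$ carries $N_{0,(ww_0)^{-1}}=N_0\cap(ww_0)^{-1}N_0(ww_0)$ isomorphically onto $N_{0,ww_0}=N_0\cap(ww_0)N_0(ww_0)^{-1}$; by the $G$-equivariance of $\mathfrak{Y}_{\pi,M}$ the map ``multiplication by $ww_0$'' is additive and sends $n\cdot x$ to $(ww_0\,n\,(ww_0)^{-1})\cdot(ww_0\cdot x)$ for $n\in N_{0,(ww_0)^{-1}}$, so it is semilinear over the induced ring isomorphism $\Fq\bs N_{0,(ww_0)^{-1}}\js\to\Fq\bs N_{0,ww_0}\js$ and carries a finite generating set to a finite generating set. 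This gives the first assertion (for $w=w_0$ it is vacuous: $ww_0=1$, $D_{w_0}=D_\infty$, $M_{w_0,\infty}^\vee=M_\infty^\vee$, and one recovers Theorem~\ref{Mbdfingen}).

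For the torsion statement, suppose $w\neq w_0$. Then $N_{0,ww_0}=\prod_{\gamma\in\Phi^+\cap ww_0(\Phi^+)}(N_0\cap N_\gamma)$ has $\Qp$-dimension $|\Phi^+\cap ww_0(\Phi^+)|=|\Phi^+|-\ell(ww_0)=\ell(w)<|\Phi^+|$, using $\ell(ww_0)=\ell(w_0)-\ell(w)$ and $\ell(w_0)=|\Phi^+|$. Restricting to $N_{0,ww_0}$ the $p$-valuation on $N_0$ fixed in Section~\ref{sec:finiteness} realises $\Fq\bs N_{0,ww_0}\js$ as a subalgebra of $\Fq\bs N_0\js$ whose associated graded ring is the polynomial subring on the $\ell(w)$ variables indexed by $\Phi^+\cap ww_0(\Phi^+)$, so $\kd\Fq\bs N_{0,ww_0}\js=\ell(w)$. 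Since $M_{w,\infty}^\vee$ is finitely generated over this subalgebra, its canonical dimension as an $\Fq\bs N_0\js$-module is at most $\ell(w)<|\Phi^+|=\kd\Fq\bs N_0\js$; as $\Fq\bs N_0\js$ is a noetherian domain, a finitely generated module of strictly smaller dimension is torsion. Hence $M_{w,\infty}^\vee$ is a finitely generated torsion $\Fq\bs N_0\js$-module.

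The serious work is packaged entirely in Proposition~\ref{MbdcapDwfin}; the remaining steps are formal. The only points needing care are keeping track of the $ww_0$-conjugation relating the two ``opposite'' subgroups $N_{0,(ww_0)^{-1}}$ and $N_{0,ww_0}$ (so that the module structure transported from $M_\infty^\vee\cap D_w$ agrees with the $B_+$-module structure on $M_{w,\infty}^\vee$), and the standard dimension-theoretic fact that finite generation over a lower-dimensional Iwasawa subalgebra forces torsion over the ambient one.
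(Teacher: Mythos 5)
Your argument is correct and is essentially the paper's own proof: $M_\infty^\vee\subseteq\mathbb{M}_\infty^{bd}(D^\#)$ via Proposition \ref{prcontainedinDhash} (surjectivity of the $\psi_t$ coming from invertibility of $t$ on $\pi$), finite generation of $M_\infty^\vee\cap D_w$ over the noetherian algebra $\Fq\bs N_{0,w_0^{-1}w^{-1}}\js$ from Proposition \ref{MbdcapDwfin}, and transport along the bijection of Lemma \ref{ww0bij}, which is semilinear over the conjugation isomorphism $\Fq\bs N_{0,w_0^{-1}w^{-1}}\js\cong\Fq\bs N_{0,ww_0}\js$. The only (cosmetic) divergence is the torsion step: the paper just observes that $N_{0,ww_0}$ has infinite index in $N_0$ for $w\neq w_0$, so a non-torsion element would generate a free rank-one $\Fq\bs N_0\js$-submodule that, being a submodule of a finitely generated module over the noetherian subalgebra, would have to be finitely generated over $\Fq\bs N_{0,ww_0}\js$ --- impossible; your dimension-theoretic formulation is fine, but the inequality bounding the canonical dimension over $\Fq\bs N_0\js$ by $\ell(w)$ is itself most directly justified by that same infinite-index argument.
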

\begin{proof}
By Prop.\ \ref{MbdcapDwfin} and \ref{prcontainedinDhash} $M_\infty^\vee\cap D_w$ is finitely generated over $\Fq\bs N_{0,w_0^{-1}w^{-1}}\js$. By Lemma \ref{ww0bij} this implies that $M_{w,\infty}^\vee=ww_0\cdot (M_\infty^\vee\cap D_w)$ is finitely generated over $\Fq\bs N_{0,ww_0}\js=\Fq\bs ww_0 N_{0,w_0^{-1}w^{-1}}w_0^{-1}w^{-1}\js$. Finally, if $w\neq w_0$ then the index of $N_{0,ww_0}$ is infinite in $N_0$, so any $\Fq\bs N_0\js$-module is torsion if it is finitely generated over  $\Fq\bs N_{0,ww_0}\js$.
\end{proof}

\begin{thm}\label{DveeDelta0gr}
Assume that $\pi$ is a smooth representation of $\GL_n(\mathbb{Q}_p)$ over $\Fq$ and $M\in \mathcal{M}_\Delta^0(\pi^{H_{\Delta,0}})$. Then we have $D^\vee_\Delta(\mathrm{gr}_{M}^{w}(\pi))=0$ for all $w_0\neq w\in N_G(T)/T$.
\end{thm}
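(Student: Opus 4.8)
The plan is to reduce the vanishing to a statement about annihilators over $E_\Delta$. Fix $w_0\neq w\in N_G(T)/T$ and write $\sigma:=\mathrm{gr}^w_M(\pi)$. Since $D^\vee_\Delta(\sigma)=\varprojlim_{M'\in\mathcal{M}_\Delta(\sigma^{H_{\Delta,0}})}{M'}^\vee[X_\Delta^{-1}]$, it suffices to prove ${M'}^\vee[X_\Delta^{-1}]=0$ for every $M'\in\mathcal{M}_\Delta(\sigma^{H_{\Delta,0}})$. By Lemma~\ref{M'containedinMw} the subrepresentation $M_{w,\infty}\leq\sigma$ is a generating $B_+$-subrepresentation, and the proof of that lemma (via \cite[Lemma 4.16]{MultVar}) does not use $X_\Delta$-torsion freeness, so it applies to arbitrary $M'\in\mathcal{M}_\Delta(\sigma^{H_{\Delta,0}})$ and gives $M'\subseteq M_{w,\infty}$; as $M'\subseteq\sigma^{H_{\Delta,0}}$ we even obtain $M'\subseteq M_{w,\infty}^{H_{\Delta,0}}$. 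Dualising the inclusion, ${M'}^\vee$ is a quotient of $(M_{w,\infty}^{H_{\Delta,0}})^\vee\cong(M_{w,\infty}^\vee)_{H_{\Delta,0}}$.

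The second step is to identify a small ring over which $(M_{w,\infty}^\vee)_{H_{\Delta,0}}$ is finitely generated. By Proposition~\ref{Mbdwkerfingen}, $M_{w,\infty}^\vee$ is finitely generated over $\Fq\bs N_{0,ww_0}\js$. Since $H_{\Delta,0}$ is normal in $N_0$, the $H_{\Delta,0}$-coinvariants of $M_{w,\infty}^\vee$ are finitely generated over the completed group ring of the image of $N_{0,ww_0}=\prod_{\gamma\in\Phi^+_{ww_0}}(N_0\cap N_\gamma)$ in the abelian group $N_{\Delta,0}$, namely the power series subring $R':=\Fq\bs X_\alpha\mid\alpha\in\Delta'\js\subseteq\Fq\bs N_{\Delta,0}\js$ with $\Delta':=\Delta\cap\Phi^+_{ww_0}=\Delta\cap w(\Phi^-)$. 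The crucial combinatorial input is that $w\neq w_0$ forces $\Delta'\subsetneq\Delta$: indeed $\Delta\setminus\Delta'=\Delta\cap w(\Phi^+)$, and were this empty we would have $w^{-1}(\Delta)\subseteq\Phi^-$, whence $w^{-1}$, and hence $w$, equals $w_0$. Consequently $\dim R'=|\Delta'|<|\Delta|=\dim\Fq\bs N_{\Delta,0}\js$.

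Finally, for $M'\in\mathcal{M}_\Delta(\sigma^{H_{\Delta,0}})$ set $J:=\operatorname{Ann}_{\Fq\bs N_{\Delta,0}\js}({M'}^\vee)$. By the previous paragraph ${M'}^\vee$ is finitely generated over the noetherian ring $R'$, so $\Fq\bs N_{\Delta,0}\js/J$, acting faithfully on ${M'}^\vee$, embeds into the finitely generated $R'$-module $\End_{R'}({M'}^\vee)$; thus $\Fq\bs N_{\Delta,0}\js/J$ is module-finite over $R'$, of Krull dimension $\leq|\Delta'|<|\Delta|$, which forces $J\neq 0$. Then $JE_\Delta$ is a nonzero ideal of $E_\Delta$ killing ${M'}^\vee[X_\Delta^{-1}]$, and $\operatorname{Ann}_{E_\Delta}({M'}^\vee[X_\Delta^{-1}])$, being the annihilator of an étale $T_+$-module, is stable under $\Gamma_\Delta$; by \cite[Prop.\ 2.1]{MultVar} a nonzero $\Gamma_\Delta$-invariant ideal of $E_\Delta$ is the unit ideal, so ${M'}^\vee[X_\Delta^{-1}]=0$, completing the proof. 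I expect the genuinely substantive work to be already contained in the cited results — the realisation of $M_{w,\infty}$ inside the $B$-equivariant sheaf on $\mathcal{C}^w$ and the finiteness Proposition~\ref{Mbdwkerfingen} — so the main obstacle in this argument is organisational: correctly transporting finite generation from $\Fq\bs N_{0,ww_0}\js$ to the explicit power series subring $R'$ of $\Fq\bs N_{\Delta,0}\js$ (tracking the image of $N_{0,ww_0}$ in $N_{\Delta,0}$), and verifying the dimension drop $|\Delta'|<|\Delta|$ that makes the $\Gamma_\Delta$-invariant-ideal argument bite.
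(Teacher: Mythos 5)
Your argument is correct and follows essentially the same route as the paper's proof: embed $M'$ into $M_{w,\infty}^{H_{\Delta,0}}$ via Lemma \ref{M'containedinMw}, use Proposition \ref{Mbdwkerfingen} and passage to $H_{\Delta,0}$-coinvariants to make ${M'}^\vee$ finitely generated over the subring $\Fq\bs N_{\Delta,0,ww_0}\js$ (your $R'$), and exploit that $w\neq w_0$ forces $\Delta\cap ww_0(\Phi^+)\subsetneq\Delta$. The only (harmless) deviation is the final step, where you produce a nonzero $\Gamma_\Delta$-invariant annihilator ideal and invoke \cite[Prop.\ 2.1]{MultVar}, whereas the paper concludes that ${M'}^\vee$ is $\Fq\bs N_{\Delta,0}\js$-torsion and cites \cite[Cor.\ 3.16]{MultVarGal} on the absence of torsion in étale modules; both closing arguments are standard in this paper's toolkit.
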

\begin{proof}
Assume that $M'$ is an object in $\mathcal{M}_\Delta^0(\mathrm{gr}_{M}^{w}(\pi)^{H_{\Delta,0}})$. Then $M'\subseteq M_{w,\infty}$ by Lemma \ref{M'containedinMw}. Since the elements of $M'$ are fixed by $H_{\Delta,0}$, we also deduce $M'\subseteq M_{w,\infty}^{N_{0,ww_0}\cap H_{\Delta,0}}$. So ${M'}^\vee\leq {M'}^\vee[X_\Delta^{-1}]$ arises as a quotient of $(M_{w,\infty}^\vee)_{N_{0,ww_0}\cap H_{\Delta,0}}$ which is finitely generated over $\Fq\bs N_{\Delta,0,ww_0}\js=\Fq\bs N_{0,ww_0}/(N_{0,ww_0}\cap H_{\Delta,0})\js$ by Proposition \ref{Mbdwkerfingen}. Whenever $w\neq w_0$, $N_{\Delta,0,ww_0}$ is a subgroup of $N_{\Delta,0}$ of infinite index whence ${M'}^\vee$ is a torsion $\Fq\bs N_{\Delta,0}\js$-module. Hence $M'=0$ as the étale $T_+$-module ${M'}^\vee[X_\Delta^{-1}]$ has no nonzero torsion $\Fq\bs N_{\Delta,0}\js$-submodule by \cite[Cor.\ 3.16]{MultVarGal}.
\end{proof}

\begin{thm}\label{DveeDeltagrw_0}
Let $\pi$ be a smooth representation of $\GL_n(\mathbb{Q}_p)$ over $\mathcal{O}_F/\varpi^h$ for some finite extension $F/\Qp$ with valuation ring $\mathcal{O}_F$ and uniformizer $\varpi$. Assume that there is an object $M\in \mathcal{M}_\Delta(\pi^{H_{\Delta,0}})$ such that the map $\beta_{G/B,M}\colon \pi^\vee\to \mathfrak{Y}_{\pi,M}(G/B)$ is injective. Then we have $D^\vee_\Delta(\pi)=M^\vee[X_\Delta^{-1}]$.
\end{thm}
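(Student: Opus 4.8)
The plan is to exploit the Bruhat filtration $\Fil^\bullet_M$ on $\pi$ induced by $M$ in order to reduce the computation of $D^\vee_\Delta(\pi)$ to its value on the single graded piece $\mathrm{gr}^{w_0}_M(\pi)$ supported on the big cell $\mathcal{C}^{w_0}$, and then to identify that value with $M^\vee[X_\Delta^{-1}]$ using the finiteness results of the previous subsections. First a harmless reduction: the module $M^\vee[X_\Delta^{-1}]$, the map $\beta_{G/B,M}$, and hence the sheaf $\mathfrak{Y}_{\pi,M}$ and the filtration $\Fil^\bullet_M$ depend on $M$ only through the étale $T_+$-module $D:=M^\vee[X_\Delta^{-1}]$, so replacing $M$ by the subrepresentation $M^0\leq M$ whose Pontryagin dual is $M^\vee$ modulo its $X_\Delta$-power torsion — which still lies in $\mathcal{M}_\Delta(\pi^{H_{\Delta,0}})$, now with $X_\Delta$-torsion free dual — changes nothing in the statement. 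Hence I may and do assume $M\in\mathcal{M}^0_\Delta(\pi^{H_{\Delta,0}})$.

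\emph{Step 1: the filtration.} Since $\Fil^1(G/B)=G/B$, injectivity of $\beta_{G/B,M}$ gives $\Fil^1_M(\pi^\vee)=\Ker(\beta_{G/B,M})=0$, hence $\Fil^1_M(\pi)=\pi$; on the other hand $\Fil^{>w_0}(G/B)=\emptyset$ forces $\Fil^{>w_0}_M(\pi^\vee)=\pi^\vee$, hence $\Fil^{>w_0}_M(\pi)=0$. So, writing $1=w^{(1)}<\dots<w^{(N)}=w_0$ for the chosen total order on $N_G(T)/T$, we obtain a finite descending filtration $\pi=\Fil^{w^{(1)}}_M(\pi)\supseteq\dots\supseteq\Fil^{w^{(N)}}_M(\pi)=\mathrm{gr}^{w_0}_M(\pi)\supseteq 0$ of $\pi$ by $B$-subrepresentations, with graded pieces $\mathrm{gr}^{w^{(i)}}_M(\pi)$.

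\emph{Step 2: only the big cell survives.} Apply the right exact contravariant functor $D^\vee_\Delta$ (which makes sense and is right exact on smooth representations of $B$ by the argument of \cite{MultVar}) to the short exact sequences $0\to\Fil^{>w}_M(\pi)\to\Fil^w_M(\pi)\to\mathrm{gr}^w_M(\pi)\to 0$. By Theorem \ref{DveeDelta0gr}, which applies because $M\in\mathcal{M}^0_\Delta(\pi^{H_{\Delta,0}})$, we have $D^\vee_\Delta(\mathrm{gr}^w_M(\pi))=0$ for every $w\neq w_0$, whence $D^\vee_\Delta(\Fil^w_M(\pi))\xrightarrow{\sim}D^\vee_\Delta(\Fil^{>w}_M(\pi))$ for such $w$. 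Composing these isomorphisms along the filtration yields $D^\vee_\Delta(\pi)=D^\vee_\Delta(\Fil^{w^{(1)}}_M(\pi))\cong\dots\cong D^\vee_\Delta(\Fil^{w^{(N)}}_M(\pi))=D^\vee_\Delta(\mathrm{gr}^{w_0}_M(\pi))$.

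\emph{Step 3: identifying the big cell term}, which I expect to be the main obstacle. It remains to prove $D^\vee_\Delta(\mathrm{gr}^{w_0}_M(\pi))=M^\vee[X_\Delta^{-1}]$. Here $\mathrm{gr}^{w_0}_M(\pi)=\Fil^{w_0}_M(\pi)$, and by Lemma \ref{M'containedinMw} the $B_+$-subrepresentation $M_\infty:=B_+M=M_{w_0,\infty}$ generates it, while $M\in\mathcal{M}^0_\Delta(\mathrm{gr}^{w_0}_M(\pi)^{H_{\Delta,0}})$. By definition $D^\vee_\Delta(\mathrm{gr}^{w_0}_M(\pi))=\varprojlim_{M'}(M')^\vee[X_\Delta^{-1}]$ over $M'\in\mathcal{M}_\Delta(\mathrm{gr}^{w_0}_M(\pi)^{H_{\Delta,0}})$, and I claim the projection to the $M$-component is an isomorphism. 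Since $\{M':M'\supseteq M\}$ is cofinal, and each such $M'$ may be replaced by its $X_\Delta$-torsion free subobject $M'^0$ — which still contains $M$ (as $M^\vee$ is $X_\Delta$-torsion free) and hence satisfies $M\subseteq M'^0\subseteq M_\infty$ and $B_+M'^0=M_\infty$ by Lemma \ref{M'containedinMw} — it suffices to show that, for such $M'^0$, the transition map $(M'^0)^\vee[X_\Delta^{-1}]\to M^\vee[X_\Delta^{-1}]$ is an isomorphism. This follows from Corollary \ref{kerneltorsion}: applied to $M$ and to $M'^0$ (both in $\mathcal{M}^0_\Delta$ with $B_+$-span equal to $M_\infty$), it identifies, compatibly, both $M^\vee[X_\Delta^{-1}]$ and $(M'^0)^\vee[X_\Delta^{-1}]$ with $(\Fq\bs N_{\Delta,0}\js\otimes_{\Fq\bs N_0\js}M_\infty^\vee)[X_\Delta^{-1}]$, via the surjections $\Fq\bs N_{\Delta,0}\js\otimes_{\Fq\bs N_0\js}M_\infty^\vee\to M^\vee,(M'^0)^\vee$ coming from $M,M'^0\subseteq M_\infty^{H_{\Delta,0}}$, whose kernels are killed by a power of $X_\Delta$. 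Hence the system is essentially constant and $D^\vee_\Delta(\mathrm{gr}^{w_0}_M(\pi))=M^\vee[X_\Delta^{-1}]$. The crux is exactly this step: it is here that the finiteness of $M_\infty^\vee$ over $\Fq\bs N_0\js$ (Theorem \ref{Mbdfingen} via Proposition \ref{prcontainedinDhash}) together with the $X_\Delta$-torsion statement of Corollary \ref{kerneltorsion} force the a priori large projective system defining $D^\vee_\Delta(\mathrm{gr}^{w_0}_M(\pi))$ to stabilize at $M^\vee[X_\Delta^{-1}]$, whereas Steps 1 and 2 are formal once Theorem \ref{DveeDelta0gr} is available.
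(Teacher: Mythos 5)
Your argument reproduces the paper's proof of the case $h=1$ essentially verbatim: exhaustiveness of $\Fil^\bullet_M$ from injectivity of $\beta_{G/B,M}$, reduction to $\mathrm{gr}^{w_0}_M(\pi)$ via Theorem \ref{DveeDelta0gr} and right exactness, and identification of the big-cell term using Lemma \ref{M'containedinMw} (the paper quotes \cite[Lemma 4.23]{MultVar}) together with Corollary \ref{kerneltorsion}. Your way of finishing Step 3 --- showing each transition map $(M'^{0})^\vee[X_\Delta^{-1}]\to M^\vee[X_\Delta^{-1}]$ is an isomorphism because both sides are compatibly identified with $\bigl(\Fq\bs N_{\Delta,0}\js\otimes_{\Fq\bs N_0\js}M_\infty^\vee\bigr)[X_\Delta^{-1}]$ --- is a clean variant of the paper's comparison of generic ranks, and your preliminary reduction to $M\in\mathcal{M}^0_\Delta(\pi^{H_{\Delta,0}})$ is a welcome explicit step, since Theorem \ref{DveeDelta0gr} and Corollary \ref{kerneltorsion} are stated for objects with $X_\Delta$-torsion-free dual.

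There is, however, a genuine gap: the theorem is stated over $\mathcal{O}_F/\varpi^h$ for arbitrary $h\geq 1$, while every ingredient you invoke (Theorem \ref{DveeDelta0gr}, Lemma \ref{M'containedinMw}, Corollary \ref{kerneltorsion}, and behind them Theorem \ref{Mbdfingen} and Proposition \ref{prcontainedinDhash}) is stated and proved only over $\Fq$; the whole lattice and filtration machinery of section \ref{sec:finiteness} is developed over $E_\Delta$. So as written your proof only establishes the statement for $h=1$. The paper completes the argument by a devissage in $h$: one checks that $\beta_{G/B,M}$ restricts to an injection of $\pi^\vee[\varpi]=(\pi/\varpi\pi)^\vee$ into $\mathfrak{Y}_{\pi,M}(G/B)[\varpi]$ and induces an injection of $(\varpi\pi)^\vee\cong\pi^\vee/\pi^\vee[\varpi]$ into $\varpi\mathfrak{Y}_{\pi,M}(G/B)$, applies the $h=1$ case and induction on $h$ to get $D^\vee_\Delta(\pi/\varpi\pi)\cong M^\vee[X_\Delta^{-1}][\varpi]$ and $D^\vee_\Delta(\varpi\pi)\cong\varpi M^\vee[X_\Delta^{-1}]$, and then uses right exactness of $D^\vee_\Delta$ together with additivity of the generic length to force the canonical surjection $f_M\colon D^\vee_\Delta(\pi)\twoheadrightarrow M^\vee[X_\Delta^{-1}]$ to be an isomorphism. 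This last step is not formal --- since $D^\vee_\Delta$ is only right exact, the length count is really needed --- so the $h>1$ case must be supplied, either along these lines or by redoing your argument with $\mathcal{O}_F/\varpi^h$-coefficients, which the paper deliberately avoids.
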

\begin{proof}
At first assume that $h=1$ so that $\pi$ is a smooth representation over $\Fq:=\mathcal{O}_F/\varpi$. Note that the filtration $\Fil^\bullet_M$ by $B$-subrepresentations is exhaustive on $\pi$ as we have $\Fil^1_M(\pi)=(\pi^\vee/\Ker(\beta_{G/B,M}))^\vee=\pi$. We deduce $D^\vee_\Delta(\pi)=D^\vee_\Delta(\mathrm{gr}_M^{w_0}(\pi))$ by Theorem \ref{DveeDelta0gr} and the right exactness of $D^\vee_\Delta$ \cite[Theorem A]{MultVar}. Let $M'\in \mathcal{M}_\Delta(\mathrm{gr}_M^{w_0}(\pi)^{H_{\Delta,0}})$ be arbitrary. By \cite[Lemma 4.23]{MultVar}, we have $M'\leq M_\infty=B_+M$ whence $M'\leq M_\infty^{H_{\Delta,0}}=M_\infty\cap \pi^{H_{\Delta,0}}$. Put $M_1:=M+M'\in \mathcal{M}_\Delta(\mathrm{gr}_M^{w_0}(\pi)^{H_{\Delta,0}})$. So we obtain a surjective map $\Fq\bs N_{\Delta,0}\js\otimes_{\Fq\bs N_0\js}M_\infty^\vee\to M_1^\vee$ showing that the rank of $M_1^\vee[X_\Delta^{-1}]$ is bounded above by the rank of $\Fq\bs N_{\Delta,0}\js\otimes_{\Fq\bs N_0\js}M_\infty^\vee$ which is equal to the rank of $M^\vee[X_\Delta^{-1}]$ by Corollary \ref{kerneltorsion}. Hence the quotient map $M_1^\vee[X_\Delta^{-1}]\twoheadrightarrow M^\vee[X_\Delta^{-1}]$ is an isomorphism. Since $M'$ was arbitrary, the map $$D^\vee_\Delta(\pi)=\varprojlim_{M'\in \mathcal{M}_\Delta(\mathrm{gr}_M^{w_0}(\pi)^{H_{\Delta,0}})}{M'}^\vee[X_\Delta^{-1}]\twoheadrightarrow M^\vee[X_\Delta^{-1}]$$ is also an isomorphism.

The general case $h>1$ is deduced by a simple devissage argument as follows. We certainly have an epimorphism $f_M\colon D^\vee_\Delta(\pi)\twoheadrightarrow M^\vee[X_\Delta^{-1}]$. On the other hand, $\pi^\vee[\varpi]=(\pi/\varpi\pi)^\vee$ maps isomorphically under $\beta_{G/B,M}$ to $\beta_{G/B,M}(\pi^\vee)\cap \mathfrak{Y}_{\pi,M}(G/B)[\varpi]$, so applying the $h=1$ one case we obtain $D^\vee_\Delta(\pi/\varpi\pi)\cong M^\vee[X_\Delta^{-1}][\varpi]$ ($\varpi$-torsion part). Moreover, we also deduce that the map $(\varpi\pi)^\vee\cong\pi^\vee/(\pi^\vee[\varpi])\to \mathfrak{Y}_{\pi,M}(G/B)/\mathfrak{Y}_{\pi,M}(G/B)[\varpi]\cong \varpi\mathfrak{Y}_{\pi,M}(G/B)$ is also injective whence we get $D^\vee_\Delta(\varpi\pi)\cong \varpi M^\vee[X_\Delta^{-1}]$ by induction. By the right exactness of $D^\vee_\Delta$ we compute (see \cite[section 2.3]{MultVar} for the definition of generic length)
\begin{align*}
\genlength(M^\vee[X_\Delta^{-1}])=\genlength(M^\vee[X_\Delta^{-1}][\varpi])+\genlength(\varpi M^\vee[X_\Delta^{-1}])=\\
=\genlength(D^\vee_\Delta(\pi/\varpi\pi))+\genlength(D^\vee_\Delta(\varpi\pi))\geq \genlength(D^\vee_\Delta(\pi)\ , 
\end{align*}
so $f_M$ must be an isomorphism.
\end{proof}

\begin{cor}\label{finitenessofDveeDelta}
Assume that $\pi$ is a smooth representation of $\GL_n(\mathbb{Q}_p)$ over $\mathcal{O}_F/\varpi^h$ of finite length for some finite extension $F/\Qp$ with valuation ring $\mathcal{O}_F$ and uniformizer $\varpi$. Then $D_\Delta^\vee(\pi)$ is a finitely generated $\OED$-module.
\end{cor}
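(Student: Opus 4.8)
The plan is to deduce this from Theorem \ref{DveeDeltagrw_0} by a short reduction that uses the finite length of $\pi$. Since $\pi$ is killed by $\varpi^h$, so is $D^\vee_\Delta(\pi)$, and $\OED/\varpi^h\cong\mathcal{O}_F/\varpi^h\bg N_{\Delta,0}\jg$ is noetherian; hence it is enough to produce a single object $M\in\mathcal{M}_\Delta(\pi^{H_{\Delta,0}})$ with $D^\vee_\Delta(\pi)=M^\vee[X_\Delta^{-1}]$, because $M^\vee[X_\Delta^{-1}]$ is a finitely generated étale $T_+$-module over $\mathcal{O}_F/\varpi^h\bg N_{\Delta,0}\jg$ (as $M$ is admissible over $N_{\Delta,0}$) and therefore a finitely generated $\OED$-module.

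Next I would do the index bookkeeping. The set $\mathcal{M}_\Delta(\pi^{H_{\Delta,0}})$ is directed (if $M,M'$ lie in it, so does $M+M'$). By \cite[Cor.\ 4.21]{MultVar}, $\beta_{G/B,M}$ identifies $\pi^\vee/\Ker(\beta_{G/B,M})$ with the Pontryagin dual of the $G$-subrepresentation $\langle G\cdot M\rangle\le\pi$, compatibly with the quotient maps out of $\pi^\vee$; in particular $\Ker(\beta_{G/B,M})=\langle G\cdot M\rangle^{\perp}$, so $M\subseteq M'$ forces $\Ker(\beta_{G/B,M'})\subseteq\Ker(\beta_{G/B,M})$, and $\{\Ker(\beta_{G/B,M})\}_M$ is a downward directed family of closed $G$-subrepresentations of $\pi^\vee$. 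Since $\pi$ has finite length, Pontryagin duality turns this into the descending chain condition on such subrepresentations, so some $M_0$ realises the minimum. Set $\pi_0:=\langle G\cdot M_0\rangle\le\pi$. For any $M$, minimality applied to $M+M_0\supseteq M_0$ gives $\Ker(\beta_{G/B,M+M_0})=\Ker(\beta_{G/B,M_0})$, hence $\langle G\cdot(M+M_0)\rangle=\langle G\cdot M_0\rangle=\pi_0$ and so $M\subseteq\pi_0$ for every $M\in\mathcal{M}_\Delta(\pi^{H_{\Delta,0}})$. Therefore $\mathcal{M}_\Delta(\pi^{H_{\Delta,0}})=\mathcal{M}_\Delta(\pi_0^{H_{\Delta,0}})$ (the $F_\Delta$-action and the transition maps restrict, and $\pi_0^{H_{\Delta,0}}=\pi_0\cap\pi^{H_{\Delta,0}}$), and by the very definition of $D^\vee_\Delta$ as a projective limit over this index set, $D^\vee_\Delta(\pi)=D^\vee_\Delta(\pi_0)$.

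Finally I would apply Theorem \ref{DveeDeltagrw_0} to $\pi_0$ with $M=M_0$. Since $B_+M_0\subseteq\pi_0$ and $\pi_0$ is $G$-stable, the subspace $M_{0,\infty}=B_+M_0$ and its étale hull are the same whether computed inside $\pi_0$ or inside $\pi$, so by the construction recalled before Lemma \ref{M'containedinMw} together with \cite[Cor.\ 4.21, 4.22]{MultVar} the sheaf $\mathfrak{Y}_{\pi_0,M_0}$ and the map $\beta_{G/B,M_0}$ attached to $\pi_0$ are obtained from those attached to $\pi$ by factoring through the surjection $\pi^\vee\twoheadrightarrow\pi_0^\vee$; as the image of $\beta_{G/B,M_0}\colon\pi^\vee\to\mathfrak{Y}_{\pi,M_0}(G/B)$ is exactly $\pi_0^\vee$, the induced map $\beta_{G/B,M_0}\colon\pi_0^\vee\to\mathfrak{Y}_{\pi_0,M_0}(G/B)$ is injective. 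Theorem \ref{DveeDeltagrw_0} then yields $D^\vee_\Delta(\pi_0)=M_0^\vee[X_\Delta^{-1}]$, and combining with $D^\vee_\Delta(\pi)=D^\vee_\Delta(\pi_0)$ completes the proof. The substantive content is entirely inside Theorem \ref{DveeDeltagrw_0} (and, through it, Theorems \ref{DveeDelta0gr} and \ref{Mbdfingen}); the only mild subtleties at this level are the directedness/descending-chain bookkeeping and the compatibility of $\mathfrak{Y}_{\pi,M}$ and $\beta_{G/B,M}$ with passage to the generated subrepresentation $\pi_0$, both dispatched above.
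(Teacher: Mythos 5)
Your proposal is correct, and it reaches the conclusion by a genuinely different reduction than the paper, though both arguments rest entirely on Theorem \ref{DveeDeltagrw_0}. The paper's proof first invokes the right exactness of $D^\vee_\Delta$ (\cite[Theorem A]{MultVar}) to reduce to the case where $\pi$ is irreducible and $h=1$; there, any nonzero $M\in\mathcal{M}^0_\Delta(\pi^{H_{\Delta,0}})$ gives a nonzero, hence injective, map $\beta_{G/B,M}$ on $\pi^\vee$, so Theorem \ref{DveeDeltagrw_0} applies directly to $\pi$ with no stabilization argument and no passage to subrepresentations. You instead keep $\pi$ of finite length over $\mathcal{O}_F/\varpi^h$ and exploit the ascending chain condition on its subrepresentations: the directed family $\{\langle G\cdot M\rangle\}_{M\in\mathcal{M}_\Delta(\pi^{H_{\Delta,0}})}$ stabilizes at some $\pi_0=\langle G\cdot M_0\rangle$, every $M$ is then contained in $\pi_0$, so $D^\vee_\Delta(\pi)=D^\vee_\Delta(\pi_0)$, and Theorem \ref{DveeDeltagrw_0} applied to the pair $(\pi_0,M_0)$ gives $D^\vee_\Delta(\pi)\cong M_0^\vee[X_\Delta^{-1}]$; this is essentially the ``stabilization'' strategy sketched in the introduction, and it yields the slightly sharper output that $D^\vee_\Delta(\pi)$ is realized by a single object of $\mathcal{M}_\Delta(\pi^{H_{\Delta,0}})$, uniformly in $h$, without any dévissage. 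The one point you should make explicit (or source to \cite{MultVar}) is the compatibility you assert at the end: that $\mathfrak{Y}_{\pi_0,M_0}$ and $\beta_{G/B,M_0}$ for $\pi_0$ are the factorization of those for $\pi$ through $\pi^\vee\twoheadrightarrow\pi_0^\vee$, so that the injectivity hypothesis of Theorem \ref{DveeDeltagrw_0} really holds for $(\pi_0,M_0)$; this is exactly the compatibility the paper itself uses implicitly when it applies Theorem \ref{DveeDeltagrw_0} to image subrepresentations in the proofs of Theorem \ref{fullyfaithful} and Corollaries \ref{irredtoirred} and \ref{detectsiso}, so it is not a gap, but in your write-up it carries the weight that irreducibility carries in the paper's shorter argument. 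In exchange, the paper's route needs the right exactness of $D^\vee_\Delta$ and the Jordan--H\"older dévissage (including the reduction to $h=1$), which your argument avoids altogether.
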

\begin{proof}
By the right exactness \cite[Theorem A]{MultVar} we may assume that $\pi$ is irreducible and $h=1$ so that the coefficient ring is $\mathcal{O}_F/\varpi\cong \Fq$. If $D_\Delta^\vee(\pi)$ is $0$ then there is nothing to prove. Otherwise there exists an object $M\neq 0$ in $\mathcal{M}^0_\Delta(\pi^{H_{\Delta,0}})$. The $G$-equivariant map $\beta_{G/B,M}\colon\pi^\vee\to \mathfrak{Y}_{\pi,M}(G/B)$ is nonzero hence injective since $\pi$ is irreducible. The statement follows from Thm.\ \ref{DveeDeltagrw_0}.
\end{proof}

\begin{cor}\label{irredtoirred}
Assume that $\pi$ is an irreducible smooth representation of $\GL_n(\mathbb{Q}_p)$ over $\Fq$ such that $D_\Delta^\vee(\pi)\neq 0$. Then $\pi$ is admissible and $\mathbb{V}^\vee(D_\Delta^\vee(\pi))$ is an irreducible representation of $\GQpD\times \Qp^\times$ over $\Fq$.
\end{cor}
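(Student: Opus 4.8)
The plan is to deduce the statement from the finiteness result Corollary \ref{finitenessofDveeDelta}, from Theorem \ref{DveeDeltagrw_0}, and from Theorem \ref{galequivcat}, which says that $\mathbb{D}\circ(\cdot)^\vee$ is an equivalence between the category of finite dimensional representations of $\GQpD\times\Qp^\times$ over $\Fq$ and the category of étale $T_+$-modules over $E_\Delta$. Write $D:=D^\vee_\Delta(\pi)$. Since $D\neq 0$ there is a nonzero $M\in\mathcal{M}^0_\Delta(\pi^{H_{\Delta,0}})$, and as $\pi$ is irreducible the $G$-equivariant map $\beta_{G/B,M}\colon\pi^\vee\to\mathfrak{Y}_{\pi,M}(G/B)$ of \cite[Cor.\ 4.21]{MultVar} is nonzero (its image is the Pontryagin dual of the $G$-subrepresentation of $\pi$ generated by $M$, which is all of $\pi$), hence injective (its kernel is a closed $G$-subrepresentation of the irreducible $\pi^\vee$). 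Theorem \ref{DveeDeltagrw_0} then gives $D=M^\vee[X_\Delta^{-1}]$, and by Corollary \ref{finitenessofDveeDelta} this is a finitely generated, hence finite dimensional, étale $T_+$-module over $E_\Delta$; in particular $\mathbb{V}^\vee(D)=\mathbb{V}(D)^\vee$ is a finite dimensional representation of $\GQpD\times\Qp^\times$ over $\Fq$, so that both assertions of the corollary make sense.

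I would obtain admissibility of $\pi$ as follows. Dualising the injection $\beta_{G/B,M}$ exhibits $\pi$ as a quotient of the smooth $G$-representation $\mathfrak{Y}_{\pi,M}(G/B)^\vee$, so it suffices that the latter be admissible. By Corollary \ref{Minftyadmissible} the $B_+$-subrepresentation $B_+M$ of $\pi$ is $N_0$-admissible, so its dual $M_\infty^\vee$ — the stalk data $\mathfrak{Y}_{\pi,M}(\mathcal{C}_0)$, up to passing to the étale hull — is finitely generated over $\Fq\bs N_0\js$; combined with the finite generation of $D$ over $\OED$ and the fact that $G/B$ is covered by finitely many $G$-translates of the compact open cell $\mathcal{C}_0$, this propagates to finite generation of $\mathfrak{Y}_{\pi,M}(G/B)$ over the Iwasawa algebra of a compact open subgroup of $G$, i.e. to admissibility of $\mathfrak{Y}_{\pi,M}(G/B)^\vee$. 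This is the routine half of the corollary, modelled on the $\GL_2$ case.

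For irreducibility of $V:=\mathbb{V}^\vee(D)$ it is enough, by the equivalence recalled above, to prove that $D$ is a simple étale $T_+$-module: then $\mathbb{V}(D)$ is irreducible and its contragredient $V$ is irreducible as well. Since $D$ is finitely generated over the noetherian ring $E_\Delta$ (equivalently $\mathbb{V}(D)$ is finite dimensional over $\Fq$), it admits a simple quotient $q\colon D\twoheadrightarrow D''$. Because $D^\vee_\Delta(\pi)=\varprojlim_{M'\in\mathcal{M}_\Delta(\pi^{H_{\Delta,0}})}{M'}^\vee[X_\Delta^{-1}]$ is itself finitely generated, its finitely generated quotient $D''$ is of the form $D''=(M'')^\vee[X_\Delta^{-1}]$ for some nonzero $M''\in\mathcal{M}_\Delta(\pi^{H_{\Delta,0}})$, which again comes equipped with a $G$-equivariant map $\beta_{G/B,M''}\colon\pi^\vee\to\mathfrak{Y}_{\pi,M''}(G/B)$. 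Exactly as in the first paragraph, $\beta_{G/B,M''}$ is nonzero — its restriction to $\mathcal{C}_0$ has image $X_\Delta$-generating $D''\neq 0$ — and hence injective since $\pi$ is irreducible. Applying Theorem \ref{DveeDeltagrw_0} to $M''$ yields $D^\vee_\Delta(\pi)=(M'')^\vee[X_\Delta^{-1}]=D''$, so $q$ is an isomorphism and $D$ is simple, as required.

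The genuinely hard input is already packaged into Theorems \ref{DveeDelta0gr} and \ref{DveeDeltagrw_0} (only the big-cell graded piece contributes to $D^\vee_\Delta$, and injectivity of $\beta_{G/B,M}$ pins down $D^\vee_\Delta(\pi)$ as $M^\vee[X_\Delta^{-1}]$). Within the present argument the one point demanding care is the identification of an arbitrary finitely generated quotient of $D^\vee_\Delta(\pi)$ with some $(M'')^\vee[X_\Delta^{-1}]$, $M''\in\mathcal{M}_\Delta(\pi^{H_{\Delta,0}})$, so that the Bruhat-filtration machinery applies to the simple quotient $D''$; this uses that $D^\vee_\Delta(\pi)$ is already finitely generated together with the fact that $\mathcal{M}_\Delta(\pi^{H_{\Delta,0}})$ is a cofinal system computing it.
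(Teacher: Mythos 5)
Your treatment of irreducibility is essentially the paper's own argument: you realize a nonzero quotient $D''$ of $D=D^\vee_\Delta(\pi)$ (in your case a simple quotient, in the paper an arbitrary proper one) as ${M''}^\vee[X_\Delta^{-1}]$ for some $M''\in\mathcal{M}_\Delta(\pi^{H_{\Delta,0}})$, note that $\beta_{G/B,M''}$ is nonzero (its image on $\mathcal{C}_0$ being $(B_+M'')^\vee$) and hence injective because $\pi^\vee$ is irreducible, and then invoke Theorem \ref{DveeDeltagrw_0} to conclude $D\cong D''$; this is exactly the paper's proof up to phrasing. (A minor slip: finitely generated over $E_\Delta$ is not the same as finite dimensional over $\Fq$; what you actually use is that Theorem \ref{galequivcat} matches finitely generated étale $T_+$-modules with finite dimensional representations of $\GQpD\times\Qp^\times$.)

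The admissibility half, however, has a genuine gap. You reduce to admissibility of $\mathfrak{Y}_{\pi,M}(G/B)^\vee$ and claim that finite generation of $M_\infty^\vee$ over $\Fq\bs N_0\js$ ``propagates'' to finite generation of the global sections over the Iwasawa algebra of a compact open subgroup. It does not: the sections over the compact open cell are the étale hull, $\mathfrak{Y}_{\pi,M}(\mathcal{C}_0)=\widetilde{M_\infty^\vee}$, an increasing union along the maps $\varphi_t$ which is \emph{not} finitely generated over $\Fq\bs N_0\js$ (already for $n=2$ the sections over $\Zp$ are all of $D$, which is not finitely generated over $\Fq\bs X\js$), so $\mathfrak{Y}_{\pi,M}(G/B)^\vee$ is not admissible in general and your reduction collapses. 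The repair is to bound the image of $\pi^\vee$ rather than the full sections, which is what the paper does: with $U^{(1)}=\Ker(\GL_n(\Zp)\to\GL_n(\Fp))$, the image of $\beta_{\mathcal{C}_0,M}$ equals $M_\infty^\vee$, finitely generated over $\Fq\bs N_0\js$ by Corollary \ref{Minftyadmissible}, hence over $\Fq\bs U^{(1)}\js$ since $U^{(1)}\cap N_0$ has finite index in $N_0$; as $U^{(1)}$ is normal in $\GL_n(\Zp)$ and stabilizes $\mathcal{C}_0$, the image of $\pi^\vee$ in $\mathfrak{Y}_{\pi,M}(g\mathcal{C}_0)$ is $g\,M_\infty^\vee$ for each $g\in\GL_n(\Zp)/U^{(1)}$, again finitely generated over $\Fq\bs U^{(1)}\js$; finally the sheaf property together with the Iwasawa decomposition $G/B=\bigcup_g g\mathcal{C}_0$ injects $\pi^\vee$ into the finite direct sum of these modules, and noetherianity of $\Fq\bs U^{(1)}\js$ yields that $\pi^\vee$ is finitely generated over $\Fq\bs U^{(1)}\js$, i.e.\ $\pi$ is admissible.
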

\begin{proof}
At first we show that $\mathbb{V}^\vee(D_\Delta^\vee(\pi))$ is irreducible. Since $\mathbb{V}_\Delta$ is an equivalence of categories \cite{MultVarGal}, we are reduced to showing that $D:=D^\vee_\Delta(\pi)$ is irreducible. Assume for contradiction that we have a short exact sequence $0\to D'\to D\to D''\to 0$ of étale $T_+$-modules over $E_\Delta$ such that $D'\neq 0\neq D''$. Pick an object $M''\in \mathcal{M}_\Delta(\pi^{H_{\Delta,0}})$ such that ${M''}^\vee[X_\Delta^{-1}]=D''$. The image of $\beta_{\mathcal{C}_0,M''}=\res^{G/B}_{\mathcal{C}_0}\circ\beta_{G/B,M''}\colon\pi^\vee\to \mathfrak{Y}_{\pi,M''}(\mathcal{C}_0)$ can be identified with $(B_+M'')^\vee$ by \cite[Lemma 4.16]{MultVar}. In particular, $\beta_{M'',G/B}$ is not the zero map hence it is injective as $\pi^\vee$ is irreducible. By Theorem \ref{DveeDeltagrw_0} we deduce $D\cong D^\vee_\Delta(\pi)\cong {M''}^\vee[X_\Delta^{-1}]\cong D''$ which contradicts to $D'\neq 0$.

For the admissibility put $U^{(1)}:=\Ker(\GL_n(\Zp)\to \GL_n(\Fp))\leq \GL_n(\Qp)$. Note that $\mathcal{C}_0=N_0w_0B$ is $U^{(1)}$-invariant and $U^{(1)}\cap N_0$ has finite index in $N_0$ (see, for instance \cite[Lemma 4.18]{MultVar}). Therefore $M_\infty^\vee=\beta_{M,\mathcal{C}_0}(\pi^\vee)$ is finitely generated as a module over $\Fq\bs U^{(1)}\js$, since it is finitely generated over $\Fq\bs N_0\js$ by Corollary \ref{Minftyadmissible}. On the other hand, we have a covering $\bigcup_{g\in \GL_n(\Zp)/U^{(1)}}g\mathcal{C}_0 =G/B$ by the Iwasawa decomposition. Since $U^{(1)}$ is normal in $\GL_n(\Zp)$, $\beta_{M,g\mathcal{C}_0}(\pi^\vee)=g\beta_{\mathcal{C}_0,M}(\pi^\vee)$ is also a finitely generated $\Fq\bs U^{(1)}\js$-module. The statement follows from the sheaf property that the composite map
\begin{align*}
\pi^\vee\overset{\beta_{G/B,M}}{\hookrightarrow}\mathfrak{Y}_{\pi,M}(G/B)\hookrightarrow \bigoplus_{g\in \GL_n(\Zp)/U^{(1)}} \mathfrak{Y}_{\pi,M}(g\mathcal{C}_0)
\end{align*}
is injective noting that $\Fq\bs U^{(1)}\js$ is (left and right) noetherian.
\end{proof}

The following statement shows that the functor $D^\vee_\Delta$ detects isomorphisms of irreducible objects unless it vanishes.
\begin{cor}\label{detectsiso}
Assume that $\pi_1$ and $\pi_2$ are irreducible smooth representations of $\GL_n(\mathbb{Q}_p)$ over $\Fq$ such that $D_\Delta^\vee(\pi_1)\cong D_\Delta^\vee(\pi_2)\neq 0$. Then we have $\pi_1\cong \pi_2$.
\end{cor}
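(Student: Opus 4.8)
The plan is to prove the stronger statement that an irreducible smooth representation $\pi$ of $\GL_n(\Qp)$ over $\Fq$ with $D^\vee_\Delta(\pi)\neq 0$ can be recovered, up to isomorphism, from the étale $T_+$-module $D:=D^\vee_\Delta(\pi)$ alone, by a construction functorial in $D$; applying this to an isomorphism $\theta\colon D^\vee_\Delta(\pi_1)\xrightarrow{\sim}D^\vee_\Delta(\pi_2)$ then yields $\pi_1\cong\pi_2$.

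First I would set up the reconstruction for a single such $\pi$. Pick a nonzero $M\in\mathcal{M}^0_\Delta(\pi^{H_{\Delta,0}})$, which exists since $D^\vee_\Delta(\pi)\neq 0$. Exactly as in the proof of Corollary \ref{finitenessofDveeDelta}, the $G$-equivariant map $\beta_{G/B,M}\colon\pi^\vee\to\mathfrak{Y}_{\pi,M}(G/B)$ is nonzero, hence injective because $\pi^\vee$ is simple ($\pi$ being irreducible), and Theorem \ref{DveeDeltagrw_0} gives $D^\vee_\Delta(\pi)=M^\vee[X_\Delta^{-1}]=D$. Now $\mathfrak{Y}_{\pi,M}(\mathcal{C}_0)$ is the étale hull of $(B_+M)^\vee=\beta_{\mathcal{C}_0,M}(\pi^\vee)$ inside $\mathbb{M}_{\infty,0}(D)$, and from the constructions of \cite[Section 4.5]{MultVar} together with \cite{SchVZ} the $B$-equivariant sheaf on $\mathcal{C}$ and its $G$-equivariant extension to $G/B$ depend — functorially, and compatibly with the $G$-action — only on $D$; write $\mathfrak{Y}_D$ for the resulting sheaf. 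The distinguished compact submodule $\mathfrak{N}:=\bigcap_{t\in T_+}\psi_t(\mathbb{M}^{bd}_\infty(D^\#))\subseteq\mathbb{M}_{\infty,0}(D)=\mathfrak{Y}_D(\mathcal{C}_0)$ of Proposition \ref{prcontainedinDhash} is likewise intrinsic to $D$. Inside $\mathfrak{Y}_D(G/B)$ I then single out the $G$-stable subspace $\mathfrak{Y}_D(G/B)^{bd}$ of global sections $s$ with $\res^{G/B}_{\mathcal{C}_0}(g^{-1}s)\in\mathfrak{N}$ for all $g\in G$, and set $\Pi(D):=(\mathfrak{Y}_D(G/B)^{bd})^\vee$, a smooth representation of $G$ depending only on $D$.

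The heart of the matter is to show $\Pi(D)\cong\pi$. First, $\beta_{\mathcal{C}_0,M}(\pi^\vee)=(B_+M)^\vee$ lies in $\mathfrak{N}$ by Corollary \ref{Minftyadmissible} and Proposition \ref{prcontainedinDhash}; combined with the $G$-equivariance of $\beta_{G/B,M}$ this gives an inclusion $\beta_{G/B,M}(\pi^\vee)\subseteq\mathfrak{Y}_D(G/B)^{bd}$, hence a surjection $\Pi(D)\twoheadrightarrow\pi$ of smooth $G$-representations. It remains to see that $\Pi(D)$ is irreducible. Here I would use: (i) $\mathfrak{Y}_D(G/B)^{bd}$ is the Pontryagin dual of an admissible representation — by Theorem \ref{Mbdfingen} and Corollary \ref{compact+psi-fingen} its sections over each $g\mathcal{C}_0$ are finitely generated over the Iwasawa algebra of an open subgroup of $N_0$, so the Iwasawa-decomposition covering argument of Corollary \ref{irredtoirred} applies; (ii) $D$ is an irreducible étale $T_+$-module (this is precisely the first half of the proof of Corollary \ref{irredtoirred}, applied to $D=D^\vee_\Delta(\pi)$); and (iii) one checks, via Theorem \ref{DveeDeltagrw_0} applied to a generating submodule of $\Pi(D)$, that $D^\vee_\Delta(\Pi(D))=D$ and, more precisely, that $D^\vee_\Delta$ kills no nonzero quotient of $\Pi(D)$. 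A short devissage using the right exactness of $D^\vee_\Delta$ \cite[Thm.\ A]{MultVar}, finite length, and (ii)--(iii) then shows $\Pi(D)$ has no nonzero proper quotient, so the surjection $\Pi(D)\twoheadrightarrow\pi$ with $\pi\neq 0$ is an isomorphism. Finally, $\theta$ induces $\Pi(D^\vee_\Delta(\pi_1))\cong\Pi(D^\vee_\Delta(\pi_2))$, i.e.\ $\pi_1\cong\pi_2$.

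The main obstacle is the step $\Pi(D)\cong\pi$: one has to know that the $G$-equivariant sheaf on $G/B$ and its bounded global sections genuinely depend only on $D$ (and not on the auxiliary pair $\pi,M$), and that the reconstruction $\Pi(D)$ is not strictly larger than $\pi$. This is where all the finiteness input of this section — Theorem \ref{Mbdfingen}, Corollary \ref{compact+psi-fingen}, Proposition \ref{prcontainedinDhash} — and the irreducibility and admissibility statements extracted from Corollary \ref{irredtoirred} have to be combined; the remaining bookkeeping is formal.
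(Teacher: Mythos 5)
There is a genuine gap, and it sits exactly where you locate the ``heart of the matter''. Your construction $\Pi(D)$ presupposes that the $G$-equivariant sheaf on $G/B$ (and hence its space of bounded global sections) depends only on the étale $T_+$-module $D$. That is not what the cited constructions provide: \cite[Theorem 3.32]{SchVZ}, as used in this paper, attaches to an étale $T_+$-module over $\Fq\bs N_0\js$ only a \emph{$B$-equivariant} sheaf on the big cell $\mathcal{C}$, while the $G$-equivariant sheaf $\mathfrak{Y}_{\pi,M}$ on all of $G/B$ from \cite[Section 4.5]{MultVar} is built from the pair $(\pi,M)$: its sections over $\mathcal{C}_0$ are the étale hull $\widetilde{M_\infty^\vee}$ of $M_\infty^\vee=\beta_{\mathcal{C}_0,M}(\pi^\vee)$ (in general a proper submodule of $\mathbb{M}_{\infty,0}(D)$), and the extension across the smaller Bruhat cells uses the $G$-action on $\pi$ itself. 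The introduction of the paper states explicitly that $\mathfrak{Y}_{D,\pi}$ depends on both $D$ and $\pi$. Producing a $G$-equivariant sheaf, functorially in $D$ alone, whose bounded sections recover $\pi$ would amount to constructing a functor in the reverse direction (a higher-rank analogue of Colmez's $D\boxtimes\mathbb{P}^1$, which even for $\GL_2$ needs extra choices), and nothing in Theorem \ref{Mbdfingen}, Corollary \ref{compact+psi-fingen} or Proposition \ref{prcontainedinDhash} supplies this. Moreover, even granting the existence of $\mathfrak{Y}_D(G/B)^{bd}$, your step (iii) -- that $D^\vee_\Delta$ kills no nonzero quotient of $\Pi(D)$ and that a dévissage forces $\Pi(D)$ to have no proper nonzero quotient -- is only asserted; it needs, at a minimum, finite length of $\Pi(D)$ and a computation of $D^\vee_\Delta(\Pi(D))$ that is not justified by Theorem \ref{DveeDeltagrw_0} without an injective map from $\Pi(D)^\vee$ into a sheaf attached to a member of $\mathcal{M}_\Delta(\Pi(D)^{H_{\Delta,0}})$.

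The paper's own argument avoids any reconstruction. It applies the functor to $\pi_1\oplus\pi_2$, uses the codiagonal $D\oplus D\to D$, $(x,y)\mapsto x+y$, to choose $M\in\mathcal{M}^0_\Delta((\pi_1\oplus\pi_2)^{H_{\Delta,0}})$ with $M^\vee[X_\Delta^{-1}]\cong D$ so that both composites $D^\vee_\Delta(\pi_j)\to D^\vee_\Delta(\pi_1\oplus\pi_2)\to M^\vee[X_\Delta^{-1}]$ are isomorphisms; then each $\pi_j^\vee$ maps injectively (being simple and mapping nontrivially) into $\mathfrak{Y}_{\pi_1\oplus\pi_2,M}(G/B)$, while $\beta_{G/B,M}$ cannot be injective on $\pi_1^\vee\oplus\pi_2^\vee$, since by Theorem \ref{DveeDeltagrw_0} injectivity would force $D^\vee_\Delta(\pi_1\oplus\pi_2)\cong D$ rather than $D\oplus D$. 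Hence the kernel is a nonzero subobject meeting neither summand, and both $\pi_1^\vee$ and $\pi_2^\vee$ map isomorphically onto the image, giving $\pi_1\cong\pi_2$. If you want to salvage your approach, you would have to either prove that the étale hull of $M_\infty^\vee$ together with its $G$-equivariant prolongation is independent of $(\pi,M)$, or abandon the intrinsic $\Pi(D)$ and work, as the paper does, with a single auxiliary representation ($\pi_1\oplus\pi_2$) in which both representations can be compared.
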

\begin{proof}
Put $D:=D_\Delta^\vee(\pi_1)$ so we have $D_\Delta^\vee(\pi_1\oplus\pi_2)\cong D\oplus D$. Let 
\begin{eqnarray*}
f\colon D^\vee_\Delta(\pi_1\oplus\pi_2)\cong D\oplus D&\to & D\\
(x,y)&\mapsto & x+y
\end{eqnarray*}
and pick $M\in \mathcal{M}_\Delta^0((\pi_1\oplus\pi_2)^{H_{\Delta,0}})$ be an object such that we have $D^\vee_\Delta(\pi_1\oplus\pi_2)\overset{f}{\twoheadrightarrow} M^\vee[X_\Delta^{-1}]\cong D$, so the composites of both embeddings $D^\vee_\Delta(\pi_j)\to D^\vee_\Delta(\pi_1\oplus\pi_2)$ ($j=1,2$) with $f$ are isomorphisms. In particular, the composite maps $$\pi_j^\vee\hookrightarrow \pi_1^\vee\oplus\pi_2^\vee\overset{\beta_{G/B,M}}{\to}\mathfrak{Y}_{\pi_1\oplus\pi_2,M}(G/B)$$ are nonzero hence injective for $j=1,2$ as $\pi_1$ and $\pi_2$ are irreducible. However, $\beta_{G/B,M}$ \emph{cannot} be injective by Theorem \ref{DveeDeltagrw_0} since $D^\vee_\Delta(\pi_1\oplus\pi_2)\not\cong D$. Therefore we have constructed an isomorphism from both $\pi_1^\vee$ and $\pi_2^\vee$ to the image of $\beta_{G/B,M}$.
\end{proof}

We end this section with the following conditional finiteness result for Breuil's functor \cite{Breuil}.
\begin{cor}\label{Breuilfin}
Assume that $\pi$ is an irreducible smooth representation of $\GL_n(\mathbb{Q}_p)$ over $\Fq$ such that $D_\Delta^\vee(\pi)\neq 0$. Then we have $D^\vee_\xi(\pi)\cong \Fq\bg X\jg\otimes_{\Fq\bg N_{\Delta,0}\jg,\ell}D^\vee_\Delta(\pi)$. In particular, $D_\xi^\vee(\pi)$ is a finitely generated $\Fq\bg X\jg$-module and the Galois representation $\mathbb{V}^\vee\circ D^\vee_\xi(\pi)$ attached by Fontaine's equivalence is finite dimensional and isomorphic to the restriction of $\mathbb{V}_\Delta^\vee\circ D_\Delta^\vee(\pi)$ to the diagonal embedding $G_{\Qp}\hookrightarrow \GQpD\times\Qp^{\times}$. Here $D_\xi^\vee$ denotes Breuil's functor \cite{Breuil}.
\end{cor}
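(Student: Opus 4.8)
The plan is to reduce the statement to a base-change computation, using the finiteness established above to control an otherwise delicate inverse limit. First I would exploit irreducibility: since $D^\vee_\Delta(\pi)\neq 0$ there is a nonzero $M\in\mathcal{M}^0_\Delta(\pi^{H_{\Delta,0}})$, the map $\beta_{G/B,M}\colon\pi^\vee\to\mathfrak{Y}_{\pi,M}(G/B)$ is then nonzero, hence injective because $\pi^\vee$ is irreducible, so Theorem~\ref{DveeDeltagrw_0} yields $D^\vee_\Delta(\pi)=M^\vee[X_\Delta^{-1}]$ for every such $M$. In particular (Corollary~\ref{finitenessofDveeDelta}) $D^\vee_\Delta(\pi)$ is a finitely generated module over $\Fq\bg N_{\Delta,0}\jg=E_\Delta$, and the inverse system $\bigl(M^\vee[X_\Delta^{-1}]\bigr)_{M\in\mathcal{M}_\Delta(\pi^{H_{\Delta,0}})}$ with limit $D^\vee_\Delta(\pi)$ is pro-constant, all of its transition maps being isomorphisms.

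Next I would bring in Breuil's functor. By definition $D^\vee_\xi(\pi)=\varprojlim_{M'}{M'}^\vee[X^{-1}]$ over $M'\in\mathcal{M}_\xi(\pi^{H_{\xi,0}})$, and the comparison between the two constructions carried out in \cite{MultVar} identifies this inverse system with the one obtained from $\bigl(M^\vee[X_\Delta^{-1}]\bigr)_M$ by applying the base change $\ell_*:=\Fq\bg X\jg\otimes_{\Fq\bg N_{\Delta,0}\jg,\ell}(-)$ along the ring homomorphism $\ell$ induced by the diagonal quotient $N_{\Delta,0}\twoheadrightarrow N_{\xi,0}$; note $\ell(1+X_\gamma)=1+X$ for each $\gamma\in\Delta$, so $\ell(X_\Delta)=X^{|\Delta|}$ is a unit and $\ell_*$ is well defined on étale $T_+$-modules over $E_\Delta$. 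The functor $\ell_*$ is right exact and commutes with finite limits, but not a priori with the infinite inverse limit defining $D^\vee_\Delta$; since, however, that inverse system is pro-constant by the previous paragraph, $\ell_*$ does commute with the limit in this case, whence $D^\vee_\xi(\pi)\cong\ell_*\bigl(D^\vee_\Delta(\pi)\bigr)=\Fq\bg X\jg\otimes_{\Fq\bg N_{\Delta,0}\jg,\ell}D^\vee_\Delta(\pi)$. Finite generation of $D^\vee_\xi(\pi)$ over $\Fq\bg X\jg$ follows at once, $\ell$ being a ring map and $D^\vee_\Delta(\pi)$ finitely generated over $\Fq\bg N_{\Delta,0}\jg$; hence $\mathbb{V}^\vee\circ D^\vee_\xi(\pi)$ is finite dimensional by Fontaine's equivalence for classical $(\varphi,\Gamma)$-modules over $\Fq\bg X\jg$.

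For the identification of the Galois representation I would use that the multivariable equivalences $\mathbb{V}_\Delta,\mathbb{D}_\Delta$ of \cite{MultVarGal,CKZ} are compatible with $\ell$: base change along $\ell$ on the $(\varphi,\Gamma)$-module side corresponds to restriction along the diagonal embedding $G_{\Qp}\hookrightarrow\GQpD\times\Qp^\times$ on the Galois side, as is forced by the Drinfeld-lemma description of the equivalence in \cite{CKZ}, under which $\ell$ is the pullback along the diagonal of the relevant coordinate rings. Combining this with the isomorphism $D^\vee_\xi(\pi)\cong\ell_*D^\vee_\Delta(\pi)$ and dualizing gives the asserted description of $\mathbb{V}^\vee\circ D^\vee_\xi(\pi)$. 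The main obstacle is the middle step: one has to make the comparison with Breuil's construction precise at the level of the two defining inverse systems, reconciling the localizations at $X_\Delta$ and at $X$ and tracking the smooth-induction and Hecke data through $\ell$, which is exactly where we lean on \cite{MultVar}. The genuinely new input is Theorem~\ref{DveeDeltagrw_0}: it collapses the $\Delta$-side inverse system to a single term and thereby removes the Mittag--Leffler obstruction that would otherwise prevent $\ell_*$ from commuting with $\varprojlim$.
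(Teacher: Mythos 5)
The crux of your argument is the assertion that the inverse system defining $D^\vee_\xi(\pi)$ is ``identified'' with the $\ell$-base change of the system $\bigl(M^\vee[X_\Delta^{-1}]\bigr)_M$ defining $D^\vee_\Delta(\pi)$, citing a comparison in \cite{MultVar}. That identification is precisely what the corollary is proving, not an available input: in \cite{MultVar} the compatibility $D^\vee_\xi\cong \Fq\bg X\jg\otimes_{\ell}D^\vee_\Delta$ is established only for successive extensions of subquotients of principal series (Cor.\ 3.20 there), and for general smooth representations it is only conjectural --- indeed the paper remarks right after this corollary that one cannot even rule out $D^\vee_\Delta(\pi)=0$ with $D^\vee_\xi(\pi)\neq 0$ and $D^\vee_\xi(\pi)$ not finitely generated, so no formal base-change identity of the two defining systems can exist. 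The two systems are built from different data: Breuil's runs over finitely generated admissible $\Fq\bs X\js[F]$-submodules $M_0$ of the invariants under a larger subgroup $N_1\supseteq H_{\Delta,0}$, and there is no a priori morphism of inverse systems to which your pro-constancy/Mittag--Leffler observation could be applied. So the step ``the $\Delta$-side system is pro-constant, hence $\ell_*$ commutes with the limit'' resolves a difficulty that is not the actual one.

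What the paper does, and what is missing from your plan, is a cell-by-cell analysis of \emph{Breuil's} functor through the filtration $\Fil^\bullet_M(\pi)$ attached to $\beta_{G/B,M}$ (injective since $\pi$ is irreducible): for $w\neq w_0$, any Breuil-datum $M_0\in\mathcal{M}(\mathrm{gr}^w_M(\pi)^{N_1})$ is contained in $M_{w,\infty}$ by Lemma \ref{M'containedinMw}, and $\Fq\bg N_{\Delta,0}\jg\otimes_{\Fq\bs N_0\js}M^\vee_{w,\infty}$ is a torsion $E_\Delta$-module by Proposition \ref{Mbdwkerfingen}; since its annihilator is a nonzero $T_0$-invariant ideal, \cite[Prop.\ 2.1]{MultVar} forces $M_0^\vee[X^{-1}]=0$, i.e.\ $D^\vee_\xi(\mathrm{gr}^w_M(\pi))=0$. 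On the big cell, Corollary \ref{kerneltorsion} and Theorem \ref{DveeDeltagrw_0} identify $\Fq\bg N_{\Delta,0}\jg\otimes_{\Fq\bs N_0\js}M_\infty^\vee$ with $D^\vee_\Delta(\pi)$, and the right exactness of $D^\vee_\xi$ then yields the base-change formula. Your appeal to Theorem \ref{DveeDeltagrw_0} only on the $\Delta$-side does not substitute for this vanishing statement for $D^\vee_\xi$ on the smaller cells, which is the genuinely new input you would need to supply. The final identification of the Galois representation via restriction along the diagonal embedding is fine in spirit (the paper invokes \cite[Cor.\ 3.10]{MultVarGal}), but it only becomes available once the base-change isomorphism has actually been established.
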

\begin{proof}
Since $D_\Delta^\vee(\pi)\neq 0$, there is an object $M\in\mathcal{M}_\Delta(\pi^{H_{\Delta,0}})$. Since $\pi$ is irreducible, the map $\beta_{G/B,M}\colon \pi^\vee\to \mathfrak{Y}_{\pi,M}(G/B)$ is injective and the filtration $\Fil^\bullet_M$ is exhaustive on $\pi$. Using the notations of \cite{Breuil} let $M_0\in \mathcal{M}(\mathrm{gr}_M^w(\pi)^{N_1})$ be an object such that $M_0^\vee$ has no $X$-torsion (this can always be achieved by passing to a finite index subobject of $M_0$). Then by Lemma \ref{M'containedinMw} $M_0$ is contained in $M_{w,\infty}$ hence also in $M_{w,\infty}^{N_1}$, so we obtain a surjective map 
$$\Fq\bg X\jg\otimes_{\ell,\Fq\bg N_{\Delta,0}\jg} \Fq\bg N_{\Delta,0}\jg\otimes_{\Fq\bs N_0\js} M_{w,\infty}^\vee \cong\Fq\bg X\jg\otimes_{\ell,\Fq\bs N_0\js}M_{w,\infty}^\vee\twoheadrightarrow M_0^\vee[X^{-1}]\ .$$
Now if $w\neq w_0$ then $\Fq\bg N_{\Delta,0}\jg\otimes_{\Fq\bs N_0\js} M_{w,\infty}^\vee$ is a torsion $E_\Delta\cong \Fq\bg N_{\Delta,0}\jg$-module by Proposition \ref{Mbdwkerfingen} admitting a semilinear action of the group $T_0$. Therefore the global annihilator of $\Fq\bg N_{\Delta,0}\jg\otimes_{\Fq\bs N_0\js} M_{w,\infty}^\vee$ is a nonzero $T_0$-invariant ideal in $E_\Delta$ whence it contains $1$ by \cite[Prop.\ 2.1]{MultVar}. We deduce $D^\vee_\xi(\mathrm{gr}_M^w(\pi))$ for all $w_0\neq w\in N_G(T)$ since $M_0$ was arbitrary. On the other hand, we compute $$\Fq\bg N_{\Delta,0}\jg\otimes_{\Fq\bs N_0\js} M_{w_0,\infty}^\vee=\Fq\bg N_{\Delta,0}\jg\otimes_{\Fq\bs N_0\js} M_{\infty}^\vee\overset{\text{Cor.\ \ref{kerneltorsion}}}{\cong} M^\vee[X_\Delta^{-1}]\overset{\text{Thm.\ \ref{DveeDeltagrw_0}}}{\cong}D^\vee_\Delta(\pi)\ . $$
The first statement follows from the right exactness of $D^\vee_\xi$ \cite[Prop.\ 3.2]{Breuil} using \cite[Remark 2 after Prop.\ 2.6]{MultVar}. The statement on the corresponding Galois representations is deduced from \cite[Cor.\ 3.10]{MultVarGal} (see also \cite[Remark 1 after Prop.\ 2.6]{MultVar}).
\end{proof}

\begin{rems}\begin{enumerate}
\item At the moment we cannot rule out the possibility that $D^\vee_\Delta(\pi)=0$, but $D^\vee_\xi(\pi)\neq 0$ for some irreducible admissible smooth representation $\pi$ of $\GL_n(\Qp)$ over $\Fq$. In this case it may happen that $D^\vee_\xi(\pi)$ is not even finitely generated. However, we conjecture the isomorphism $D^\vee_\xi(\pi)\cong \Fq\bg X\jg\otimes_{\Fq\bg N_{\Delta,0}\jg,\ell}D^\vee_\Delta(\pi)$ for all admissible smooth representations $\pi$. The conjecture is true in case of successive extensions of subquotients of principal series by \cite[Cor.\ 3.20]{MultVar}.
\item One striking question is to characterize the irreducible smooth representations of $\GL_n(\Qp)$ on which $D^\vee_\Delta$ has nonzero value. For subquotients of principal series we answer this question in the next section. By compatibility with tensor products and parabolic induction \cite[Prop.\ 3.2, Thm.\ B]{MultVar} and the nonvanishing of Colmez' Montréal functor \cite[Thm.\ 0.10]{Colmez1} we also know the nonvanishing of $D^\vee_\Delta$ on representations of the form
\begin{align*}
\Ind_{\overline{P}}^G\sigma_1\otimes\dots\otimes\sigma_r
\end{align*}
where $P=LU\leq G$ is a standard parabolic subgroup with opposite parabolic $\overline{P}$ and Levi component $L\cong \GL_{n_1}\times \dots \times \GL_{n_r}$ such that $1\leq n_j\leq 2$ and $\sigma_j$ is irreducible for all $j=1,\dots,r$ and $\sigma_j$ is, in addition, infinite dimensional in case $n_j=2$.
\item Note that we do not assume admissibility of the representation $\pi$ above. The finiteness condition is encoded, for instance in Theorem \ref{DveeDeltagrw_0}, in the existence of an object $M\in \mathcal{M}_\Delta(\pi^{H_{\Delta,0}})$ such that $\beta_{G/B,M}$ is injective (see Corollary \ref{irredtoirred}).
\end{enumerate}
\end{rems}

\section{Applications to higher extensions of principal series}\label{sec:princser}

Let $F/\Qp$ be a finite extension of $\Qp$ with ring of integers $\mathcal{O}_F$, uniformizer $\varpi\in\mathcal{O}_F$ and residue field $\mathcal{O}_F/\varpi\cong\Fq$. Fix $\alpha\in \Delta$ and a character $\xi\colon \GQpDa\times \mathbb{Q}_p^\times\to \mathbb{F}_q^\times$. Denote by $\Rep^{\alpha,\xi}_{\mathbb{F}_q}(\GQpD\times \mathbb{Q}_p^\times)$ (resp.\ by $\Rep^{\alpha,ab}_{\mathbb{F}_q}(\GQpD\times \mathbb{Q}_p^\times)$) the full subcategory of the category $\Rep_{\mathbb{F}_q}(\GQpD\times \mathbb{Q}_p^\times)$ of finite dimensional continuous representations of $\GQpD\times \mathbb{Q}_p^\times$ over $\mathbb{F}_q$ consisting of objects $V$ on which the subgroup $\GQpDa\times \mathbb{Q}_p^\times\leq \GQpD\times \mathbb{Q}_p^\times$ acts via the character $\xi$ (resp.\ acts via its maximal abelian quotient $\GQpDa^{ab}\times \mathbb{Q}_p^\times$). Moreover, for any positive integer $h$ we denote by $\Rep_h(\GQpD\times \mathbb{Q}_p^\times):=\Rep_{\mathcal{O}_F/\varpi^h}(\GQpD\times \mathbb{Q}_p^\times)$ the category of continuous representations of $\GQpD\times \mathbb{Q}_p^\times$ on finitely generated $\mathcal{O}_F/\varpi^h$-modules. Further, put $\Rep^{\ord}_{\mathbb{F}_q}(\GQpD\times \mathbb{Q}_p^\times)$ (resp.\ $\Rep^{\ord}_h(\GQpD\times \mathbb{Q}_p^\times)$, resp.\ $\Rep^{\alpha,\xi,\ord}_{\mathbb{F}_q}(\GQpD\times \mathbb{Q}_p^\times)$, resp.\ $\Rep^{\alpha,ab,\ord}_{\mathbb{F}_q}(\GQpD\times \mathbb{Q}_p^\times)$) for the full subcategory of $\Rep_{\mathbb{F}_q}(\GQpD\times \mathbb{Q}_p^\times)$ (resp.\ of $\Rep_h(\GQpD\times \mathbb{Q}_p^\times)$, resp.\ of $\Rep^{\alpha,\xi}_{\mathbb{F}_q}(\GQpD\times \mathbb{Q}_p^\times)$, resp.\ of $\Rep^{\alpha,ab}_{\mathbb{F}_q}(\GQpD\times \mathbb{Q}_p^\times)$) whose objects are successive extensions of $1$-dimensional representations over $\Fq$. 

For any positive integer $h$ put $\SP_h:=\SP_h(G):=\SP_{\mathcal{O}_F/\varpi^h}(G)$ for the category of finite length smooth representations of $G=\GL_n(\Qp)$ on $\mathcal{O}_F/\varpi^h$-modules with all Jordan-Hölder factors isomorphic to subquotients of principal series representations (over $\Fq$). Recall \cite[Corollary 3.4 and Theorem 3.19]{MultVar}, \cite[Theorem 3.15]{MultVarGal} that the functor $\mathbb{V}^\vee\circ D^\vee_\Delta$ is exact on $\SP_h(G)$ and lands in the subcategory $\Rep^{\ord}_h(\GQpD\times \mathbb{Q}_p^\times)$. Let $\SP_{\mathbb{F}_q}^{\alpha,\xi}(G)$ (resp.\ $\SP_{\mathbb{F}_q}^{\alpha,ab}(G)$) be the full subcategory of $\SP_{\mathbb{F}_q}(G)$ whose objects are representations $\pi$ in $\SP_{\mathbb{F}_q}(G)$ such that $\mathbb{V}\circ D^\vee_\Delta(\pi)$ lies in $\Rep^{\alpha,\xi,\ord}_{\mathbb{F}_q}(\GQpD\times \mathbb{Q}_p^\times)$ (resp.\ in $\Rep^{\alpha,ab,\ord}_{\mathbb{F}_q}(\GQpD\times \mathbb{Q}_p^\times)$).

\subsection{The kernel of $D^\vee_\Delta$ on $\SP_{\mathcal{O}_F/\varpi^h}$}

For any parabolic subgroup $\overline{B}\leq \overline{P}=L\overline{U}\leq G$ with unipotent radical $\overline{U}\leq \overline{N}$ and Levi component $L\geq T$ we write $\SP_h(L)$ (and $\SP_{\Fq}(L):=\SP_1(L)$) for the category of smooth representations of $L$ over $\mathcal{O}_F/\varpi^h$ of finite length with Jordan--H\"older factors isomorphic to subquotients of principal series, ie.\ of representations parabolically induced from $\overline{B}\cap L$. We may view representations of $L$ as representations of $\overline{P}$ via the quotient map $\overline{P}\twoheadrightarrow \overline{P}/\overline{U}\cong L$. Recall \cite[Theorem 3.5]{MultVar} that $D^\vee_\Delta(\Ind_{\overline{P}}^G\pi_L)\cong E_\Delta\otimes_{E_{\Delta_P}}D^\vee_{\Delta_P}(\pi_L)$ for any object $\pi_L$ in $\SP_{\mathbb{F}_q}(L)$.

Let $\Fin_{\mathbb{F}_q}(L)$ (resp.\ $\Fin_h(L)$) be the full subcategory of $\SP_{\mathbb{F}_q}(L)$ (resp.\ of $\SP_h(L)$) consisting of representations with finite cardinality (ie.\ finite dimensional over $\Fq$, resp.\ finitely generated $\mathcal{O}_F/\varpi^h$-modules). This is a Serre subcategory as any extension of finite dimensional representations is finite dimensional. We define $\Fin^{>B}_{\mathbb{F}_q}(G)$ (resp.\ $\Fin^{>B}_h(G)$) as the full subcategory of $\SP_{\mathbb{F}_q}(G)$ (resp.\ of $\SP_h(G)$) whose objects are representations with all Jordan-Hölder factors isomorphic to subquotients of representations of the form $\Ind_{\overline{P}}^G(\pi_L)$ for some parabolic $\overline{B}<\overline{P}=L\overline{U}\leq G$ and $\pi_L$ in $\Fin_{\mathbb{F}_q}(L)$. This is a Serre subcategory of $\SP_{\mathbb{F}_q}$ (resp.\ of $\SP_h(G)$). We put $\overline{SP}_{\mathbb{F}_q}:=\overline{SP}_{\mathbb{F}_q}(G)$ (resp.\ $\overline{SP}_h:=\overline{SP}_h(G)$) for the quotient category of $\SP_{\mathbb{F}_q}(G)$ by $\Fin^{>B}_{\mathbb{F}_q}(G)$ (resp.\ for the quotient category of $\SP_h(G)$ by $\Fin^{>B}_h(G)$) and put $Q$ for the quotient functor $\SP_{\mathbb{F}_q}(G)\to \overline{SP}_{\mathbb{F}_q}(G)$ (resp.\ $\SP_h(G)\to \overline{SP}_h(G)$). Further denote by $\overline{\SP}_{\mathbb{F}_q}^{\alpha,\xi}$ the image of the category $\SP_{\mathbb{F}_q}^{\alpha,\xi}$ under $Q$.  

In order to describe the irreducible objects in $\SP_{\Fq}$ (hence in $\SP_h$ as any irreducible object is killed by $\varpi$) recall that the generalized Steinberg representation corresponding to a standard parabolic subgroup $\overline{P}$ (ie.\ $\overline{B}\leq \overline{P}$) is given by
\begin{align*}
\Sp_{\overline{P}}=\frac{\Ind_{\overline{P}}^G 1}{\sum_{\overline{Q}\supsetneq \overline{P}}\Ind_{\overline{Q}}^G 1}\ .
\end{align*}
These representations are irreducible \cite{EGKSteinberg}. Further note that if $\overline{P}\supsetneq \overline{B}$ then $\Sp_{\overline{P}}$ is in the subcategory $\Fin^{>B}_{\Fq}(G)$. By the classification \cite{Herzigclass} of irreducible representations of $G=\GL_n(\Qp)$ in characteristic $p$ (or simply by \cite{EGKSteinberg} since we are only considering subquotients of principal series), the irreducible objects in $\SP_{\mathbb{F}_q}$ are of the form $\Ind_{\overline{P}}^G\sigma_1\otimes\cdots\otimes\sigma_r$ where $\overline{P}$ is a standard parabolic subgroup with Levi $\prod_{i=1}^r\GL_{n_i}(\Qp)$ and $\sigma_i\cong \Sp_{\overline{Q_i}}\otimes (\eta_i\circ\det)$ for some smooth character $\eta_i\colon \Qp^\times\to\Fq^\times$ and standard parabolic $\overline{Q_i}\leq \GL_{n_i}(\Qp)$ ($i=1,\dots,r$) with $\eta_i\neq \eta_{i+1}$ ($i=1,\dots,r-1$).

\begin{lem}\label{KerDveeDelta}
Let $\Ind_{\overline{P}}^G\sigma_1\otimes\cdots\otimes\sigma_r$ be an irreducible representation in $\SP_{\Fq}$ as above. The following are equivalent:
\begin{enumerate}[$(i)$]
\item We have $D^\vee_\Delta(\Ind_{\overline{P}}^G\sigma_1\otimes\cdots\otimes\sigma_r)=0$. 
\item There exists an index $i\in \{1,\dots,r\}$ such that $n_i>1$ and $\overline{Q_i}$ is \emph{not} a Borel subgroup in $\GL_{n_i}(\Qp)$.
\item $\Ind_{\overline{P}}^G\sigma_1\otimes\cdots\otimes\sigma_r$ lies in the subcategory $\Fin^{>B}_{\Fq}(G)$.
\end{enumerate}
In particular, $D^\vee_\Delta$ is identically $0$ on the subcategory $\Fin^{>B}_{\Fq}(G)$. Assume $D^\vee_\Delta(\Ind_{\overline{P}}^G\sigma_1\otimes\cdots\otimes\sigma_r)\neq 0$. Then $D^\vee_\Delta(\Ind_{\overline{P}}^G\sigma_1\otimes\cdots\otimes\sigma_r)$ is a free module of rank $1$ over $E_\Delta$ on which the action of $T_+$ is given by the dual of the character $\eta_1^{\otimes n_1}\otimes\cdots\otimes\eta_r^{\otimes n_r}$.
\end{lem}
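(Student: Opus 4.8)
\emph{Plan.} The three conditions will be linked by the cycle $(ii)\Rightarrow(iii)\Rightarrow(i)\Rightarrow(ii)$, and the final rank-one assertion will drop out of the implication ``$\neg(ii)\Rightarrow$ explicit computation''. Throughout I use that $D^\vee_\Delta$ is exact on $\SP_{\Fq}$ (because $\mathbb{V}^\vee\circ D^\vee_\Delta$ is, and $\mathbb{V}^\vee$ is an equivalence of categories), its compatibility with parabolic induction $D^\vee_\Delta(\Ind_{\overline{P}}^G\pi_L)\cong E_\Delta\otimes_{E_{\Delta_P}}D^\vee_{\Delta_P}(\pi_L)$ \cite[Theorem 3.5]{MultVar} and with external tensor products $D^\vee_{\Delta_P}(\sigma_1\boxtimes\cdots\boxtimes\sigma_r)\cong D^\vee_{\Delta^{(1)}}(\sigma_1)\widehat{\otimes}\cdots\widehat{\otimes}D^\vee_{\Delta^{(r)}}(\sigma_r)$ \cite[Prop.\ 3.2]{MultVar} (here $\Delta_P=\bigsqcup_i\Delta^{(i)}$ is the set of simple roots of the Levi $L\cong\prod_i\GL_{n_i}(\Qp)$), together with the classification of irreducibles recalled before the lemma and the irreducibility of the $\Sp_{\overline{P}}$ \cite{EGKSteinberg}. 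The first step is to show $D^\vee_\Delta$ vanishes on $\Fin^{>B}_{\Fq}(G)$ (this gives $(iii)\Rightarrow(i)$ and the ``in particular'' clause). By exactness it suffices to treat a generator $\Ind_{\overline{P}}^G\pi_L$ with $\overline{B}\lneq\overline{P}=L\overline{U}\leq G$ and $\pi_L\in\Fin_{\Fq}(L)$, and then, filtering $\pi_L$ and using exactness of $\Ind_{\overline{P}}^G$ and of $D^\vee_\Delta$, to treat $\pi_L=(\eta_1\circ\det)\boxtimes\cdots\boxtimes(\eta_r\circ\det)$ one-dimensional. Since $\overline{P}\supsetneq\overline{B}$, some $n_i\geq 2$; for that index $\eta_i\circ\det$ is trivial on $N$, so $(\eta_i\circ\det)^{H_{\Delta^{(i)},0}}$ is the whole line, every object of $\mathcal{M}_{\Delta^{(i)}}$ is $0$ or that line, and on it each $X_\alpha=n_\alpha-1$ acts by $0$; hence $(\eta_i\circ\det)^\vee[X_{\Delta^{(i)}}^{-1}]=0$ and $D^\vee_{\Delta^{(i)}}(\eta_i\circ\det)=0$, so the whole $D^\vee_{\Delta_P}(\pi_L)$ and therefore $D^\vee_\Delta(\Ind_{\overline{P}}^G\pi_L)$ vanish.

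\emph{Step $(ii)\Rightarrow(iii)$.} Suppose $n_i>1$ and $\overline{Q_i}$ is not a Borel, so $\overline{B}_{\GL_{n_i}}\lneq\overline{Q_i}$; then $\sigma_i=\Sp_{\overline{Q_i}}\otimes(\eta_i\circ\det)$ is a quotient of $\Ind_{\overline{Q_i}}^{\GL_{n_i}}(\eta_i\circ\det)$, hence lies in $\Fin^{>B}_{\Fq}(\GL_{n_i})$. Each other $\sigma_j$ is a subquotient of a principal series of $\GL_{n_j}$, i.e.\ of $\Ind_{\overline{B}_{\GL_{n_j}}}^{\GL_{n_j}}$ of a character of the torus. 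By transitivity of parabolic induction, $\sigma_1\boxtimes\cdots\boxtimes\sigma_r$ is then a subquotient of $\Ind_{\overline{R}}^L$ of a finite-dimensional representation, where $\overline{R}$ is the product of $\overline{Q_i}$ with the Borels of the other factors, and $\overline{B}_L\lneq\overline{R}$ because of the $i$-th block; hence $\pi=\Ind_{\overline{P}}^G(\sigma_1\boxtimes\cdots\boxtimes\sigma_r)$ is a subquotient of $\Ind_{\overline{R'}}^G$ of a finite-dimensional representation with $\overline{B}\lneq\overline{R'}$, i.e.\ $\pi\in\Fin^{>B}_{\Fq}(G)$. This is elementary but requires careful bookkeeping of the Levi-block structure.

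\emph{Step $\neg(ii)$ and the rank-one computation.} Now suppose $\overline{Q_i}=\overline{B}_{\GL_{n_i}}$ for every $i$ with $n_i>1$, so $\sigma_i=\Sp_{\overline{B}_{\GL_{n_i}}}\otimes(\eta_i\circ\det)$ (full Steinberg) for $n_i\geq 2$ and $\sigma_i=\eta_i$ for $n_i=1$. For $n_i\geq 2$ the defining presentation of $\Sp_{\overline{B}}$ exhibits $\sigma_i$ as the quotient of $\Ind_{\overline{B}_{\GL_{n_i}}}^{\GL_{n_i}}(\eta_i\circ\det)$ by a subrepresentation all of whose Jordan--H\"older factors lie in $\Fin^{>B}_{\Fq}(\GL_{n_i})$ (being subquotients of the $\Ind_{\overline{Q}}^{\GL_{n_i}}(\eta_i\circ\det)$ with $\overline{Q}\supsetneq\overline{B}$); by the first step and right exactness, $D^\vee_{\Delta^{(i)}}(\sigma_i)\cong D^\vee_{\Delta^{(i)}}(\Ind_{\overline{B}_{\GL_{n_i}}}^{\GL_{n_i}}(\eta_i\circ\det))\cong E_{\Delta^{(i)}}$, free of rank one (parabolic-induction formula with $\overline{P}=\overline{B}$, $E_\emptyset=\Fq$), with $T_+$-action dual to the torus character $\eta_i^{\otimes n_i}$; for $n_i=1$ this is the $\GL_1$ statement, rank one dual to $\eta_i$. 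Taking the completed tensor product over $i$ and then applying $E_\Delta\otimes_{E_{\Delta_P}}(-)$ gives $D^\vee_\Delta(\pi)\cong E_\Delta$, free of rank one, with $T_+$-action dual to $\eta_1^{\otimes n_1}\otimes\cdots\otimes\eta_r^{\otimes n_r}$; in particular $D^\vee_\Delta(\pi)\neq 0$, which is $\neg(i)$. This closes the cycle $(i)\Rightarrow(ii)\Rightarrow(iii)\Rightarrow(i)$, and the last sentence of the lemma is exactly this step applied under the now-equivalent hypothesis $\neg(ii)$.

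\emph{Main obstacle.} I expect the delicate part to be this last step and its interaction with the vanishing statement: correctly assembling the two compatibility isomorphisms with (right) exactness, checking that the base change $E_\Delta\otimes_{E_{\Delta_P}}(-)$ preserves ``free of rank one'', and above all pinning down the $T_+$-action throughout --- in particular matching the character $\eta_i\circ\det$ of $\GL_{n_i}$ with $\eta_i^{\otimes n_i}$ on the diagonal torus and handling the $\GL_1$ factors uniformly. The combinatorics of $(ii)\Rightarrow(iii)$ is routine but tedious; everything else is immediate from the cited results together with the elementary observation that a determinant-type character is annihilated by inverting $X_\Delta$.
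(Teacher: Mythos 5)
Your proof is correct and follows essentially the same route as the paper: vanishing of $D^\vee_\Delta$ on $\Fin^{>B}_{\Fq}(G)$ because finite-dimensional representations are killed by inverting $X_{\Delta_P}$, the quotient description of the $\sigma_i$ for $(ii)\Rightarrow(iii)$, and the contrapositive of $(i)\Rightarrow(ii)$ via an explicit rank-one computation using exactness on $\SP_{\Fq}$ and compatibility with parabolic induction. The only (cosmetic) divergence is in that last computation: you work block-by-block on the Levi via the Steinberg presentation and the tensor-product compatibility of \cite[Prop.\ 3.2]{MultVar}, whereas the paper identifies $\Ind_{\overline{P}}^G\sigma_1\otimes\cdots\otimes\sigma_r$ as the unique Jordan--Hölder factor of $\Ind_{\overline{B}}^G\eta_1^{\otimes n_1}\otimes\cdots\otimes\eta_r^{\otimes n_r}$ lying outside $\Fin^{>B}_{\Fq}(G)$ and transfers the computation there directly—both arguments rest on the same cited ingredients.
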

\begin{proof}
For the direction $(ii)\Rightarrow (iii)$ we use the identification $(\Ind_{\overline{P}}^G1)\otimes (\eta\circ\det)\cong \Ind_{\overline{P}}^G((\eta\circ\det)\otimes\cdots(\eta\circ\det))$ to deduce that $\Ind_{\overline{P}}^G\sigma_1\otimes\cdots\otimes\sigma_r$ is a quotient of the parabolically induced representation $\Ind_{\overline{Q}}^G\chi$ for some character $\chi$ of $\overline{Q}$ where $\overline{Q}$ is the standard Borel subgroup containing $\overline{P}$ with Levi component $\prod_{i=1}^rL_i$ and $L_i$ is the Levi component of $\overline{Q_i}$. Therefore $\Ind_{\overline{P}}^G\sigma_1\otimes\cdots\otimes\sigma_r$ lies in $\Fin^{>B}_{\Fq}(G)$ if $L_i$ is not a torus for some $i\in\{1,\dots,r\}$.

For $(iii)\Rightarrow (i)$ note that $\Fin_{\mathbb{F}_q}(L)$ is in the kernel of $D^\vee_{\Delta_P}$ whenever $\overline{P}$ is strictly bigger than $\overline{B}$ by the definiton\footnote{Breuil's functor \cite{Breuil} is defined in a different way for tori than for other reductive groups. The definition of the functor $D^\vee_\Delta$ is not made explicit in \cite{MultVar} in the special case $G=T$. However, in this case we have $\Delta=\emptyset$, $H_{\Delta,0}=\{1\}=N_{\Delta,0}$ whence $X_\Delta=1$. Hence any finite dimensional subrepresentation of $\pi$ lies in $\mathcal{M}_\Delta(\pi^{H_{\Delta,0}})$, so $D^\vee_\emptyset(\pi)=\pi^\vee$ together with the action of the monoid $T=T_+$ if $\pi$ is locally finite.} of $D^\vee_\Delta$. Indeed, in case $L$ is not a torus then $\Delta_P$ is nonempty whence $X_{\Delta_P}\neq 1$. So for any $M_0\in \mathcal{M}_{\Delta_P}(\pi_L^{H_{\Delta_P,0}})$ and finite dimensional representation $\pi_L$ of $L$ the dual $M_0^\vee$ is killed by some power of $X_\Delta$ as $M_0^\vee$ is an $\Fq\bs X_{\Delta_P}\js$-module with $\dim_{\Fq}M_0^\vee\leq \dim_{\Fq}\pi_L<\infty$. This means we have $M_0^\vee[X_{\Delta_P}^{-1}]=0$ for all $M_0\in \mathcal{M}_{\Delta_P}(\pi_L^{H_{\Delta_P,0}})$ whence $D^\vee_{\Delta_P}(\pi_L)=\varprojlim_{M_0\in \mathcal{M}_{\Delta_P}(\pi_L^{H_{\Delta_P,0}})}M_0^\vee[X_{\Delta_P}^{-1}]=0$. We deduce $D^\vee_\Delta(\Ind_{\overline{P}}^G\pi_L)=0$ by \cite[Theorem 3.5]{MultVar}. Since $D^\vee_\Delta$ is exact on $\SP_{\Fq}$ \cite[Theorem 3.19]{MultVar}, this means $D^\vee_\Delta$ is identically $0$ on $\Fin^{>B}_{\Fq}(G)$.

For $(i)\Rightarrow (ii)$ assume $\overline{Q_i}$ is a Borel subgroup in $\GL_{n_i}(\Qp)$ for all $i\in\{1,\dots,r\}$ with $n_i>1$. In this case $\Ind_{\overline{P}}^G\sigma_1\otimes\dots\sigma_r$ is the only irreducible subquotient of the principal series representation $\Ind_{\overline{B}}^G\eta_1^{\otimes n_1}\otimes\cdots\otimes\eta_r^{\otimes n_r}$ which does \emph{not} lie in the subcategory $\Fin^{>B}_{\Fq}(G)$. So we have 
\begin{align*}
D^\vee_\Delta(\Ind_{\overline{P}}^G\sigma_1\otimes\dots\sigma_r)\cong D^\vee_\Delta(\Ind_{\overline{B}}^G\eta_1^{\otimes n_1}\otimes\cdots\otimes\eta_r^{\otimes n_r})\cong E_\Delta\otimes_{\Fq}(\eta_1^{\otimes n_1}\otimes\cdots\otimes\eta_r^{\otimes n_r})^\vee
\end{align*}
as claimed.
\end{proof}

In view of Lemma \ref{KerDveeDelta} the functor $D^\vee_\Delta$ factors through $\overline{SP}_h$ and by an abuse of notation we still denote by $D^\vee_\Delta$ the induced functor on $\overline{\SP}_h$. For an object $\pi$ in $\SP_h$ let $\ell(\pi)$ be the generic rank of $D^\vee_\Delta(\pi)$ as a module over $\OED$. By Lemma \ref{KerDveeDelta} and \cite[Theorem 3.19]{MultVar} $\ell(\pi)$ equals the number of Jordan-Hölder factors of $\pi$ not belonging to $\Fin^{>B}_{\Fq}(G)$.

Our main result in this section is the following refinement of \cite[Theorem 4.26]{MultVar}. The proof is similar to that of \cite[Theorem 4.26]{MultVar}. However, \cite[Lemmata 4.24 and 4.25]{MultVar} rely on the work of Hauseux \cite{Hauseux1,Hauseux} (hence on Emerton's ordinary parts functor \cite{EmertonOrd1}) which we cannot use here as the objects are not just successive extensions of principal series but equivalence classes in the quotient category $\overline{\SP}_h$. So we shall use Theorem \ref{DveeDeltagrw_0} instead.
\begin{thm}\label{fullyfaithful}
The functor $D^\vee_\Delta$ is fully faithful on $\overline{\SP}_h$ for all $h\geq 1$.
\end{thm}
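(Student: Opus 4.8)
The plan is to prove faithfulness and fullness separately, using throughout that $D^\vee_\Delta$ is exact on $\SP_h$ (\cite[Theorem 3.19]{MultVar}) and hence induces an exact functor on the Serre quotient $\overline{\SP}_h$, that it kills exactly the objects whose class in $\overline{\SP}_h$ vanishes (Lemma \ref{KerDveeDelta} together with exactness and the equality of $\ell(\pi)$ with the length of $Q(\pi)$ in $\overline{\SP}_h$), and the quantitative input Theorem \ref{DveeDeltagrw_0}. Faithfulness is the easy half: a morphism $\varphi\in\Hom_{\overline{\SP}_h}(Q(\pi_1),Q(\pi_2))$ is represented by a diagram $\pi_1\xleftarrow{a}\rho\xrightarrow{b}\pi_2$ in $\SP_h$ with $\Ker a$ and $\Coker a$ in $\Fin^{>B}_h(G)$, so $D^\vee_\Delta(a)$ is an isomorphism and $D^\vee_\Delta(\varphi)=D^\vee_\Delta(a)^{-1}\circ D^\vee_\Delta(b)$. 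If $D^\vee_\Delta(\varphi)=0$ then $D^\vee_\Delta(b)=0$, hence $D^\vee_\Delta(\operatorname{im}b)=0$ by exactness, hence $\operatorname{im}b\in\Fin^{>B}_h(G)$ by Lemma \ref{KerDveeDelta}, hence $Q(b)=0$ and $\varphi=0$.

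For fullness the first step is to put objects in normal form. Given $\pi$ in $\SP_h$, the system $\{M^\vee[X_\Delta^{-1}]\}_{M\in\mathcal{M}_\Delta(\pi^{H_{\Delta,0}})}$ has surjective transition maps and limit $D^\vee_\Delta(\pi)$, which is finitely generated over $\OED$ by Corollary \ref{finitenessofDveeDelta}; since $\OED$ is Noetherian the system stabilises, so there is $M_0$ with $D^\vee_\Delta(\pi)=M_0^\vee[X_\Delta^{-1}]$. Letting $\pi_{M_0}\leq\pi$ be the $G$-subrepresentation generated by $M_0$, the map $\beta_{G/B,M_0}$ has image $\pi_{M_0}^\vee$ (\cite[Cor.\ 4.21]{MultVar}) and restricts to an injection $\pi_{M_0}^\vee\hookrightarrow\mathfrak{Y}_{\pi_{M_0},M_0}(G/B)$, so Theorem \ref{DveeDeltagrw_0} gives $D^\vee_\Delta(\pi_{M_0})=M_0^\vee[X_\Delta^{-1}]=D^\vee_\Delta(\pi)$; as $D^\vee_\Delta$ is exact, this forces the inclusion $\pi_{M_0}\hookrightarrow\pi$ to become an isomorphism in $\overline{\SP}_h$. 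Hence every object of $\overline{\SP}_h$ is isomorphic to one of the form $\pi_M$ with $M\in\mathcal{M}_\Delta$ and $\beta_{G/B,M}$ injective, and for such objects $D^\vee_\Delta(\pi_M)=M^\vee[X_\Delta^{-1}]$.

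Now let $g\colon D^\vee_\Delta(\pi_2)\to D^\vee_\Delta(\pi_1)$ be given, with $\pi_i=\pi_{M_i}$ as above. Set $\pi:=\pi_1\oplus\pi_2$ and $M:=M_1\oplus M_2\in\mathcal{M}_\Delta(\pi^{H_{\Delta,0}})$, so $D^\vee_\Delta(\pi)=D^\vee_\Delta(\pi_1)\oplus D^\vee_\Delta(\pi_2)$ and $\beta_{G/B,M}$ is injective, and inside $D^\vee_\Delta(\pi)$ consider the cograph $\Gamma_g:=\{(g(y),y):y\in D^\vee_\Delta(\pi_2)\}$, an étale sub-$T_+$-module carried isomorphically onto $D^\vee_\Delta(\pi_2)$ by the second projection. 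The aim is to produce a $G$-subrepresentation $\Sigma\leq\pi$ with $D^\vee_\Delta(\pi/\Sigma)=\Gamma_g$ as a submodule of $D^\vee_\Delta(\pi)$; granting this, the composite $\pi_2\hookrightarrow\pi\twoheadrightarrow\pi/\Sigma$ has $D^\vee_\Delta$ equal to the isomorphism $\Gamma_g\xrightarrow{\sim}D^\vee_\Delta(\pi_2)$, hence is an isomorphism in $\overline{\SP}_h$, and setting $f:=Q(\pi_2\xrightarrow{\sim}\pi/\Sigma)^{-1}\circ Q(\pi_1\hookrightarrow\pi\twoheadrightarrow\pi/\Sigma)$ a short diagram chase gives $D^\vee_\Delta(f)=g$. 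To construct $\Sigma$, one realises the quotient $D^\vee_\Delta(\pi)\twoheadrightarrow D^\vee_\Delta(\pi)/\Gamma_g$ as coming from an $\mathcal{M}_\Delta$-submodule: there is $M'\leq M$ in $\mathcal{M}_\Delta$ (after harmlessly passing to finite-index subobjects of the $M_i$) with $M'^\vee[X_\Delta^{-1}]=D^\vee_\Delta(\pi)/\Gamma_g$ compatibly with the canonical surjection from $D^\vee_\Delta(\pi)$, and then $\Sigma:=\pi_{M'}$ works, since by exactness together with $D^\vee_\Delta(\pi_{M'})=M'^\vee[X_\Delta^{-1}]$ (Theorem \ref{DveeDeltagrw_0}, using injectivity of $\beta_{G/B,M'}$) one gets $D^\vee_\Delta(\pi/\pi_{M'})=\Ker\bigl(D^\vee_\Delta(\pi)\twoheadrightarrow D^\vee_\Delta(\pi_{M'})\bigr)=\Gamma_g$.

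The step that carries all the weight — the substitute here for \cite[Lemmata 4.24 and 4.25]{MultVar}, which relied on Hauseux's $\Ext^1$-computations \cite{Hauseux1,Hauseux} for principal series and are unavailable in the quotient category — is the rigidity statement that if $Q(\pi)$ has length $r>1$ in $\overline{\SP}_h$, then $\pi^\vee$ admits no $G$-equivariant embedding into the global sections on $G/B$ of the sheaf $\mathfrak{Y}_D$ attached to a rank-$1$ étale $T_+$-module $D$ over $E_\Delta$. I would prove this by restricting such a hypothetical embedding over the big cell to a $\psi_t$-equivariant map $\pi^\vee\to\mathfrak{Y}_D(\mathcal{C}_0)=\mathbb{M}_{\infty,0}(D)$: its image is a compact, $\psi_t$-stable $\Fq\bs N_0\js$-submodule on which every $\psi_t$ is surjective, hence finitely generated (Corollary \ref{compact+psi-fingen}) with $\pr_{0,\infty}$-image inside $D^\#$ (Proposition \ref{prcontainedinDhash}); feeding this through Theorem \ref{DveeDelta0gr} (only the big-cell graded piece contributes) and the computation of $D^\vee_\Delta$ from $\mathcal{M}_\Delta$-submodules shows that $D^\vee_\Delta(\pi)$ is a subquotient of $D$, hence of rank $\leq 1$, so $\ell(\pi)\leq 1$. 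This rigidity is exactly what controls the length of $\pi_{M'}$ and lets one verify that the $M'$ produced above has $\beta_{G/B,M'}$ injective, so that Theorem \ref{DveeDeltagrw_0} applies; the remaining bookkeeping is routine, and the case of general $h\geq 1$ follows either because all the cited inputs hold over $\mathcal{O}_F/\varpi^h$ or by a dévissage in $h$ along the lines of the end of the proof of Theorem \ref{DveeDeltagrw_0}. I expect this rigidity statement to be the main obstacle; everything else is formal manipulation once Theorem \ref{DveeDeltagrw_0} is in hand.
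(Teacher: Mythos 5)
Your faithfulness argument and the core of your fullness construction are in substance the paper's own proof, only packaged a little differently: the paper factors the given morphism through its image $D$, realizes $D$ as $D^\vee_\Delta$ of a subrepresentation $\pi_3\leq\pi_1$ and of a quotient $\pi_4$ of $\pi_2$, and then glues these via an object $M_3$ realizing the sum map $D\oplus D\to D$, whereas you realize the cograph $\Gamma_g\subseteq D^\vee_\Delta(\pi_1\oplus\pi_2)$ in one stroke via an $M'$ with $M'^\vee[X_\Delta^{-1}]\cong D^\vee_\Delta(\pi_1\oplus\pi_2)/\Gamma_g$. Both versions rest on exactly the same inputs: exactness on $\SP_h$, Lemma \ref{KerDveeDelta}, the fact (used identically by the paper when choosing $M_1,M_2,M_3$) that finitely generated étale quotients of $D^\vee_\Delta(\pi)$ are realized inside the system $\mathcal{M}_\Delta(\pi^{H_{\Delta,0}})$, \cite[Cor.\ 4.21]{MultVar}, and Theorem \ref{DveeDeltagrw_0}. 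One quibble: in your normal-form step the justification ``the system stabilises because $\OED$ is Noetherian'' does not work as stated (the kernels of $D^\vee_\Delta(\pi)\to M^\vee[X_\Delta^{-1}]$ form a \emph{descending} chain, and Noetherianity gives only the ascending chain condition); but that step is avoidable, and the paper indeed avoids it by realizing only the image and the quotient rather than $D^\vee_\Delta(\pi_i)$ itself.

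Where you go astray is the final paragraph. No rank-$1$ rigidity statement is needed, and your reason for wanting one --- ``to verify that the $M'$ produced above has $\beta_{G/B,M'}$ injective'' --- rests on a misreading of which injectivity Theorem \ref{DveeDeltagrw_0} requires. You apply that theorem to the representation $\pi_{M'}$ generated by $M'$, and for \emph{that} representation the injectivity is automatic: by \cite[Cor.\ 4.21]{MultVar} the image of $\beta_{G/B,M'}$ is the Pontryagin dual of the $G$-subrepresentation generated by $M'$, which is all of $\pi_{M'}^\vee$; no injectivity on the ambient $\pi^\vee$ is needed. This is precisely the paper's device --- its auxiliary representations $\pi_3,\pi_5,\pi_6$ are \emph{defined} as Pontryagin duals of images of $\beta$, so Theorem \ref{DveeDeltagrw_0} applies to them on the nose. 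The rank-$1$ non-embedding statement you sketch is the strategy of the earlier proof of \cite[Thm.\ G]{MultVar} for $\SP^0_h$, and the entire point of Theorem \ref{DveeDeltagrw_0} is to replace it here, since extensions in the quotient category $\overline{\SP}_h$ need not lift to genuine extensions and one cannot dévisser to rank-$1$ targets. So drop that paragraph: what remains of your argument is complete and coincides with the paper's proof.
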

\begin{proof}
The faithfulness is clear by exactness using Lemma \ref{KerDveeDelta}.

Let $\pi_1,\pi_2$ be objects in $\SP_h$ and assume we have a morphism $f\colon D^\vee_\Delta(\pi_1)\to D^\vee_\Delta(\pi_2)$ of étale $T_+$-modules and let $D$ be the image. Pick an object $M_1\in \mathcal{M}_\Delta^{0}(\pi_1^{H_{\Delta,0}})$ with $M_1^\vee[X_\Delta^{-1}]\cong D$. We have a $G$-equivariant continuous map $\beta_{G/B,M_1}\colon \pi_1^\vee\to \mathfrak{Y}_{\pi_1,M_1}(G/B)$. We denote by $\pi_3:=\beta_{G/B,M_1}(\pi_1^\vee)^\vee$ the Pontryagin dual of the image. So $\pi_3$ is a subrepresentation in $\pi_1$ and we have $D^\vee_\Delta(\pi_3)\cong M_1^\vee[X_\Delta^{-1}]\cong D$ by Theorem \ref{DveeDeltagrw_0}. On the other hand, pick an object $M_2\in \mathcal{M}_\Delta^0(\pi_2^{H_{\Delta,0}})$ such that $M_2^\vee[X_\Delta^{-1}]\cong D^\vee_\Delta(\pi_2)/D$. Put $\pi_5:=\beta_{G/B,M_2}(\pi_2^\vee)^\vee\leq \pi_2$ and $\pi_4:=\pi_2/\pi_5$. By Theorem \ref{DveeDeltagrw_0} we have $D^\vee_\Delta(\pi_5)\cong M_2^\vee[X_\Delta^{-1}]\cong D^\vee_\Delta(\pi_2)/D$, so by \cite[Theorem C]{MultVar} we obtain $D^\vee_\Delta(\pi_4)\cong D$. Finally, put
\begin{eqnarray*}
f_1\colon D^\vee_\Delta(\pi_3\oplus\pi_4)\cong D\oplus D&\to & D\\
(x,y)&\mapsto & x+y
\end{eqnarray*}
and let $M_3\in \mathcal{M}_\Delta^0((\pi_3\oplus\pi_4)^{H_{\Delta,0}})$ be an object such that we have $D^\vee_\Delta(\pi_3\oplus\pi_4)\overset{f_1}{\twoheadrightarrow} M_3^\vee[X_\Delta^{-1}]\cong D$, so the composites of both embeddings $D^\vee_\Delta(\pi_j)\to D^\vee_\Delta(\pi_3\oplus\pi_4)$ ($j=3,4$) with $f_1$ are isomorphisms. We put $\pi_6:=\beta_{G/B,M_3}(\pi_3^\vee\oplus\pi_4^\vee)^\vee$ so that the composite map $$\pi_j^\vee\hookrightarrow \pi_3^\vee\oplus\pi_4^\vee\overset{\beta_{G/B,M_3}}{\twoheadrightarrow}\pi_6^\vee$$ induces an isomorphism in the quotient category $\overline{\SP}_h$ after taking Pontryagin duals for each $j=3,4$ by Theorem \ref{DveeDeltagrw_0} and Lemma \ref{KerDveeDelta}. In particular, we obtain a morphism $\widetilde{f}\colon Q(\pi_2)\twoheadrightarrow Q(\pi_4)\cong Q(\pi_3)\hookrightarrow Q(\pi_1)$ such that $D^\vee_\Delta(\widetilde{f})=f$. This proves $D^\vee_\Delta$ is full as a functor on $\overline{\SP}_h$.
\end{proof}

\begin{cor}\label{Qisom}
Let $\pi_1,\pi_2$ be two objects in $\SP_h$ such that $D^\vee_\Delta(\pi_1)\cong D^\vee_\Delta(\pi_2)$. Then we have $Q(\pi_1)\cong Q(\pi_2)$ in the quotient category $\overline{\SP}_h$.
\end{cor}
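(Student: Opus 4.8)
The plan is to deduce this immediately from the full faithfulness established in Theorem \ref{fullyfaithful}, together with the elementary fact that a fully faithful functor reflects isomorphisms. Recall that, by Lemma \ref{KerDveeDelta}, the functor $D^\vee_\Delta$ factors through the quotient functor $Q\colon \SP_h\to\overline{\SP}_h$, so it makes sense to regard $D^\vee_\Delta$ as a functor defined on $\overline{\SP}_h$, and it is this induced functor to which Theorem \ref{fullyfaithful} applies.

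First I would fix an isomorphism $\phi\colon D^\vee_\Delta(\pi_1)\to D^\vee_\Delta(\pi_2)$ of étale $T_+$-modules, together with its inverse $\phi^{-1}\colon D^\vee_\Delta(\pi_2)\to D^\vee_\Delta(\pi_1)$. Since $D^\vee_\Delta$ is full on $\overline{\SP}_h$, there exist morphisms $\widetilde{\phi}\colon Q(\pi_1)\to Q(\pi_2)$ and $\widetilde{\psi}\colon Q(\pi_2)\to Q(\pi_1)$ in the quotient category with $D^\vee_\Delta(\widetilde{\phi})=\phi$ and $D^\vee_\Delta(\widetilde{\psi})=\phi^{-1}$.

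Next I would check that $\widetilde{\phi}$ and $\widetilde{\psi}$ are mutually inverse. We compute $D^\vee_\Delta(\widetilde{\psi}\circ\widetilde{\phi})=D^\vee_\Delta(\widetilde{\psi})\circ D^\vee_\Delta(\widetilde{\phi})=\phi^{-1}\circ\phi=\mathrm{id}_{D^\vee_\Delta(\pi_1)}=D^\vee_\Delta(\mathrm{id}_{Q(\pi_1)})$, and since $D^\vee_\Delta$ is faithful on $\overline{\SP}_h$ this forces $\widetilde{\psi}\circ\widetilde{\phi}=\mathrm{id}_{Q(\pi_1)}$. The symmetric computation gives $\widetilde{\phi}\circ\widetilde{\psi}=\mathrm{id}_{Q(\pi_2)}$. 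Hence $\widetilde{\phi}$ is an isomorphism in $\overline{\SP}_h$ and $Q(\pi_1)\cong Q(\pi_2)$, as claimed.

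There is essentially no obstacle here: the statement is a formal consequence of Theorem \ref{fullyfaithful}. The only point worth emphasizing is that one genuinely uses both halves of "fully faithful"---fullness to lift $\phi$ and $\phi^{-1}$ to morphisms in $\overline{\SP}_h$, and faithfulness to conclude that these lifts compose to the respective identities.
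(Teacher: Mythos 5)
Your proposal is correct and follows exactly the route the paper intends: the paper simply states that the corollary is an immediate consequence of Theorem \ref{fullyfaithful}, and your argument is the standard verification that a fully faithful functor reflects isomorphisms (fullness to lift $\phi$ and $\phi^{-1}$, faithfulness to see the lifts compose to identities). Nothing further is needed.
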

\begin{proof}
This is an immediate consequence of Theorem \ref{fullyfaithful}.
\end{proof}

\begin{rem}
Corollary \ref{Qisom} gives a new proof of \cite[Lemma 4.24]{MultVar} not relying on results of Hauseux \cite{Hauseux1,Hauseux}. Indeed, let $\chi,\chi'\colon T\to \Fq^\times$ be two not necessarily distinct characters such that both principal series representations $\Ind_{\overline{B}}^G\chi$ and $\Ind_{\overline{B}}^G\chi'$ are irreducible. Then the natural map $$\Ext^1(\Ind_{\overline{B}}^G\chi',\Ind_{\overline{B}}^G\chi)\to \Ext^1(D^\vee_\Delta(\Ind_{\overline{B}}\chi),D^\vee_\Delta(\Ind_{\overline{B}}\chi'))$$ is injective.
\end{rem}
\begin{proof}
Assume we have a nonsplit extension $0\to \Ind_{\overline{B}}^G\chi\to \pi_1\to \Ind_{\overline{B}}^G\chi'\to 0$ such that $D^\vee_\Delta(\pi_1)\cong D^\vee_\Delta(\pi_2)$ where we put $\pi_2:=\Ind_{\overline{B}}^G\chi\oplus \Ind_{\overline{B}}^G\chi'$. Then by the proof of Theorem \ref{fullyfaithful} we have another object $\pi_3$ in $\SP_{\Fq}$ and morphisms $f_j\colon \pi_3\to\pi_j$ ($j=1,2$) that become isomorphisms in the quotient category $\overline{\SP}_{\Fq}$. Since $\pi_1$ and $\pi_2$ have no subquotients in the category $\Fin^{>B}_{\Fq}(G)$, we must have $\Ker(f_1)=\Ker(f_2)$ by Lemma \ref{KerDveeDelta} whence $\pi_1\cong \pi_2$, contradiction.
\end{proof}

\subsection{Characterizing parabolically induced mod $p$ representations}\label{sec:charparind}

Let $\overline{P}=L\overline{U}\leq G$ be a parabolic subgroup and let $\pi$ be an object in $\Rep^{\mathrm{sm}}_h(G)$. There are (at least) $2$ possibilities to generalize Colmez' Jacquet-functor $J$ \cite[sections IV.3, VII.1]{Colmez1}. Putting $J_{\overline{P}}(\pi):=\pi_{\overline{U}}=((\pi^\vee)^{\overline{U}})^\vee$ we obtain a functor $J_{\overline{P}}\colon \Rep^{\mathrm{sm}}_h(G)\to \Rep^{\mathrm{sm}}_h(L)$. On the other hand, we also propose the following construction. Assume that $\pi$ has finite length. By Corollary \ref{finitenessofDveeDelta}, $D^\vee_\Delta(\pi)$ is finitely generated over $E_\Delta$, so we have an object $M\in \mathcal{M}_\Delta(\pi^{H_{\Delta,0}})$ such that $D^\vee_\Delta(\pi)=M^\vee[X_\Delta^{-1}]$. So we have a $G$-equivariant continuous map $\beta_{M,G/B}\colon \pi^\vee\to \mathfrak{Y}_{M,\pi}(G/B)$. Assume for simplicity that $\overline{P}$ contains $\overline{B}$ (there is always such a conjugate of $\overline{P}$). Put $\overline{B}_L:=L\cap \overline{B}$. We have the embedding $$\iota_{\overline{P}}\colon L/\overline{B}_L\cong \overline{P}/\overline{B}\hookrightarrow G/\overline{B}\cong G/B$$
where the last identification is given by $g\overline{B}\mapsto gw_0B$.

For an open subset $U\subseteq L/\overline{B}_L$ there exists an open subset $V\subseteq G/B$ such that $U=\iota_{\overline{P}}^{-1}(V)$ since $\iota_{\overline{P}}$ is a closed embedding. We define $$\mathfrak{Y}^{\overline{P}}_{\pi,M}(U):=\Ker\left(\res^V_{V\setminus\iota_{\overline{P}}(U)}\colon \mathfrak{Y}_{\pi,M}(V)\to \mathfrak{Y}_{\pi,M}(V\setminus\iota_{\overline{P}}(U))\right)\ .$$ 
\begin{lem}
The abelian group $\mathfrak{Y}^{\overline{P}}_{\pi,M}(U)$ does not depend on the choice of $V$. Moreover, the assignment $\mathfrak{Y}^{\overline{P}}_{\pi,M}$ is a $\overline{P}$-equivariant sheaf on $L/\overline{B}_L\cong \overline{P}/\overline{B}$.
\end{lem}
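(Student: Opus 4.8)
The plan is to recognize $\mathfrak{Y}^{\overline{P}}_{\pi,M}$ as the subsheaf of ``sections supported on $Z$'' of the $G$-equivariant sheaf $\mathfrak{Y}_{\pi,M}$, transported along $\iota_{\overline{P}}$, where $Z:=\iota_{\overline{P}}(L/\overline{B}_L)=\overline{P}w_0B/B\subseteq G/B$. First I would record that, as already noted before the statement, $\iota_{\overline{P}}$ is a closed embedding: it is a continuous injection of the compact space $\overline{P}/\overline{B}\cong L/\overline{B}_L$ into the Hausdorff space $G/B$. Hence $Z$ is closed, every open $U\subseteq L/\overline{B}_L$ equals $\iota_{\overline{P}}^{-1}(V)$ for some open $V\subseteq G/B$, and for any such $V$ one has $\iota_{\overline{P}}(U)=V\cap Z$, so that $V\setminus\iota_{\overline{P}}(U)=V\setminus Z$ is again open. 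Consequently $\mathfrak{Y}^{\overline{P}}_{\pi,M}(U)=\Ker\big(\res^V_{V\setminus Z}\colon\mathfrak{Y}_{\pi,M}(V)\to\mathfrak{Y}_{\pi,M}(V\setminus Z)\big)$ is exactly the subgroup of sections of $\mathfrak{Y}_{\pi,M}$ over $V$ whose support lies in $Z$.

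For independence of the choice of $V$, suppose $V_1,V_2$ are open in $G/B$ with $V_1\cap Z=V_2\cap Z=\iota_{\overline{P}}(U)$; then $V_1\cap V_2$ also meets $Z$ exactly in $\iota_{\overline{P}}(U)$, and $V_i=(V_1\cap V_2)\cup(V_i\setminus Z)$ for $i=1,2$. A section of $\mathfrak{Y}_{\pi,M}$ over $V_i$ supported on $Z$ vanishes on $V_i\setminus Z$, so the locality axiom for $\mathfrak{Y}_{\pi,M}$ makes $\res^{V_i}_{V_1\cap V_2}$ injective on such sections; and glueing a given section over $V_1\cap V_2$ supported on $Z$ with the zero section over $V_i\setminus Z$ — they agree on the overlap $(V_1\cap V_2)\setminus Z$ — shows it is surjective onto the sections over $V_1\cap V_2$ supported on $Z$. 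Composing the two isomorphisms identifies $\mathfrak{Y}^{\overline{P}}_{\pi,M}(U)$ computed from $V_1$ with the one computed from $V_2$, which is the first assertion.

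Next I would check the sheaf axioms. For $U'\subseteq U$ one takes $V'=V\cap W'$ with $W'$ any open subset of $G/B$ meeting $Z$ in $\iota_{\overline{P}}(U')$; then $V'\subseteq V$, $V'\cap Z=\iota_{\overline{P}}(U')$, and $V'\setminus\iota_{\overline{P}}(U')=V'\setminus Z\subseteq V\setminus Z$, so $\res^V_{V'}$ carries sections supported on $Z$ to sections supported on $Z$ and, by the previous paragraph, induces a restriction map $\mathfrak{Y}^{\overline{P}}_{\pi,M}(U)\to\mathfrak{Y}^{\overline{P}}_{\pi,M}(U')$ independent of all choices and functorial in $U'$. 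Given an open cover $U=\bigcup_iU_i$, fix $V$ with $V\cap Z=\iota_{\overline{P}}(U)$ and $V_i\subseteq V$ with $V_i\cap Z=\iota_{\overline{P}}(U_i)$; since $\bigcup_iV_i\cup(V\setminus Z)=V$ and, $\iota_{\overline{P}}$ being injective, $(V_i\cap V_j)\cap Z=\iota_{\overline{P}}(U_i\cap U_j)$, the locality and glueing axioms for $\mathfrak{Y}^{\overline{P}}_{\pi,M}$ on $U=\bigcup_iU_i$ follow from the corresponding axioms for $\mathfrak{Y}_{\pi,M}$ on the open cover $\{V_i\}_i\cup\{V\setminus Z\}$ of $V$ (a compatible family of sections supported on $Z$, together with the zero section on $V\setminus Z$, glues uniquely to a section of $\mathfrak{Y}_{\pi,M}$ over $V$ supported on $Z$). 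Equivalently, one is merely observing that $W\mapsto\Ker(\res^W_{W\setminus Z})$ is the usual subsheaf of $\mathfrak{Y}_{\pi,M}$ of sections supported on the closed set $Z$, hence of the form $(\iota_{\overline{P}})_*\mathcal{G}$ for a unique sheaf $\mathcal{G}$ on $L/\overline{B}_L$, and $\mathfrak{Y}^{\overline{P}}_{\pi,M}=\mathcal{G}$.

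Finally, for $\overline{P}$-equivariance I would use that $Z=\overline{P}w_0B/B$ is $\overline{P}$-stable, that $\iota_{\overline{P}}$ is $\overline{P}$-equivariant (with $\overline{U}\lhd\overline{P}$ acting trivially on $\overline{P}/\overline{B}\cong L/\overline{B}_L$, so that the action factors through $L$), and that $\mathfrak{Y}_{\pi,M}$ is $\overline{P}$-equivariant by restricting its $G$-equivariant structure: for $p\in\overline{P}$, if $V$ is open with $V\cap Z=\iota_{\overline{P}}(U)$ then $pV\cap Z=\iota_{\overline{P}}(pU)$, and the isomorphism $\mathfrak{Y}_{\pi,M}(V)\overset{\sim}{\to}\mathfrak{Y}_{\pi,M}(pV)$ commutes with restriction to the complements of $Z$, hence restricts to $\mathfrak{Y}^{\overline{P}}_{\pi,M}(U)\overset{\sim}{\to}\mathfrak{Y}^{\overline{P}}_{\pi,M}(pU)$, the cocycle identities being inherited from those of $\mathfrak{Y}_{\pi,M}$. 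I expect the only delicate point to be the independence of $V$ in the second paragraph together with the resulting well-definedness of the restriction maps — that is, making precise that a local section supported on the closed subspace $Z$ is ``the same datum'' in every open neighbourhood of $Z\cap V$; the rest is a formal unwinding of the sheaf axioms for $\mathfrak{Y}_{\pi,M}$ and of its equivariant structure.
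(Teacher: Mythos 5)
Your argument is correct and takes essentially the same route as the paper: the paper likewise identifies $\mathfrak{Y}^{\overline{P}}_{\pi,M}$ with the kernel of the $\overline{P}$-equivariant morphism $\mathfrak{Y}_{\pi,M}\to \overline{\iota_{\overline{P}}}_*\overline{\iota_{\overline{P}}}^{-1}\mathfrak{Y}_{\pi,M}$ (restriction to the open complement of $\overline{P}w_0B/B$), i.e.\ with the subsheaf of sections supported on the closed subset $\overline{P}w_0B/B\cong L/\overline{B}_L$ — exactly the viewpoint you state at the end of your third paragraph. Your explicit checks of independence of $V$, of the sheaf axioms via gluing with the zero section, and of equivariance are just a hands-on unwinding of that observation.
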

\begin{proof}
As in the proof of Lemma \ref{kersheaf}, $\mathfrak{Y}^{\overline{P}}_{\pi,M}$ is the kernel of a morphism from the ($G$-equivariant hence) $\overline{P}$-equivariant sheaf $\mathfrak{Y}_{\pi,M}$ to $\overline{\iota_{\overline{P}}}_*\overline{\iota_{\overline{P}}}^{-1}\mathfrak{Y}_{\pi,M}$ where $\overline{\iota_{\overline{P}}}$ is the inclusion map of the open subset $(G/B)\setminus (\overline{P}w_0B/B)$ in $G/B$. This sheaf is supported on $\overline{P}w_0B/B$ which is homeomorphic to $L/\overline{B}_L$.
\end{proof}
We put
\begin{align}\label{jacquet}
J_{\overline{P}}^{\mathrm{sheaf}}(\pi)^\vee:=\beta_{M,G/B}(\pi^\vee)\cap \Ker\left(\res^{G/B}_{(G/B) \setminus (\overline{P}w_0B/B)}\colon \mathfrak{Y}_{\pi,M}(G/B)\to \mathfrak{Y}_{\pi,M}( (G/B) \setminus (\overline{P}w_0B/B) ))\right)\ .
\end{align}
Since $\res^{G/B}_{(G/B) \setminus (\overline{P}w_0B/B)}$ is $\overline{P}$-equivariant, so is the quotient map $\pi\twoheadrightarrow J_{\overline{P}}^{\mathrm{sheaf}}(\pi)$. In particular, $J_{\overline{P}}^{\mathrm{sheaf}}(\pi)$ is a smooth representation of $\overline{P}$ and also of $L$ via the inclusion $L\hookrightarrow \overline{P}$. By construction, we have a $\overline{P}$-equivariant inclusion $J_{\overline{P}}^{\mathrm{sheaf}}(\pi)^\vee\hookrightarrow \mathfrak{Y}^{\overline{P}}_{\pi,M}(L/\overline{B}_L)$.

The following results are generalizations of \cite[Lemme III.1.4]{Colmez1} and \cite[Prop.\ IV.3.2(ii)]{Colmez1} that we will need later.

\begin{pro}\label{uniptriv}
Let $P=LU$ be a parabolic subgroup of $G=\GL_n(\Qp)$ and let $\pi$ be a smooth representation of $P$ over $\mathcal{O}_F/\varpi^h$ that is admissible and has finite length as a representation of the Levi component $L$ (via restriction to $L\hookrightarrow P$). Then the unipotent radical $U$ acts trivially on $\pi$.
\end{pro}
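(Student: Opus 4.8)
The statement is a generalization of Colmez' \cite[Lemme III.1.4]{Colmez1}, where the key input is that a smooth admissible representation of $\GL_2(\Qp)$ on which a Borel acts through its torus quotient must be finite-dimensional; here the unipotent radical $U$ plays the role of the unipotent radical of that Borel. The plan is to reduce to the rank-one situation via a filtration argument on $U$, combined with the structure of $U$ as an iterated extension of one-parameter root subgroups, and at the bottom apply the classical fact for $\Qp$.

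First I would set up the reduction. Since $P=LU$ with $L$ the Levi, the unipotent radical $U$ is a product of root subgroups $N_\gamma\cong\Qp$ for $\gamma$ in the appropriate subset of $\Phi^+$, and $U$ is normalized by $L$. Pick a one-parameter subgroup $N_\gamma\leq U$ that is central in $U$ (such exists since $U$ is nilpotent and $L$ acts on the successive quotients; one takes $\gamma$ a highest root relative to the ordering used in Lemma \ref{subgroupfilt}). The subspace $\pi^{N_\gamma}$ of $N_\gamma$-invariants is an $L$-subrepresentation of $\pi$ (since $N_\gamma$ is normalized by $L$, indeed by all of $P$), and it is again admissible and of finite length over $L$. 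The quotient $\pi/\pi^{N_\gamma}$ carries a free action of $N_\gamma\cong\Qp$ in the sense that $n_\gamma-1$ is injective on it, where $n_\gamma$ is a topological generator of $N_\gamma\cap$ (a fixed compact open); one then wants to show this quotient is zero. The essential point — and this is where I expect the main obstacle — is showing that an admissible finite-length smooth representation of $\Qp$ (as a locally profinite group, i.e.\ a smooth $\Fq\bs \Zp\js[\text{inverting}]$-module) on which $n_\gamma-1$ acts injectively must vanish. This is the one-variable input: by admissibility the $\Zp$-invariants form a finite-dimensional space, hence are killed by some power of $n_\gamma^{p^k}-1$, and smoothness forces every vector to be $\Zp_{(k)}$-fixed for large $k$; injectivity of $n_\gamma-1$ then forces the space to be $0$. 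Care is needed because $\pi$ is only a representation of $P$, not of the whole group, but $N_\gamma$ together with its normalizer $L$ suffices, and admissibility over $L$ controls the relevant invariants.

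Next I would run the induction on $\dim U$ (equivalently on the nilpotency class together with the number of root subgroups). Having shown $N_\gamma$ acts trivially, $\pi$ descends to a smooth representation of $P/N_\gamma$, which is again of the form $L(U/N_\gamma)$ with $U/N_\gamma$ of strictly smaller dimension, still admissible and finite length over $L$. Note that we must check $\pi$ remains admissible and finite length as a representation of the smaller Levi-plus-unipotent group, but since $L$ is unchanged and we have only passed to a quotient group acting on the same underlying space, this is immediate. By the inductive hypothesis $U/N_\gamma$ acts trivially, hence $U$ acts trivially, completing the argument. The base case $U=\{1\}$ is vacuous.

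The plan thus has two genuine ingredients: the extraction of a central one-parameter root subgroup (purely group-theoretic, using the commutator relations as in Lemma \ref{subgroupfilt} and \cite[Proposition 8.2.3]{MR1642713}) and the one-dimensional vanishing statement for admissible smooth $\Qp$-representations with injective $(n_\gamma-1)$-action. I would isolate the latter as a short lemma and prove it directly from the definitions of smoothness and admissibility, which makes the rest of the proof a clean dévissage. One subtlety I would double-check: passing to $\pi^{N_\gamma}$ and to $\pi/\pi^{N_\gamma}$ must both preserve finite length over $L$ — this holds because these are an $L$-subrepresentation and an $L$-quotient of a finite-length $L$-representation — and admissibility, which is likewise inherited by subrepresentations and quotients. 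With these in hand the induction goes through without further difficulty.
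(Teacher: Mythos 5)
There is a genuine gap, and it sits exactly where you flagged "the main obstacle". Your dévissage rests on two claims that are not available. First, the assertion that $n_\gamma-1$ acts injectively on $\pi/\pi^{N_\gamma}$ is unjustified and false in general: a vector $v$ with $(n_\gamma-1)v\in\pi^{N_\gamma}$ but $v\notin\pi^{N_\gamma}$ is precisely a nontrivial unipotent layer of the $N_\gamma$-action, which is the very phenomenon the proposition must exclude; assuming the quotient is "free" in this sense is essentially assuming the conclusion (already for $U=N_\gamma\cong\Qp$ one can build smooth $P$-representations whose restriction to a compact open subgroup of $N_\gamma$ is a nontrivial unipotent $2$-dimensional block, so the injectivity cannot be had for free). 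Second, your one-variable lemma invokes "admissibility" to bound the $\Zp$-invariants for $\Zp\subset N_\gamma$, but the hypothesis is admissibility as a representation of $L$, and $N_\gamma\subseteq U$ is not contained in $L$: finiteness of $\pi^{K}$ for $K\leq L$ compact open says nothing about invariants under compact open subgroups of $U$. The remark that "$N_\gamma$ together with its normalizer $L$ suffices" is exactly the point that needs an argument, and without it the base case of your induction does not get off the ground (note also that once the correct one-variable statement is isolated, injectivity of $n_\gamma-1$ makes it trivial: a nonzero smooth $\Fq$-representation of the pro-$p$ group $\Zp$ always has nonzero invariants, so the whole content has been pushed into the unproved injectivity claim).

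The missing idea is the contraction argument via the centre of the Levi, which is how the paper (following Colmez \cite[Lemme III.1.4]{Colmez1}) proceeds and where the finite-length hypothesis is genuinely used: by \cite[Lemma 2.3.6]{EmertonOrd1}, admissibility and finite length over $L$ force $Z(L)$ to act through a finite quotient, hence (after a finite extension of $\Fq$) by a character $\xi$; then one picks $t_L\in Z(L)$ whose conjugation action contracts $U$, so that $\bigcup_{k\geq 0}t_L^{-k}U_1t_L^{k}=U$ for the open subgroup $U_1\leq U$ fixing a given vector $x$, and computes $u\cdot x=t_L^{-k}u_1t_L^{k}\cdot x=\xi(t_L^{k})\,t_L^{-k}\cdot x=x$. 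In your proposal the finite-length hypothesis is only used to say that sub- and quotient representations inherit it, never to produce the (near-)central character, and no contraction by $Z(L)$ appears; some such input is indispensable, since for a general smooth $P$-representation (without the finiteness over $L$) the conclusion is simply false. If you want to keep your filtration by a central root subgroup, you would still need the $t_L$-contraction step to handle each layer, at which point the filtration becomes unnecessary.
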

\begin{proof}
We may assume without loss of generality that $P\leq G$ is a standard parabolic subgroup, ie.\ $B\subseteq P$ and $L\cong \GL_{n_1}(\Qp)\times\dots\times \GL_{n_r}(\Qp)$ with $n_1+\dots+n_r=n$. The center $Z(L)$ acts on $\pi$ via a finite quotient by \cite[Lemma 2.3.6]{EmertonOrd1}, so by passing to a finite extension of $\Fq$ we may assume that $Z(L)$ acts via a character $\xi\colon Z(L)\to\Fq^\times$. Let $x\in\pi$ be arbitrary and consider the element $$t_L:=\diag(\underbrace{p^{r-1},\dots,p^{r-1}}_{n_1},\dots,\underbrace{p^{r-j},\dots,p^{r-j}}_{n_j},\dots,\underbrace{1,\dots,1}_{n_r})\in Z(L)\ .$$ Since $\pi$ is smooth, $x$ is fixed by some open subgroup $U_1\leq U$. However, we have $\bigcup_{k\geq 0}t_L^{-k}U_1t_L^k=U$ by the construction of $t_L$, so we compute $$u\cdot x=t_L^{-k}u_1t_L^k\cdot x=\xi(t_L^k)t_L^{-k}\cdot (u_1\cdot x)=\xi(t_L^k)t_L^{-k}\cdot x=x$$
where $u\in U$ is arbitrary and $k\gg 0$ is large enough so that $u_1:=t_L^kut_L^{-k}$ lies in $U_1$. We obtain that $x$ is fixed by $U$ as claimed.
\end{proof}

\begin{pro}\label{Uinv0onopen}
Let $P=LU$ be a parabolic subgroup of $G=\GL_n(\Qp)$ containing $B$ and let $\pi$ be a smooth representation of $G$ and $M\in \mathcal{M}_\Delta(\pi^{H_{\Delta,0}})$. Then we have $\mathfrak{Y}_{\pi,M}( (G/B) \setminus (P/B) )^{U}=0$.
\end{pro}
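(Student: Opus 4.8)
\emph{Proof proposal.} If $P=G$ there is nothing to prove since $(G/B)\setminus(P/B)=\emptyset$, so assume $P\subsetneq G$ and write $\Omega:=(G/B)\setminus(P/B)$, $U_0:=U\cap N_0$. Since $B\subseteq P$, the unipotent radical $U$ lies in $N$; in particular $N$, and a fortiori $U$, preserves every $B$-stable locally closed subset $\mathcal{C}^w$, $\Fil^w(G/B)$, $\Fil^{>w}(G/B)$, as well as $P/B=\bigsqcup_{w\in W_L}\mathcal{C}^w$ (here $W_L\leq W:=N_G(T)/T$ is the Weyl group of $L$) and hence $\Omega=\bigsqcup_{w\in W\setminus W_L}\mathcal{C}^w$. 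The plan is: fix $x\in\mathfrak{Y}_{\pi,M}(\Omega)^U$, run over the total order $\geq$ refining the Bruhat order starting from $w_0$ and going down, and show by induction that $\res^{\Omega}_{\Fil^w(G/B)\cap\Omega}(x)=0$; taking $w$ minimal (so $\Fil^w(G/B)=G/B$) yields $x=0$. For $w\in W_L$ one has $\mathcal{C}^w\cap\Omega=\emptyset$ and there is nothing to do. For $w\notin W_L$ we have $\Fil^w(G/B)\cap\Omega=(\Fil^{>w}(G/B)\cap\Omega)\sqcup\mathcal{C}^w$, so by the inductive hypothesis and Lemma \ref{kersheaf} (applied to $\mathfrak{Y}=\mathfrak{Y}_{\pi,M}$ and the open set $V=\Omega$) the element $\res^{\Omega}_{\Fil^w(G/B)\cap\Omega}(x)$ lies in $\mathfrak{Y}^w_{\pi,M}(\mathcal{C}^w)$, and it remains $U$-invariant because all the restriction maps in sight are $N$-equivariant. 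Thus the whole statement is reduced to proving $\mathfrak{Y}^w_{\pi,M}(\mathcal{C}^w)^U=0$ for every $w\in W\setminus W_L$.

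To treat this, I would identify $\mathfrak{Y}^w_{\pi,M}(\mathcal{C}^w)$ with the global sections $\varprojlim_{\psi_s}\widetilde{M_{w,\infty}^\vee}$ of the SchVZ-type sheaf attached to the $\psi$-invariant lattice $M_{w,\infty}^\vee$ inside an étale $T_+$-module over (a localisation of) $\Fq\bs N_{0,ww_0}\js$, exactly as in the discussion preceding Lemma \ref{M'containedinMw} and in Proposition \ref{Dwprojlim}, using $\mathcal{C}^w=\bigcup_{k\geq0}s^{-k}\mathcal{C}^w_0$ and Lemma \ref{Cwsubsetww_0C}. The key combinatorial input is that for $w\notin W_L$ there is a root $\gamma\in\Phi_U$ (the set of roots of $U$, so that $N_\gamma\leq U$) which is a translation direction of $\mathcal{C}^w$, i.e.\ $\gamma\in\Phi^+\cap w(\Phi^-)$: otherwise $\Phi_U\subseteq w(\Phi^+)$, which forces $w$ to normalise $\Phi_L$ with, in type $A$, no block permutation, hence $w\in W_L$ — contradiction. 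Given such $\gamma$, the $N_\gamma$-action on $\mathcal{C}^w$ is translation in the $\gamma$-direction, which corresponds under the above identification to multiplication by $1+X_\gamma$ on the finite layers; so a $U$-invariant $(x_k)_k$ has, after conjugating level $k$ back to $\mathcal{C}^w_0$ by $s^k$, the property that $s^kx_k$ is annihilated by $X_\gamma=n_\gamma-1$. If $\gamma$ is simple then $X_\gamma$ is a unit in all the relevant rings (it is among the inverted Iwasawa variables, since $w_0w^{-1}(\gamma)\in\Phi^+$ puts the corresponding root in $\Delta_{ww_0}$), so $s^kx_k=0$ for all $k$ and we are done. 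If $\gamma$ is not simple, then $N_\gamma\leq H_{\Delta,0}$ and $X_\gamma$ is nilpotent in each $\Fq[H_{\Delta,0}/H_{\Delta,k}]$; one then argues, via the comparison of the $H_{\Delta,0}$-filtration with the kernels of $\pr_{k,\infty}$ (Lemma \ref{comparefilpr}) together with the compactness and continuity lemmas (\ref{intersect+}, \ref{onlyonetermnotinkerpr}) in the spirit of Proposition \ref{fingen+kertors}, that a compatible system concentrated in top $X_\gamma$-degree at every finite level is zero.

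The main obstacle is precisely the non-simple case just sketched: one must turn the weak pointwise condition ``$X_\gamma\cdot(s^kx_k)=0$'' into an honest vanishing by a boundary/compactness argument, and one must simultaneously keep track of the fact that the $U$-action on $\mathcal{C}^w$ is not merely by translations — the root subgroups of $U$ that stabilise the base point $wB$ act by \emph{shears} (because $N$ is non-abelian, their commutators with the translation directions land again in translation directions), which must be expanded via the commutator identities in $N$ and Lemma \ref{varphibequivres} to track the restriction maps. The rest — the Bruhat induction and the reduction to the graded sheaves $\mathfrak{Y}^w$ — is formal and parallels the proof of Theorem \ref{DveeDelta0gr}; the admissibility results (Corollary \ref{Minftyadmissible}, Proposition \ref{Mbdwkerfingen}) can be invoked to guarantee the finiteness needed for the compactness arguments to run.
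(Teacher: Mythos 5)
Your reduction is fine as far as it goes: the Bruhat induction over the total order, reducing the statement to the vanishing of $U$-invariants coming from a single cell $\mathcal{C}^{w'}$ with $w'\notin W_L$, is essentially what the paper does (more tersely), and your root-theoretic observation that $w'\notin W_L$ forces the existence of $\gamma\in\Phi_U\cap w'(\Phi^-)$ is exactly the paper's step ``there is $\beta\in\Phi^+$ with $N^{(\beta)}\subseteq N\cap w^{-1}Uw$'' (with $w'=ww_0$, $\gamma=w(\beta)$). The problem is the last step. You only carry it out when $\gamma$ is simple, and the non-simple case -- which you yourself call ``the main obstacle'' -- is left as a sketch of a hoped-for compactness/filtration argument ``in the spirit of Proposition \ref{fingen+kertors}''; that is precisely the hard point of the route you chose, and it is not proven. (In addition, your identification of $\mathfrak{Y}^{w}_{\pi,M}(\mathcal{C}^{w})$ with the global sections of a sheaf built from $M_{w,\infty}^\vee$ is not justified: $M_{w,\infty}^\vee$ is the \emph{image} of $\pi^\vee$ in the sections over $\mathcal{C}^w_0$, whereas the proposition concerns invariants of the full sheaf sections.)

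The missing idea, and the way the paper avoids the simple/non-simple dichotomy altogether, is to use the $G$-equivariance of $\mathfrak{Y}_{\pi,M}$ to move the offending section off the small cell: translate the nonzero $U$-invariant section by $w^{-1}$ into the big cell $\mathcal{C}=Nw_0B/B$, where it becomes invariant under the positive root subgroup $N^{(\beta)}$ with $\beta\in\Phi^+\cap w^{-1}(\Phi_U)$, and then use $\mathcal{C}=\bigcup_{t\in T}t\mathcal{C}_0$ to produce a nonzero section over $\mathcal{C}_0$ still invariant under an open subgroup of a root subgroup of $N_0$. Such a section is a nonzero element of $\mathfrak{Y}_{\pi,M}(\mathcal{C}_0)\subset\mathbb{M}_{\infty,0}(M^\vee[X_\Delta^{-1}])$ killed by $u_\beta-1$, i.e.\ a nonzero $\Fq\bs N_0\js$-torsion element, contradicting the torsion-freeness of $\mathbb{M}_{\infty,0}$ over the full Iwasawa algebra $\Fq\bs N_0\js$. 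This single translation step makes the argument uniform in $\beta$ (simple or not) and requires none of the finite-level nilpotence analysis or the extra boundedness machinery your proposal would need; without it, your non-simple case remains a genuine gap.
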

\begin{proof}
By the Bruhat decomposition the subsets $Nww_0B/B\subset G/B$ cover $(G/B) \setminus (P/B)$ where $ww_0$ runs through the set $N_G(T)\setminus N_P(T)$. So if there is a nonzero $U$-invariant section in $x\in\mathfrak{Y}_{\pi,M}( (G/B) \setminus (P/B) )$, then there is an element $w\in N_G(T)$ with $ww_0\notin N_P(T)$ such that the restriction of $x$ to $Nww_0B/B$ is nonzero (and $U$-invariant). Hence the restriction of $w^{-1}x$ to $w^{-1}Nww_0B/B=(N\cap w^{-1}Nw)w_0B/B\subset Nw_0B/B=\mathcal{C}$ is also nonzero and $w^{-1}Uw$-invariant. By the condition that $ww_0\notin N_P(T)$, there is a positive root $\beta\in\Phi^+$ such that the root subgroup $N^{(\beta)}$ is contained in $N\cap w^{-1}Uw$ showing $w^{-1}x$ is $N^{(\beta)}$-invariant. Since $\mathcal{C}=\bigcup_{t\in T}t\mathcal{C}_0$, we obtain that the restriction of $tw^{-1}x$ to $\mathcal{C}_0\cap (N\cap w^{-1}Nw)w_0B/B$ is nonzero and $N^{(\beta)}$-invariant for some $t\in T$. This contradicts to the fact that $\mathfrak{Y}_{\pi,M}(\mathcal{C}_0)\subset \mathbb{M}_{\infty,0}(M^\vee[X_\Delta^{-1}])$ has no nonzero $\Fq\bs N_0\js$-torsion.
\end{proof}

\begin{rem}
The precise relationship between the functors $J_{\overline{P}}$ and $J_{\overline{P}}^{\mathrm{sheaf}}$ is an important open question. In the remaining of the section we partially answer this question in case $L$ is the direct product of a torus and a subgroup isomorphic to $\GL_2(\Qp)$.
\end{rem}

For a simple root $\alpha\in\Delta$ let $L^{(\alpha)}$ be the Zariski closed subgroup of $G$ generated by $N_\alpha,N_{-\alpha}$, and $T$. By \cite[Lemma 3.1.4]{BH1} we have $L^{(\alpha)}= \GL_2(\Qp)^{(\alpha)}\times T^{(\alpha)}$ for some subgroups $T^{(\alpha)}\leq T$ and $\GL_2(\Qp)\cong \GL_2(\Qp)^{(\alpha)}\leq L^{(\alpha)}$. The parabolic subgroup $\overline{P}^{(\alpha)}:=L^{(\alpha)}\overline{N}=L^{(\alpha)}\overline{U}^{(\alpha)}$ has Levi component $L^{(\alpha)}$ and unipotent radical $\overline{U}^{(\alpha)}=\prod_{\beta\in\Phi^+\setminus\{\alpha\}}N_{-\beta}$. Recall we denote the reflection corresponding to the simple root $\alpha\in\Delta$ by $w_\alpha\in N_G(T)/T$ so we have $\overline{P}^{(\alpha)}=\overline{B}\cup \overline{B}w_\alpha \overline{B}$. Hence we also obtain $w_0\overline{P}^{(\alpha)}w_0^{-1}=B\cup Bw_{\alpha'}B=P^{(\alpha')}$ where $\alpha':=-w_0(\alpha)\in\Delta$ with $w_{\alpha'}=w_0w_\alpha w_0^{-1}$. The following result characterizes representations in $\SP_{\Fq}$ that are in the image of parabolic induction from $\overline{P}^{(\alpha)}$.

\begin{thm}\label{lookslikeparindisparind}
Let $\pi$ be an object in $\SP_{\Fq}$. There is a smooth representation $\pi_\alpha$ of $\GL_2(\mathbb{Q}_p)^{(\alpha)}\times T^{(\alpha)}$ such that the image $Q(\pi)$ in $\overline{\SP}_{\mathbb{F}_q}$ is isomorphic to $Q(\Ind_{\overline{P}^{(\alpha)}}^{\GL_n(\mathbb{Q}_p)}\pi_\alpha)$ if and only if $\pi$ lies in the subcategory $\SP_{\mathbb{F}_q}^{\alpha,ab}$.
\end{thm}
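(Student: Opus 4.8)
The plan is to treat the two directions separately, with the ``only if'' direction being the easier one and the ``if'' direction being the substantial one, built on the sheaf-theoretic Jacquet functor $J^{\mathrm{sheaf}}_{\overline{P}^{(\alpha)}}$ introduced above together with Theorem~\ref{DveeDeltagrw_0} and Theorem~\ref{fullyfaithful}.

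For the ``only if'' direction, suppose $Q(\pi)\cong Q(\Ind_{\overline{P}^{(\alpha)}}^{G}\pi_\alpha)$ for some smooth representation $\pi_\alpha$ of $L^{(\alpha)}=\GL_2(\Qp)^{(\alpha)}\times T^{(\alpha)}$. Since $D^\vee_\Delta$ factors through $\overline{\SP}_{\Fq}$ and is compatible with parabolic induction \cite[Thm.~3.5]{MultVar}, we get $D^\vee_\Delta(\pi)\cong D^\vee_\Delta(\Ind_{\overline{P}^{(\alpha)}}^G\pi_\alpha)\cong E_\Delta\otimes_{E_{\Delta_{P^{(\alpha)}}}}D^\vee_{\Delta_{P^{(\alpha)}}}(\pi_\alpha)$, where $\Delta_{P^{(\alpha)}}=\{\alpha\}$ (using $w_0\overline{P}^{(\alpha)}w_0^{-1}=P^{(\alpha')}$, one works on the $\overline{P}^{(\alpha)}$-side consistently). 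Now $D^\vee_{\{\alpha\}}(\pi_\alpha)$ is (a base change of) the output of Colmez' Montréal functor applied to the $\GL_2(\Qp)^{(\alpha)}$-part, tensored with the dual of the character of $T^{(\alpha)}$; applying $\mathbb{V}_\Delta$ and using the compatibility of $\mathbb{V}$ with tensor products and with restriction to Galois subgroups, $\mathbb{V}^\vee(D^\vee_\Delta(\pi))$ is a representation of $\GQpD\times\Qp^\times$ whose restriction to the subgroup $\GQpDa\times\Qp^\times$ factors through an \emph{abelian} quotient: indeed the only ``nonabelian'' Galois factor lives in the $\GQpa$-direction coming from $\GL_2(\Qp)^{(\alpha)}$, and all the other root-directions $\beta\in\Delta\setminus\{\alpha\}$ contribute characters (the $T^{(\alpha)}$-part and the parabolic induction contribute only characters of $G_{\Qp,\beta}$). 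Hence $\pi$ lies in $\SP_{\Fq}^{\alpha,ab}$.

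For the ``if'' direction, suppose $\pi\in\SP_{\Fq}^{\alpha,ab}$, so $V:=\mathbb{V}^\vee(D^\vee_\Delta(\pi))$ is a successive extension of characters on which $\GQpDa\times\Qp^\times$ acts through $\GQpDa^{ab}\times\Qp^\times$. Translating via Proposition~\ref{Dnrmultvar} (with $S=\Delta\setminus\{\alpha\}$), the abelian-action hypothesis says precisely that the natural map $E_\Delta\otimes_{E_\alpha}D^{\nr}_{\Delta\setminus\{\alpha\}}\to D$ is \emph{bijective}, where $D:=D^\vee_\Delta(\pi)$; thus $D$ is ``induced up'' from a $(\varphi_\alpha,\Gamma_\alpha)$-module $D_\alpha:=D^{\nr}_{\Delta\setminus\{\alpha\}}$ over $E_\alpha\cong E_{\Delta_{P^{(\alpha)}}}$. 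Since the functor $D^\vee_\Delta$ restricted to $\overline{\SP}_h$ is fully faithful (Theorem~\ref{fullyfaithful}) and compatible with parabolic induction, it suffices to produce a representation $\pi_\alpha$ of $L^{(\alpha)}$ with $D^\vee_{\{\alpha\}}(\pi_\alpha)\cong D_\alpha$ as an étale $T^{(\alpha)}_+$-module; then $Q(\Ind_{\overline{P}^{(\alpha)}}^G\pi_\alpha)$ and $Q(\pi)$ have isomorphic images under $D^\vee_\Delta$, hence are isomorphic. The candidate for $\pi_\alpha$ is built from $J^{\mathrm{sheaf}}_{\overline{P}^{(\alpha)}}(\pi)$: concretely, choose $M\in\mathcal{M}_\Delta(\pi^{H_{\Delta,0}})$ with $M^\vee[X_\Delta^{-1}]=D$ (possible by Corollary~\ref{finitenessofDveeDelta}), form the sheaf $\mathfrak{Y}^{\overline{P}^{(\alpha)}}_{\pi,M}$ on $L^{(\alpha)}/\overline{B}_{L^{(\alpha)}}$ and set $\pi_\alpha:=J^{\mathrm{sheaf}}_{\overline{P}^{(\alpha)}}(\pi)$, viewed as a representation of $L^{(\alpha)}$. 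One then has to show $D^\vee_{\{\alpha\}}(\pi_\alpha)\cong D_\alpha$. The strategy is: (1) identify the global sections of $\mathfrak{Y}^{\overline{P}^{(\alpha)}}_{\pi,M}$ restricted to the ``big cell'' of $L^{(\alpha)}/\overline{B}_{L^{(\alpha)}}$ with $D_w\subseteq D_\infty=\mathbb{M}_{\infty,0}(D)$ for the appropriate Weyl element $w$, using Proposition~\ref{Dwprojlim} — this is where $D^{\nr}_{\Delta\setminus\{\alpha\}}$ enters, since $D_w$ was computed there to be built from $D^{\nr}_S$; (2) apply Theorem~\ref{DveeDeltagrw_0}/\ref{DveeDeltagrw_0} in the $\GL_2$-setting (i.e.\ the original result of Colmez, or \cite[Cor.~4.21]{MultVar} for $n=2$) to the graded piece $\mathrm{gr}^w_M(\pi)$, whose dual embeds into these sections, to conclude $D^\vee_{\{\alpha\}}(\pi_\alpha)\cong D_\alpha$; (3) check the $T^{(\alpha)}$-equivariance and that $\pi_\alpha$ indeed lands in $\SP_{\Fq}(L^{(\alpha)})$ (here one uses that $\pi\in\SP_{\Fq}$ forces all the pieces to be successive extensions of subquotients of principal series, plus Proposition~\ref{uniptriv} and Proposition~\ref{Uinv0onopen} to control the $\overline{U}^{(\alpha)}$-action — in particular Proposition~\ref{Uinv0onopen} kills the ``wrong'' Bruhat cells so that the sheaf is genuinely supported on $\overline{P}^{(\alpha)}w_0B/B$).

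\emph{The main obstacle} will be step (2): relating the value of $D^\vee_\Delta$ on $\pi$ to the value of the $\GL_2$-functor on $J^{\mathrm{sheaf}}_{\overline{P}^{(\alpha)}}(\pi)$ requires knowing that the big-cell sections of $\mathfrak{Y}^{\overline{P}^{(\alpha)}}_{\pi,M}$ recover \emph{exactly} $D^{\nr}_{\Delta\setminus\{\alpha\}}$ and nothing more — i.e.\ that the bijectivity in Proposition~\ref{Dnrmultvar} is not merely an abstract statement about $D$ but is realized geometrically by the restriction maps of the sheaf on $G/B$. This is the analogue, in the present multivariable setting, of Colmez' comparison \cite[Prop.~IV.3.2]{Colmez1} between his two Jacquet functors, and it is genuinely the technical heart; I expect it to occupy the bulk of the proof, and it is presumably why the theorem is stated only at the level of the quotient category $\overline{\SP}_{\Fq}$ rather than on the nose. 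A secondary difficulty is bookkeeping with the two parabolics $\overline{P}^{(\alpha)}$ and $P^{(\alpha')}=w_0\overline{P}^{(\alpha)}w_0^{-1}$ and the identification $g\overline{B}\mapsto gw_0B$ of $G/\overline{B}$ with $G/B$, which has to be kept consistent throughout so that ``$\Delta_{P^{(\alpha)}}=\{\alpha\}$'' and ``$S=\Delta\setminus\{\alpha\}$'' match up.
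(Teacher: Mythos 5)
Your overall architecture coincides with the paper's: the ``only if'' direction via Lemma \ref{KerDveeDelta} and compatibility with parabolic induction, and the ``if'' direction by taking $\pi_\alpha:=J^{\mathrm{sheaf}}_{\overline{P}^{(\alpha)}}(\pi)$ for a suitable $M\in\mathcal{M}^0_\Delta(\pi^{H_{\Delta,0}})$, computing its image under the $\GL_2$-functor, and concluding with Proposition \ref{Dnrmultvar} and Theorem \ref{fullyfaithful}. But there is a genuine gap exactly at the point you flag as the ``main obstacle'': your step (2) is not a consequence of Theorem \ref{DveeDeltagrw_0}. Applying that theorem in the $n=2$ case (after checking that the relevant $\beta$-map is injective, via the covering of $\mathbb{P}^1(\Qp)$ by the two cells $N_0^{(\alpha)}\overline{B}_2^{(\alpha)}$ and $w_\alpha N_0^{(\alpha)}\overline{B}_2^{(\alpha)}$) only gives $D^\vee_{\{\alpha\}}(\pi_\alpha)\cong M_0^\vee[X_\alpha^{-1}]$, where $M_0^\vee=M_\infty^\vee\cap D_{w_{\alpha'}}$ is the intersection of the image of $\pi^\vee$ inside $\mathbb{M}_{\infty,0}(D)$ with the kernel-of-restriction module $D_{w_{\alpha'}}$. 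Proposition \ref{Dwprojlim} identifies the \emph{ambient} module $D_{w_{\alpha'}}$ with (a deformation of) $D^{\nr}_{\Delta\setminus\{\alpha\}}$, which gives the upper bound you describe; what is needed, and what you never argue, is the lower bound: that this intersection has full generic rank, i.e.\ $M_0^\vee[X_\alpha^{-1}]=D_{w_{\alpha'}}$. A priori the intersection could be far too small, even zero, in which case $D^\vee_{\{\alpha\}}(\pi_\alpha)$ would not recover $D^{\nr}_{\Delta\setminus\{\alpha\}}$ and the final comparison $D^\vee_\Delta(\Ind_{\overline{P}^{(\alpha)}}^G\pi_\alpha)\cong D^\vee_\Delta(\pi)$ would collapse.

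The paper proves precisely this missing step as Proposition \ref{asbigasitcanbe}, and the argument is not formal: it introduces Colmez' minimal $\psi$-stable lattice $D^\natural_{w_{\alpha'}}\subset D_{w_{\alpha'}}$, shows that the $E_\Delta^+$-module $D^\natural$ it generates is the smallest $\psi_t$-stable lattice in $D$ (Lemma \ref{Dnaturalsmallest}) and that the $\Fq\bs N_0\js$-module $\mathfrak{M}^\natural$ it generates is free of the right rank (Lemma \ref{Mnaturalfree}); it then applies Proposition \ref{fingen+kertors} to the three compact modules $\mathfrak{M}^\natural$, $M_\infty^\vee$ and $\mathfrak{M}^\natural+M_\infty^\vee$ and concludes $\mathfrak{M}^\natural\subseteq M_\infty^\vee$ by a Nakayama argument, whence $D^\natural_{w_{\alpha'}}\subseteq M_0^\vee$ and the desired equality after inverting $X_\alpha$. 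It is only here that the hypothesis $\pi\in\SP^{\alpha,ab}_{\Fq}$ enters in an essential way, through Proposition \ref{Dnrmultvar} giving $D\cong E_\Delta\otimes_{E_{\{\alpha\}}}D^{\nr}_{\Delta\setminus\{\alpha\}}$ and the ensuing freeness. Your remaining points --- admissibility, finite length and the principal-series constraint on $\pi_\alpha$ (the paper's Proposition \ref{gl2admissprinc}, using Proposition \ref{MbdcapDwfin} and Emerton's finiteness theorem), triviality of the $\overline{U}^{(\alpha)}$-action via Proposition \ref{uniptriv}, and the final appeal to full faithfulness --- do match the paper, so the proposal is a correct outline of the same route, but with its hardest ingredient acknowledged rather than proved.
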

\begin{thm}\label{irredjacquet}
Let $\pi$ be a smooth irreducible representation of $G$ and assume $D^\vee_\Delta(\pi)\neq 0$. Then there exists a smooth irreducible admissible representation $\pi_\alpha$ of $\GL_2(\mathbb{Q}_p)^{(\alpha)}\times T^{(\alpha)}$ such that $\pi$ is isomorphic to a subquotient of $\Ind_{\overline{P}^{(\alpha)}}^{\GL_n(\mathbb{Q}_p)}\pi_\alpha$ if and only if $\GQpDa$ acts on $\mathbb{V}^\vee\circ D^\vee_\Delta(\pi)$ via a character.
\end{thm}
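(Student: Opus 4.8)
The plan is to translate the Galois-side condition into a statement about the étale $T_+$-module $D:=D^\vee_\Delta(\pi)$, and then to play this off against the geometry of $G/B$ and Colmez' theory for $\GL_2(\Qp)$. First I would record the dictionary. Put $V:=\mathbb{V}^\vee(D)$; by Corollary \ref{irredtoirred} $\pi$ is admissible and $V$, equivalently $D$, is irreducible. Apply Proposition \ref{Dnrmultvar} with $S:=\Delta\setminus\{\alpha\}$, so $\Delta\setminus S=\{\alpha\}$: the map $E_\Delta\otimes_{E_{\{\alpha\}}}D^{\nr}_S\to D$ is always injective, and it is an isomorphism if and only if $\GQpDa=G_{\Qp,S}$ acts on $V$ through its maximal abelian quotient. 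Since $V$ is irreducible, Clifford theory forces $V|_{\GQpDa}$ to be semisimple, and one checks that for such $V$ the condition ``acts through an abelian quotient'' is equivalent to ``acts through a character'' (the relevant character is forced to be $\Fq$-rational, being built into the central character of the $\GL_2(\Qp)^{(\alpha)}\times T^{(\alpha)}$-representation one wants to produce). Thus the hypothesis of the theorem is equivalent to: $D\cong E_\Delta\otimes_{E_{\{\alpha\}}}D^{\nr}_S$, i.e.\ $D$ is inflated from a one-variable étale $(\varphi_\alpha,\Gamma_\alpha)$-module $D^{\nr}_S$ over $E_{\{\alpha\}}$ carrying an extra action of the torus $T^{(\alpha)}\times Z(\GL_2(\Qp)^{(\alpha)})$; moreover $D^{\nr}_S$ is irreducible because $D$ is.

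For the ``if'' direction I would then realize $D^{\nr}_S$ on the $\GL_2$-side. The irreducible one-variable module $D^{\nr}_S$ lies in the essential image of Colmez' Montréal functor $D^\vee_{\{\alpha\}}$ on irreducible admissible smooth representations of $\GL_2(\Qp)^{(\alpha)}\times T^{(\alpha)}$ --- either directly from Colmez \cite{Colmez1,Colmez2} (every such irreducible object has rank $\le 1$, coming from a subquotient of a principal series, or rank $2$, coming from a supercuspidal), or intrinsically by taking $\pi_\alpha$ to be an irreducible subquotient of the sheaf-theoretic Jacquet module $J^{\mathrm{sheaf}}_{\overline{P}^{(\alpha)}}(\pi)$, which is admissible over $L^{(\alpha)}$ by Proposition \ref{uniptriv} together with the finiteness theorems of Section \ref{sec:finiteness}, and whose $D^\vee_{\{\alpha\}}$ is read off from the restriction of $\mathfrak{Y}_{\pi,M}$ to the closed cell $\overline{P}^{(\alpha)}w_0B/B\cong L^{(\alpha)}/\overline{B}_{L^{(\alpha)}}$; by Proposition \ref{Dwprojlim} and Lemma \ref{ww0bij} this restriction carries exactly the datum $D^{\nr}_S$. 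Having fixed $\pi_\alpha$ with $D^\vee_{\{\alpha\}}(\pi_\alpha)\cong D^{\nr}_S$, compatibility of $D^\vee_\Delta$ with parabolic induction \cite{MultVar} gives $D^\vee_\Delta(\Ind_{\overline{P}^{(\alpha)}}^{G}\pi_\alpha)\cong E_\Delta\otimes_{E_{\{\alpha\}}}D^\vee_{\{\alpha\}}(\pi_\alpha)\cong D$. It then remains to see that $\pi$ is a Jordan--Hölder factor of $\Ind_{\overline{P}^{(\alpha)}}^{G}\pi_\alpha$: since $D^\vee_\Delta$ is right exact and $D$ is irreducible, among the Jordan--Hölder factors of $\Ind_{\overline{P}^{(\alpha)}}^{G}\pi_\alpha$ exactly one factor $\tau$ has $D^\vee_\Delta(\tau)\ne 0$, with $D^\vee_\Delta(\tau)\cong D$ (using exactness of $D^\vee_\Delta$ on $\SP_{\Fq}$ for the part of the induced representation lying in $\SP_{\Fq}$ and a $\genlength$-count for the rest), and Corollary \ref{detectsiso} gives $\tau\cong\pi$.

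For the ``only if'' direction, suppose $\pi$ is a subquotient of $\rho:=\Ind_{\overline{P}^{(\alpha)}}^{G}\pi_\alpha$ with $\pi_\alpha$ irreducible admissible. By the same induction formula $D^\vee_\Delta(\rho)\cong E_\Delta\otimes_{E_{\{\alpha\}}}D^\vee_{\{\alpha\}}(\pi_\alpha)$, and here the variables $X_\beta$ with $\beta\ne\alpha$ are dummy, so $\GQpDa$ acts through a character on $\mathbb{V}^\vee(D^\vee_\Delta(\rho))$. Write $\pi=\rho_2/\rho_1$ with $\rho_1\subseteq\rho_2\subseteq\rho$ and $\rho_2$ minimal with $\pi$ as a quotient; then $\pi$ is not a subquotient of $\rho_1$. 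Right exactness of $D^\vee_\Delta$ gives a surjection $D^\vee_\Delta(\rho)\twoheadrightarrow D^\vee_\Delta(\rho_2)$ and a map $D^\vee_\Delta(\pi)\to D^\vee_\Delta(\rho_2)$ with image $\Ker(D^\vee_\Delta(\rho_2)\twoheadrightarrow D^\vee_\Delta(\rho_1))$; irreducibility of $D^\vee_\Delta(\pi)$ makes this image $0$ or everything, and the minimality of $\rho_2$ together with the filtration of Theorem \ref{DveeDelta0gr} (which shows $D^\vee_\Delta$ does see the top quotient) rules out the zero case. Hence $D^\vee_\Delta(\pi)$ embeds into $D^\vee_\Delta(\rho_2)$, a quotient of $D^\vee_\Delta(\rho)$; applying the exact functor $\mathbb{V}_\Delta$ and Pontryagin duality shows $V$ is a subquotient of $\mathbb{V}^\vee(D^\vee_\Delta(\rho))$, on which $\GQpDa$ acts through a character, so $\GQpDa$ acts through a character on $V$.

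The main obstacles both lie in the ``if'' direction. The genuinely new technical point is the construction of the irreducible admissible $\GL_2(\Qp)^{(\alpha)}\times T^{(\alpha)}$-representation $\pi_\alpha$ realizing the one-variable module $D^{\nr}_S$: this is the core of the Jacquet-functor circle of ideas in the paper (parallel to the proof of Theorem \ref{lookslikeparindisparind}) and requires the admissibility and finiteness results of Section \ref{sec:finiteness}, Propositions \ref{uniptriv} and \ref{Uinv0onopen}, the identification of $\mathfrak{Y}_{\pi,M}$ over the cell $\overline{P}^{(\alpha)}w_0B/B$ with $\GL_2$-data via Propositions \ref{Dwprojlim} and \ref{ww0bij}, and Colmez' description of the essential image of the Montréal functor. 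The second delicate point is the passage ``$D^\vee_\Delta(\pi)\cong D^\vee_\Delta(\Ind_{\overline{P}^{(\alpha)}}^{G}\pi_\alpha)$ implies $\pi$ is a Jordan--Hölder factor'', which must be argued outside $\SP_{\Fq}$ where only right exactness of $D^\vee_\Delta$ is available; one needs that the induced representation has a unique Jordan--Hölder factor not killed by $D^\vee_\Delta$, which should follow from additivity of generic length together with irreducibility of $D$ and Corollary \ref{detectsiso}. Everything else reduces to formal manipulations with compatibility of $D^\vee_\Delta$ with parabolic induction and exactness of $\mathbb{V}_\Delta$.
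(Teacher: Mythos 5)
Your overall architecture coincides with the paper's: translate the Galois condition via Proposition \ref{Dnrmultvar}, build $\pi_\alpha$ from the sheaf-theoretic Jacquet module, identify its Montr\'eal module with $D^{\nr}_{\Delta\setminus\{\alpha\}}$, use compatibility with parabolic induction, and finish with Corollaries \ref{irredtoirred} and \ref{detectsiso}. However, two of your steps do not hold as written. First, in the ``if'' direction your shortcut ``directly from Colmez'' is circular: $D^{\nr}_{\Delta\setminus\{\alpha\}}$ is an irreducible \'etale $(\varphi_\alpha,\Gamma_\alpha)$-module whose rank equals $\dim_{\Fq}\mathbb{V}^\vee(D^\vee_\Delta(\pi))$, and nothing at this stage bounds that rank by $2$; irreducible one-variable modules of higher rank exist and are \emph{not} in the essential image of the Montr\'eal functor on irreducible $\GL_2(\Qp)$-representations. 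The rank bound is a consequence of the theorem, not an input, so only your second route (the paper's) is viable. And there the identification $D^\vee_{\{\alpha\}}(\pi_\alpha)\cong D^{\nr}_{\Delta\setminus\{\alpha\}}$ is not delivered by Proposition \ref{Dwprojlim} and Lemma \ref{ww0bij} alone: these compute the kernel-of-restriction $D_{w_{\alpha'}}$ inside $\mathbb{M}_{\infty,0}(D)$, whereas the Jacquet module only sees $M_0^\vee=M_\infty^\vee\cap D_{w_{\alpha'}}$, and one must prove that this intersection is a lattice in $D_{w_{\alpha'}}$ (in particular nonzero). That is exactly Proposition \ref{asbigasitcanbe}, whose proof needs the minimal $\psi$-stable lattice $D^\natural$, Lemma \ref{Mnaturalfree}, Proposition \ref{fingen+kertors} and Nakayama, on top of Proposition \ref{gl2admissprinc} (which itself invokes Theorem \ref{DveeDeltagrw_0} for $n=2$ to see $D^\vee_{\{\alpha\}}(\pi_\alpha)\cong M_0^\vee[X_\alpha^{-1}]$). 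You flag this construction as the main obstacle, but you attribute it to the wrong supporting statements and never address why $J^{\mathrm{sheaf}}_{\overline{P}^{(\alpha)}}(\pi)\neq 0$.

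Second, your ``only if'' argument has a genuine gap at the point where you rule out that $D^\vee_\Delta(\pi)\to D^\vee_\Delta(\rho_2)$ is zero. Since $D^\vee_\Delta$ is merely right exact outside $\SP_{\Fq}$, the image of a simple subquotient need not appear in the value on the ambient representation at all; minimality of $\rho_2$ has no evident interaction with the functor, and Theorem \ref{DveeDelta0gr} concerns the Bruhat filtration attached to a chosen $M\in\mathcal{M}^0_\Delta(\pi^{H_{\Delta,0}})$, not arbitrary filtrations by subrepresentations, so it does not show that ``$D^\vee_\Delta$ sees the top quotient.'' The paper avoids this by exploiting irreducibility of $\pi_\alpha$: by Herzig's classification together with Lemma \ref{KerDveeDelta}, the representation $\Ind_{\overline{P}^{(\alpha)}}^{G}\pi_\alpha$ has a unique constituent not killed by $D^\vee_\Delta$, which must be $\pi$, whence $D^\vee_\Delta(\pi)\cong D^\vee_\Delta(\Ind_{\overline{P}^{(\alpha)}}^{G}\pi_\alpha)\cong E_\Delta\otimes_{E_{\{\alpha\}}}D^\vee_{\{\alpha\}}(\pi_\alpha)$ and the fact that $\GQpDa$ acts through a character is immediate. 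Some such control of the Jordan--H\"older constituents (or exactness on a suitable subcategory) is needed; the purely formal right-exactness manipulation you propose cannot replace it.
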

We prove Theorems \ref{lookslikeparindisparind} and \ref{irredjacquet} parallel to each other. The difficulty lies in the ``if part'' in both cases as the ``only if'' part essentially follows from \cite[Thm.\ B]{MultVar}. In order to proceed we need some preliminaries. Put $\overline{B}_2^{(\alpha)}:=\GL_2(\Qp)^{(\alpha)}\cap \overline{B}$ and $\iota_\alpha:=\iota_{\overline{P}^{(\alpha)}}\colon \GL_2(\Qp)^{(\alpha)}/\overline{B}_2^{(\alpha)}\hookrightarrow G/\overline{B}\overset{\sim}{\to}G/B$. By construction, the image of $\iota_\alpha$ is $\overline{P}^{(\alpha)}w_0B/B$. 

Let $\pi$ be either an object in the category $\SP_{\mathbb{F}_q}^{\alpha,ab}$ or an irreducible smooth representation of $G$ such that $D^\vee_\Delta(\pi)\neq 0$ and $\GQpDa$ acts on $\mathbb{V}^\vee\circ D^\vee_\Delta(\pi)$ via a character. In either case let $M\in\mathcal{M}_\Delta^0(\pi^{H_{\Delta,0}})$ be the object with $M^\vee[X_\Delta^{-1}]=D^\vee_\Delta(\pi)=:D$. By assumption (and the definition of the category $\SP_{\mathbb{F}_q}^{\alpha,ab}$) the action of $\GQpDa\times\Qp^\times$ on the attached representation $\mathbb{V}^\vee(D)$ factors through the maximal abelian quotient $\GQpDa^{ab}\times\Qp^\times$. Hence by Proposition \ref{Dnrmultvar} we obtain $D=E_\Delta\otimes_{E_{\{\alpha\}}}D^{\nr}_{\Delta\setminus\{\alpha\}}$. We have a $G$-equivariant map $\beta_{M,G/B}\colon \pi^\vee\to \mathfrak{Y}_{\pi,M}(G/B)$ (cf.\ \cite[Thm.\ F]{MultVar}). In our second case when $\pi$ is irreducible with $D^\vee_\Delta(\pi)\neq 0$, the map $\beta_{M,G/B}$ is injective. In the other case when $\pi$ is in the category $\SP_{\mathbb{F}_q}^{\alpha,ab}$, the Pontryagin dual of the kernel of $\beta_{M,G/B}$ lies in the category $\Fin^{>B}_{\Fq}(G)$ by Theorem \ref{DveeDeltagrw_0}, Lemma \ref{KerDveeDelta}, and \cite[Thm.\ C]{MultVar}. 

We put $\pi_\alpha:=J_{\overline{P}^{(\alpha)}}^{\mathrm{sheaf}}(\pi)$ (see \eqref{jacquet}) and let $M_0$ be the Pontryagin dual of the image of $\pi_\alpha^\vee$ under the composite map $$\pi_\alpha^\vee\hookrightarrow \mathfrak{Y}^\alpha_{\pi,M}(\GL_2(\Qp)^{(\alpha)}/\overline{B}_2^{(\alpha)})\overset{\res}{\to} \mathfrak{Y}^\alpha_{\pi,M}(N_0^{(\alpha)}\overline{B}_2^{(\alpha)}/\overline{B}_2^{(\alpha)})$$ where $N_0^{(\alpha)}:=N_0\cap \GL_2(\Qp)^{(\alpha)}=N_0\cap N_\alpha$. 

\begin{pro}\label{gl2admissprinc}
The representation $\pi_\alpha$ of $\GL_2(\Qp)^{(\alpha)}$ is admissible and has finite length. The value of Colmez' Montréal functor at $\pi_\alpha$ is $D^\vee_{\{\alpha\}}(\pi_\alpha)\cong M_0^\vee[X_\alpha^{-1}]$ which admits an additional linear action of the direct product of $T^{(\alpha)}$ and of the center of $\GL_2(\Qp)^{(\alpha)}$. Moreover, in case $\pi$ is in the category $\SP_{\mathbb{F}_q}^{\alpha,ab}$, the Jordan--Hölder factors of $\pi_\alpha$ are isomorphic to subquotients of principal series.
\end{pro}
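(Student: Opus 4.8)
The plan is to realise $\pi_\alpha$, together with the tautological embedding of its Pontryagin dual into the sheaf $\mathfrak{Y}^{\overline{P}^{(\alpha)}}_{\pi,M}=\mathfrak{Y}^\alpha_{\pi,M}$ on $\GL_2(\Qp)^{(\alpha)}/\overline{B}_2^{(\alpha)}\cong\mathbb{P}^1$, as precisely the input to which the case $n=2$ of Theorem \ref{DveeDeltagrw_0} applies, and then to transport the finiteness results of Section \ref{sec:finiteness}. First I would set up the geometric dictionary. Put $\alpha':=-w_0(\alpha)\in\Delta$ and $w:=w_{\alpha'}$; then $w_0^{-1}w^{-1}=w_\alpha w_0$, and the only positive root subgroup contained in $\overline{P}^{(\alpha)}=\overline{B}\cup\overline{B}w_\alpha\overline{B}$ is $N_0^{(\alpha)}=N_0\cap N_\alpha=N_{0,w_\alpha w_0}$. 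Arguing exactly as in Lemma \ref{Cwsubsetww_0C} one gets $\mathcal{C}_0\cap(\overline{P}^{(\alpha)}w_0B/B)=N_0^{(\alpha)}w_0B/B=w_0^{-1}w^{-1}\mathcal{C}_0^w$, so under $\iota_\alpha$ the compact open cell $N_0^{(\alpha)}\overline{B}_2^{(\alpha)}/\overline{B}_2^{(\alpha)}$ is carried onto $N_0^{(\alpha)}w_0B/B$, and $\mathfrak{Y}^\alpha_{\pi,M}$ restricted there has sections $D_w=\Ker(\res^{\mathcal{C}_0}_{U_w})$ with $U_w=\mathcal{C}_0\setminus N_0^{(\alpha)}w_0B/B$, in the notation of Lemma \ref{phipsiinvariantDw}. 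Since $\Delta_{w_\alpha w_0}=\{\alpha\}$ and $H_{\Delta,k}\cap N_0^{(\alpha)}=\{1\}$, Proposition \ref{Dwprojlim} identifies this $D_w$ with the $\GL_2(\Qp)^{(\alpha)}$-noncommutative deformation $\mathbb{M}_{\infty,0}$ of the étale $(\varphi_\alpha,\Gamma_\alpha)$-module $D^{\nr}_{\Delta\setminus\{\alpha\}}$ over $E_\alpha=E_{\{\alpha\}}$. The $\GL_2(\Qp)^{(\alpha)}$-action turns $\mathfrak{Y}^\alpha_{\pi,M}$ into a $\GL_2(\Qp)^{(\alpha)}$-equivariant sheaf of the type of \cite{SchVZ}, and the complementary central factor $T^{(\alpha)}\times Z(\GL_2(\Qp)^{(\alpha)})=Z(L^{(\alpha)})$ acts linearly on every stalk, its elements being central in $L^{(\alpha)}$ and hence acting trivially by conjugation on $N_0^{(\alpha)}$.

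Next I would identify $M_0$ and prove it is finite. By \cite[Lemma 4.16]{MultVar} the image $\beta_{\mathcal{C}_0,M}(\pi^\vee)$ is $M_\infty^\vee$, so by construction $M_0^\vee=M_\infty^\vee\cap D_w$: a compact $\Fq\bs N_0^{(\alpha)}\js$-submodule of $D_w$, stable under all $\psi_t$ ($t\in T_+^{(\alpha)}$), on which every $\psi_t$ acts surjectively, being the Pontryagin dual of the bijective multiplication-by-$t$ map on $\pi_\alpha$ (a group element acts invertibly). Via the bijection $ww_0\colon M_\infty^\vee\cap D_w\to M_{w,\infty}^\vee$ of Lemma \ref{ww0bij} and Proposition \ref{Mbdwkerfingen} — equivalently, directly by Propositions \ref{prcontainedinDhash} and \ref{MbdcapDwfin} applied to $\GL_2(\Qp)^{(\alpha)}$ — it follows that $M_0^\vee$ is finitely generated over $\Fq\bs N_0^{(\alpha)}\js$. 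Moreover $M_0^\vee$ is $T_0^{(\alpha)}$-stable (the cell is $B_+$-stable) and $M_0$ generates $\pi_\alpha$ over $B_+^{(\alpha)}$, by the covering $\mathbb{P}^1=\{w_0B/B\}\cup\bigcup_k s_\alpha^{-k}(N_0^{(\alpha)}w_0B/B)$ together with Lemma \ref{M'containedinMw}; hence $M_0\in\mathcal{M}_{\{\alpha\}}(\pi_\alpha)$. Admissibility of $\pi_\alpha$ then follows exactly as in Corollary \ref{irredtoirred}: one covers $\mathbb{P}^1$ by $\GL_2(\Zp)^{(\alpha)}$-translates of the cell, each translate having sections finitely generated over the Iwasawa algebra of a uniform open subgroup, and invokes the sheaf property together with noetherianity.

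Now I would compute the functor. The tautological inclusion $\pi_\alpha^\vee\hookrightarrow\mathfrak{Y}^\alpha_{\pi,M}(\mathbb{P}^1)$ restricts on the cell to the canonical surjection $\pi_\alpha^\vee\twoheadrightarrow M_0^\vee$, so by the universal property of $\beta$ in \cite[Cor.\ 4.21]{MultVar} it equals $\beta_{\GL_2(\Qp)^{(\alpha)}/\overline{B}_2^{(\alpha)},M_0}$ and is in particular injective. Applying Theorem \ref{DveeDeltagrw_0} with $n=2$ to $\GL_2(\Qp)^{(\alpha)}$ and $\pi_\alpha$ yields $D^\vee_{\{\alpha\}}(\pi_\alpha)\cong M_0^\vee[X_\alpha^{-1}]$, a finitely generated $E_\alpha$-module of finite rank $r$; the linear $T^{(\alpha)}\times Z(\GL_2(\Qp)^{(\alpha)})$-action descends from the sheaf. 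Combining the description of $D_w$ (Proposition \ref{Dwprojlim}) with $D^\vee_\Delta(\pi)=E_\Delta\otimes_{E_\alpha}D^{\nr}_{\Delta\setminus\{\alpha\}}$ (Proposition \ref{Dnrmultvar}) one further identifies $M_0^\vee[X_\alpha^{-1}]\cong D^{\nr}_{\Delta\setminus\{\alpha\}}$, so that $D^\vee_\Delta(\pi)\cong E_\Delta\otimes_{E_\alpha}D^\vee_{\{\alpha\}}(\pi_\alpha)$. It remains to treat finite length and the Jordan–Hölder factors. Since $D^\vee_{\{\alpha\}}(\pi_\alpha)$ has finite rank over $E_\alpha$, the number of Jordan–Hölder factors of $\pi_\alpha$ on which Colmez' functor does not vanish is at most $r$; the remaining factors are finite-dimensional, hence one-dimensional twists of $\det$. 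Finite length of $\pi_\alpha$ is then deduced by bounding also these finite-dimensional constituents, using that $\pi_\alpha$ is a subquotient of $\pi$ as a representation of $\overline{P}^{(\alpha)}$ and that $\pi$ has finite length, together with the explicit behaviour of $D^\vee_\Delta$ on parabolically induced representations. Finally, when $\pi\in\SP_{\Fq}^{\alpha,ab}$, the representation $\mathbb{V}^\vee(D^\vee_\Delta(\pi))$ is a successive extension of characters of $\GQpD\times\Qp^\times$; restricting the identification $D^\vee_\Delta(\pi)\cong E_\Delta\otimes_{E_\alpha}D^\vee_{\{\alpha\}}(\pi_\alpha)$ to the $\alpha$-Galois factor shows that $\mathbb{V}_\alpha^\vee(D^\vee_{\{\alpha\}}(\pi_\alpha))$ is a successive extension of characters of $\GQpa$, so by Colmez' classification of irreducible admissible mod $p$ representations of $\GL_2(\Qp)$ no supersingular representation occurs among the Jordan–Hölder factors of $\pi_\alpha$; hence every such factor is a subquotient of a principal series.

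The main obstacle is the first step: carefully pinning down that $\mathfrak{Y}^{\overline{P}^{(\alpha)}}_{\pi,M}$ restricted to $\overline{P}^{(\alpha)}w_0B/B$ is genuinely a $\GL_2(\Qp)^{(\alpha)}$-equivariant sheaf of the form produced by \cite{SchVZ} (and \cite[\S4]{MultVar}), with cell sections $D_w$ in the precise sense of Proposition \ref{Dwprojlim}, including the Weyl-group bookkeeping of the twists by $w_0$ and $w_\alpha$, and checking that the tautological embedding of $\pi_\alpha^\vee$ is the $\beta$-map so that Theorem \ref{DveeDeltagrw_0} applies. A secondary delicate point is controlling the finite-dimensional constituents of $\pi_\alpha$ in the finite-length argument, where the finite length of $\pi$ itself — not merely admissibility of $\pi_\alpha$ — is used.
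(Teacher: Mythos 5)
Your setup and the first two-thirds of the argument follow the paper's own route: the identification $M_0^\vee=M_\infty^\vee\cap D_{w_{\alpha'}}$ with $N_{0,w_0^{-1}w_{\alpha'}^{-1}}=N_0^{(\alpha)}$, finite generation of $M_0^\vee$ over $\Fq\bs N_0^{(\alpha)}\js$ via Propositions \ref{prcontainedinDhash} and \ref{MbdcapDwfin} together with Proposition \ref{Dwprojlim}, admissibility by covering $\mathbb{P}^1$ with translates of the cell and noetherianity, and the computation $D^\vee_{\{\alpha\}}(\pi_\alpha)\cong M_0^\vee[X_\alpha^{-1}]$ by applying Theorem \ref{DveeDeltagrw_0} with $n=2$, with the $Z(L^{(\alpha)})$-action coming from $\pi_\alpha$ being an $L^{(\alpha)}$-representation. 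The genuine gap is the finite-length step. The rank of $D^\vee_{\{\alpha\}}(\pi_\alpha)$ over $E_{\{\alpha\}}$ bounds only the number of infinite-dimensional constituents in any finite filtration of $\pi_\alpha$; it gives no control over the one-dimensional constituents, and your proposed remedy --- ``$\pi_\alpha$ is a subquotient of $\pi$ as a $\overline{P}^{(\alpha)}$-representation and $\pi$ has finite length'' --- does not work: finite length of $\pi$ as a $G$-representation implies nothing about the length of a quotient of $\pi|_{\overline{P}^{(\alpha)}}$ viewed as an $L^{(\alpha)}$-representation (already the restriction of an irreducible $\pi$ to a parabolic typically has infinite length), and admissibility alone does not imply finite length either. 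The paper's missing idea is finite \emph{generation}: from the injectivity of $\pi_\alpha^\vee\to M_0^\vee\oplus(w_\alpha M_0)^\vee$ one gets $\pi_\alpha=M_0+w_\alpha M_0$, and since $M_0$ is finitely generated over the monoid $B_{2,+}^{(\alpha)}$ (being an object of $\mathcal{M}_{\{\alpha\}}(\pi_\alpha)$), $\pi_\alpha$ is a finitely generated $\GL_2(\Qp)^{(\alpha)}$-representation; finite length then follows from admissibility plus finite generation by \cite[Thm.\ 2.3.8]{EmertonOrd1}.

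A secondary problem is your last step. You invoke $D^\vee_\Delta(\pi)\cong E_\Delta\otimes_{E_{\{\alpha\}}}D^\vee_{\{\alpha\}}(\pi_\alpha)$, i.e.\ the equality $M_0^\vee[X_\alpha^{-1}]=D_{w_{\alpha'}}\cong D^{\nr}_{\Delta\setminus\{\alpha\}}$. That equality is Proposition \ref{asbigasitcanbe}, a later result whose proof relies on the present proposition, so using it here is circular; at this stage only the containment $M_0^\vee[X_\alpha^{-1}]\subseteq D_{w_{\alpha'}}$ is known. The containment suffices, and this is how the paper argues: when $\pi$ lies in $\SP_{\Fq}$, $D$ and hence $D_{w_{\alpha'}}\cong D^{\nr}_{\Delta\setminus\{\alpha\}}$ is a successive extension of rank-$1$ étale modules, so the same holds for the étale submodule $D^\vee_{\{\alpha\}}(\pi_\alpha)$; since Colmez' functor is exact and attaches irreducible $2$-dimensional Galois representations to supersingular representations, no supersingular Jordan--Hölder factor can occur in $\pi_\alpha$. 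With these two corrections your outline matches the paper's proof.
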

\begin{proof}
Since the diagram 
\begin{align*}
\xymatrixcolsep{5pc}\xymatrix{ N_0^{(\alpha)}\overline{B}_2^{(\alpha)}/\overline{B}_2^{(\alpha)} \ar[r]^-{u\overline{B}_2^{(\alpha)}\mapsto uw_0B} \ar@{^{(}->}[d] & \mathcal{C}_0=N_0w_0B/B \ar@{^{(}->}[d] \\ \GL_2(\Qp)^{(\alpha)}/\overline{B}_2^{(\alpha)} \ar[r]^-{\iota_\alpha} &G/B}
\end{align*}
commutes, we may identify  
\begin{align*}
M_0^\vee\cong \beta_{M,\mathcal{C}_0}(\pi^\vee)\cap \Ker\left(\res^{\mathcal{C}_0}_{\mathcal{C}_0\setminus (N_0^{(\alpha)}w_0B/B)}\colon \mathfrak{Y}_{\pi,M}(\mathcal{C}_0)\to \mathfrak{Y}_{\pi,M}(\mathcal{C}_0\setminus (N_0^{(\alpha)}w_0B/B))\right)=M_\infty^\vee\cap D_{w_{\alpha'}}
\end{align*}
since we have $$U_{w_{\alpha'}}=\mathcal{C}_0\setminus w_0^{-1}w_{\alpha'}^{-1}\mathcal{C}^{w_{\alpha'}}=\mathcal{C}_0\setminus w_0^{-1}w_{\alpha'}^{-1}Nw_{\alpha'}B=\mathcal{C}_0\setminus w_0^{-1}w_{\alpha'}^{-1}N^{(\alpha')}w_{\alpha'}B=\mathcal{C}_0\setminus N_0^{(\alpha)}w_0B\ .$$ Now $w_{\alpha'}$ is a simple reflection, so we compute 
$$N_{0,w_0^{-1}w_{\alpha'}^{-1}}=N_0\cap w_0^{-1}w_{\alpha'}^{-1}N_0w_{\alpha'}w_0= N_0\cap w_\alpha^{-1}\overline{N}w_\alpha= N_0^{(\alpha)}\ .$$
In particular, the ring $\Fq\bg N_{\Delta,\infty,w_0^{-1}w_{\alpha'}^{-1}}\jg$ is isomorphic to the $1$-variable Laurent series field $\Fq\bg X_\alpha\jg$, ie.\ the field of fractions of $\Fq\bs N_0^{(\alpha)}\js\cong\Fq\bs X_\alpha\js$. Hence by Proposition \ref{Dwprojlim}, we may identify $D_{w_{\alpha'}}$ with $D^{\nr}_{\Delta\setminus\{\alpha\}}$. Therefore $M_0^\vee$ is a finitely generated module over $\Fq\bs N_0^{(\alpha)}\js$ by Proposition \ref{MbdcapDwfin}. Let $U^{(1,\alpha)}\leq \GL_2(\Zp)^{(\alpha)}\leq \GL_2(\Qp)^{(\alpha)}$ be the kernel of the projection map $\GL_2(\Zp)^{(\alpha)}\to \GL_2(\Fp)^{(\alpha)}$. Note that the open subset $N_0^{(\alpha)}\overline{B}_2^{(\alpha)}/\overline{B}_2^{(\alpha)}\subset \GL_2(\Qp)^{(\alpha)}/\overline{B}_2^{(\alpha)}$ is $U^{(1,\alpha)}$-invariant, so the projection map $\pi_\alpha^\vee\twoheadrightarrow M_0^\vee$ is $\Fq\bs U^{(1,\alpha)}\js$-linear. Since the index of $U^{(1,\alpha)}\cap N_0^{(\alpha)}$ in $N_0^{(\alpha)}$ is finite, $M_0^\vee$ is also finitely generated as a module over $\Fq\bs U^{(1,\alpha)}\js$. Moreover, $U^{(1,\alpha)}$ is normalized by the Weyl element $w_\alpha$, so $(w_\alpha M_0)^\vee$ is also finitely generated over $\Fq\bs U^{(1,\alpha)}\js$. Finally, since we have a covering $$ \GL_2(\Qp)^{(\alpha)}/\overline{B}_2^{(\alpha)}\cong \mathbb{P}^1(\Qp)=\Zp\cup\frac{1}{\Zp}\cong N_0^{(\alpha)}\overline{B}_2^{(\alpha)}/\overline{B}_2^{(\alpha)}\cup w_\alpha N_0^{(\alpha)}\overline{B}_2^{(\alpha)}/\overline{B}_2^{(\alpha)}\ ,$$ the map $\pi_\alpha^\vee\to M_0^\vee\oplus (w_\alpha M_0)^\vee$ is injective. The admissibility of $\pi_\alpha$ follows as $\Fq\bs U^{(1,\alpha)}\js$ is noetherian.

By Theorem \ref{DveeDeltagrw_0} applied to $G=\GL_2(\Qp)^{(\alpha)}$ (ie.\ $n=2$) we obtain $D^\vee_{\{\alpha\}}(\pi_\alpha)\cong M_0^\vee[X_\alpha^{-1}]$. Since $\pi_\alpha$ is also a representation of $L^{(\alpha)}$, $D^\vee_{\{\alpha\}}(\pi_\alpha)$ admits an action of the center $Z(L^{(\alpha)})=T^{(\alpha)}\times Z(\GL_2(\Qp)^{(\alpha)})$. Note that in case of $\GL_2(\Qp)$ both $D^\vee_\Delta=D^\vee_{\{\alpha\}}$ and Breuil's functor $D^\vee_\xi$ give back Colmez' \cite{Colmez1} Montréal functor \cite[Prop.\ 3.2]{Breuil}.

We obtain $\pi_\alpha=M_0+w_\alpha M_0$ using again the injectivity of the map $\pi_\alpha^\vee\to M_0^\vee\oplus(w_\alpha M_0)^\vee$. Therefore $\pi_\alpha$ is a finitely generated representation of $\GL_2(\Qp)^{(\alpha)}$ since $M_0$ is finitely generated as a representation of the monoid $B_{2,+}^{(\alpha)}$ (being an object in $\mathcal{M}_{\{\alpha\}}(\pi_\alpha)$). It follows that $\pi_\alpha$ has finite length using the admissibility and \cite[Thm.\ 2.3.8]{EmertonOrd1}.

Finally, assume that $\pi$ is in the category $\SP_{\mathbb{F}_q}^{\alpha,ab}$. Since Colmez' Montréal functor is exact \cite[Thm.\ 0.9]{Colmez1} and produces $2$-dimensional irreducible Galois representations at supersingular smooth mod $p$ representations of $\GL_2(\Qp)$ \cite[Thm.\ 0.10]{Colmez1}, $\pi_\alpha$ is a successive extension of subquotients of principal series as $D_{w_{\alpha'}}$ is an extension of rank $1$ objects (since so is $D$ because $\pi$ is in the category $\SP_{\Fq}$).
\end{proof}

\begin{cor}\label{Uactstrivially}
The representation $\pi_\alpha$ of $\overline{P}^{(\alpha)}$ factors through the quotient $\overline{P}^{(\alpha)}\twoheadrightarrow L^{(\alpha)}$.
\end{cor}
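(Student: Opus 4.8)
The plan is to deduce this as an immediate combination of Proposition \ref{gl2admissprinc} and Proposition \ref{uniptriv}. First I would recall that, by construction, $\pi_\alpha=J^{\mathrm{sheaf}}_{\overline{P}^{(\alpha)}}(\pi)$ is a smooth representation of $\overline{P}^{(\alpha)}$ (the quotient map $\pi\twoheadrightarrow \pi_\alpha$ being $\overline{P}^{(\alpha)}$-equivariant since the relevant restriction map of sheaves is), and that $\overline{P}^{(\alpha)}=L^{(\alpha)}\overline{U}^{(\alpha)}$ is genuinely a parabolic subgroup of $G=\GL_n(\Qp)$ with Levi component $L^{(\alpha)}$ and unipotent radical $\overline{U}^{(\alpha)}$; in fact it is conjugate (via $w_0$) to the standard parabolic $P^{(\alpha')}$. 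Hence Proposition \ref{uniptriv} is applicable to $(P,L,U)=(\overline{P}^{(\alpha)},L^{(\alpha)},\overline{U}^{(\alpha)})$ once I check that $\pi_\alpha$ is admissible and of finite length as a representation of $L^{(\alpha)}$ (via the restriction $L^{(\alpha)}\hookrightarrow \overline{P}^{(\alpha)}$).

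The key intermediate step is therefore to upgrade the conclusion of Proposition \ref{gl2admissprinc}---that $\pi_\alpha$ is admissible of finite length over $\GL_2(\Qp)^{(\alpha)}$---to the same statement over $L^{(\alpha)}=\GL_2(\Qp)^{(\alpha)}\times T^{(\alpha)}$. For admissibility I would use that $\GL_2(\Qp)^{(\alpha)}=\GL_2(\Qp)^{(\alpha)}\times\{1\}$ is a closed subgroup of $L^{(\alpha)}$, so for any open compact subgroup $K\leq L^{(\alpha)}$ the intersection $K\cap \GL_2(\Qp)^{(\alpha)}$ is open compact in $\GL_2(\Qp)^{(\alpha)}$ and $\pi_\alpha^{K}\subseteq \pi_\alpha^{K\cap \GL_2(\Qp)^{(\alpha)}}$ is finite dimensional. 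For finite length I would observe that any strictly ascending chain of $L^{(\alpha)}$-subrepresentations of $\pi_\alpha$ is in particular a chain of $\GL_2(\Qp)^{(\alpha)}$-subrepresentations, hence has length bounded by the length of $\pi_\alpha$ as a $\GL_2(\Qp)^{(\alpha)}$-representation. (Implicitly this uses that $T^{(\alpha)}$ is a central direct factor of $L^{(\alpha)}$.)

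Then I would invoke Proposition \ref{uniptriv}: the unipotent radical $\overline{U}^{(\alpha)}$ acts trivially on $\pi_\alpha$, so the action of $\overline{P}^{(\alpha)}$ on $\pi_\alpha$ factors through $\overline{P}^{(\alpha)}/\overline{U}^{(\alpha)}\cong L^{(\alpha)}$, which is exactly the assertion of the corollary. I do not expect a genuine obstacle here: the only point that takes a moment's care is the transfer of the admissibility and finite-length properties from $\GL_2(\Qp)^{(\alpha)}$ to $L^{(\alpha)}$, and that is routine given the product decomposition $L^{(\alpha)}=\GL_2(\Qp)^{(\alpha)}\times T^{(\alpha)}$.
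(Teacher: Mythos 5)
Your proposal is correct and follows the paper's own route: the corollary is obtained exactly by combining Proposition \ref{gl2admissprinc} with Proposition \ref{uniptriv} applied to $\overline{P}^{(\alpha)}=L^{(\alpha)}\overline{U}^{(\alpha)}$. The only extra content in your write-up is the (routine, and correctly executed) transfer of admissibility and finite length from $\GL_2(\Qp)^{(\alpha)}$ to $L^{(\alpha)}$, a step the paper leaves implicit.
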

\begin{proof}
This follows immediately combining Propositions \ref{uniptriv} and \ref{gl2admissprinc}.
\end{proof}

\begin{pro}\label{asbigasitcanbe}
We have $D^\vee_{\{\alpha\}}(\pi_\alpha)\cong M_0^\vee[X_\alpha^{-1}]\cong D_{w_{\alpha'}}\cong D^\vee_\Delta(\pi)^{\nr}_{\Delta\setminus\{\alpha\}}$.
\end{pro}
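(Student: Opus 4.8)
The statement gathers one genuinely new isomorphism together with two already established in Proposition \ref{gl2admissprinc}: there we proved $D^\vee_{\{\alpha\}}(\pi_\alpha)\cong M_0^\vee[X_\alpha^{-1}]$ and identified $D_{w_{\alpha'}}$ with $D_{\Delta\setminus\{\alpha\}}^{\nr}=D^\vee_\Delta(\pi)_{\Delta\setminus\{\alpha\}}^{\nr}$. So the task is to prove $M_0^\vee[X_\alpha^{-1}]\cong D_{w_{\alpha'}}$. First I would make the identification $D_{w_{\alpha'}}\cong D_{\Delta\setminus\{\alpha\}}^{\nr}$ precise: since $H_{\Delta,k}\cap N_0^{(\alpha)}=\{1\}$ for every $k$, the layers $N_{\Delta,k,w_0^{-1}w_{\alpha'}^{-1}}$ all coincide with $N_0^{(\alpha)}$, so $\Fq\bg N_{\Delta,\infty,w_0^{-1}w_{\alpha'}^{-1}}\jg\cong E_{\{\alpha\}}$ and, by Proposition \ref{Dwprojlim} together with Lemma \ref{gradedDw} (for $w=w_{\alpha'}$ the index set $(\mathbb{Z}^{\geq 0})^{\Phi^+_{w_0^{-1}w_{\alpha'}^{-1}}\setminus\Delta}$ reduces to $\{{\bf 0}\}$), the map $\pr_{0,\infty}$ restricts to an isomorphism $\pr_{0,\infty}\colon D_{w_{\alpha'}}\xrightarrow{\ \sim\ }D_{\Delta\setminus\{\alpha\}}^{\nr}$ of étale $(\varphi_\alpha,\Gamma_\alpha)$-modules over $E_{\{\alpha\}}$. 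Because $M_0^\vee=M_\infty^\vee\cap D_{w_{\alpha'}}$ is $\psi_\alpha$-stable and $X_\alpha$-torsion free, $M_0^\vee[X_\alpha^{-1}]$ is then an étale $(\varphi_\alpha,\Gamma_\alpha)$-submodule of $D_{w_{\alpha'}}$; as the latter is free of rank $r:=\rk_{E_\Delta}D^\vee_\Delta(\pi)=\dim_{\Fq}\mathbb{V}^\vee(D^\vee_\Delta(\pi))$ over the field $E_{\{\alpha\}}$ — here one uses the standing hypothesis that $\GQpDa$ acts on $\mathbb{V}^\vee(D^\vee_\Delta(\pi))$ through its maximal abelian quotient, via Proposition \ref{Dnrmultvar} — it suffices to prove $\rk_{E_{\{\alpha\}}}M_0^\vee[X_\alpha^{-1}]\geq r$.

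For this rank bound I would use a Jacquet-type comparison. By Corollary \ref{Uactstrivially}, $\pi_\alpha$ is a smooth representation of $L^{(\alpha)}=\GL_2(\Qp)^{(\alpha)}\times T^{(\alpha)}$, which we inflate to $\overline{P}^{(\alpha)}$; since the quotient map $\pi\twoheadrightarrow\pi_\alpha$ is $\overline{P}^{(\alpha)}$-equivariant by construction of $J^{\mathrm{sheaf}}_{\overline{P}^{(\alpha)}}$, Frobenius reciprocity provides a $G$-equivariant map $\Phi\colon\pi\to\Ind_{\overline{P}^{(\alpha)}}^{\GL_n(\Qp)}\pi_\alpha$. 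Applying the contravariant, right exact functor $D^\vee_\Delta$ and its compatibility with parabolic induction \cite[Thm.\ 3.5]{MultVar} (noting $\Delta_{\overline{P}^{(\alpha)}}=\{\alpha\}$) yields a homomorphism $D^\vee_\Delta(\Phi)\colon E_\Delta\otimes_{E_{\{\alpha\}}}M_0^\vee[X_\alpha^{-1}]\cong D^\vee_\Delta(\Ind_{\overline{P}^{(\alpha)}}^{\GL_n(\Qp)}\pi_\alpha)\to D^\vee_\Delta(\pi)$, which, $D^\vee_\Delta$ being right exact, is surjective provided $D^\vee_\Delta(\Ker\Phi)=0$. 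When $\pi$ is irreducible with $D^\vee_\Delta(\pi)\neq 0$, we have $\Phi\neq 0$, hence $\Phi$ is injective and $\Ker\Phi=0$. When $\pi$ lies in $\SP_{\Fq}^{\alpha,ab}$, set $\pi_3:=\beta_{M,G/B}(\pi^\vee)^\vee\leq\pi$: by Theorem \ref{DveeDeltagrw_0} the quotient $\pi/\pi_3$ lies in $\Fin_{\Fq}^{>B}(G)$, while $\beta_{M,G/B}$ is injective on $\pi_3^\vee$ and $J^{\mathrm{sheaf}}_{\overline{P}^{(\alpha)}}(\pi_3)=\pi_\alpha$; then $\Ker\Phi\cap\pi_3=\Ker(\Phi|_{\pi_3})$ has all Jordan–Hölder constituents in $\Fin_{\Fq}^{>B}(G)$ and $\Ker\Phi/(\Ker\Phi\cap\pi_3)$ embeds into $\pi/\pi_3$, so $D^\vee_\Delta(\Ker\Phi)=0$ by Lemma \ref{KerDveeDelta} and exactness of $D^\vee_\Delta$ on $\SP_{\Fq}$. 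In either case surjectivity of $D^\vee_\Delta(\Phi)$ forces $r=\rk_{E_\Delta}D^\vee_\Delta(\pi)\leq\rk_{E_\Delta}(E_\Delta\otimes_{E_{\{\alpha\}}}M_0^\vee[X_\alpha^{-1}])=\rk_{E_{\{\alpha\}}}M_0^\vee[X_\alpha^{-1}]$, and combined with the first paragraph this gives $M_0^\vee[X_\alpha^{-1}]=D_{w_{\alpha'}}$, whence all four modules in the statement are isomorphic.

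The main obstacle is the vanishing $D^\vee_\Delta(\Ker\Phi)=0$ in the reducible ($\SP_{\Fq}^{\alpha,ab}$) case, i.e.\ that $\Phi$ does not collapse any non-$\Fin_{\Fq}^{>B}$ part of $\pi$. I expect to handle it by the reduction to $\pi_3$ above and then analysing $\Ker(\Phi|_{\pi_3})$ through the $B$-filtration $\Fil^\bullet_M$ attached to the Bruhat stratification: since $D^\vee_\Delta$ sees only the big cell $\mathcal{C}^{w_0}$ (Theorem \ref{DveeDelta0gr}) and $\pi_\alpha^\vee$ records precisely the sections of $\mathfrak{Y}_{\pi_3,M}$ supported on $\overline{P}^{(\alpha)}w_0B/B\supseteq N^{(\alpha)}w_0B/B$, the dual sections of any subrepresentation of $\Ker(\Phi|_{\pi_3})$ must, after intersecting with $\mathcal{C}^{w_0}$, be supported off $N^{(\alpha)}w_0B/B$, which forces its $D^\vee_\Delta$ to vanish via the $\Fq\bs N_0\js$-torsion-freeness of $\mathfrak{Y}_{\pi_3,M}(\mathcal{C}_0)$ already exploited in the proofs of Theorem \ref{DveeDelta0gr} and Proposition \ref{Uinv0onopen}. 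A routine preliminary — but worth spelling out, since it underlies regarding $M_0^\vee[X_\alpha^{-1}]$ as a submodule of $D_{w_{\alpha'}}$ at all — is the isomorphism $\pr_{0,\infty}\colon D_{w_{\alpha'}}\xrightarrow{\sim}D_{\Delta\setminus\{\alpha\}}^{\nr}$ recorded at the start, which follows from Proposition \ref{Dwprojlim} because the transition maps of the relevant projective limit are isomorphisms when $N_{\Delta,k,w_0^{-1}w_{\alpha'}^{-1}}$ is independent of $k$.
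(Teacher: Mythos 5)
Your reduction of the statement to the single rank bound $\rk_{E_{\{\alpha\}}}M_0^\vee[X_\alpha^{-1}]\geq\rk_{E_\Delta}D^\vee_\Delta(\pi)$ is fine (the identifications $D_{w_{\alpha'}}\cong D^{\nr}_{\Delta\setminus\{\alpha\}}$ and the freeness over $E_{\{\alpha\}}$ are indeed already in Propositions \ref{Dwprojlim}, \ref{Dnrmultvar} and \ref{gl2admissprinc}), and the Frobenius-reciprocity skeleton $\Phi\colon\pi\to\Ind_{\overline{P}^{(\alpha)}}^{G}\pi_\alpha$ with $D^\vee_\Delta(\Phi)$ surjective would indeed give that bound. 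But the two inputs your argument hinges on are exactly where the content of the proposition lies, and neither is established. In the irreducible case you assert ``$\Phi\neq 0$, hence injective''; since $\pi\twoheadrightarrow\pi_\alpha$ is surjective by construction, $\Phi\neq0$ is equivalent to $\pi_\alpha\neq0$, i.e.\ to $M_0^\vee=M_\infty^\vee\cap D_{w_{\alpha'}}\neq0$. There is no a priori reason that $\beta_{M,G/B}(\pi^\vee)$ contains a single nonzero section supported on the closed subset $\overline{P}^{(\alpha)}w_0B/B$: both $M_\infty^\vee$ and $D_{w_{\alpha'}}$ are just closed $\Fq\bs N_0^{(\alpha)}\js$-stable subgroups of $\mathbb{M}_{\infty,0}(D)$, and their intersection could a priori be $0$. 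Showing it is nonzero is not easier than the proposition itself --- it is its heart --- so at this point your argument is circular. Similarly, in the $\SP^{\alpha,ab}_{\Fq}$ case the vanishing $D^\vee_\Delta(\Ker(\Phi|_{\pi_3}))=0$, which you flag as the main obstacle, is only sketched, and the sketch does not go through as stated: for a constituent of $\Ker\Phi$ with nonvanishing $D^\vee_\Delta$ what you know is only that its dual does not meet the space of globally supported sections on $\overline{P}^{(\alpha)}w_0B/B$, which is much weaker than its restrictions to $\mathcal{C}_0$ vanishing on $N_0^{(\alpha)}w_0B/B$, so no $\Fq\bs N_0\js$-torsion contradiction is forthcoming; ruling out such constituents is again essentially equivalent to the full-rank statement you are trying to prove.

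For comparison, the paper avoids this circularity by arguing from the other side of the containment $M_0^\vee[X_\alpha^{-1}]\subseteq D_{w_{\alpha'}}$: it takes Colmez's smallest $\psi_t$-stable lattice $D^\natural_{w_{\alpha'}}\subset D_{w_{\alpha'}}$, forms its $\Fq\bs N_0\js$-span $\mathfrak{M}^\natural$ and the $E_\Delta^+$-lattice $D^\natural$ it generates (Lemmas \ref{Dnaturalsmallest}, \ref{Mnaturalfree}), notes that $\mathfrak{M}^\natural$, $M_\infty^\vee$ and $\mathfrak{M}^\natural+M_\infty^\vee$ all lie in $\mathbb{M}^{bd}_\infty(D^\#)$ by Proposition \ref{prcontainedinDhash}, so that Propositions \ref{kerintersect} and \ref{fingen+kertors} give $\Fq\bg N_{\Delta,0}\jg\otimes_{\Fq\bs N_0\js}(-)\cong D$ for all three; the exact sequence relating sum and intersection then shows $\pr_{0,\infty}(\mathfrak{M}^\natural\cap M_\infty^\vee)$ is a $\psi$-stable lattice, hence equals $D^\natural$ by minimality, and Nakayama forces $\mathfrak{M}^\natural\subseteq M_\infty^\vee$, i.e.\ $D^\natural_{w_{\alpha'}}\subseteq M_0^\vee$. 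This one argument delivers simultaneously the nontriviality and the equality $M_0^\vee[X_\alpha^{-1}]=D_{w_{\alpha'}}$ that your route would need as separate inputs. If you want to keep your functorial approach, you would still have to supply a direct lattice-theoretic proof of $\pi_\alpha\neq0$ (and of the kernel vanishing), at which point you have essentially re-derived the paper's proof; note also that the surjectivity you would obtain is, conversely, an immediate consequence of the paper's statement via Theorem \ref{lookslikeparindisparind}, not a shortcut to it.
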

\begin{proof}
By Proposition \ref{gl2admissprinc} it remains to show that the containment $M_0^\vee[X_\alpha^{-1}]\leq D_{w_{\alpha'}}$ is an equality. Recall \cite[Cor.\ II.5.12]{Colmez2} that there is a smallest $\Fq\bs N_0^{(\alpha)}\js$-lattice $D^\natural_{w_{\alpha'}}$ in $D_{w_{\alpha'}}$ stable under $\psi_t$ for all $t\in T_+$. Let $D^\natural$ be the $E_\Delta^+$-submodule in $D$ generated by $D^\natural_{w_{\alpha'}}$ under the identification $D_{w_{\alpha'}}\cong D^\nr_{\Delta\setminus\{\alpha\}}\subset D=D^\vee_\Delta(\pi)=E_\Delta\otimes_{E_{\alpha}}D^\nr_{\Delta\setminus\{\alpha\}}$.
\begin{lem}\label{Dnaturalsmallest}
$D^\natural$ is the smallest $E_\Delta^+$-lattice in $D$ stable under the operators $\psi_t\colon D\to D$ for all $t\in T_+$.
\end{lem}
\begin{proof}
Let $D_0\leq D$ be an $E_\Delta^+=\Fq\bs N_{\Delta,0}\js$-submodule stable under $\psi_t$ for all $t\in T_+$ such that $D_0[X_\Delta^{-1}]=D$. Pick an element $x\in D_{w_{\alpha'}}\subset D$. So there is an integer $k\geq 0$ such that $X_\Delta^kx\in D_0$. Applying $\psi_{\Delta\setminus\{\alpha\}}^r$ for some integer $r>\log_p k$ we obtain $X_\alpha^kx\in D_0$ since $\psi_{\Delta\setminus\{\alpha\}}$ commutes with $X_\alpha$, acts via a character on $x$, and satisfies $\psi_{\Delta\setminus\{\alpha\}}^r(X_{\Delta\setminus\{\alpha\}})=\pm 1$. This shows that $D_0\cap D_{w_{\alpha'}}$ is an $\Fq\bs N_0^{(\alpha)}\js$-lattice in $D_{w_{\alpha'}}$ stable under $\psi_t$ for all $t\in T_+$ whence we obtain $D_0\cap D_{w_{\alpha'}}\supseteq D^\natural_{w_{\alpha'}}$ by the minimality of $D^\natural_{w_{\alpha'}}$. Since $D_0$ is an $E_\Delta^+$-submodule of $D$, it contains $D^\natural$ as claimed.
\end{proof}
Let $\mathfrak{M}^\natural$ be the $\Fq\bs N_0\js$-submodule in $D$ generated by $D^\natural_{w_{\alpha'}}$ so that we have $\mathfrak{M}^\natural\cap D_{w_{\alpha'}}=D^\natural_{w_{\alpha'}}$ and $\pr_{0,\infty}(\mathfrak{M}^\natural)=D^\natural$.
\begin{lem}\label{Mnaturalfree}
$\mathfrak{M}^\natural$ (resp.\ $D^\natural$) is a free module of rank $\rk_{E_\Delta}(D)$ over $\Fq\bs N_0\js$ (resp.\ over $\Fq\bs N_{\Delta,0}\js\cong E_{\Delta}^+$). In particular, the quotient map $\Fq\bs N_{\Delta,0}\js\otimes_{\Fq\bs N_0\js}\mathfrak{M}^\natural\twoheadrightarrow D^\natural$ is an isomorphism.
\end{lem}
\begin{proof}
By Proposition \ref{Dnrmultvar} we get $E_\Delta\otimes_{E_{\{\alpha\}}}D_{w_{\alpha'}}\cong D$. Since there is a section $E_{\{\alpha\}}\hookrightarrow \Fq\bg N_{\Delta,\infty}\jg$, we also obtain the isomorphism $\Fq\bg N_{\Delta,\infty}\jg\otimes_{E_{\{\alpha\}}}D_{w_{\alpha'}}$. On the other hand, $\Fq\bs N_0^{(\alpha)}\js\cong \Fq\bs X_\alpha\js$ is a DVR, so $D^\natural_{w_{\alpha'}}$ is a free $\Fq\bs N_0^{(\alpha)}\js$-module of rank $\rk_{E_{\{\alpha\}}}(D_{w_{\alpha'}})=\rk_{E_\Delta}(D)$ with $E_{\{\alpha\}}\otimes_{\Fq\bs N_0^{(\alpha)}\js}D^\natural_{w_{\alpha'}}\cong D_{w_{\alpha'}}$ by the theorem of elementary divisors. We deduce $\mathfrak{M}^\natural\cong \Fq\bs N_0\js\otimes_{\Fq\bs N_0^{(\alpha)}\js}D^\natural_{w_{\alpha'}}$ (resp.\ $D^\natural\cong \Fq\bs N_{\Delta,0}\js\otimes_{\Fq\bs N_0^{(\alpha)}\js}D^\natural_{w_{\alpha'}}$) is a free $\Fq\bs N_0\js$-module (resp.\ $\Fq\bs N_{\Delta,0}\js$-module) of the same rank.
\end{proof}
The operators $\psi_t$ ($t\in T_+$) act surjectively on $D^\natural_{w_{\alpha'}}$ \cite[Cor.\ II.5.12]{Colmez2} hence also on $\mathfrak{M}^\natural$. So each of the three compact $\Fq\bs N_0\js$-submodules $\mathfrak{M}^\natural,M_\infty^\vee$, and $\mathfrak{M}^\natural+M_\infty^\vee$ are contained in $\mathbb{M}^{bd}_\infty(D^\#)$ by Proposition \ref{prcontainedinDhash}. So by Proposition \ref{kerintersect}$(b)$ we may apply Proposition \ref{fingen+kertors} to each $\mathfrak{M}^\natural,M_\infty^\vee$, and $\mathfrak{M}^\natural+M_\infty^\vee$ to deduce isomorphisms
\begin{align*}
\Fq\bg N_{\Delta,0}\jg\otimes_{\Fq\bs N_0\js}\mathfrak{M}^\natural& \cong D\\
\Fq\bg N_{\Delta,0}\jg\otimes_{\Fq\bs N_0\js}M_\infty^\vee& \cong D\\
\Fq\bg N_{\Delta,0}\jg\otimes_{\Fq\bs N_0\js}(\mathfrak{M}^\natural+M_\infty^\vee)& \cong D\ .
\end{align*}
Applying $\Fq\bg N_{\Delta,0}\jg\otimes_{\Fq\bs N_0\js}\cdot$ to the short exact sequence $$0\to \mathfrak{M}^\natural\cap M_\infty^\vee\to \mathfrak{M}^\natural\oplus M_\infty^\vee\to \mathfrak{M}^\natural+M_\infty^\vee\to 0$$ we obtain that the map $$\Fq\bg N_{\Delta,0}\jg\otimes_{\Fq\bs N_0\js}(\mathfrak{M}^\natural\cap M_\infty^\vee)\twoheadrightarrow \Fq\bg N_{\Delta,0}\jg\otimes_{\Fq\bs N_{\Delta,0}\js}\pr_{0,\infty}(\mathfrak{M}^\natural\cap M_\infty^\vee) \to D$$ is onto. Hence we compute $D^\natural\subseteq \pr_{0,\infty}(\mathfrak{M}^\natural\cap M_\infty^\vee)\subseteq \pr_{0,\infty}(\mathfrak{M}^\natural)=D^\natural$ by Lemma \ref{Dnaturalsmallest}. Since the kernel of the ring homomorphism $\Fq\bs N_0\js \twoheadrightarrow \Fq\bs N_{\Delta,0}\js$ is contained in the Jacobson radical $\Ker(\Fq\bs N_0\js\twoheadrightarrow \Fq)$ of $\Fq\bs N_0\js$, so we deduce $\mathfrak{M}^\natural\cap M_\infty^\vee=\mathfrak{M}^\natural$ from Lemma \ref{Mnaturalfree} using Nakayama's lemma. Finally, we obtain $M_0^\vee=M_\infty^\vee\cap D_{w_{\alpha'}}\supseteq \mathfrak{M}^\natural\cap D_{w_{\alpha'}}=D^\natural_{w_{\alpha'}}$ whence $M_0^\vee[X_\alpha^{-1}]=D_{w_{\alpha'}}$ as claimed.
\end{proof}
\begin{proof}[Proof of Theorem \ref{lookslikeparindisparind}]
Assume first there is a smooth representation $\pi_\alpha$ of $L^{(\alpha)}$ such that $Q(\pi)\cong Q(\Ind_{\overline{P}^{(\alpha)}}^G\pi_\alpha)$. Then by Lemma \ref{KerDveeDelta} and \cite[Thm.\ B]{MultVar} we have $D^\vee_\Delta(\pi)\cong D^\vee_\Delta(\Ind_{\overline{P}^{(\alpha)}}^G\pi_\alpha)\cong E_\Delta\otimes_{E_{\{\alpha\}}}D^\vee_{\{\alpha\}}(\pi_\alpha)$, so $\pi$ lies in the subcategory $\SP_{\Fq}^{\alpha,ab}$.

For the other direction assume $\pi$ is an object in $\SP_{\Fq}^{\alpha,ab}$. We use \cite[Thm.\ B]{MultVar}, Proposition \ref{Dnrmultvar}, and Proposition \ref{asbigasitcanbe} to compute
\begin{align*}
D^\vee_\Delta(\Ind_{\overline{P}^{(\alpha)}}^G\pi_\alpha)\cong E_\Delta\otimes_{E_{\{\alpha\}}}D^\vee_{\{\alpha\}}(\pi_\alpha)\cong E_\Delta\otimes_{E_{\{\alpha\}}}D^\vee_\Delta(\pi)^{\nr}_{\Delta\setminus\{\alpha\}}\cong D^\vee_\Delta(\pi)\ .
\end{align*}
By Proposition \ref{gl2admissprinc}, both $\Ind_{\overline{P}^{(\alpha)}}^G\pi_\alpha$ and $\pi$ are objects in $\SP_{\Fq}$, so we obtain $Q(\Ind_{\overline{P}^{(\alpha)}}^G\pi_\alpha)\cong Q(\pi)$ using Theorem \ref{fullyfaithful}.
\end{proof}

The following is a generalization of \cite[Prop.\ IV.3.2(ii)]{Colmez1}.

\begin{cor}\label{twojacquetsame}
Assume that $\pi$ lies in the category $\SP_{\Fq}^{\alpha,ab}$ and has no nonzero quotient in the subcategory $\Fin^{>B}_{\Fq}(G)$. Then we have  $J_{\overline{P}^{(\alpha)}}(\pi)\cong J_{\overline{P}^{(\alpha)}}^{\mathrm{sheaf}}(\pi)$ as smooth representations of $L^{(\alpha)}$.
\end{cor}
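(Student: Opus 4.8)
Throughout write $\overline{U}^{(\alpha)}=\prod_{\beta\in\Phi^+\setminus\{\alpha\}}N_{-\beta}$ for the unipotent radical of $\overline{P}^{(\alpha)}$, so that by definition $J_{\overline{P}^{(\alpha)}}(\pi)=\pi_{\overline{U}^{(\alpha)}}$ and $J_{\overline{P}^{(\alpha)}}(\pi)^\vee=(\pi^\vee)^{\overline{U}^{(\alpha)}}$. The plan is to prove that the two subspaces $\pi_\alpha^\vee=J_{\overline{P}^{(\alpha)}}^{\mathrm{sheaf}}(\pi)^\vee$ and $(\pi^\vee)^{\overline{U}^{(\alpha)}}$ of $\pi^\vee$ coincide; taking Pontryagin duals then yields the asserted isomorphism, and it will automatically be $L^{(\alpha)}$-equivariant because $\overline{U}^{(\alpha)}$ acts trivially on both $\pi_\alpha$ (Corollary~\ref{Uactstrivially}) and $\pi_{\overline{U}^{(\alpha)}}$.

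First I would record that, under the present hypotheses, $\beta_{M,G/B}\colon\pi^\vee\to\mathfrak{Y}_{\pi,M}(G/B)$ is injective: as noted just before Proposition~\ref{gl2admissprinc}, for $\pi$ in $\SP_{\Fq}^{\alpha,ab}$ the Pontryagin dual of $\Ker(\beta_{M,G/B})$ is a quotient of $\pi$ lying in $\Fin^{>B}_{\Fq}(G)$ (by Theorem~\ref{DveeDeltagrw_0}, Lemma~\ref{KerDveeDelta} and the exactness of $D^\vee_\Delta$ on $\SP_{\Fq}$), and by assumption $\pi$ admits no nonzero such quotient. Hence $\pi_\alpha=J_{\overline{P}^{(\alpha)}}^{\mathrm{sheaf}}(\pi)$ is a genuine $\overline{P}^{(\alpha)}$-equivariant quotient of $\pi$, with $\pi_\alpha^\vee=\pi^\vee\cap\Ker\big(\res^{G/B}_{(G/B)\setminus\overline{P}^{(\alpha)}w_0B/B}\big)$ inside $\pi^\vee$. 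Since $\overline{U}^{(\alpha)}$ acts trivially on $\pi_\alpha$, this immediately gives the inclusion $\pi_\alpha^\vee\subseteq(\pi^\vee)^{\overline{U}^{(\alpha)}}$; equivalently, the quotient map $\pi\twoheadrightarrow\pi_\alpha$ factors through $J_{\overline{P}^{(\alpha)}}(\pi)$.

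The substantive step is the reverse inclusion $(\pi^\vee)^{\overline{U}^{(\alpha)}}\subseteq\pi_\alpha^\vee$. Given $\mu\in\pi^\vee$ fixed by $\overline{U}^{(\alpha)}$, I must show $\res^{G/B}_{(G/B)\setminus\overline{P}^{(\alpha)}w_0B/B}(\beta_{M,G/B}(\mu))=0$, and the idea is to conjugate by $w_0$ so as to reduce to Proposition~\ref{Uinv0onopen}. With $\alpha':=-w_0(\alpha)\in\Delta$ one has $\overline{P}^{(\alpha)}w_0=w_0P^{(\alpha')}$ (as recorded before Theorem~\ref{lookslikeparindisparind}), whence $(G/B)\setminus\overline{P}^{(\alpha)}w_0B/B=w_0\big((G/B)\setminus(P^{(\alpha')}/B)\big)$; moreover a direct computation on root subgroups, using $w_0(\Phi^+\setminus\{\alpha\})=\Phi^-\setminus\{-\alpha'\}$, gives $w_0^{-1}\overline{U}^{(\alpha)}w_0=U^{(\alpha')}$, the unipotent radical of the standard parabolic $P^{(\alpha')}\supseteq B$. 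Thus $w_0^{-1}\mu$ is fixed by $U^{(\alpha')}$, so by $G$-equivariance of $\beta_{M,G/B}$ the global section $\beta_{M,G/B}(w_0^{-1}\mu)=w_0^{-1}\beta_{M,G/B}(\mu)$ is $U^{(\alpha')}$-invariant; restricting it to the $P^{(\alpha')}$-stable open set $(G/B)\setminus(P^{(\alpha')}/B)$ produces a $U^{(\alpha')}$-invariant section of $\mathfrak{Y}_{\pi,M}$, which vanishes by Proposition~\ref{Uinv0onopen}. Translating this identity back by $w_0$ gives $\res^{G/B}_{(G/B)\setminus\overline{P}^{(\alpha)}w_0B/B}(\beta_{M,G/B}(\mu))=0$, i.e.\ $\mu\in\pi_\alpha^\vee$. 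Combining the two inclusions, $\pi_\alpha^\vee=(\pi^\vee)^{\overline{U}^{(\alpha)}}$, and dualizing completes the argument.

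The main obstacle — mild, given the machinery already in place — is the bookkeeping in the third paragraph: because $\overline{P}^{(\alpha)}$ is built from the \emph{opposite} Borel whereas Proposition~\ref{Uinv0onopen} is phrased for a standard parabolic containing $B$, one is forced to conjugate by $w_0$ and to replace $\alpha$ by $\alpha'=-w_0(\alpha)$, keeping careful track of how the open cell $\overline{P}^{(\alpha)}w_0B/B\subset G/B$ and the group $\overline{U}^{(\alpha)}$ transform. Everything else that is needed — injectivity of $\beta_{M,G/B}$, triviality of the $\overline{U}^{(\alpha)}$-action on $\pi_\alpha$, and the absence of nonzero $U$-invariant sections of $\mathfrak{Y}_{\pi,M}$ over $(G/B)\setminus(P/B)$ — is already established above.
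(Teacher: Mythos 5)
Your proof is correct and follows essentially the same route as the paper: injectivity of $\beta_{M,G/B}$ from the no-finite-quotient hypothesis, Corollary \ref{Uactstrivially} for the inclusion $J_{\overline{P}^{(\alpha)}}^{\mathrm{sheaf}}(\pi)^\vee\subseteq(\pi^\vee)^{\overline{U}^{(\alpha)}}$, and Proposition \ref{Uinv0onopen} for the reverse inclusion. The only difference is that you spell out the $w_0$-conjugation (identifying $\overline{P}^{(\alpha)}w_0B/B$ with $w_0(P^{(\alpha')}/B)$ and $w_0^{-1}\overline{U}^{(\alpha)}w_0$ with $U^{(\alpha')}$) which the paper leaves implicit when invoking that proposition.
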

\begin{proof}
By assumption on the nonexistence of nontrivial quotients in $\Fin^{>B}_{\Fq}(G)$, the map $\beta_{M,G/B}\colon \pi^\vee\to \mathfrak{Y}_{\pi,M}(G/B)$ is injective. By Corollary \ref{Uactstrivially} $\overline{U}^{(\alpha)}$ acts trivially on $J_{\overline{P}^{(\alpha)}}^{\mathrm{sheaf}}(\pi)$, so we have $J_{\overline{P}^{(\alpha)}}^{\mathrm{sheaf}}(\pi)^\vee\subseteq (\pi^\vee)^{U^{(\alpha)}}=J_{\overline{P}^{(\alpha)}}(\pi)^\vee$. The other containment follows from Proposition \ref{Uinv0onopen}. 
\end{proof}

\begin{proof}[Proof of Theorem \ref{irredjacquet}]
Assume first there is a smooth admissible representation $\pi_\alpha$ of $L^{(\alpha)}$ such that $\pi$ is isomorphic to a subquotient of $\Ind_{\overline{P}^{(\alpha)}}^G\pi_\alpha$. We may assume without loss of generality that $\pi_\alpha$ is irreducible. By Lemma \ref{KerDveeDelta} the classification \cite{Herzigclass} of irreducible admissible smooth representations of $G$, the representation $\Ind_{\overline{P}^{(\alpha)}}^G\pi_\alpha$has a unique subquotient on which $D^\vee_\Delta$ does not vanish which must therefore be isomorphic to $\pi$. Hence by \cite[Thm.\ B]{MultVar} we have $D^\vee_\Delta(\pi)\cong D^\vee_\Delta(\Ind_{\overline{P}^{(\alpha)}}^G\pi_\alpha)\cong E_\Delta\otimes_{E_{\{\alpha\}}}D^\vee_{\{\alpha\}}(\pi_\alpha)$. So $\GQpDa$ acts on $\mathbb{V}^\vee\circ D^\vee_\Delta(\pi)$ by a character as claimed.

For the other direction assume that $\pi$ is a smooth irreducible representation of $G$ such that $\GQpDa$ acts on $\mathbb{V}^\vee\circ D^\vee_\Delta(\pi)\neq 0$ by a character. As in the proof of Theorem \ref{lookslikeparindisparind} we use \cite[Thm.\ B]{MultVar}, Proposition \ref{Dnrmultvar}, and Proposition \ref{asbigasitcanbe} to compute
\begin{align*}
D^\vee_\Delta(\Ind_{\overline{P}^{(\alpha)}}^G\pi_\alpha)\cong E_\Delta\otimes_{E_{\{\alpha\}}}D^\vee_{\{\alpha\}}(\pi_\alpha)\cong E_\Delta\otimes_{E_{\{\alpha\}}}D^\vee_\Delta(\pi)^{\nr}_{\Delta\setminus\{\alpha\}}\cong D^\vee_\Delta(\pi)\ .
\end{align*}
Finally, the statement follows from Corollaries \ref{irredtoirred} and \ref{detectsiso}.
\end{proof}

We end this section with noting that in case $n=3$ one can say slightly more since the parabolic subgroups $\overline{P}^{(\alpha)}$ are maximal.

\begin{cor}\label{gl3supercuspidal}
Assume that $\pi$ is a smooth irreducible representation of $\GL_3(\Qp)$ such that $D^\vee_\Delta(\pi)\neq 0$. Then $\pi$ is supercuspidal if and only if the action of $\GQpa$ on $\mathbb{V}^\vee\circ D^\vee_\Delta(\pi)$ does \emph{not} factor through the maximal abelian quotient $\GQpa^{ab}$ for any $\alpha\in\Delta$.
\end{cor}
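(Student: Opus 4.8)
The plan is to combine Theorem~\ref{irredjacquet} (applied to each simple root) with the classification of irreducible smooth mod~$p$ representations of $\GL_3(\Qp)$. Recall that $\Delta=\{\alpha,\beta\}$ and the standard parabolics containing $\overline{B}$ are $\overline{B}$, $\overline{P}^{(\alpha)}$, $\overline{P}^{(\beta)}$, and $G$ itself; since $n=3$ the parabolics $\overline{P}^{(\alpha)}$ and $\overline{P}^{(\beta)}$ are maximal, so an irreducible $\pi$ with $D^\vee_\Delta(\pi)\neq 0$ is either supercuspidal or isomorphic to a subquotient of a representation parabolically induced from $\overline{P}^{(\alpha)}$ or from $\overline{P}^{(\beta)}$. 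This dichotomy is what makes the $n=3$ case cleaner than the general statement in Theorem~\ref{thmE}.

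First I would prove the ``only if'' direction by contraposition. Suppose that the action of $\GQpa$ on $V:=\mathbb{V}^\vee\circ D^\vee_\Delta(\pi)$ factors through $\GQpa^{ab}$, i.e.\ $\GQpa$ acts via a character (here $V_\alpha$ is one-dimensional, in the notation of the introduction, since $\Fq$ is finite). By Theorem~\ref{irredjacquet} applied to the simple root $\alpha$, there is a smooth irreducible admissible representation $\pi_\alpha$ of $\GL_2(\Qp)^{(\alpha)}\times T^{(\alpha)}$ such that $\pi$ is isomorphic to a subquotient of $\Ind_{\overline{P}^{(\alpha)}}^{\GL_3(\Qp)}\pi_\alpha$. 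In particular $\pi$ is not supercuspidal. The same argument works with $\beta$ in place of $\alpha$. This establishes one implication.

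For the ``if'' direction, assume $\pi$ is not supercuspidal. Then by the classification of irreducible admissible smooth representations of $\GL_3(\Qp)$ over $\Fq$ (note $\pi$ is automatically admissible by Corollary~\ref{irredtoirred}, using $D^\vee_\Delta(\pi)\neq 0$) the representation $\pi$ is isomorphic to a subquotient of $\Ind_{\overline{P}^{(\gamma)}}^{\GL_3(\Qp)}\pi_\gamma$ for some $\gamma\in\Delta$ and some smooth admissible representation $\pi_\gamma$ of $L^{(\gamma)}$; we may assume $\pi_\gamma$ irreducible. By the ``only if'' direction of Theorem~\ref{irredjacquet} applied to the root $\gamma$, the subgroup $\GQpg$ acts on $V$ via a character, hence through its maximal abelian quotient $\GQpg^{ab}$. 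This is exactly the negation of the right-hand side of the stated equivalence. Combining the two directions gives the corollary.

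The main obstacle I anticipate is making sure the invocation of the classification \cite{Herzigclass} is genuinely legitimate here: one needs to know that \emph{every} irreducible admissible smooth representation of $\GL_3(\Qp)$ over $\Fq$ that is not supercuspidal really is a subquotient of a representation parabolically induced from a \emph{proper} parabolic (equivalently from a maximal parabolic, since $n=3$), and that one can take the inducing data on a Levi of the form $\GL_2(\Qp)\times\GL_1(\Qp)$ — i.e.\ that the root $\gamma$ can indeed be chosen to be simple. This is where the smallness of $n$ is used essentially: for $\GL_3$ the only proper standard parabolics are $\overline{B}$, $\overline{P}^{(\alpha)}$, $\overline{P}^{(\beta)}$, and a representation induced from $\overline{B}$ is a fortiori induced from $\overline{P}^{(\alpha)}$ (taking $\pi_\alpha$ itself a subquotient of a principal series of $\GL_2$), so no extra case analysis is needed. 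For higher $n$ this reduction breaks down — which is precisely why Theorem~\ref{thmE} only characterizes representations induced from $\GL_2\times T$ rather than supercuspidals directly — but for $n=3$ the argument closes with no residual hypotheses beyond $D^\vee_\Delta(\pi)\neq 0$.
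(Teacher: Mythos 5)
Your proof is essentially the paper's argument: the published proof of this corollary is a one-liner invoking Theorem \ref{irredjacquet} together with exactly your observation that for $n=3$ the only maximal standard parabolics containing $\overline{B}$ are $\overline{P}^{(\alpha)}$ and $\overline{P}^{(\beta)}$, with the Borel case absorbed by transitivity and exactness of parabolic induction, as you note. Your passage between ``factors through $\GQpa^{ab}$'' and ``acts via a character'' is the same gloss the paper itself makes, so no objection there.

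One indexing slip should be corrected, though it does not affect the validity of the equivalence. Theorem \ref{irredjacquet}, applied to the simple root $\alpha$, characterizes subquotients of $\Ind_{\overline{P}^{(\alpha)}}^{G}\pi_\alpha$ by the condition that $\GQpDa$ acts via a character --- for $n=3$ this is the factor $G_{\Qp,\beta}$ attached to the \emph{other} simple root, not $\GQpa$ itself. (If $\pi$ is induced from $\overline{P}^{(\alpha)}$, the $\alpha$-indexed copy of $G_{\Qp}$ carries the possibly $2$-dimensional Colmez module of the $\GL_2(\Qp)^{(\alpha)}$-factor, while the remaining copies act by characters; see the discussion after Theorem \ref{thmC}.) So in your contrapositive step, the hypothesis that $\GQpa$ acts through $\GQpa^{ab}$ lets you apply the theorem to the root $\beta$ and conclude that $\pi$ is a subquotient of an induction from $\overline{P}^{(\beta)}$; and in the converse direction, knowing $\pi$ is a subquotient of an induction from $\overline{P}^{(\gamma)}$ forces $G_{\Qp,\Delta\setminus\{\gamma\}}$, not $G_{\Qp,\gamma}$, to act via a character. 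Since the corollary quantifies symmetrically over the two simple roots, this swap is harmless, but as written both of your invocations of Theorem \ref{irredjacquet} pair the wrong Galois factor with the parabolic and should be relabelled.
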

\begin{proof}
This follows immediately from Theorem \ref{irredjacquet} noting that any maximal parabolic subgroup of $\GL_3(\Qp)$ containing $\overline{B}$ is of the form $\overline{P}^{(\alpha)}$ for one of the two simple roots $\alpha\in\Delta$.
\end{proof}

\begin{rem}
We expect Corollary \ref{gl3supercuspidal} to hold in general. The striking question is, of course, on which smooth irreducible admissible supercuspidal representations vanishes the functor $D^\vee_\Delta$. 
\end{rem}

\subsection{Extensions of characters on the Galois side}

We start this subsection by the following general group-theoretic lemma having its origin in Schur's lemma.

\begin{lem}\label{one_splits_lemma}
Let $G_1$ and $G_2$ be groups and assume that $V$ is a $2$-dimensional representation of $G_1\times G_2$ over $\mathbb{F}_q$ admitting a short exact sequence 
$$0 \to \eta'\to V\to \eta\to 0$$
for some characters $\eta, \eta'\colon G_1\times G_2\to \mathbb{F}_q^\times$. If $V$ does not split as a representation of $G_1$ then we have $\eta_{\mid G_2}=\eta'_{\mid G_2}$. In particular, at least one of the restrictions of $V$ to $G_1$ and $G_2$ splits if $\eta\neq\eta'$.
\end{lem}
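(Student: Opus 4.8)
The plan is to exploit the fact that $V$ sits in a short exact sequence of $G_1\times G_2$-representations with one-dimensional sub and quotient, so as a $G_1\times G_2$-module it is given by a cocycle, and to analyze separately the cases where this cocycle splits after restriction to $G_1$ or not. First I would pick a basis $e_1,e_2$ of $V$ adapted to the filtration: $e_1$ spans the subrepresentation affording $\eta'$, and the image $\overline{e_2}$ of $e_2$ in the quotient affords $\eta$. In this basis, for $g\in G_1\times G_2$ the action is upper triangular,
\begin{align*}
g\cdot e_1&=\eta'(g)e_1,\\
g\cdot e_2&=\eta(g)e_2+c(g)e_1,
\end{align*}
where $c\colon G_1\times G_2\to\mathbb{F}_q$ satisfies the cocycle identity $c(gh)=\eta'(g)c(h)+c(g)\eta(h)$, i.e.\ $c$ represents a class in $H^1(G_1\times G_2,\eta'\eta^{-1})$ after twisting.

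Next I would use the hypothesis that $V$ does not split as a $G_1$-representation: this says precisely that the restriction $c_{\mid G_1}$ is not a coboundary, so in particular $c_{\mid G_1}\neq 0$. The key step is then to evaluate the cocycle identity on a commuting pair $g_1\in G_1$, $g_2\in G_2$ (viewed inside $G_1\times G_2$). Since $g_1g_2=g_2g_1$ in $G_1\times G_2$, expanding $c(g_1g_2)$ in both orders gives
\begin{align*}
\eta'(g_1)c(g_2)+c(g_1)\eta(g_2)&=\eta'(g_2)c(g_1)+c(g_2)\eta(g_1).
\end{align*}
Rearranging, $c(g_1)\bigl(\eta(g_2)-\eta'(g_2)\bigr)=c(g_2)\bigl(\eta(g_1)-\eta'(g_1)\bigr)$ for all $g_1\in G_1$, $g_2\in G_2$. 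Now choose $g_1\in G_1$ with $c(g_1)\neq 0$ (possible since $c_{\mid G_1}\neq 0$); if there existed some $g_2\in G_2$ with $\eta(g_2)\neq\eta'(g_2)$ then $c(g_1)\neq 0$ would force $\eta(g_1)\neq\eta'(g_1)$ for that particular $g_1$, which is fine, but the same identity applied with $g_1$ replaced by an arbitrary element $h_1\in G_1$ yields $c(h_1)(\eta(g_2)-\eta'(g_2))=c(g_2)(\eta(h_1)-\eta'(h_1))$; combining the two relations (one for $g_1$, one for $h_1$) with the same $g_2$ eliminates $c(g_2)$ and shows $c(h_1)/c(g_1)=(\eta(h_1)-\eta'(h_1))/(\eta(g_1)-\eta'(g_1))$, exhibiting $c_{\mid G_1}$ as a scalar multiple of $\eta_{\mid G_1}-\eta'_{\mid G_1}$, which is a coboundary for $\eta'\eta^{-1}$ — contradicting non-splitness over $G_1$. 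Hence no such $g_2$ exists, i.e.\ $\eta_{\mid G_2}=\eta'_{\mid G_2}$.

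For the final assertion: if $\eta\neq\eta'$ as characters of $G_1\times G_2$, then either $\eta_{\mid G_1}\neq\eta'_{\mid G_1}$ or $\eta_{\mid G_2}\neq\eta'_{\mid G_2}$ (since a character of a direct product is determined by its restrictions to the two factors). If $\eta_{\mid G_2}\neq\eta'_{\mid G_2}$ then the first part (with the roles of $G_1$ and $G_2$ as stated) shows $V$ must split over $G_1$; symmetrically, if $\eta_{\mid G_1}\neq\eta'_{\mid G_1}$ then $V$ splits over $G_2$. Either way at least one restriction splits. I expect the main obstacle to be purely bookkeeping: making sure the cocycle normalization is consistent and that "does not split as a $G_1$-representation" is correctly translated into "$c_{\mid G_1}$ is not a coboundary of the twisted module $\eta'\eta^{-1}$" rather than simply "$c_{\mid G_1}\neq 0$" — the two differ exactly when $\eta_{\mid G_1}=\eta'_{\mid G_1}$, and handling that degenerate sub-case cleanly (where any nonzero $c_{\mid G_1}$ is automatically a non-coboundary, and the argument above still goes through since then $\eta(g_1)-\eta'(g_1)=0$ forces $c(g_2)(\text{something})=0$ differently) is the one spot that needs care.
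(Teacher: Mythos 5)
Your proof is correct, but it takes a different route from the paper. You encode the extension by the upper–triangular cocycle $c$ with $c(gh)=\eta'(g)c(h)+c(g)\eta(h)$, observe that non-splitting over $G_1$ means exactly that no $\lambda\in\mathbb{F}_q$ satisfies $c(g)=\lambda\bigl(\eta(g)-\eta'(g)\bigr)$ for all $g\in G_1$, and then exploit commutativity of the two factors: evaluating the cocycle identity on $g_1g_2=g_2g_1$ gives $c(g_1)\bigl(\eta(g_2)-\eta'(g_2)\bigr)=c(g_2)\bigl(\eta(g_1)-\eta'(g_1)\bigr)$, from which the existence of a single $g_2$ with $\eta(g_2)\neq\eta'(g_2)$ forces $c_{\mid G_1}$ to be a multiple of $(\eta-\eta')_{\mid G_1}$, i.e.\ a splitting — a contradiction. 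I checked the splitting criterion and the elimination step, including the degenerate sub-case $\eta_{\mid G_1}=\eta'_{\mid G_1}$, and they all go through. The paper argues differently: since $\GL(V)$ is finite and groups of order prime to $p$ act semisimply in characteristic $p$, non-splitting over $G_1$ produces some $g\in G_1$ whose image in $\GL(V)$ has order $p$; then $g-1$ is a nonzero $G_2$-equivariant endomorphism with $(g-1)^2V=0$, so it induces an isomorphism $\eta_{\mid G_2}\cong\eta'_{\mid G_2}$. The paper's argument is shorter but uses the finiteness of $\mathbb{F}_q$ and the characteristic-$p$ semisimplicity trick, whereas your cocycle computation is purely elementary linear algebra and proves the statement over an arbitrary coefficient field with no finiteness or characteristic hypothesis, while also making the splitting obstruction completely explicit.
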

\begin{proof}
Assume $V$ does not split as a representation of $G_1$. Then there is an element $g\in G_1$ with image in $\GL(V)$ of order $p$ as $\GL(V)$ is finite and finite groups of order coprime to $p$ only have semisimple representations in characteristic $p$. Since $g$ commutes with any element in $G_2$, we deduce that $g-1$ is a nonzero endomorphism of the restriction $V_{\mid G_2}$ of $V$ to $G_2$. Since $(g-1)^pV=(g^p-1)V=0$, we have $(g-1)^2V=0$ as $\dim V=2$. We deduce $\Ker(g-1)=\mathrm{Im}(g-1)$ whence $g-1$ induces an isomorphism between $\eta_{\mid G_2}$ and $\eta'_{\mid G_2}$. Similarly, if $V$ does not split as a representation of $G_2$ then the restrictions $\eta_{\mid G_1}$ and $\eta'_{\mid G_1}$ are equal. The statement follows from the assumption that $\eta\neq \eta'$. 
\end{proof}

\begin{pro}\label{splits_all_but_one_pro}
Let $V$ be a $2$-dimensional continuous representation of $\GQpD$ over $\mathbb{F}_q$ admitting a short exact sequence 
$$0 \to \eta'\to V\to \eta\to 0$$
such that $\eta,\eta'\colon \GQpD\to \mathbb{F}_q^\times$ are continuous characters. If $V$ splits as a representation of $\GQpa$ for some $\alpha\in\Delta$ then we have $\eta_{\mid G_{\Qp,\beta}}=\eta'_{\mid G_{\Qp,\beta}}$ for all $\beta\neq\alpha\in\Delta$. In particular, if $\eta\neq\eta'$ then all but at most one of the restrictions of $V$ to $\GQpa$ ($\alpha\in\Delta$) splits.
\end{pro}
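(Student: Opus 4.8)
The plan is to obtain this as a direct consequence of Lemma \ref{one_splits_lemma}. First I would use the product decomposition $\GQpD=\GQpa\times\GQpDa$, where $\GQpDa=G_{\Qp,\Delta\setminus\{\alpha\}}=\prod_{\beta\in\Delta\setminus\{\alpha\}}G_{\Qp,\beta}$ collects the remaining factors, and regard $V$ --- together with its short exact sequence $0\to\eta'\to V\to\eta\to 0$ --- as a $2$-dimensional representation of $G_1\times G_2$ with $G_1:=\GQpa$ and $G_2:=\GQpDa$. Since by hypothesis $V$ does not split as a representation of $G_1=\GQpa$, Lemma \ref{one_splits_lemma} yields $\eta_{\mid\GQpDa}=\eta'_{\mid\GQpDa}$. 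A character of the direct product $\GQpDa=\prod_{\beta\neq\alpha}G_{\Qp,\beta}$ is the (external) product of its restrictions to the individual factors, so this equality is equivalent to $\eta_{\mid G_{\Qp,\beta}}=\eta'_{\mid G_{\Qp,\beta}}$ for every $\beta\in\Delta$ with $\beta\neq\alpha$, which is the first assertion.

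For the second assertion I would argue by contradiction, invoking the first part twice. Assume $\eta\neq\eta'$ and that there are two distinct simple roots $\alpha_1,\alpha_2\in\Delta$ such that $V$ splits neither as a representation of $G_{\Qp,\alpha_1}$ nor as a representation of $G_{\Qp,\alpha_2}$. Applying the first assertion with $\alpha=\alpha_1$ gives $\eta_{\mid G_{\Qp,\beta}}=\eta'_{\mid G_{\Qp,\beta}}$ for all $\beta\neq\alpha_1$, and applying it with $\alpha=\alpha_2$ gives the same for all $\beta\neq\alpha_2$. Because $\alpha_1\neq\alpha_2$, every $\beta\in\Delta$ satisfies $\beta\neq\alpha_1$ or $\beta\neq\alpha_2$, so $\eta$ and $\eta'$ agree on each factor $G_{\Qp,\beta}$ and hence $\eta=\eta'$ on $\GQpD$, a contradiction. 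Therefore $V$ can fail to split as a representation of $\GQpa$ for at most one $\alpha\in\Delta$, i.e.\ all but at most one of the restrictions splits.

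I do not expect a genuine obstacle here: the whole content is packaged into the previously established Lemma \ref{one_splits_lemma}, and the only points needing care are bookkeeping --- that grouping the $|\Delta|$ Galois factors as $\{\alpha\}\sqcup(\Delta\setminus\{\alpha\})$ places us verbatim in the situation of that lemma (the sub- and quotient characters $\eta',\eta$ of the filtration of $V$ are unchanged, since $G_1\times G_2$ is all of $\GQpD$), and that an equality of characters of $\GQpDa$ is the same as the factorwise equalities in the statement. No topological input is required, since $V$ is finite dimensional over the discrete field $\mathbb{F}_q$ and thus factors through a finite quotient of $\GQpD$, so the finiteness of $\mathrm{GL}(V)$ used inside the proof of Lemma \ref{one_splits_lemma} is available.
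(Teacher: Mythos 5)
Your argument is correct and is precisely the paper's proof, which consists of the single line that the proposition follows immediately from Lemma \ref{one_splits_lemma}: you apply that lemma with $G_1=\GQpa$ and $G_2=\GQpDa$ (using that a character of $\GQpDa$ is determined factorwise), and apply it twice, for two distinct roots, to obtain the ``in particular'' clause. One remark: you read the hypothesis as ``$V$ does \emph{not} split as a representation of $\GQpa$'', which is the only reading under which the first assertion is true (a totally split $V$ with $\eta\neq\eta'$ falsifies the literal wording), is exactly what Lemma \ref{one_splits_lemma} delivers, and is how the proposition is invoked later (e.g.\ in Theorem \ref{nolength3}); so the word ``splits'' in the stated hypothesis is a slip in the statement rather than a gap in your argument.
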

\begin{proof}
This follows immediately from Lemma \ref{one_splits_lemma}.
\end{proof}

\begin{rem}
One can reprove the result of Hauseux \cite{Hauseux1} on the computation of $\Ext^1(\Ind_{\overline{B}}^G\chi',\Ind_{\overline{B}}\chi)$ in the \emph{generic} case when $\chi\neq \chi'$ and both $\Ind_{\overline{B}}^G\chi'$ and $\Ind_{\overline{B}}^G\chi$ are irreducible as follows. Assume we are given a nontrivial extension class in $\Ext^1(\Ind_{\overline{B}}^G\chi',\Ind_{\overline{B}}\chi)$ represented by a smooth representation $\pi$. Then the center $Z(G)$ must act on $\pi$ via a character using Schur's lemma. By the remark after Corollary \ref{Qisom} we apply $D^\vee_\Delta$ to obtain a nonzero class in $\Ext^1(D^\vee_\Delta(\Ind_{\overline{B}}^G\chi),D^\vee_\Delta(\Ind_{\overline{B}}\chi'))$. Using the equivalence of categories $\mathbb{V}$ we get a nontrivial extension $V$ of two distinct characters of $\GQpD\times \Qp^\times$ which splits when restricted to $\GQpa$ for all but exactly one $\alpha\in\Delta$ by Proposition \ref{splits_all_but_one_pro} (since the center $\Qp^\times$ acts via a character). Hence $\pi$ lies in the category $\SP^{\alpha,\xi}_{\Fq}$ for some character $\xi\colon\GQpDa\times\Qp^\times\to \Fq^\times$. By Theorem \ref{lookslikeparindisparind} and our assumption on the irreducibility of both principal series we deduce that $\pi$ can be parabolically induced from $\overline{P}^{(\alpha)}$. Hence the space $\Ext^1(\Ind_{\overline{B}}^G\chi',\Ind_{\overline{B}}\chi)$ is $1$-dimensional by \cite[Prop.\ VII.4.7]{Colmez1}.
\end{rem}

Now let us explicitly describe the value of $\mathbb{V}^\vee\circ D^\vee_\Delta$ at extensions of not necessarily irreducible principal series. Here $\mathbb{V}^\vee$ denotes the composite of the functor $\mathbb{V}\colon \mathcal{D}^{et}(T_+,\mathcal{O}_F/\varpi^h\bg N_{\Delta,0}\jg)\to \Rep_h(\GQpD\times\Qp^\times)$ with $(\cdot)^\vee\colon \Rep_h(\GQpD\times\Qp^\times)\to \Rep_h(\GQpD\times\Qp^\times)$ of taking $F/\mathcal{O}_F$-duals. This way $\mathbb{V}^\vee\circ D^\vee_\Delta$ is a left exact \cite[Thm.\ A]{MultVar} covariant functor from the category $\Rep^{fin}_h(G)$ of smooth representations of $G=\GL_n(\Qp)$ of finite length over $\mathcal{O}_F/\varpi^h$ to $\Rep_h(\GQpD\times\Qp^\times)$ (see Corollary \ref{finitenessofDveeDelta}). Let $\chi_1\otimes\dots\otimes\chi_n\colon T\to \Fq^\times$ be a smooth character. Denote by $\widetilde{\chi_j}\colon G_{\Qp}\to \Fq^\times$ the character corresponding to $\chi_j$ via local class field theory. Then $\mathbb{V}^\vee\circ D^\vee_\Delta(\Ind_{\overline{B}}^G\chi_1\otimes\dots\otimes\chi_n)$ is a $1$-dimensional representation of $\GQpD\times\Qp^\times$ on which the $j$th copy of $G_{\Qp}$ acts via $\widetilde{\chi_1}\cdots\widetilde{\chi_j}$ ($j=1,\dots,n-1$) and $\Qp^\times$ acts via $\chi_1\cdots\chi_n$.

\begin{thm}\label{torabel}
The composite functor $\mathbb{V}^\vee\circ D_\Delta^\vee\circ\Ind_{\overline{B}}^{G}\circ\operatorname{Inf}_T^{\overline{B}}$ is an equivalence of categories between the category of continuous representations of $T$ on finitely generated $\mathcal{O}_F/\varpi^h$-modules to the full subcategory $\Rep_{h}^{ab}(\GQpD\times \Qp^\times)$ of $\Rep_{h}(\GQpD\times\Qp^\times)$ consisting of representations with abelian image.
\end{thm}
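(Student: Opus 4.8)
The claim is that $F := \mathbb{V}^\vee\circ D_\Delta^\vee\circ\Ind_{\overline{B}}^G\circ\operatorname{Inf}_T^{\overline{B}}$ is an equivalence of categories from $\Rep_h(T)$ (continuous representations of $T$ on finitely generated $\mathcal{O}_F/\varpi^h$-modules, equivalently smooth finite-length such since $T$ acts via a smooth character in our setting) onto the full subcategory $\Rep_h^{ab}(\GQpD\times\Qp^\times)$. The strategy is to check (a) that $F$ lands in $\Rep_h^{ab}$, (b) that $F$ is essentially surjective onto $\Rep_h^{ab}$, and (c) that $F$ is fully faithful; the natural route is to produce an explicit quasi-inverse and verify the unit and counit are isomorphisms. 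First I would record the case $h=1$ of a one-dimensional character, which is written out just before the statement: $F$ sends the smooth character $\chi_1\otimes\cdots\otimes\chi_n$ of $T$ to the character of $\GQpD\times\Qp^\times$ whose $j$th Galois factor acts by $\widetilde{\chi_1}\cdots\widetilde{\chi_j}$ and whose $\Qp^\times$-factor acts by $\chi_1\cdots\chi_n$. Since the diagonal characters $\chi_j$ can be recovered from this datum (take successive quotients $\widetilde{\chi_1}\cdots\widetilde{\chi_j}$ over $\widetilde{\chi_1}\cdots\widetilde{\chi_{j-1}}$, then use local class field theory and the $\Qp^\times$-component for $\chi_n$), this shows $F$ is a bijection on isomorphism classes of one-dimensional objects; that identification will be the combinatorial backbone of the whole argument.

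For general objects I would argue as follows. For essential surjectivity, let $V$ be in $\Rep_h^{ab}(\GQpD\times\Qp^\times)$, so $V$ is a successive extension of one-dimensional representations; by the one-dimensional dictionary above each such character is $F$ of a character of $T$, and by devissage (using exactness of $\mathbb{V}^\vee\circ D_\Delta^\vee$ on $\SP_h$, \cite[Cor.\ 3.4, Thm.\ 3.19]{MultVar} and \cite[Thm.\ 3.15]{MultVarGal}, together with exactness of $\Ind_{\overline{B}}^G$) I would lift the extension. Here the key input is that all Ext-data is captured faithfully: by Theorem \ref{fullyfaithful}, $D_\Delta^\vee$ is fully faithful on $\overline{\SP}_h$, and by Lemma \ref{KerDveeDelta} the kernel of $D_\Delta^\vee$ on $\SP_h$ is exactly $\Fin^{>B}_{\Fq}(G)$, which contains no subquotient of a principal series induced from a \emph{torus} (those all stay irreducible-or-principal-series). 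So on the full subcategory of $\SP_h$ generated under extensions by the irreducible principal series $\Ind_{\overline{B}}^G\chi$ with $\chi$ a character of $T$, the functor $D_\Delta^\vee$ — hence $\mathbb{V}^\vee\circ D_\Delta^\vee$ — is fully faithful and exact, and $\Ind_{\overline{B}}^G\circ\operatorname{Inf}_T^{\overline{B}}$ is fully faithful and exact from $\Rep_h(T)$ onto that subcategory (parabolic induction from the Borel is exact and, for $\GL_n(\Qp)$ mod $p$, faithful on characters; injectivity on morphisms is standard Frobenius reciprocity). Composing, $F$ is fully faithful and exact, and essential surjectivity onto $\Rep_h^{ab}$ follows by matching one-dimensional pieces and lifting extensions inductively on length.

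To make step (b) precise I would construct the quasi-inverse $G^{ab}\colon \Rep_h^{ab}(\GQpD\times\Qp^\times)\to \Rep_h(T)$ directly on the level of characters and extend it by the dictionary: a one-dimensional $V$ with $j$th Galois factor $\psi_j$ and $\Qp^\times$-factor $\psi_Z$ goes to the character of $T$ with entries $\chi_j := \widehat{\psi_j\psi_{j-1}^{-1}}$ for $1\le j\le n-1$ (where $\widehat{\cdot}$ is the inverse of local class field theory, $\psi_0:=1$) and $\chi_n := \psi_Z\cdot(\chi_1\cdots\chi_{n-1})^{-1}$; then extend $G^{ab}$ to extensions using full faithfulness of $F$ on the relevant subcategories. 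Checking $F\circ G^{ab}\cong\id$ and $G^{ab}\circ F\cong\id$ then reduces, by devissage and the five lemma, to the one-dimensional case, which is the explicit computation recalled before the statement. \textbf{The main obstacle} I anticipate is verifying that $F$ is \emph{essentially surjective onto exactly $\Rep_h^{ab}$} and not something smaller: one must show every successive extension $V$ of one-dimensional $\GQpD\times\Qp^\times$-representations actually arises, i.e.\ that the natural map on Ext-groups $\Ext^1_T(\chi',\chi)\to \Ext^1_{\GQpD\times\Qp^\times}(F\chi',F\chi)$ is \emph{surjective} (full faithfulness only gives injectivity of $D_\Delta^\vee$ on morphisms, and one needs that no genuinely new Galois extension class appears). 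I would handle this by noting that $\Ext^1$ in $\Rep_h^{ab}$ between two characters is computed by continuous group cohomology $H^1(\GQpD\times\Qp^\times,\eta'\eta^{-1})$, which by Künneth decomposes over the $n-1$ Galois factors and the $\Qp^\times$-factor; each $H^1(G_{\Qp},\cdot)$ matches via local class field theory and \cite[Cor.\ VII.4.7]{Colmez1}-type computations the corresponding piece of $\Ext^1_T$, and $H^1$ of $\Qp^\times$ on a character matches the central Ext. Assembling these identifications — and checking they are compatible with the filtration on $\Ind_{\overline{B}}^G$ coming from the Bruhat stratification used throughout Section 2 — is where the real work lies; the rest is formal devissage.
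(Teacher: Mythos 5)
Your route (devissage on length plus a comparison of $\Ext^1$-groups) is genuinely different from the paper's, and the step you yourself flag as the main obstacle contains an error as written. Extensions of two characters \emph{inside} the full subcategory $\Rep_h^{ab}(\GQpD\times\Qp^\times)$ are extensions of representations of the abelianization $\prod_{\alpha\in\Delta}G_{\Qp,\alpha}^{ab}\times\Qp^\times$ (an object with abelian image factors through this quotient), so they are classified by $H^1$ of the abelianized group, \emph{not} by $H^1(\GQpD\times\Qp^\times,\eta'\eta^{-1})$ as you assert. This is not cosmetic: your proposed matching ``each $H^1(G_{\Qp},\cdot)$ matches the corresponding piece of $\Ext^1_T$ via local class field theory'' fails for a nontrivial character $\chi$, since by the local Euler characteristic formula $H^1(G_{\Qp},\chi)$ is $1$-dimensional (or $2$-dimensional for the cyclotomic character), whereas the corresponding piece $H^1(\Qp^\times,\widetilde{\chi})\cong H^1(G_{\Qp}^{ab},\chi)$ of $\Ext^1_T$ vanishes. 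With the groups you wrote down the dimensions do not match, so the surjectivity needed to lift extensions would appear to fail; with the correct groups the comparison becomes essentially tautological, by the local Kronecker--Weber theorem together with the observation that a continuous action of $\Qp^\times$ on a finite module extends uniquely to its profinite completion $\hat{\mathbb{Z}}\times\Zp^\times\cong G_{\Qp}^{ab}$. Two subsidiary gaps: lifting extensions along the composite functor needs exactly this surjectivity (Theorem \ref{fullyfaithful} only gives injectivity on $\Ext^1$), and your claim that $\Ind_{\overline{B}}^{G}\circ\operatorname{Inf}_T^{\overline{B}}$ is fully faithful into $\overline{\SP}_h$ requires an argument, since $Q$ kills the Jordan--Hölder factors of $\Ind_{\overline{B}}^G\chi$ lying in $\Fin^{>B}_{\Fq}(G)$ and could a priori create new morphisms in the quotient category.

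For comparison, the paper's proof avoids devissage and all $\Ext$-computations: it first records the abstract equivalence just described (Kronecker--Weber plus unique extension of finite continuous $\Qp^\times$-representations to $G_{\Qp}^{ab}$), and then exhibits an explicit quasi-inverse $V\mapsto \mathbb{D}(V)^{\nr}_\Delta$ using Proposition \ref{Dnrmultvar}: for $V$ with abelian image one has $\mathbb{D}(V)\cong E_\Delta\otimes_{\Fq}\mathbb{D}(V)^{\nr}_\Delta$, and $\mathbb{D}(V)^{\nr}_\Delta$ is a finite module whose $T_+$-action (via $T_+\cong\Qp^\times\times\Gamma_\Delta\times\prod_\alpha\varphi_\alpha^{\mathbb{N}}$) extends to $T$, which recovers the torus representation; essential surjectivity and full faithfulness then follow at once, with no appeal to Theorem \ref{fullyfaithful}. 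If you want to salvage your approach, replace the ambient $H^1(\GQpD\times\Qp^\times,\cdot)$ by $H^1$ of $\prod_\alpha G_{\Qp,\alpha}^{ab}\times\Qp^\times$ throughout and justify the full faithfulness of parabolic induction into $\overline{\SP}_h$; but the quasi-inverse via $\mathbb{D}^{\nr}$ is both shorter and works uniformly for all objects, not only by induction on length.
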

\begin{proof}
By the local Kronecker--Weber theorem the maximal abelian quotient of $G_{\Qp}$ is $$G_{\Qp}^{ab}=\Gal(\Qp(\mu_{\infty})/\Qp)\cong \hat{\mathbb{Z}}\times \Zp^\times\ .$$ On the other hand, $\mathcal{O}/\varpi^h$ has finite cardinality, so any continuous representation of $\Qp^\times=p^{\mathbb{Z}}\times \Zp^\times$ extends uniquely to a continuous representation of $p^{\hat{\mathbb{Z}}}\times \Zp^\times\cong G_{\Qp}^{ab}$. Therefore the two categories are indeed equivalent, one only has to check that the equivalence is realized by the given composite functor. The quasi-inverse is given by $V\mapsto \mathbb{D}^{\nr}(V)$ using Proposition \ref{Dnrmultvar} and the identification $T_+\cong \Qp^\times\times\Gamma_\Delta\times \prod_{\alpha\in\Delta}\varphi_\alpha^{\mathbb{N}}$ noting that any continuous action of the monoid $T_+$ on a finite set extends to the group completion $T$ (as finite monoids are in fact groups).
\end{proof}

Let $\varepsilon\colon \Qp^\times\to \Fp^\times\hookrightarrow \Fq^\times$ denote the character corresponding to the mod $p$ cyclotomic character via local class field theory.

\begin{cor}
Assume $p\neq 2$ and let $\chi,\chi'\colon T\to\mathbb{F}_q^{\times}$ be two continuous characters such that either $\chi=\chi'$ or $\chi\neq\chi'$ and there exists a simple root $\alpha\in\Delta$ satisfying $\chi'=w_\alpha(\chi)\cdot \varepsilon^{-1}\circ\alpha$, $\chi\neq w_\alpha(\chi)$, and $\chi'\neq w_\alpha(\chi')$. Then the functor $\mathbb{V}^\vee\circ D_\Delta^\vee$ induces an isomorphism $$\Ext^1(\Ind_{\overline{B}}^{G}\chi,\Ind_{\overline{B}}^{G}\chi')\overset{\sim}{\rightarrow} \Ext^1_{\GQpD\times\mathbb{Q}_p^\times}(\mathbb{V}^\vee\circ D_\Delta^\vee(\Ind_{\overline{B}}^{G}\chi),\mathbb{V}^\vee\circ D_\Delta^\vee(\Ind_{\overline{B}}^{G}\chi'))\ .$$
\end{cor}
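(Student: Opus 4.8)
The plan is to exhibit the natural homomorphism induced by $\mathbb{V}^\vee\circ D^\vee_\Delta$ on $\Ext^1$ and to prove it bijective, treating the two cases of the hypothesis separately and in each case reducing to a situation already settled. Since $\mathbb{V}$ is an equivalence and $D^\vee_\Delta$ is exact on $\SP_{\Fq}$ by \cite[Theorem 3.19]{MultVar}, the covariant functor $\mathbb{V}^\vee\circ D^\vee_\Delta$ is exact on $\SP_{\Fq}$; hence a short exact sequence $0\to\Ind_{\overline{B}}^{G}\chi'\to\pi\to\Ind_{\overline{B}}^{G}\chi\to0$ is carried to $0\to\eta'\to\mathbb{V}^\vee D^\vee_\Delta(\pi)\to\eta\to0$, where $\eta:=\mathbb{V}^\vee D^\vee_\Delta(\Ind_{\overline{B}}^{G}\chi)$ and $\eta':=\mathbb{V}^\vee D^\vee_\Delta(\Ind_{\overline{B}}^{G}\chi')$ are the one-dimensional representations of $\GQpD\times\Qp^\times$ described just before the statement, and it suffices to prove injectivity and surjectivity of the resulting map. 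Throughout I would use that, under the stated hypotheses, the principal series $\Ind_{\overline{B}}^{G}\chi$ and $\Ind_{\overline{B}}^{G}\chi'$ are irreducible, which I would extract from the classification \cite{Herzigclass} together with $\chi\neq w_\alpha(\chi)$, $\chi'\neq w_\alpha(\chi')$ and $\chi'=w_\alpha(\chi)\cdot\varepsilon^{-1}\circ\alpha$; in particular neither of them lies in $\Fin^{>B}_{\Fq}(G)$.

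For injectivity I would run the argument of the remark following Corollary \ref{Qisom}: if $\pi$ is a non-split extension with $\mathbb{V}^\vee D^\vee_\Delta(\pi)$ split, then $D^\vee_\Delta(\pi)\cong D^\vee_\Delta(\Ind_{\overline{B}}^{G}\chi\oplus\Ind_{\overline{B}}^{G}\chi')$ (as $\mathbb{V}$ reflects isomorphisms), hence $Q(\pi)\cong Q(\Ind_{\overline{B}}^{G}\chi\oplus\Ind_{\overline{B}}^{G}\chi')$ in $\overline{\SP}_{\Fq}(G)$ by Corollary \ref{Qisom} (which rests on Theorem \ref{fullyfaithful}); but $\pi$ and $\Ind_{\overline{B}}^{G}\chi\oplus\Ind_{\overline{B}}^{G}\chi'$ have no Jordan--H\"older constituents in $\Fin^{>B}_{\Fq}(G)$, so $Q$ is faithful on morphisms between them, and therefore $\pi$ splits, a contradiction.

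For surjectivity let $V$ be an extension of $\eta$ by $\eta'$. If $\chi=\chi'$, then $\eta=\eta'$ and $V$ is a self-extension of a character, hence has abelian image and lies in $\Rep^{ab}_{\Fq}(\GQpD\times\Qp^\times)$; since in fact every self-extension of $\eta$ in $\Rep_{\Fq}(\GQpD\times\Qp^\times)$ lies in $\Rep^{ab}_{\Fq}(\GQpD\times\Qp^\times)$, the group $\Ext^1_{\GQpD\times\Qp^\times}(\eta,\eta)$ is already computed inside $\Rep^{ab}_{\Fq}(\GQpD\times\Qp^\times)$, and by Theorem \ref{torabel} it is identified, via the equivalence $\mathbb{V}^\vee\circ D^\vee_\Delta\circ\Ind_{\overline{B}}^{G}\circ\operatorname{Inf}_T^{\overline{B}}$, with $\Ext^1_{\Rep_{\Fq}(T)}(\chi,\chi)$; it then remains only to see that $\Ind_{\overline{B}}^{G}\circ\operatorname{Inf}_T^{\overline{B}}$ identifies this last group with $\Ext^1_G(\Ind_{\overline{B}}^{G}\chi,\Ind_{\overline{B}}^{G}\chi)$ — i.e.\ that every self-extension of $\Ind_{\overline{B}}^{G}\chi$ is parabolically induced from $T$ — which follows from the irreducibility of $\Ind_{\overline{B}}^{G}\chi$ (the vanishing of the non-identity Weyl contributions in the computation of $\Ext^1$ of an irreducible principal series with itself), or which one can see directly from Theorem \ref{lookslikeparindisparind} applied successively at each simple root. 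If instead $\chi\neq\chi'$ and $\chi'=w_\alpha(\chi)\cdot\varepsilon^{-1}\circ\alpha$, a short computation of $\eta'\eta^{-1}$ from the explicit description of $\eta$ shows that this character is trivial on $\GQpDa\times\Qp^\times$ and non-trivial on $\GQpa$; hence $H^0(\GQpa,\eta'\eta^{-1})=0$, so the K\"unneth decomposition of $H^1(\GQpD\times\Qp^\times,\eta'\eta^{-1})$ forces the class of $V$ to restrict to $0$ on $\GQpDa\times\Qp^\times$, whence $\GQpDa\times\Qp^\times$ acts on $V$ via a character and $V$ lies in $\Rep^{\alpha,ab}_{\Fq}(\GQpD\times\Qp^\times)$ (alternatively, apply Proposition \ref{splits_all_but_one_pro}). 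Now I would realize $V$ by parabolic induction: using the compatibility of $D^\vee_\Delta$ with induction from $\overline{P}^{(\alpha)}$ (\cite[Theorem 3.5]{MultVar}) and the $\GL_2(\Qp)$-case — namely that $\Ext^1$ of the two relevant principal series of $\GL_2(\Qp)^{(\alpha)}$ is one-dimensional \cite[Prop.\ VII.4.7]{Colmez1} and, by Colmez' Montr\'eal functor, maps isomorphically onto the corresponding one-dimensional Galois $\Ext^1$ (the $n=2$ instance being classical, and in any case covered by the injectivity argument above) — there is an extension $\pi_\alpha$ of principal series of $\GL_2(\Qp)^{(\alpha)}\times T^{(\alpha)}$ with $\mathbb{V}^\vee D^\vee_\Delta(\Ind_{\overline{P}^{(\alpha)}}^{G}\pi_\alpha)\cong V$, and $\Ind_{\overline{P}^{(\alpha)}}^{G}\pi_\alpha$ is, by transitivity of parabolic induction, an extension of $\Ind_{\overline{B}}^{G}\chi$ by $\Ind_{\overline{B}}^{G}\chi'$ lying in $\SP_{\Fq}$.

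The step I expect to be the main obstacle is the bookkeeping around $\Fin^{>B}_{\Fq}(G)$. Because $D^\vee_\Delta$ vanishes on that subcategory and is only faithful on the quotient category $\overline{\SP}_{\Fq}(G)$, both the injectivity argument and the recognition of an automorphic extension behind a given Galois one rely on knowing that $\Ind_{\overline{B}}^{G}\chi$ and $\Ind_{\overline{B}}^{G}\chi'$ have no $\Fin^{>B}_{\Fq}(G)$-constituents, i.e.\ are irreducible; deducing this irreducibility from $\chi\neq w_\alpha(\chi)$, $\chi'\neq w_\alpha(\chi')$ and the shape of $\chi'$ via \cite{Herzigclass}, and then threading the relevant $\Ext^1$-groups through Theorem \ref{torabel} (in the case $\chi=\chi'$), respectively through Theorem \ref{lookslikeparindisparind}, \cite[Theorem 3.5]{MultVar} and \cite[Prop.\ VII.4.7]{Colmez1} (in the case $\chi\neq\chi'$), so that the composite is visibly an isomorphism, is where most of the care lies.
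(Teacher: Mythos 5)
There is a genuine gap, and it sits exactly where you predicted the ``main obstacle'' would be: your claim that the hypotheses force $\Ind_{\overline{B}}^{G}\chi$ and $\Ind_{\overline{B}}^{G}\chi'$ to be irreducible is false, and the corollary is stated precisely so as to cover reducible principal series (the paper introduces it with ``extensions of not necessarily irreducible principal series''). The conditions $\chi\neq w_\alpha(\chi)$, $\chi'\neq w_\alpha(\chi')$, $\chi'=w_\alpha(\chi)\cdot\varepsilon^{-1}\circ\alpha$ only constrain the entries of $\chi$ at the single simple root $\alpha$; for $n\geq 3$ one can have $\chi_i=\chi_{i+1}$ at some other simple root, making $\Ind_{\overline{B}}^{G}\chi$ reducible, and in the case $\chi=\chi'$ there is no hypothesis at all. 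Once the principal series are reducible they do have Jordan--H\"older constituents in $\Fin^{>B}_{\Fq}(G)$ (all but one constituent, by Lemma \ref{KerDveeDelta}), so your injectivity argument collapses: $Q(\pi)\cong Q(\Ind_{\overline{B}}^{G}\chi\oplus\Ind_{\overline{B}}^{G}\chi')$ no longer implies $\pi\cong\Ind_{\overline{B}}^{G}\chi\oplus\Ind_{\overline{B}}^{G}\chi'$, and the ``$Q$ is faithful on morphisms between them'' step is exactly what fails. The paper avoids this by never arguing through the quotient category here: it quotes Hauseux's computation (valid without irreducibility, using connectedness of the centre) to get that $\Ext^1_G(\Ind_{\overline{B}}^{G}\chi,\Ind_{\overline{B}}^{G}\chi')$ is $1$-dimensional with the nontrivial extension induced from $\overline{P}^{(\alpha)}$, and then matches dimensions with the Galois side via Colmez' \cite[Prop.\ VII.4.7]{Colmez1} and \cite[Thm.\ B]{MultVar}. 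Some ingredient of this kind (a computation of the automorphic $\Ext^1$ that does not presuppose irreducibility) is unavoidable and is missing from your proposal.

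The $\chi=\chi'$ case has a second, independent problem even if irreducibility were granted: the vanishing of the ``non-identity Weyl contributions'' to $\Ext^1_G(\Ind_{\overline{B}}^{G}\chi,\Ind_{\overline{B}}^{G}\chi)$ is governed by the dot-twisted condition $\chi\neq w_\beta(\chi)\cdot\varepsilon^{-1}\circ\beta$ for all $\beta\in\Delta$, not by $\chi\neq w_\beta(\chi)$; a character with $\chi_{\beta+1}=\chi_\beta\varepsilon$ gives an irreducible principal series for which your ``vanishing'' assertion is unjustified. This is precisely why the paper invokes \cite[Thm.\ 1.2]{Hauseux} (with connected centre) for the automorphic side of this case, combined with Theorem \ref{torabel} for the Galois side; your fallback of applying Theorem \ref{lookslikeparindisparind} ``successively at each simple root'' is too vague to substitute for it, since that theorem only produces an identification in the quotient category $\overline{\SP}_{\Fq}$ and only for one $\overline{P}^{(\alpha)}$ at a time. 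On the positive side, your treatment of surjectivity in the case $\chi\neq\chi'$ --- the K\"unneth/Proposition \ref{splits_all_but_one_pro} argument placing any Galois extension in $\Rep^{\alpha,ab}_{\Fq}(\GQpD\times\Qp^\times)$, followed by realizing it via Colmez' functor for $\GL_2(\Qp)^{(\alpha)}$ and inducing from $\overline{P}^{(\alpha)}$ using \cite[Thm.\ 3.5]{MultVar} --- is sound and is essentially the same mechanism the paper uses; what is missing is the complementary control of the automorphic $\Ext^1$ (injectivity or a dimension count) in the possibly reducible situation.
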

\begin{proof}
Assume $\chi\neq \chi'$. Since the center of $\GL_n(\Qp)$ is connected, $\Ext^1(\Ind_{\overline{B}}^{G}\chi,\Ind_{\overline{B}}^{G}\chi')$ is $1$-dimensional by \cite[Thm.\ 1.2]{Hauseux}. Further, the unique nontrivial extension $\pi$ is parabolically induced from $\overline{P}^{(\alpha)}$, ie.\ we have a smooth representation $\pi_\alpha$ of $L^{(\alpha)}$ such that $\pi\cong\Ind_{\overline{P}^{(\alpha)}}^G\pi_\alpha$. Further, by our assumptions $\chi\neq \chi'$, $\chi\neq w_\alpha(\chi)$, $\chi'\neq w_\alpha(\chi')$, $\pi_\alpha$ is an extension of $2$ nonisomorphic irreducible principal series. Hence we may apply \cite[Proposition VII.4.7]{Colmez1} to deduce that the corresponding Galois representation is a nonsplit extension of $2$ characters. The statement follows from the isomorohism of $\Ext$ groups in \cite[Proposition VII.4.7]{Colmez1} using \cite[Theorem B]{MultVar}.

The case $\chi=\chi'$ follows from Theorem \ref{torabel} using \cite[Thm.\ 1.2]{Hauseux}.
\end{proof}
\begin{rem}
Even in case $n=2$ there are characters $\chi,\chi'$ of the torus such that $\Ext^1_{G}(\Ind_B^{G}\chi,\Ind_B^{G}\chi')= 0$, but $\Ext^1_{\GQpD\times\mathbb{Q}_p^\times}(\mathbb{V}^\vee\circ D_\Delta^\vee(\Ind_{\overline{B}}^{G}\chi),\mathbb{V}^\vee\circ D_\Delta^\vee(\Ind_{\overline{B}}^{G}\chi'))$. For instance let $\chi=\chi_1\otimes\chi_2$ and $\chi'=\chi_3\otimes\chi_4$ such that the central characters agree: $\chi_1\chi_2=\chi_3\chi_4$, but $\chi_3\neq\varepsilon^{-1}\chi_1$.
\end{rem}

Finally, we present a new proof of another result of Hauseux:

\begin{thm}[special case of Thm.\ 3.2.1 in \cite{Hauseux3}]\label{nolength3}
Let $\chi,\chi',\chi''\colon T\to \Fq^\times$ be three distinct characters such that all three principal series representations $\Ind_{\overline{B}}^G\chi$, $\Ind_{\overline{B}}^G\chi'$, and $\Ind_{\overline{B}}^G\chi''$ are irreducible. Then there does not exist a smooth representation of $\GL_n(\Qp)$ over $\Fq$ of length $3$ with socle isomorphic to $\Ind_{\overline{B}}^G\chi$, cosocle isomorphic to $\Ind_{\overline{B}}^G\chi''$, and the unique intermediate constituant isomorphic to $\Ind_{\overline{B}}^G\chi'$.
\end{thm}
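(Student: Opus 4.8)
The plan is to argue by contradiction via the functor $\mathbb{V}^\vee\circ D^\vee_\Delta$, using the rigidity results established earlier in the section. Suppose $\pi$ is a smooth representation of $G$ of length $3$ in $\SP_{\Fq}$ with socle $\Ind_{\overline{B}}^G\chi$, intermediate constituent $\Ind_{\overline{B}}^G\chi'$, and cosocle $\Ind_{\overline{B}}^G\chi''$, with $\chi,\chi',\chi''$ pairwise distinct and all three principal series irreducible. First I would observe that since the three characters are distinct, the center $Z(G)$ acts on $\pi$ via a single character (by Schur's lemma applied to the three distinct central characters forcing a contradiction otherwise — actually more simply, if a length $3$ representation has Jordan--Hölder factors with distinct central characters it cannot be uniserial; so all three central characters coincide). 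Hence $V:=\mathbb{V}^\vee\circ D^\vee_\Delta(\pi)$ is a representation of $\GQpD\times\Qp^\times$ on which $\Qp^\times$ acts by a character, and by exactness of $\mathbb{V}^\vee\circ D^\vee_\Delta$ on $\SP_{\Fq}$ together with Lemma \ref{KerDveeDelta} (each $D^\vee_\Delta(\Ind_{\overline{B}}^G\chi^{(i)})$ is rank $1$), $V$ is a $3$-dimensional uniserial representation with one-dimensional graded pieces $\eta$, $\eta'$, $\eta''$ corresponding to $\chi$, $\chi'$, $\chi''$ respectively, and these three characters of $\GQpD\times\Qp^\times$ are pairwise distinct.

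Next I would extract the group-theoretic heart. The length-$2$ subrepresentation of $V$ with graded pieces $\eta,\eta'$ is a nonsplit extension (otherwise $\soc(\pi)\cong\Ind_{\overline{B}}^G\chi$ would fail to be the socle after pushing back through full faithfulness), so by Proposition \ref{splits_all_but_one_pro} there is a unique simple root, say $\alpha_1\in\Delta$, such that the restriction of this length-$2$ piece to $\GQpa$ does \emph{not} split for $\alpha=\alpha_1$, and it splits for every other simple root. Similarly the length-$2$ quotient of $V$ with graded pieces $\eta',\eta''$ is nonsplit (as $\Ind_{\overline{B}}^G\chi''$ is the cosocle), yielding a unique simple root $\alpha_2\in\Delta$ where it fails to split. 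I would then case-split on whether $\alpha_1=\alpha_2$ or $\alpha_1\neq\alpha_2$. In the case $\alpha_1\neq\alpha_2$: the whole $V$ restricted to $\GQpa$ with $\alpha\notin\{\alpha_1,\alpha_2\}$ is semisimple (both sub- and quotient extensions split and the factors are distinct, so the $3$-dimensional representation splits completely), and restricted to $\GQpa[\alpha_1]$ the sub splits off... I would apply Lemma \ref{one_splits_lemma}-type reasoning to $V$ restricted to each $\GQpa$, and use that $V$ is uniserial with distinct factors: a unipotent element $g\in\GQpa[\alpha_1]$ acting with a single nontrivial Jordan block structure forced onto $\eta\subset V$, combined with the splitting behavior at $\alpha_2$, to derive $\eta_{\mid G_{\Qp,\beta}}=\eta''_{\mid G_{\Qp,\beta}}$ for all $\beta$, contradicting $\eta\neq\eta''$ once I also check the $\Qp^\times$-components agree (forced by the common central character and the determinant computation of $\mathbb{V}^\vee\circ D^\vee_\Delta$ on principal series spelled out before Theorem \ref{torabel}).

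In the case $\alpha_1=\alpha_2=:\alpha$: then $V$ restricted to $\GQpa[\beta]$ for $\beta\neq\alpha$ has both its length-$2$ sub and its length-$2$ quotient split, hence (the factors being pairwise distinct) $V_{\mid\GQpa[\beta]}$ is semisimple; thus the group $\GQpDa$ acts on $V$ through an abelian quotient, which means $\pi\in\SP_{\Fq}^{\alpha,ab}$. By Theorem \ref{lookslikeparindisparind}, $Q(\pi)\cong Q(\Ind_{\overline{P}^{(\alpha)}}^G\pi_\alpha)$ for a smooth representation $\pi_\alpha$ of $L^{(\alpha)}=\GL_2(\Qp)^{(\alpha)}\times T^{(\alpha)}$, and by Proposition \ref{gl2admissprinc} $\pi_\alpha$ has Jordan--Hölder factors among subquotients of principal series, is admissible of finite length, with $\pi_\alpha$ of length $3$ over $\GL_2(\Qp)^{(\alpha)}$ (as $D^\vee_{\{\alpha\}}(\pi_\alpha)\cong D^\vee_\Delta(\pi)^{\nr}_{\Delta\setminus\{\alpha\}}$ has rank $3$) with the three factors being distinct irreducible principal series of $\GL_2(\Qp)$ (irreducibility descending from the genericity hypothesis, since $\chi,\chi',\chi''$ distinct with the induced $\GL_n$-principal series irreducible forces the $w_\alpha$-conjugacy classes to be distinct and nontrivial). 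Then I would invoke the $\GL_2(\Qp)$ theory: via Colmez' functor one gets a uniserial $3$-dimensional Galois representation of $G_{\Qp}$ with distinct characters as graded pieces, which by elementary linear algebra over $\Fq$ cannot exist unless two of the characters coincide (a unipotent matrix in $\GL_3$ with a full flag of eigenlines forces the relevant characters equal) — more precisely I would cite \cite[Prop.\ VII.4.7]{Colmez1} together with the observation that $\Ext^1$ of two distinct characters of $G_{\Qp}$ is at most one-dimensional and the relevant Yoneda product / obstruction forces the length-$3$ tower to collapse, or simply quote the $n=2$ case of the statement directly from Hauseux, or argue as in the remark after Theorem \ref{irredjacquet}. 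The main obstacle I anticipate is the bookkeeping in the $\alpha_1\neq\alpha_2$ case: correctly tracking which restrictions split and which do not, and converting "$V$ uniserial with pairwise distinct $1$-dimensional factors and restriction to $\GQpa[\beta]$ semisimple for all $\beta$ except possibly a distinguished one for the sub and another for the quotient" into a clean contradiction with $\eta\neq\eta''$; I expect this requires a slightly more careful version of Lemma \ref{one_splits_lemma} applied to the $3$-dimensional $V$ directly rather than just to its length-$2$ pieces, exploiting that a unipotent $g\in\GQpa[\alpha_1]$ must have $(g-1)^2=0$ on the sub but the quotient behavior at $\alpha_1$ (which splits, since $\alpha_2\neq\alpha_1$) pins down $(g-1)$ to land in a specific line, forcing an identification of Galois characters on the complementary factors.
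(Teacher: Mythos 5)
Your opening moves coincide with the paper's: pass to $V=\mathbb{V}^\vee\circ D^\vee_\Delta(\pi)$, use exactness and full faithfulness (Theorem \ref{fullyfaithful}) to see that the length-two sub $V_1$ (extension of $\eta'$ by $\eta$) and the length-two quotient $V_2=V/\eta$ are nonsplit, and use Lemma \ref{one_splits_lemma}/Proposition \ref{splits_all_but_one_pro} to single out roots $\alpha_1$ (for $V_1$) and $\alpha_2$ (for $V_2$). But in the case $\alpha_1\neq\alpha_2$ — the case the paper actually works out — you are missing the decisive argument. The paper finishes as follows: $G_{\Qp,\alpha_1}$ acts on $V_2$ through the single character $\eta'$, so the class of $0\to\eta\to V\to V_2\to 0$ restricted to $G_{\Qp,\alpha_1}$ is a linear map $V_2\to\Ext^1_{G_{\Qp,\alpha_1}}(\eta',\eta)$; one-dimensionality of this $\Ext^1$ produces a line $V_3\subset V_2$ whose preimage in $V$ splits over $G_{\Qp,\alpha_1}$, normality of $G_{\Qp,\alpha_1}$ in the product makes $V_3$ stable under all of $\GQpD\times\Qp^\times$, and since the nonsplit $V_2$ has $\eta'$ as its unique one-dimensional subrepresentation, $V_3=\eta'$, so $V_1$ splits over $G_{\Qp,\alpha_1}$ — a contradiction. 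Nothing in your sketch substitutes for this. Your stated target, $\eta=\eta''$ on every $G_{\Qp,\beta}$, cannot be reached: in this configuration $\eta\neq\eta''$ on $G_{\Qp,\alpha_1}$ automatically (there $\eta\neq\eta'$ while $\eta'=\eta''$), so the contradiction cannot be a character identity, and the element-wise commutation you invoke does not force anything: writing $g\in G_{\Qp,\alpha_1}$, $h\in G_{\Qp,\alpha_2}$ in an adapted basis, the only constraint is $a(h)u(g)=c(g)(\eta(h)-\eta''(h))+b(h)(\eta'(g)-\eta(g))$, which is solvable with $a,u\neq 0$ precisely because $\eta\neq\eta''$ at $\alpha_2$ and $\eta\neq\eta'$ at $\alpha_1$; the obstruction is cohomological, not pointwise.

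Two further steps are incorrect as written. First, you repeatedly argue "sub and quotient split over $G_{\Qp,\beta}$ and the factors are distinct, hence $V$ (or its restriction) is semisimple"; in those situations the three characters restricted to $G_{\Qp,\beta}$ are \emph{equal}, not distinct, and split sub plus split quotient does not rule out a nontrivial class in the corner $\Ext^1(\eta'',\eta)$. (The conclusion you want in the case $\alpha_1=\alpha_2$ — that $\GQpDa\times\Qp^\times$ acts through a character, so $\pi\in\SP_{\Fq}^{\alpha,ab}$ — is in fact true, but it needs the Schur-type commutator argument with some $g\in\GQpa$ satisfying $\eta(g)\neq\eta''(g)$, not the inference you give.) Second, your intended endgame for $\alpha_1=\alpha_2$ fails on the Galois side: uniserial three-dimensional representations of a single $G_{\Qp}$ with pairwise distinct characters as graded pieces do exist (suitable successive extensions of $\omega$, $1$, $\omega^{-1}$, say), and $\Ext^1_{G_{\Qp}}$ of two distinct characters is two-dimensional when their ratio is cyclotomic, so no linear-algebra or Yoneda-vanishing argument on the Galois side can collapse the tower; that case has to be closed on the automorphic side for $\GL_2(\Qp)$ (your fallback of quoting Hauseux's $n=2$ case, which is logically admissible but partly defeats the purpose of reproving his theorem) or by showing, as the paper asserts at the corresponding point, that the two distinguished roots must differ. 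In sum, the framework is the right one, but the main case is not proved and the auxiliary case rests on invalid semisimplicity and $\Ext$-dimension claims.
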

\begin{proof}
Assume for contradiction that there is such a representation $\pi$ and put $V:=\mathbb{V}^\vee\circ D^\vee_\Delta(\pi)$. By exactness \cite[Thm.\ C]{MultVar}, $V$ is a successive extension of distinct characters $\eta,\eta',\eta''\colon \GQpD\times \Qp^\times\to \Fq^\times$. Put $V_1\leq V$ for the extension of $\eta'$ by $\eta$ (ie.\ $V_1=\mathbb{V}^\vee(D^\vee_\Delta(\pi_1))$ where $\pi_1\leq \pi$ is the subrepresentation with quotient $\pi/\pi_1\cong\Ind_{\overline{B}}^G\chi''$) and $V_2:=V/\eta$ (ie.\ $V_2=\mathbb{V}^\vee(D^\vee_\Delta(\pi_2))$ where $\pi_2=\pi/\Ind_{\overline{B}}^G\chi$). By Proposition \ref{splits_all_but_one_pro} there is a simple root $\alpha\in \Delta$ such that $V_1$ does not split as a representation of $\GQpa$ and $\eta_{\mid \GQpDa\times \Qp^\times}=\eta'_{\mid \GQpDa\times \Qp^\times}$. Similarly, there is a simple root $\beta\in\Delta$ such that $V_2$ does not split as a representation of $G_{\Qp,\beta}$ and $\eta'_{\mid G_{\Qp,\Delta\setminus\{\beta\}}\times \Qp^\times}=\eta''_{\mid G_{\Qp,\Delta\setminus\{\beta\}}\times \Qp^\times}$. As $\eta\neq \eta''$, we must have $\alpha\neq\beta$. Now the dimension of $\Ext^1_{\GQpa}(\eta'_{\mid\GQpa},\eta_{\mid\GQpa})$ is $1$ (see, for example \cite[Prop.\ VII.4.7]{Colmez1}). However, $\GQpa$ acts by the character $\eta'_{\mid\GQpa}$ on $V_2$, so there must be a $1$-dimensional subspace $V_3$ of $V_2$ whose extension by $\eta_{\mid \GQpa}$ is trivial. Since $\GQpa$ is a normal subgroup in $\GQpD\times \Qp^\times$, $V_3$ is also a $\GQpD\times \Qp^\times$-subrepresentation of $V_2$. However, $V_2$ is a nonsplit extension, so it has a unique $1$-dimensional subrepresentation $\eta'$. This contradicts to $V_1$ being nonsplit as a representation of $\GQpa$.
\end{proof}

\begin{rem}
Suppose in addition to the assumptions of Theorem \ref{nolength3} that $\chi=w_\alpha(\chi')\cdot \varepsilon^{-1}\circ\alpha$ and $\chi''=w_\beta(\chi')\cdot \varepsilon^{-1}\circ\beta$ for some (necessarily distinct) simple roots $\alpha,\beta\in\Delta$. Then there exists representations $\pi_1,\pi_2$ which are nonsplit extensions of $\Ind_{\overline{B}}^G\chi'$ by $\Ind_{\overline{B}}^G\chi$, resp.\ of $\Ind_{\overline{B}}^G\chi''$ by $\Ind_{\overline{B}}^G\chi'$. So Theorem \ref{nolength3} shows that the class of the length $2$ Yoneda extension
\begin{align*}
0\to \Ind_{\overline{B}}^G\chi\to \pi_1\to \pi_2\to \Ind_{\overline{B}}^G\chi''\to 0
\end{align*}
is nonzero in $\Ext^2(\Ind_{\overline{B}}^G\chi'',\Ind_{\overline{B}}^G\chi)$. It would be interesting to generalize this to Yoneda classes in higher extension groups and possibly give a new proof of a recent result of Heyer \cite[Thm.\ 4.2.8]{Heyer} (see also \cite[Thm.\ 5.3.1]{Hauseux1} for a similar result depending on the validity of \cite[Conjecture 3.7.2]{EmertonOrd2} on the derived functor of Emerton's ordinary parts).
\end{rem}

\subsection{The conjectural essential image of $D^\vee_\Delta$ on $\SP_{\mathcal{O}_F/\varpi^h}$}\label{sec:conj}

We end this paper by formulating a conjecture on the essential image of the functor $D^\vee_\Delta$ restricted to the category $\SP_h$. Let $V$ be an object in $\Rep^{\ord}_h(\GQpD\times\Qp^\times)$. For each $\alpha=\alpha_j\in \Delta$ ($j=1,\dots,n-1$) consider the socle filtration 
\begin{align*}
0=\soc_0^{\overline{\alpha}}(V)\lneq \soc_1^{\overline{\alpha}}(V)\lneq\dots\lneq \soc_{r_\alpha}^{\overline{\alpha}}(V)=V
\end{align*}
of $V$ as a representation of $\GQpDa\times\Qp^\times$. Since $\GQpDa\times\Qp^\times$ is a normal subgroup in $\GQpD\times\Qp^\times$, $\soc_i^{\overline{\alpha}}(V)$ is a $\GQpD\times\Qp^\times$-subrepresentation of $V$ over $\Fq$ and $\GQpDa\times\Qp^\times$ acts on the graded pieces $\mathrm{gr}^{\overline{\alpha}}_i(V):=\soc_i^{\overline{\alpha}}(V)/\soc_{i-1}^{\overline{\alpha}}(V)$ ($i=1,\dots,r_\alpha$) via a direct sum of characters. For each character $\eta_{\overline{\alpha}}\colon \GQpDa\times \Qp^\times\to \Fq^\times$ put $\mathrm{gr}^{\overline{\alpha}}_i(V)(\eta_{\overline{\alpha}})$ for the $\eta_{\overline{\alpha}}$-isotypic component of $\mathrm{gr}^{\overline{\alpha}}_i(V)$ ($i=1,\dots,r_\alpha$) as a representation of the $j$th copy of $G_{\Qp}$. Further, let $\widetilde{\eta_{\overline{\alpha}}}^{(k)}\colon \Qp^\times\to\Fq^\times$ be the character corresponding to the restriction of $\eta_{\overline{\alpha}}$ to the $k$th copy of $G_{\Qp}$ for $k\neq j\in \{1,\dots,n-1\}$ and $\widetilde{\eta_{\overline{\alpha}}}^{(n)}\colon \Qp^\times\to\Fq^\times$ for the restriction of $\eta_{\overline{\alpha}}$ to the center $\Qp^\times$ and put $\widetilde{\eta_{\overline{\alpha}}}^{(0)}\colon \Qp^\times\to\Fq^\times$ for the trivial (identically $1$) character.

\begin{con}\label{conimage}
The object $V$ in $\Rep^{\ord}_h(\GQpD\times\Qp^\times)$ is in the essential image of the functor $\mathbb{V}^\vee\circ D^\vee_\Delta$ if and only if for each $\alpha\in\Delta$, $i\in\{1,\dots,r_\alpha\}$, and character $\eta_{\overline{\alpha}}\colon \GQpDa\times \Qp^\times\to \Fq^\times$ the representation $\mathrm{gr}^{\overline{\alpha}}_i(V)(\eta_{\overline{\alpha}})$ is in the image of Colmez' Montréal functor at a representation $\pi$ of $\GL_2(\Qp)$ over $\Fq$ with central character $\frac{\widetilde{\eta_{\overline{\alpha}}}^{(j+1)}}{\widetilde{\eta_{\overline{\alpha}}}^{(j-1)}}$.
\end{con}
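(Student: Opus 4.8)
The plan is to establish the two implications of Conjecture~\ref{conimage} separately. The necessity (``only if'') direction is accessible with the structure theory developed above and essentially follows from Theorem~\ref{lookslikeparindisparind}; the sufficiency (``if'') direction is genuinely open and is the main obstacle.

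For necessity, suppose $V\cong\mathbb{V}^\vee\circ D^\vee_\Delta(\pi)$ with $\pi$ in $\SP_h$, and fix $\alpha=\alpha_j\in\Delta$. Using the same device as in the proof of Theorem~\ref{fullyfaithful} --- choosing objects $M\in\mathcal{M}^0_\Delta(\pi^{H_{\Delta,0}})$ adapted to the $\GQpDa\times\Qp^\times$-subrepresentations $\soc_i^{\overline{\alpha}}(V)$, applying Theorem~\ref{DveeDeltagrw_0}, and invoking exactness of $\mathbb{V}^\vee\circ D^\vee_\Delta$ on $\SP_h$ --- one matches the socle filtration of $V$ as a representation of $\GQpDa\times\Qp^\times$ with an ascending filtration of $Q(\pi)$ in $\overline{\SP}_h$ by subobjects $Q(\pi_i)$. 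Since $V$ lies in $\Rep^{\ord}_h$ its simple $\GQpDa\times\Qp^\times$-constituents are one-dimensional over $\Fq$, so each graded piece $\mathrm{gr}^{\overline{\alpha}}_i(V)$ is a direct sum of characters of $\GQpDa\times\Qp^\times$; consequently $\GQpDa\times\Qp^\times$ acts through its maximal abelian quotient, and the corresponding subquotient $\pi_i$ (or $\pi_i/\varpi\pi_i$ if $h>1$, which has the same value of $D^\vee_\Delta$) lies in $\SP^{\alpha,ab}_{\Fq}$. By Theorem~\ref{lookslikeparindisparind} there is a smooth representation $\pi_{i,\alpha}$ of $\GL_2(\Qp)^{(\alpha)}\times T^{(\alpha)}$ with $Q(\pi_i)\cong Q(\Ind_{\overline{P}^{(\alpha)}}^{G}\pi_{i,\alpha})$, and by Proposition~\ref{gl2admissprinc} together with \cite[Thm.~B]{MultVar} we get $D^\vee_\Delta(\pi_i)\cong E_\Delta\otimes_{E_{\{\alpha\}}}D^\vee_{\{\alpha\}}(\pi_{i,\alpha})$, where $D^\vee_{\{\alpha\}}(\pi_{i,\alpha})$ is the value of Colmez' Montréal functor on $\pi_{i,\alpha}$ viewed as a $\GL_2(\Qp)^{(\alpha)}$-representation.

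The remaining work for necessity is bookkeeping. The central torus $Z(L^{(\alpha)})=T^{(\alpha)}\times Z(\GL_2(\Qp)^{(\alpha)})$ acts on $\pi_{i,\alpha}$ through a finite, hence locally constant, family of characters; under $\mathbb{V}$ the $T^{(\alpha)}$-action corresponds to the actions of the remaining factors $G_{\Qp,\beta}$ ($\beta\neq\alpha$) and the $Z(\GL_2(\Qp)^{(\alpha)})$-action to part of the central $\Qp^\times$-action. Matching a character $\eta_{\overline{\alpha}}$ of $\GQpDa\times\Qp^\times$ with the corresponding character of $Z(L^{(\alpha)})$ via local class field theory, the $\eta_{\overline{\alpha}}$-isotypic summand of $\pi_{i,\alpha}$ yields the required $\GL_2(\Qp)$-representation $\pi_\alpha(i,\eta_{\overline{\alpha}})$, whose Montréal-functor value realizes $\mathrm{gr}^{\overline{\alpha}}_i(V)(\eta_{\overline{\alpha}})$ as a representation of the $j$-th copy of $G_{\Qp}$; the central character constraint $\widetilde{\eta_{\overline{\alpha}}}^{(j+1)}/\widetilde{\eta_{\overline{\alpha}}}^{(j-1)}$ is then read off from the explicit formula for $\mathbb{V}^\vee\circ D^\vee_\Delta$ on principal series (the $k$-th copy of $G_{\Qp}$ acting via $\widetilde{\chi_1}\cdots\widetilde{\chi_k}$, the centre via $\chi_1\cdots\chi_n$) and compatibility with parabolic induction from $\overline{P}^{(\alpha)}$, which on Jordan--Hölder constituents forces the central character of $\pi_\alpha(i,\eta_{\overline{\alpha}})$ to be the class of $\chi_j\chi_{j+1}^{-1}$.

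For sufficiency --- the hard part --- one would start from $V$ satisfying the stated condition and use the fact that Colmez' Montréal functor can be inverted \cite{Colmez1,Paskunas} to obtain, for each simple root $\alpha$, a family of admissible $\GL_2(\Qp)$-representations realizing the pieces $\mathrm{gr}^{\overline{\alpha}}_i(V)(\eta_{\overline{\alpha}})$; assembling these over $i$ and $\eta_{\overline{\alpha}}$ one hopes to build a representation $\pi_\alpha$ of $L^{(\alpha)}$ and then a $\GL_n(\Qp)$-representation $\pi$ in $\SP_h$ as a suitable successive extension of the $\Ind_{\overline{P}^{(\alpha)}}^{G}\pi_\alpha$. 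The obstruction, and the reason the conjecture is only a conjecture, is that there is no available gluing mechanism: the $\GL_2$-data attached to distinct simple roots only determine $V$ after restriction to the individual factors $\GQpa$, and producing a single $\GL_n(\Qp)$-representation whose socle filtrations in all of these directions are simultaneously compatible would require both a converse to the parabolic-induction compatibility \cite[Thm.~B]{MultVar} --- recognizing when an étale $T_+$-module that is simultaneously of the form $E_\Delta\otimes_{E_{\{\alpha\}}}D_{\{\alpha\}}$ for every $\alpha\in\Delta$ actually arises as $D^\vee_\Delta$ of a smooth $\GL_n(\Qp)$-representation --- and control of the higher extension groups governing the non-abelian interaction between the roots. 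This is precisely the kind of difficulty flagged in the remark following Proposition~\ref{prcontainedinDhash}, and I do not see how to resolve it with the methods of this paper.
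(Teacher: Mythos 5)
Your treatment matches the paper's: the statement is only a conjecture, and the paper itself establishes exactly the ``only if'' half (Proposition \ref{onlyif}) by producing, via exactness (\cite[Thm.\ C]{MultVar}), subquotients of $\pi$ whose $\mathbb{V}^\vee\circ D^\vee_\Delta$-values are the isotypic pieces $\mathrm{gr}^{\overline{\alpha}}_i(V)(\eta_{\overline{\alpha}})$, then applying Theorem \ref{lookslikeparindisparind} and the parabolic-induction compatibility \cite[Thm.\ B]{MultVar} and reading off the central character from the centre of $\GL_2(\Qp)^{(\alpha)}$ inside $T$ --- which is essentially your necessity argument, your detour through the Theorem \ref{fullyfaithful}/\ref{DveeDeltagrw_0} machinery and the principal-series formula being only cosmetic differences. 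Your assessment of the ``if'' direction as genuinely open, lacking a gluing mechanism across the simple roots, likewise agrees with the paper, which offers no proof of sufficiency and only cites evidence (Theorem \ref{torabel} and the constructions of Breuil--Herzig).
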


\begin{pro}\label{onlyif}
The ``only if'' part of Conjecture \ref{conimage} is true.
\end{pro}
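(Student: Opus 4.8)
The plan is to reduce the ``only if'' direction to the explicit descriptions already established in Theorems \ref{lookslikeparindisparind} and \ref{irredjacquet} together with Propositions \ref{asbigasitcanbe} and \ref{gl2admissprinc}. Let $\pi$ be an object in $\SP_h$ and put $V:=\mathbb{V}^\vee\circ D^\vee_\Delta(\pi)$, so $V$ lies in $\Rep^{\ord}_h(\GQpD\times\Qp^\times)$. Fix $\alpha=\alpha_j\in\Delta$ and an index $i\in\{1,\dots,r_\alpha\}$. First I would translate the socle filtration of $V$ as a $\GQpDa\times\Qp^\times$-representation into a filtration of $\pi$ by subrepresentations in $\SP_h$ via the full faithfulness of $D^\vee_\Delta$ on $\overline{\SP}_h$ (Theorem \ref{fullyfaithful}) combined with the equivalence $\mathbb{V}_\Delta$, $\mathbb{D}_\Delta$: concretely, each $\soc_k^{\overline\alpha}(V)$ corresponds to an étale $T_+$-module which is a subobject of $D^\vee_\Delta(\pi)$, hence (up to $\Fin^{>B}$-subquotients) to a subrepresentation $\pi_{(k)}\subseteq\pi$, and the graded piece $\mathrm{gr}^{\overline\alpha}_i(V)$ corresponds to $Q(\pi_{(i)}/\pi_{(i-1)})$ in $\overline{\SP}_h$. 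The point of passing to graded pieces is that on each $\mathrm{gr}^{\overline\alpha}_i(V)$ the subgroup $\GQpDa\times\Qp^\times$ acts semisimply, so it splits as a direct sum of its $\eta_{\overline\alpha}$-isotypic components, and each isotypic component $\mathrm{gr}^{\overline\alpha}_i(V)(\eta_{\overline\alpha})$ is itself a $\GQpD\times\Qp^\times$-subquotient of $V$ on which $\GQpDa\times\Qp^\times$ acts through the character $\eta_{\overline\alpha}$ — in particular through its maximal abelian quotient.

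Next I would apply Theorem \ref{lookslikeparindisparind}: since the subquotient $\pi'$ of $\pi$ realizing $\mathrm{gr}^{\overline\alpha}_i(V)(\eta_{\overline\alpha})$ lies in $\SP^{\alpha,ab}_{\Fq}$ (after reducing to $h=1$ by the usual dévissage, using that $\mathbb{V}^\vee\circ D^\vee_\Delta$ is exact on $\SP_h$), there is a smooth representation $\pi_\alpha$ of $\GL_2(\Qp)^{(\alpha)}\times T^{(\alpha)}$ with $Q(\pi')\cong Q(\Ind_{\overline P^{(\alpha)}}^G\pi_\alpha)$, and by Corollary \ref{Uactstrivially} the unipotent radical acts trivially on $\pi_\alpha$. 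By Proposition \ref{asbigasitcanbe} we have $D^\vee_{\{\alpha\}}(\pi_\alpha)\cong D^\vee_\Delta(\pi')^{\nr}_{\Delta\setminus\{\alpha\}}$, so applying $\mathbb{V}^\vee$ and using Proposition \ref{Dnrmultvar} (the case $S=\Delta\setminus\{\alpha\}$, where the action factors through the abelian quotient so $V^{G'_{\Qp,S}}=V$), the representation of the $j$th copy $\GQpa$ attached to $\pi_\alpha$ by Colmez' Montréal functor is precisely $\mathrm{gr}^{\overline\alpha}_i(V)(\eta_{\overline\alpha})$ as a $\GQpa$-representation. Finally, I would read off the central character: the additional $T^{(\alpha)}\times Z(\GL_2(\Qp)^{(\alpha)})$-action on $D^\vee_{\{\alpha\}}(\pi_\alpha)$ recorded in Proposition \ref{gl2admissprinc}, together with the identification $T_+\cong Z(G)\times\Gamma_\Delta\times\prod_j t_j^{\mathbb N}$ and the way $\mathbb{V}_\Delta$ converts the $\varphi_{\alpha_k}$-actions for $k\ne j$ into the Galois actions of the other copies, forces the central character of $\pi_\alpha$ (as a $\GL_2(\Qp)$-representation) to be $\widetilde{\eta_{\overline\alpha}}^{(j+1)}/\widetilde{\eta_{\overline\alpha}}^{(j-1)}$; this is a direct bookkeeping check using that the root $\alpha_j$ pairs the $j$th and the adjacent coordinates of the torus.

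The main obstacle I anticipate is the bookkeeping in the last step: matching the normalization of the central character of $\pi_\alpha$ with the ratio $\widetilde{\eta_{\overline\alpha}}^{(j+1)}/\widetilde{\eta_{\overline\alpha}}^{(j-1)}$ requires being careful about (i) which copy of $G_{\Qp}$ the $j$th variable $X_{\alpha_j}$ corresponds to, (ii) how the embedding $\GL_2(\Qp)^{(\alpha)}\times T^{(\alpha)}\hookrightarrow \GL_n(\Qp)$ of \cite[Lemma 3.1.4]{BH1} identifies $Z(\GL_2(\Qp)^{(\alpha)})$ and $T^{(\alpha)}$ with subtori of $T$, and (iii) the Tate twists hidden in Colmez' functor and in local class field theory normalizations. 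A secondary technical point is justifying the passage from the socle filtration of $V$ to an honest filtration of $\pi$ in $\overline{\SP}_h$ — strictly we only get subobjects up to $\Fin^{>B}_{\Fq}(G)$, but since $D^\vee_\Delta$ kills that subcategory (Lemma \ref{KerDveeDelta}) this is harmless, and I would spell it out by working entirely on the étale $T_+$-module side and only transporting back at the end. Once these normalizations are pinned down, the statement follows formally; the author's proof (which follows) presumably carries out exactly this reduction.
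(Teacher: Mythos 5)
Your proposal is correct and follows essentially the same route as the paper: produce a subquotient of $\pi$ realizing each isotypic graded piece $\mathrm{gr}^{\overline{\alpha}}_i(V)(\eta_{\overline{\alpha}})$ (the paper cites \cite[Thm.\ C]{MultVar} directly rather than going through Theorem \ref{fullyfaithful}, but this is immaterial), apply Theorem \ref{lookslikeparindisparind} to realize it as $\Ind_{\overline{P}^{(\alpha)}}^{G}\pi_\alpha$ up to $\Fin^{>B}$, identify the $\GQpa$-piece with Colmez' functor applied to the $\GL_2(\Qp)^{(\alpha)}$-restriction via \cite[Thm.\ B]{MultVar}, and read off the central character. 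The normalization check you flag as the main obstacle is in fact a one-line computation in the paper, from the factorization $\diag(1,\dots,1,x,x,1,\dots,1)=\diag(\underbrace{x,\dots,x}_{j+1},1,\dots,1)\diag(\underbrace{x,\dots,x}_{j-1},1,\dots,1)^{-1}$ of the center of $\GL_2(\Qp)^{(\alpha)}$, giving exactly $\widetilde{\eta_{\overline{\alpha}}}^{(j+1)}/\widetilde{\eta_{\overline{\alpha}}}^{(j-1)}$ as you predicted.
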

\begin{proof}
Let $\pi$ be an object in $\SP_h$ and put $V:=\mathbb{V}^\vee\circ D^\vee_\Delta(\pi)$. By \cite[Theorem C]{MultVar} there is a subquotient $\mathrm{gr}^{\overline{\alpha}}_i(\pi)(\eta_{\overline{\alpha}})$ of $\pi$ for each $\alpha\in\Delta$, $i\in\{1,\dots,r_\alpha\}$, and character $\eta_{\overline{\alpha}}\colon \GQpDa\times \Qp^\times\to \Fq^\times$ such that
\begin{align*}
\mathbb{V}^\vee\circ D^\vee_\Delta\left(\mathrm{gr}^{\overline{\alpha}}_i(\pi)(\eta_{\overline{\alpha}}) \right)\cong \mathrm{gr}^{\overline{\alpha}}_i(V)(\eta_{\overline{\alpha}})\ .
\end{align*}
In particular, $\mathrm{gr}^{\overline{\alpha}}_i(\pi)(\eta_{\overline{\alpha}})$ lies in the category $\SP^{\alpha,\widetilde{\eta}_{\overline{\alpha}}}_{\Fq}$. Hence by Theorem \ref{lookslikeparindisparind} there exists a representation $\pi_0(\alpha,i,\eta_{\overline{\alpha}})$ of $L^{(\alpha)}$ such that we have
\begin{align*}
D^\vee_\Delta(\mathrm{gr}^{\overline{\alpha}}_i(\pi)(\eta_{\overline{\alpha}}))\cong D^\vee_\Delta(\Ind_{\overline{P}^{(\alpha)}}^G\pi_0(\alpha,i,\eta_{\overline{\alpha}}))\ .
\end{align*}
By \cite[Thm.\ B]{MultVar}, the $\GQpa$-representation $\mathrm{gr}^{\overline{\alpha}}_i(V)(\eta_{\overline{\alpha}})$ is the value of Colmez' Montréal functor at the restriction of $\pi_0$ to $\GL_2(\Qp)\cong \GL_2(\Qp)^{(\alpha)}\leq L^{(\alpha)}$. Finally, if $\alpha=\alpha_j$ then the center of $\GL_2(\Qp)^{(\alpha)}$ consists of elements $\diag(\underbrace{1,\dots,1}_{j-1},x,x,\underbrace{1,\dots,1}_{n-j-1})=\diag(\underbrace{x,\dots,x}_{j+1},\underbrace{1,\dots,1}_{n-j-1})\diag(\underbrace{x,\dots,x}_{j-1},\underbrace{1,\dots,1}_{n-j+1})^{-1}$ so the central character of $\pi_0$ is $\frac{\widetilde{\eta_{\overline{\alpha}}}^{(j+1)}}{\widetilde{\eta_{\overline{\alpha}}}^{(j-1)}}$ as predicted by Conjecture \ref{conimage}.
\end{proof}

\begin{rems}\begin{enumerate}
\item It would be very interesting to reformulate Conjecture \ref{conimage} in terms of deformation theory in the spirit of \cite{Paskunas}. The technical difficulty is that in case $n>2$ the injective hull of principal series representations appears to contain supersingular subquotients, as well. Therefore we cannot control $D^\vee_\Delta$ on such injective hulls as we do not know the exactness or the value at supersingular representations.
\item The condition on a representation $V$ of $\GQpD\times \Qp^\times$ being ordinary is strictly stronger than just requiring that its restriction to the diagonal embedding of $G_{\Qp}\hookrightarrow \GQpD\times \Qp^\times$ be ordinary.
\item Evidence for Conjecture \ref{conimage} is provided by Proposition \ref{onlyif}, Theorem \ref{torabel}, and the construction $\Pi(\overline{\rho})^{\ord}$ (or even $\Pi(\overline{\rho})_I$ for various sets $I$ of roots) of Breuil and Herzig \cite[section 3.4]{BH1}.
\end{enumerate}
\end{rems}

\bibliographystyle{alpha}
\bibliography{ref}

\end{document}